\documentclass[a4paper,11pt,reqno]{amsart}

\usepackage[utf8]{inputenc}
\usepackage[left=2.5cm,right=2.5cm,top=3cm, bottom=3cm]{geometry}
\usepackage{amsmath, amssymb, mathrsfs, amsthm}
\usepackage{tikz, tikz-cd}
\usepackage{mathtools}
\usepackage{graphicx}
\usepackage{quiver}
\usepackage[hidelinks]{hyperref}
\usepackage{stmaryrd} 
\usepackage[style=alphabetic,sorting=nyt]{biblatex}
\usepackage{url}

\newcommand{\Q}{\mathbb Q}

\newcommand{\R}{\mathbb R}
\newcommand{\Z}{\mathbb Z}

\def\Spec{\operatorname{Spec}}

\newtheoremstyle{mytheorem}
{}
{}
{\itshape}
{}
{\scshape}
{:}
{.5em}
{}%

\newtheoremstyle{mydefi}
{}
{}
{}
{}
{\scshape}
{:}
{.5em}
{}%

\theoremstyle{mytheorem}
\newtheorem{teo}{Theorem}
\numberwithin{teo}{section}
\newtheorem{prop}[teo]{Proposition}
\newtheorem{lem}[teo]{Lemma}
\newtheorem{cor}[teo]{Corollary}

\theoremstyle{mydefi}
\newtheorem{ex/}[teo]{Example}
\newenvironment{ex}
  {%
   \pushQED{\qed}\begin{ex/}}
  {\popQED\end{ex/}}

\newtheorem{rmk/}[teo]{Remark}
\newenvironment{rmk}
  {%
   \pushQED{\qed}\begin{rmk/}}
  {\popQED\end{rmk/}}

\newtheorem{defi/}[teo]{Definition}
\newenvironment{defi}
  {%
   \pushQED{\qed}\begin{defi/}}
  {\popQED\end{defi/}}

\newtheorem{ass/}[teo]{Assumption}
\newenvironment{ass}
  {%
   \pushQED{\qed}\begin{ass/}}
  {\popQED\end{ass/}}

\makeatletter
\renewenvironment{proof}[1][\proofname]{\par
  \vspace{-\topsep+6pt}
  \pushQED{\qed}%
  \normalfont
  \topsep0pt \partopsep0pt 
  \trivlist
  \item[\hskip\labelsep
        \scshape
    #1\@addpunct{.}]\ignorespaces
}{%
  \popQED\endtrivlist\@endpefalse
  \addvspace{6pt plus 6pt} 
}
\makeatother

\title[Intrinsic Non-Reductive GIT]{Intrinsic Non-Reductive Geometric Invariant Theory and Moduli Spaces of Principal Bundles with Non-Reductive Structure Groups}
\author{George Cooper and Ludvig Modin}
\address{G. Cooper: \textbf{Centro di Ricerca Matematica Ennio De Giorgi, Scuola Normale Superiore}, Collegio Puteano, Piazza dei Cavalieri, 3, 56126 Pisa (PI), REPUBBLICA ITALIANA \emph{(former)}} 
\email{george.cooper@sns.it}
\address{L. Modin: \textbf{Institut f\"ur Algebraische Geometrie, Fakult\"at f\"ur Mathematik und Physik, Gottfried Wilhelm Leibniz Universit\"at Hannover}, Welfengarten 1, 30167 Hannover, BUNDESREPUBLIK DEUTSCHLAND}
\email{modin@math.uni-hannover.de}

\begin{document}
\emergencystretch 3em 

\begin{abstract}
    We prove the existence of moduli spaces for algebraic stacks with a weak action of $\Theta = [\mathbb{A}^1/\mathbb{G}_m]$ (including stacks of filtered objects and unstable $\Theta$-strata) for which the center admits a good moduli space and the retraction to the center has flat, finitely presented relative inertia, providing a stack-intrinsic generalization of the existence of non-reductive Geometric Invariant Theory quotients. As an application, we construct quasi-projective coarse moduli spaces of principal bundles over a smooth projective curve with non-reductive structure groups, including moduli spaces of Ramanathan unstable principal bundles of fixed Harder--Narasimhan type.
\end{abstract}

\maketitle

\setcounter{tocdepth}{1}
\tableofcontents

\section{Introduction}

Building on the results of \cite{Modin}, this article serves to provide a stack-intrinsic generalization of the existence of non-reductive Geometric Invariant Theory (GIT) quotients, providing a link between the non-reductive GIT of Bérczi--Kirwan et al. \cite{BDHK-Projective-Completions, BDHK-git-graded-unipotent, BK-moment-maps} and the Beyond GIT program of Alper--Halpern-Leistner et al. \cite{AHL-Beyond-GIT-Survey}. We then illustrate how our \emph{Intrinsic Non-Reductive GIT} can be carried out in practice, by using it to construct new examples of quasi-projective coarse moduli spaces of principal bundles over a smooth projective curve, where the principal bundles have a non-reductive structure group.

\subsection{Background and Previous Work}

\subsubsection{Non-Reductive GIT}

At a fundamental level, non-reductive GIT is concerned with the existence of good quotients for actions of non-reductive algebraic groups $H = U \rtimes L$ on schemes $X$, extending the classical theory of Mumford et al. \cite{MFK-GIT} for actions of reductive algebraic groups. A key assumption when constructing non-reductive GIT quotients is that the center $Z(L)$ of the Levi subgroup $L \subset H$ admits a distinguished cocharacter $\lambda$ which both grades the unipotent radical $U \subset H$, in the sense that the conjugation action of $\lambda(\mathbb{G}_m)$ contracts $U$ to the identity, and simultaneously contracts $X$ to the $\lambda(\mathbb{G}_m)$-fixed point locus $Z = X^{\lambda,0}$. 

The construction of such quotients can be realized via a two-step procedure, as carried out explicitly in \cite{HHJRelativeNRGIT} and implicitly in the proof of Theorem \ref{thm:nrgit-intro} of the present article. The first step involves constructing a schematic geometric quotient $X \to X/U$ for the action of $U$ on $X$, through which the contraction morphism $X \to Z$, $x \mapsto \lim_{t \to 0} \lambda(t) \cdot x$ factors. The formation of the quotient $X/U$ requires the $U$-stabilizers of $X$ to be sufficiently well-behaved; though different authors in the literature impose slightly different assumptions on the $U$-stabilizers of points of $X$, all of these assumptions imply that the stabilizer group scheme $\mathrm{Stab}_{U,X} \to X$ is flat (see Remarks \ref{rmk:generalizing-constancy-unipotent-stabilizers} and \ref{rmk:generalizing-qiao-cdrs}). The second step is to then apply relative reductive GIT with respect to the induced morphism $X/U \to Z$, which is surjective and equivariant with respect to naturally inherited actions of $L$. As with reductive GIT, in practice an $L$-linearized invertible sheaf is used to determine a notion of semistability for points of $X/U$ which is characterizable in terms of cocharacters of $L$, and for which the resulting quotient by $L$ is a good quotient of the open locus of semistable points in $X/U$. The result is an $H$-invariant open subscheme $X' \subset X$ admitting a schematic categorical $H$-quotient $X' \to X' /\!\!/ H$, with the points of $X'$ being characterized in terms of cocharacters of $H$.

Non-reductive GIT has seen numerous recent applications, including the construction of moduli spaces of GIT unstable objects \cite{Hamilton-Thesis, Jackson-HNL2, Qiao-HNL2, Qiao-Thesis, Hoskins-Jackson-QIS}, moduli spaces of hypersurfaces in toric orbifolds \cite{Bunnett-Orbifolds} and flags of hypersurfaces in projective space \cite{Cooper-Flags}, compactifications of Hilbert schemes of complete intersections in projective space \cite{Wiaterek-Thesis}, moduli spaces of representations of quivers over truncated polynomial rings \cite{HHJ-Affine-NRGIT, HJV-Quivers} and, most strikingly, to prove the Green--Griffiths--Lang and Kobayashi hyperbolicity conjectures for generic hypersurfaces of polynomial degree \cite{BK-Hyperbolicity}.

\subsubsection{The Beyond GIT Program} \label{sec:beyond-git-program}

The traditional approach to studying moduli problems and constructing moduli spaces, especially via Mumford's reductive GIT, requires that the moduli problem is sufficiently rigidified (so that the moduli problem can be described as a global quotient stack), which in practice requires introducing extrinsic data to the moduli problem. The goal of the Beyond GIT program is to instead develop the tools necessary to study moduli problems intrinsically, using the theory of algebraic stacks. 

The Beyond GIT program postulates that a moduli problem is the same as an algebraic stack $\mathcal{X}$ (or a higher analog, such as a derived algebraic stack). The study of the moduli problem is then the study of the geometry of the algebraic stack $\mathcal{X}$. In particular, the problem of the existence of a moduli space for the moduli problem becomes the problem of the existence of a stack-theoretic moduli space $\pi : \mathcal{X} \to Y$; depending on the context, $\pi$ may be sought to be a (stack-theoretic) coarse moduli space, a good moduli space, in the sense of Alper \cite{Alper-GMS}, an adequate moduli space, also in the sense of Alper \cite{Alper-AMS}, or more generally a topological moduli space, in the sense of upcoming work of Rydh \cite{Rydh-TMS-Talk-2, Rydh-TMS-Talk-1}.

One of the recent successes of the Beyond GIT program has been the valuative criterion of Alper--Halpern-Leistner--Heinloth \cite{AHLH-Existence-GMS} for when an algebraic stack admits a good moduli space, in terms of S-completeness and $\Theta$-completeness. Another success has been the development by Halpern-Leistner \cite{Halpern-Leistner} of the theory of $\Theta$-stratifications and of the notion of semistability with respect to numerical invariants on algebraic stacks, generalizing both the Hesselink--Kempf--Kirwan--Ness instability stratification \cite{Hesselink-instability, Kempf-instability, Kirwan-Thesis, Ness-instability} and the Hilbert--Mumford criterion from reductive GIT. These tools provide effective criteria for proving the existence of and studying good moduli spaces, which in practice are more straightforward to implement compared to GIT. The Beyond GIT program has successfully been used to construct good moduli spaces which lie beyond the reach of GIT, most notably the projective moduli space of K-polystable $\mathbb{Q}$-Fano varieties by Xu et al \cite{xu-k-stability}; it has also been used by Halpern-Leistner to prove the $D$-equivalence conjecture for smooth Calabi--Yau varieties birationally equivalent to smooth moduli spaces of semistable coherent sheaves on a K3 surface \cite{Halpern-Leistner-Derived}.

\subsubsection{Closing the Gap Between Non-Reductive GIT and Beyond GIT}

The existence of non-reductive GIT quotients has yet to be fully addressed from the perspective of the Beyond GIT program. Whereas the GIT quotient of an action of a linearly reductive group produces a good moduli space, and the GIT quotient of a geometrically reductive group produces an adequate moduli space, the quotients produced by non-reductive GIT in general are neither good nor adequate moduli spaces, since the automorphism groups of the closed points of a quotient stack admitting a non-reductive GIT quotient in general have non-trivial unipotent radicals. As such, the toolkit from the Beyond GIT program for constructing and studying good or adequate moduli spaces cannot directly be applied to non-reductive GIT quotients.

The first bridge between non-reductive GIT and the Beyond GIT program was provided in previous work by the second named author \cite{Modin}, where an existence result for relative moduli spaces for the associated graded morphism $\mathrm{gr} : \mathrm{Filt}(\mathcal{Y}) \to \mathrm{Grad}(\mathcal{Y})$ was established; here $\mathrm{Filt}(\mathcal{Y})$ (resp. $\mathrm{Grad}(\mathcal{Y})$) denotes the stack of filtered (resp. graded) objects of the algebraic stack $\mathcal{Y}$, in the sense of Halpern-Leistner \cite{Halpern-Leistner}. This same work also showed how this existence result can be used to extend the \emph{$\widehat{U}$-Theorem} of Bérczi--Doran--Hawes--Kirwan \cite{BDHK-git-graded-unipotent} (i.e. the existence of non-reductive GIT quotients when the group $H$ is of the form $\widehat{U} = U \rtimes \mathbb{G}_m$) to the case where the base space is a Noetherian scheme of arbitrary characteristic.

By developing the results of \cite{Modin}, this article seeks to further close the gap between the theory of non-reductive GIT and the Beyond GIT program, by establishing a stack-intrinsic existence result for (absolute) moduli spaces of algebraic stacks $\mathcal{X}$ with a weak action of the monoid stack $\Theta = [\mathbb{A}^1/\mathbb{G}_m]$ for which there is a suitable notion of semistability, in our case given by $\Theta$-semistability with respect to suitable invertible sheaves. The class of stacks with weak $\Theta$-actions include stacks of filtered objects $\mathcal{X} = \mathrm{Filt}(\mathcal{Y})$ as well as unstable $\Theta$-strata appearing in a $\Theta$-stratification of an algebraic stack. Given an action of $H = U \rtimes L$ on a scheme $X$ for which the distinguished cocharacter $\lambda : \mathbb{G}_m \to Z(L)$ grades the unipotent radical $U$ and contracts $X$ onto the $\mathbb{G}_m$-fixed locus $Z = X^{\lambda,0}$, there is an induced weak $\Theta$-action on the quotient stack $[X/H]$. By applying our main result to the quotient stack $[X/H]$ with this weak $\Theta$-action, we are able to recover the existence of non-reductive GIT quotients (as formulated in \cite{HHJRelativeNRGIT}); consequently our main result should be viewed as a stack-intrinsic generalization of non-reductive GIT.

\subsection{Main Results}

Let $\mathcal{X}$ be an algebraic stack which is locally of finite presentation over a quasi-separated base scheme $B$; assume that the diagonal of $\mathcal{X} \to B$ is affine. Suppose $\mathcal{X}$ is endowed with a weak action of $\Theta_B = [\mathbb{A}_B^1/\mathbb{G}_{m,B}]$. This weak action determines an algebraic stack $\mathcal{Z}$ with a weak action of $B\mathbb{G}_{m,B}$, known as the \emph{center} of the weak $\Theta_B$-action, and a pair of morphisms $\mathrm{gr} : \mathcal{X} \to \mathcal{Z}$, $\sigma : \mathcal{Z} \to \mathcal{X}$, which are equivariant with respect to the natural maps $\Theta_B \to B\mathbb{G}_{m,B}$ and $B\mathbb{G}_{m,B} \to \Theta_B$ respectively. Heuristically, $\mathcal{Z}$ should be thought of as the fixed point locus for a contracting action of $\mathbb{G}_{m,B}$ on $\mathcal{X}$, and $\mathrm{gr} : \mathcal{X} \to \mathcal{Z}$ as the retraction of $\mathcal{X}$ onto this fixed locus. Indeed, for the weak action on $[X/H]$ determined by $\lambda$, the center is given by the stack quotient $[Z/L]$ of $Z = X^{\lambda, 0}$, $\mathrm{gr} : [X/H] \to [Z/L]$ is the morphism corresponding to the retraction $X \to Z$ given by $x \mapsto \lim_{t \to 0} \lambda(t) \cdot x$, and $\sigma : [Z/L] \to [X/H]$ corresponds to the inclusion $Z \hookrightarrow X$.

\subsubsection{Statement of the Main Result}

The main result of this article, concerning the existence of moduli spaces for the stack $\mathcal{X}$, is the following theorem.

\begin{teo}[Intrinsic Non-Reductive GIT] \label{thm:main-theorem}
    Let $B$ be a quasi-separated scheme. Let $\mathcal{X}$ be an algebraic stack which is locally of finite presentation over $B$, with $\mathcal{X} \to B$ having an affine diagonal. Assume that $\mathcal{X}$ admits a weak action of the stack $\Theta_B$ with center $\mathcal{Z}$; denote by $\mathrm{gr} : \mathcal{X} \to \mathcal{Z}$ the retraction of $\mathcal{X}$ onto its center. Assume further that the relative inertia $\mathcal{I}_{\mathrm{gr}} \to \mathcal{X}$ is flat and of finite presentation. 

    Then the following statements hold:

    \begin{enumerate}
        \item The rigidification $\rho : \mathcal{X} \to \mathcal{X}^{\mathrm{rig}} := \mathcal{X} \!\!\! \fatslash  \mathcal{I}_{\mathrm{gr}}$ is well-defined, and $\mathcal{X}^{\mathrm{rig}}$ inherits a weak action of $\Theta_B$ from that on $\mathcal{X}$, with the same center $\mathcal{Z}$, and with respect to which $\rho$ is $\Theta_B$-equivariant. Moreover, the induced morphism $\overline{\mathrm{gr}} : \mathcal{X}^{\mathrm{rig}} \to \mathcal{Z}$ is affine and of finite presentation.

        \item If $\mathcal{Z}$ admits a good moduli space $\pi : \mathcal{Z} \to Z$, then the composite $\pi \circ \overline{\mathrm{gr}} : \mathcal{X}^{\mathrm{rig}} \to Z$ is a good moduli space. Assuming further that $\mathcal{X}$ is locally Noetherian, if $\mathcal{L}$ is an invertible sheaf on $\mathcal{X}^{\mathrm{rig}}$, then there are open substacks $(\mathcal{X}^{\mathrm{rig}})^s(\mathcal{L}) \subset (\mathcal{X}^{\mathrm{rig}})^{ss}(\mathcal{L})$ of $\mathcal{X}^{\mathrm{rig}}$ such that $(\mathcal{X}^{\mathrm{rig}})^{ss}(\mathcal{L})$ admits a good moduli space of the form
        $$ \pi_{\mathcal{L}} : (\mathcal{X}^{\mathrm{rig}})^{ss}(\mathcal{L}) \to X(\mathcal{L}) := \underline{\mathrm{Proj}}_Z \bigoplus_{m \geq 0} (\pi \circ \overline{\mathrm{gr}})_{\ast}(\mathcal{L}^m) = \underline{\mathrm{Proj}}_Z \bigoplus_{m \geq 0} (\pi \circ \mathrm{gr})_{\ast}(\rho^{\ast}\mathcal{L}^m), $$
        and such that the restriction of $\pi_{\mathcal{L}}$ to $(\mathcal{X}^{\mathrm{rig}})^{s}(\mathcal{L})$ is a tame moduli space onto its image. Moreover, for any algebraically closed field $k$, the following holds:
            \begin{enumerate}
                \item[(i)] For any $k$-point $\overline{x} \in \mathcal{X}^{\mathrm{rig}}(k)$, $ \overline{x} \in (\mathcal{X}^{\mathrm{rig}})^{(s)s}(\mathcal{L})(k)$ if and only if $\overline{x}$ is $\Theta_k$-(semi)stable with respect to $\mathcal{L}$.

                \item[(ii)] If $\overline{x} \in (\mathcal{X}^{\mathrm{rig}})^s(\mathcal{L})(k)$ then $\dim \mathrm{Aut}_{\mathcal{X}^{\mathrm{rig}}}(\overline{x}) = 0$.

                \item[(iii)] $\rho^{-1}((\mathcal{X}^{\mathrm{rig}})^{(s)s}(\mathcal{L}))(k)$ is given by the set of $k$-points $x \in \mathcal{X}(k)$ which are $\Theta_k$-(semi)stable with respect to $\rho^{\ast} \mathcal{L}$.
            \end{enumerate}
            In particular, there is a well-defined morphism of algebraic stacks $\mathcal{X}^{ss}(\rho^{\ast}\mathcal{L}) := \rho^{-1}((\mathcal{X}^{\mathrm{rig}})^{ss}(\mathcal{L})) \to X(\mathcal{L})$, which factors as a gerbe $\rho : \mathcal{X}^{ss}(\rho^{\ast} \mathcal{L}) \to (\mathcal{X}^{\mathrm{rig}})^{ss}(\mathcal{L})$ followed by a good moduli space $(\mathcal{X}^{\mathrm{rig}})^{ss}(\mathcal{L}) \to X(\mathcal{L})$; the restriction of this morphism to $\mathcal{X}^{s}(\rho^{\ast}\mathcal{L}) := \rho^{-1}((\mathcal{X}^{\mathrm{rig}})^{s}(\mathcal{L}))$ is a coarse moduli space onto its image. The $k$-points of $\mathcal{X}^{(s)s}(\rho^{\ast}\mathcal{L})$ are given by the $k$-points of $\mathcal{X}$ which are $\Theta_k$-(semi)stable with respect to $\rho^{\ast} \mathcal{L}$.
    \end{enumerate}
\end{teo}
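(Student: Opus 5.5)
The plan is to prove part (1) first, then deduce part (2) from relative reductive GIT over the good moduli space $Z$.

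\textbf{Part (1).} Since $\mathcal{I}_{\mathrm{gr}} \to \mathcal{X}$ is a flat, finitely presented subgroup of the inertia $\mathcal{I}_{\mathcal{X}}$, the rigidification $\mathcal{X} \fatslash \mathcal{I}_{\mathrm{gr}}$ exists and $\rho$ is a gerbe. This follows from the general rigidification theorem of Abramovich--Olsson--Vistoli/Romagnoli, in the form for flat finitely presented normal subgroups of inertia. Because $\mathrm{gr}$ kills $\mathcal{I}_{\mathrm{gr}}$ by definition, it factors as $\overline{\mathrm{gr}}\circ\rho$.

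To equip $\mathcal{X}^{\mathrm{rig}}$ with a weak $\Theta_B$-action, I would use the universal property of rigidification. The action map $\Theta_B \times \mathcal{X} \to \mathcal{X} \to \mathcal{X}^{\mathrm{rig}}$ kills the pullback of $\mathcal{I}_{\mathrm{gr}}$, since the action is compatible with $\mathrm{gr}$ and $\Theta_B \to B\mathbb{G}_{m}$. It therefore descends to $\Theta_B \times \mathcal{X}^{\mathrm{rig}}$, and the coherence data descend the same way. The center is unchanged because $\sigma$ composed with $\rho$ gives the fixed locus. Note that $\mathcal{I}_{\mathrm{gr}}$ restricted along $\sigma$ is trivial: it is the kernel of $\mathrm{gr}\circ\sigma=\mathrm{id}$.

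For affineness of $\overline{\mathrm{gr}}$, the relative inertia of $\overline{\mathrm{gr}}$ is trivial, so $\overline{\mathrm{gr}}$ is representable by algebraic spaces. Then work locally. After base change along a smooth cover of $\mathcal{Z}$, I would reduce via the results of \cite{Modin} to the statement that a representable morphism with a contracting $\mathbb{G}_m$-action onto a section is affine, as in Białynicki-Birula/Drinfeld attractors. Concretely, I would invoke \cite{Modin}'s result that $\mathrm{Filt}$-type retractions become affine after rigidification. Finite presentation follows since $\mathcal{X}$ is locally of finite presentation and $\mathcal{Z}$ is too.

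\textbf{Part (2).} Since $\overline{\mathrm{gr}}$ is affine, $\pi\circ\overline{\mathrm{gr}}$ is the composition of an affine morphism with a good moduli space. However, affine pushforward into $\mathcal{Z}$ is not automatically exact after composing with $\pi$, because $\overline{\mathrm{gr}}_\ast\mathcal{O}$ need not be coherent. The key claim is that quasi-coherent sheaves on $\mathcal{X}^{\mathrm{rig}}$ are $\mathbb{Z}_{\ge0}$-graded $\mathcal{O}_{\mathcal{Z}}$-algebra modules via the $\Theta$-action. $\pi_\ast$ is exact on $\mathcal{Z}$, so it is exact on all quasi-coherent $\mathcal{O}_{\mathcal{Z}}$-modules, and $\overline{\mathrm{gr}}_\ast$ is exact by affineness. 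Hence $\pi\circ\overline{\mathrm{gr}}$ is cohomologically affine.

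For the identity $\mathcal{O}_Z \cong (\pi\overline{\mathrm{gr}})_\ast\mathcal{O}$, I would use the weight decomposition of $\overline{\mathrm{gr}}_\ast\mathcal{O}$ under the $B\mathbb{G}_m$-action on $\mathcal{Z}$. Only weight $0$ survives $\pi_\ast$, since the $\mathbb{G}_m$ lies in the stabilizers of $\mathcal{Z}$ and $\pi_\ast$ takes invariants. That weight-$0$ piece is $\mathcal{O}_{\mathcal{Z}}$, because $\sigma$ is a section and the $\Theta$-contraction forces nonnegative weights with degree zero part $\mathcal{O}_{\mathcal{Z}}$. This is the step I expect to be the main obstacle.

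With this in hand, $\mathcal{X}^{\mathrm{rig}} = [\underline{\mathrm{Spec}}_{\mathcal{Z}}\mathcal{A}]$ is affine over a stack with a good moduli space. Apply relative Proj GIT for good moduli spaces (Alper's relative GIT) to $\mathcal{L}$, giving $(\mathcal{X}^{\mathrm{rig}})^{ss}(\mathcal{L})$ with good moduli space $\underline{\mathrm{Proj}}_Z\bigoplus(\pi\overline{\mathrm{gr}})_\ast\mathcal{L}^m$. The second formula follows from $\rho_\ast\rho^\ast\mathcal{L}^m=\mathcal{L}^m$, since $\rho$ is a gerbe and $\rho^\ast\mathcal{L}$ has trivial $\mathcal{I}_{\mathrm{gr}}$-weight. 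The stable locus is the locus with finite stabilizers and closed orbits, which yields the tame moduli space and (ii).

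For (i), I would use the Hilbert--Mumford-type criterion of Halpern-Leistner: $\Theta$-semistability coincides with GIT semistability for a stack affine over one with a good moduli space, after reducing to the fibre over a point of $Z$. For (iii), $\Theta_k$-maps to $\mathcal{X}$ and to $\mathcal{X}^{\mathrm{rig}}$ correspond up to gerbe twisting, and the weights of $\rho^\ast\mathcal{L}$ and $\mathcal{L}$ agree. The final assertions follow by composing the gerbe $\rho$ with the good moduli space; the coarse statement holds because a gerbe over a tame moduli space with finite stabilizers has the same coarse space.
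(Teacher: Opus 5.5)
Your outline of (1) and of the first half of (2) follows the paper: AOV rigidification, descent of the action by the universal property, affineness via the Drinfeld--Richarz attractor theorem, a weight argument for $\mathcal{O}_Z\cong(\pi\circ\overline{\mathrm{gr}})_\ast\mathcal{O}_{\mathcal{X}^{\mathrm{rig}}}$, saturated $\mathrm{Proj}$, and a fibrewise Hilbert--Mumford criterion. However, (2)(iii) has a genuine gap.

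Equality of weights only shows that $k$-points of $\rho^{-1}((\mathcal{X}^{\mathrm{rig}})^{(s)s}(\mathcal{L}))$ are $\Theta_k$-(semi)stable for $\rho^\ast\mathcal{L}$. For the converse you need every non-trivial filtration $f\colon\Theta_k\to\mathcal{X}^{\mathrm{rig}}$ with $\mathrm{ev}_1(f)\cong\rho(x)$ to lift to a filtration of $x$. ``Correspond up to gerbe twisting'' does not give this, because lifting is not a formal property of gerbes. A lift is a section of the pulled-back gerbe over $\Theta_k$. Already over $B\mathbb{G}_{m,k}$ it amounts to lifting a cocharacter along the surjection of automorphism groups, and this fails, e.g.\ for the $\mu_n$-gerbe $B\mathbb{G}_m\to B\mathbb{G}_m$ induced by $t\mapsto t^n$.

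The missing idea is the $\Theta$-surjectivity of $\rho$, which the paper proves in four steps:
\begin{enumerate}
\item At a point of the center, $\mathrm{Aut}_{\mathcal{X}}(\sigma(z))\to\mathrm{Aut}_{\mathcal{X}^{\mathrm{rig}}}(\overline{\sigma}(z))$ is split by $\sigma_\ast\circ\overline{\mathrm{gr}}_\ast$. Hence graded points based at $\overline{\sigma}(\mathcal{Z})$ lift.
\item $\mathrm{Grad}(\rho)$ is smooth, so it has open image. Any graded point $g$ of $\mathcal{X}^{\mathrm{rig}}$ $\Theta$-specializes to $0\cdot g$, which is based at $\overline{\sigma}(\mathcal{Z})$. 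So $\mathrm{Grad}(\rho)$ is surjective.
\item Halpern-Leistner's Lemma 4.4.4 then gives surjectivity of $\mathrm{Filt}(\rho)$.
\item A $k$-point in the fibre over $f$ gives the lift, since $x$ is the unique $k$-point over $\rho(x)$.
\end{enumerate}

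Relatedly, your remark that $\mathcal{I}_{\mathrm{gr}}$ is trivial along $\sigma$ is false. The identity $\mathrm{gr}_\ast\circ\sigma_\ast=\mathrm{id}$ only says that $\mathrm{gr}_\ast$ is split surjective at $\sigma(z)$. For $\mathcal{X}=BH$, $\mathcal{Z}=BL$, the fibre of $\mathcal{I}_{\mathrm{gr}}$ at $\sigma(\mathrm{pt})$ is all of $U$. The correct statement, $\mathrm{Aut}_{\mathcal{X}}(\sigma(z))=\mathcal{I}_{\mathrm{gr},\sigma(z)}\rtimes\mathrm{Aut}_{\mathcal{X}^{\mathrm{rig}}}(\overline{\sigma}(z))$, is precisely the input needed for step 1 of the lifting argument.

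Three smaller points:
\begin{itemize}
\item Cohomological affineness of $\pi\circ\overline{\mathrm{gr}}$ is automatic, since affine morphisms are cohomologically affine; coherence plays no role.
\item The step you flag as the main obstacle requires identifying two gradings on $\overline{\mathrm{gr}}_\ast\mathcal{O}_{\mathcal{X}^{\mathrm{rig}}}$: the one from $\lambda^{\mathrm{can}}\colon\mathbb{G}_{m,\mathcal{Z}}\to\mathcal{I}_{\mathcal{Z}}$ and the one from the contracting $\mathbb{G}_m$-action. This is the lemma on the internal $\mathbb{G}_m$-action (from Modin). The paper applies it after neutralizing $\mathcal{Z}\to\mathcal{Z}^{\mathrm{rig}}$, and treats separately the $\mu_q$-kernel and the components where $\lambda^{\mathrm{can}}$ is trivial.
\item Your treatment of (i) covers only semistability. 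The stable case needs the GIT characterization of properly stable points; the paper uses Mumford's Amplification 1.11 together with the relative Hilbert--Mumford criterion.
\end{itemize}
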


\begin{rmk}
    In general, the morphism $\rho \circ \pi_{\mathcal{L}} : \mathcal{X}^{ss}(\rho^{\ast}\mathcal{L}) \to X(\mathcal{L})$ is neither a good moduli space nor an adequate moduli space, since the automorphism groups of closed points of $\mathcal{X}^{ss}(\rho^{\ast}\mathcal{L})$ can have non-trivial unipotent radicals. 
\end{rmk}

\begin{rmk}
    If $B$ is an excellent scheme (for instance, a scheme of finite type over a field or over $\Z$), if $\mathcal{X}$ is of finite type over $B$ and if the good moduli space $Z$ is a scheme, then the $\mathcal{O}_Z$-algebras $\bigoplus_{m \geq 0} (\pi \circ \overline{\mathrm{gr}})_{\ast}(\mathcal{L}^m) \cong \bigoplus_{m \geq 0} (\pi \circ \mathrm{gr})_{\ast}(\rho^{\ast}\mathcal{L}^m)$ are of finite type and the morphism $X(\mathcal{L}) \to Z$ is (Zariski) locally projective (see Lemma \ref{lem:when-moduli-space-is-projective}).
\end{rmk}

\subsubsection{Existence of Non-Reductive GIT Quotients}

Take $B = \Spec k$ to be the spectrum of a field. Suppose $X$ is a separated $k$-scheme of finite type which is acted on by $H = U \rtimes L$, and suppose $\lambda : \mathbb{G}_{m,k} \to Z(L)$ is a fixed cocharacter which simultaneously grades $U \subset H$ and contracts $X$ onto $Z = X^{\lambda,0}$. By taking $\mathcal{X} = [X/H]$ with the weak $\Theta_k$-action induced by $\lambda$, we recover as a special case of Theorem \ref{thm:main-theorem} the existence of non-reductive GIT quotients for the action of the internally graded group $H$ on $X$, as formulated in \cite[Theorem 1.2]{HHJRelativeNRGIT} in the case where $k$ is algebraically closed and of characteristic $0$ (which, by \cite[Theorem 6.10]{HHJRelativeNRGIT}, generalizes the existence results of \cite{BDHK-Projective-Completions, BDHK-git-graded-unipotent}); compare as well with \cite[Corollary 5.3 and Theorem 5.5]{Modin}.

\begin{teo}[Existence of Non-Reductive GIT Quotients for Internally Graded Groups] \label{thm:nrgit-intro}
Let $k$ be a field. Let $X$ be a separated $k$-scheme locally of finite type, acted on by a smooth linear algebraic group $H$ over $k$ with a split maximal torus. Assume that $H$ admits a cocharacter $\lambda : \mathbb{G}_{m,k} \to H$ for which $H = P_H(\lambda)$ coincides with its own attractor locus for the conjugation action of $\lambda(\mathbb{G}_{m,k})$ on $H$. Let $L = L_H(\lambda)$ be the centralizer of $\lambda(\mathbb{G}_{m,k})$ in $H$ and let $U = U_H(\lambda)$ be the kernel of the natural homomorphism $H \to L$ induced by $\lambda$, so that $H = U \rtimes L$ and $\lambda$ factors through the center $Z(L) \subset H$. Assume further that the action of $\mathbb{G}_{m,k}$ on $X$ via $\lambda$ extends to an action of the monoid $\mathbb{A}_k^1$, so that $X = X^{\lambda,+}$, and that the $U$-stabilizer group scheme $\mathrm{Stab}_{U,X} \to X$ is flat. Let $Z := X^{\lambda, 0}$ be the $\lambda(\mathbb{G}_{m,k})$-fixed locus of $X$.

Then the following statements hold:

\begin{enumerate}
    \item There exists a geometric good $U$-quotient $X \to X/U$, and $X/U$ is affine over the closed subscheme $Z$ of $X$. If the action of $U$ on $X$ is free then $X \to X/U$ is a principal $U$-bundle.

    \item Assume further that under the induced action of $L$ on $Z$, the quotient stack $[Z/L]$ admits a good moduli space $\pi : [Z/L] \to Z/\!\!/L$. Then the quotient stack $[(X/U)/L] = [X/H] \!\! \fatslash \ \mathcal{I}_{[X/H]/[Z/L]}$ admits a good moduli space, given by the composite $[(X/U)/L] \to [Z/L] \to Z /\!\!/ L$. If $\chi$ is a character of $L$, then there are $L$-invariant open subschemes $(X/U)^{\chi-s} \subset (X/U)^{\chi-ss}$ of $X/U$ such that $[(X/U)^{\chi-ss}/L]$ admits a good moduli space of the form
    $$ \pi_{\chi} : [(X/U)^{\chi-ss}/L] \to X /\!\!/_{\chi} H := \underline{\mathrm{Proj}}_{Z /\!\!/L} \bigoplus_{m \geq 0} (\pi \circ \mathrm{gr})_{\ast} (\mathcal{O}_{[X/H]}(\chi^m)), $$
    and such that the restriction of $\pi_{\chi}$ to $[(X/U)^{\chi-s}/L]$ is a tame moduli space onto its image; if $X$ is a $k$-scheme of finite type and if $Z /\!\!/ L$ is a scheme then the natural morphism $X /\!\!/_{\chi} H \to Z /\!\!/ L$ is projective. Moreover, for any algebraically closed field $K \supset k$ the following statements hold:
    \begin{enumerate}
        \item[(i)] Let $\overline{x} \in (X/U)(K)$ be a $K$-point of $X/U$. Let $L_{\overline{x}} \subset L_K$ be the stabilizer for the unique polystable orbit contained in the $L_K$-orbit closure of $\overline{x}$ in $(X/U)_K$. Then $\overline{x} \in (X/U)^{\chi-(s)s}(K)$ if and only if for all cocharacters $\eta : \mathbb{G}_{m,K} \to L_{\overline{x}}$ for which the limit $\lim_{t \to 0} \eta(t) \cdot x$ exists in $(X/U)_K$, $\langle \chi, \eta \rangle > (\geq) \ 0$.

        \item[(ii)] Let $X^{\chi-(s)s}$ denote the preimage of $(X/U)^{\chi-(s)s}$ in $X$. Then, for $x \in X(K)$ lying over $\overline{x} \in (X/U)(K)$, we have that $x \in X^{\chi-(s)s}(K)$ if and only if for all cocharacters $\eta : \mathbb{G}_{m,K} \to U_K \rtimes L_{\overline{x}}$ for which the limit $\lim_{t \to 0} \eta(t) \cdot x$ exists in $X_K$, $\langle \chi, \eta \rangle > (\geq) \ 0$.
    \end{enumerate}
    In particular, the composite of the geometric good $U$-quotient $X^{\chi-ss} \to (X/U)^{\chi-ss}$ with the induced good $L$-quotient $(X/U)^{\chi-ss} \to X /\!\!/_{\chi} H$ is a good $H$-quotient $X^{\chi-ss} \to X /\!\!/_{\chi} H$, and the restriction of this morphism to $X^{\chi-s}$ is a geometric good $H$-quotient onto its image.
\end{enumerate}

\end{teo}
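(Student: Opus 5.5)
The plan is to deduce Theorem \ref{thm:nrgit-intro} from Theorem \ref{thm:main-theorem}, applied to $B = \Spec k$ and $\mathcal{X} = [X/H]$ with the weak $\Theta_k$-action induced by $\lambda$. The first task is to check the hypotheses of Theorem \ref{thm:main-theorem}. Since $X$ is separated and locally of finite type and $H$ is affine, $[X/H]$ is locally of finite presentation over $k$ and has affine diagonal. The hypothesis $X = X^{\lambda,+}$ together with $H = P_H(\lambda)$ gives the weak $\Theta_k$-action, with action map induced by $\mathbb{A}^1 \times X \to X$, $(t,x) \mapsto \lambda(t)\cdot x$, which is compatible with conjugation by $\lambda$ on $H$. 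Its center is $[Z/L]$, $\mathrm{gr}$ is induced by the retraction $X \to Z$ and the projection $H \to L$, and $\sigma$ by $Z \hookrightarrow X$.

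The key identification is that the relative inertia $\mathcal{I}_{\mathrm{gr}}$ pulled back along $X \to [X/H]$ is $\mathrm{Stab}_{H,X} \times_{H} U = \mathrm{Stab}_{U,X}$. Indeed, $\mathrm{gr}$ on automorphism groups is $H \to L$, and this map is injective on the $L$-part because $X \to Z$ is $L$-equivariant and $\mathrm{Stab}_L(x) \subset \mathrm{Stab}_L(\mathrm{gr}(x))$. Since $\mathrm{Stab}_{U,X} \to X$ is flat and of finite presentation ($U$ is of finite type), the inertia hypothesis holds.

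Part (1) then follows from Theorem \ref{thm:main-theorem}(1). The rigidification $[X/H] \fatslash \mathcal{I}_{\mathrm{gr}}$ is affine of finite presentation over $[Z/L]$. Pull back along the $L$-torsor $Z \to [Z/L]$, using that $[X/H] \times_{[Z/L]} Z \simeq [X/U]$, which holds because $H/U = L$. This shows that $[X/U] \fatslash \mathrm{Stab}_{U,X}$ is an algebraic space affine over $Z$; call it $X/U$. Because its points have trivial automorphism groups it is a space, and since it is affine over the scheme $Z$ it is a scheme. The map $X \to X/U$ is then a geometric good quotient: a gerbe composed with the rigidification is universally submersive with orbits as fibres, and $\mathcal{O}$-invariants are recovered because $\mathcal{O}_{X/U} = (\cdot)_*\mathcal{O}_{[X/U]}$, a property of rigidification. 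If the action is free, $\mathrm{Stab}$ is trivial, so $X/U = [X/U]$ and $X \to X/U$ is a $U$-torsor. The $L$-action descends, and $[(X/U)/L] = [X/H]^{\mathrm{rig}}$.

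For part (2), take $\mathcal{L} = \mathcal{O}(\chi)$ on $[(X/U)/L]$; it pulls back to $\mathcal{O}_{[X/H]}(\chi)$ under $\rho$. Theorem \ref{thm:main-theorem}(2) gives the good moduli space and the $\underline{\mathrm{Proj}}$ description, and the semistable and stable loci are $L$-invariant opens of $X/U$. Projectivity in the finite type case comes from the subsequent remark and Lemma \ref{lem:when-moduli-space-is-projective}. The statement about quotients at the end follows from composing with the geometric quotient of part (1) and taking $H$-invariants.

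The main obstacle is translating the $\Theta$-(semi)stability conditions of Theorem \ref{thm:main-theorem}(2)(i),(iii) into the cocharacter criteria (i) and (ii). For a quotient stack $[Y/G]$ over an algebraically closed field $K$, maps $f: \Theta_K \to [Y/G]$ with $f(1)=y$ correspond to cocharacters $\eta$ of $G$ for which $\lim \eta(t) y$ exists, up to conjugation by $P_G(\eta)$. The weight of $\mathcal{O}(\chi)$ at $f(0)$ is $\langle \chi, \eta\rangle$ up to a sign convention that must be fixed. This gives (ii) with $G = H$, but a priori using all cocharacters of $H$ rather than those of $U_K \rtimes L_{\overline{x}}$.

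The restriction to $L_{\overline{x}}$ needs a reduction step, for which I would use the good moduli space structure. Since $[(X/U)/L]$ has a good moduli space, $\Theta$-semistability can be tested at the polystable point of the orbit closure, by a Hilbert--Mumford/Kempf argument in the affine-over-$Z$ setting as in \cite{HHJRelativeNRGIT}. Destabilizing cocharacters that are not in the stabilizer of the closed orbit can be handled by the GIT convexity over $Z/\!\!/L$, or I would cite the corresponding step of \cite{HHJRelativeNRGIT}. Cocharacters of $H$ can then be conjugated into $U \rtimes L_{\overline{x}}$, since every cocharacter of $H$ is $U$-conjugate into $L$. I expect this matching of test cocharacters, and getting the sign conventions right, to be the delicate part.
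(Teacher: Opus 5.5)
Your verification of the hypotheses, part (1), and the existence and $\underline{\mathrm{Proj}}$ statements of part (2) follow the paper's route: base change of $\overline{\mathrm{gr}}$ along $Z \to [Z/L]$, identification of $\mathcal{I}_{\mathrm{gr}}$ over $X$ with $\mathrm{Stab}_{U,X}$, then Theorem \ref{thm:main-theorem}. The one omission there is that affineness of $X \to X/U$ must be deduced from $X$ and $X/U$ both being affine over $Z$.

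The genuine gap is the Hilbert--Mumford part (i)--(ii). Theorem \ref{thm:main-theorem} does not give it, and the mechanisms you suggest do not work.

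For (i), Theorem \ref{thm:main-theorem}(2)(i) only tests against all cocharacters of $L_K$. ``Testing at the polystable point'' is false if taken literally. The polystable point $\overline{x}_0$ is fixed by $L_{\overline{x}}$, so it is $\chi$-semistable only when $\langle \chi, \eta \rangle = 0$ for every cocharacter $\eta$ of $L_{\overline{x}}$, which is unrelated to the semistability of $\overline{x}$. ``GIT convexity over $Z/\!\!/L$'' supplies no argument, and an appeal to \cite{HHJRelativeNRGIT} (algebraically closed, characteristic $0$) does not cover the stated generality.

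The missing idea is to pass to the fibre:
\begin{itemize}
\item Every filtration $\Theta_K \to \mathcal{X}^{\mathrm{rig}}$ factors through the fibre $\mathcal{X}^{\mathrm{rig}}_{\overline{z}}$ of the good moduli space over $\overline{z}$, since $\Theta_K \to \Spec K$ is universal for maps to algebraic spaces.
\item The semistable locus is compatible with this base change (Lemma \ref{lem:stability_is_fiberwise}).
\item Since $\mathcal{X}^{\mathrm{rig}}_{\overline{z}}$ has good moduli space $\Spec K$, Theorem \ref{thm:gms-etale-local-structure} gives a \emph{global} isomorphism $\mathcal{X}^{\mathrm{rig}}_{\overline{z}} \cong [\Spec A/L_{\overline{x}}]$ compatible with $L_{\overline{x}} \subset L_K$.
\end{itemize}
The Hilbert--Mumford criterion for the linearly reductive group $L_{\overline{x}}$ acting on the affine scheme $\Spec A$ (Proposition \ref{prop:ghh_hilbert_mumford}) then yields (i), in any characteristic.

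For (ii), the sentence ``cocharacters of $H$ can be conjugated into $U \rtimes L_{\overline{x}}$, since every cocharacter of $H$ is $U$-conjugate into $L$'' is a non sequitur. $U$-conjugation lands in $L$, not in $L_{\overline{x}}$. It only handles the easy direction: an $H$-cocharacter destabilizing $x$ yields an $L$-cocharacter destabilizing $\overline{x}$.

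The hard direction is a lifting statement. Given $\eta$ in $L_{\overline{x}}$ with $\lim_{t \to 0} \eta(t) \cdot \overline{x}$ existing in $(X/U)_K$ and $\langle \chi, \eta \rangle < 0$, you need a cocharacter of $U_K \rtimes L_{\overline{x}}$ with the same pairing whose limit at $x$ exists in $X_K$. This requires the $\Theta$-surjectivity of $\rho$ (Proposition \ref{prop:lifting_filts_along_gerbe}). You use it only implicitly, through Theorem \ref{thm:main-theorem}(2)(iii), and not for this reduction.

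Moreover, lifting over all of $\mathcal{X}^{\mathrm{rig}}$ only returns a cocharacter of $H$ whose image in $L$ is a $P_L(\eta)$-conjugate of $\eta$, so it still has to be corrected. The paper avoids this by base changing $\rho$ to $\rho^{-1}(\mathcal{X}^{\mathrm{rig}}_{\overline{z}}) \cong [X_A/U_K \rtimes L_{\overline{x}}]$, with $X_A = X_K \times_{(X/U)_K} \Spec A$. It then applies the lifting property there, combined with the criterion from (i) on $[\Spec A/L_{\overline{x}}]$.
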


\begin{rmk}
    If the characteristic of $k$ is $0$ and if $X$ is reduced, then the flatness of $\mathrm{Stab}_{U,X} \to X$ is equivalent to the condition that the map $x \mapsto \dim \mathrm{Stab}_U(x)$ is locally constant on $|X|$ (see Remark \ref{rmk:generalizing-constancy-unipotent-stabilizers}).
\end{rmk}

\begin{rmk}
    Denote by $\psi$ the morphism $X \to Z /\!\!/ L$. Then the algebraic space $X /\!\!/_{\chi} H$ coincides with the relative projective spectrum (over $Z /\!\!/ L$) of the algebra of $\chi$-twisted $H$-semi-invariants for the $\mathcal{O}_{Z/\!\!/L}$-algebra $\psi_{\ast} \mathcal{O}_X$, where $\chi$ is viewed as a character of $H$ via composition with the quotient $H \to L$ (see Remark \ref{rmk:H-quotient-is-spectrum-of-semiinvariants}).
\end{rmk}

By modifying the proof of Theorem \ref{thm:nrgit-intro}, we also recover the existence of non-reductive GIT quotients in the case where the group $H = U \rtimes L$ instead admits an \emph{external} grading (arising from a cocharacter $\lambda : \mathbb{G}_{m,k} \to \mathrm{Aut}(H)$) for which $X$ is an attractor locus for this external $\mathbb{G}_{m,k}$; compare this with \cite[Theorem 1.3]{HHJRelativeNRGIT}.

\begin{teo}[Existence of Non-Reductive GIT Quotients for Externally Graded Groups] \label{thm:nrgit-external-intro}
    Let $k$ be a field. Let $X$ be a separated $k$-scheme locally of finite type, acted on by a smooth linear algebraic group $H = U \rtimes L$ over $k$ with a split maximal torus. Assume that there exists a cocharacter $\lambda : \mathbb{G}_{m,k} \to \mathrm{Aut}(H)$ such that $\lambda(\mathbb{G}_{m,k})$ commutes with $L$, and such that for the group $\widehat{H} := H \rtimes_{\lambda} \mathbb{G}_{m,k}$, we have $P_{\widehat{H}}(\widehat{\lambda}) = \widehat{H}$, $U_{\widehat{H}}(\widehat{\lambda}) = U$ and $L_{\widehat{H}}(\widehat{\lambda}) = L \rtimes_{\lambda} \mathbb{G}_{m,k} \cong L \times \mathbb{G}_{m,k}$, where $\widehat{\lambda} : \mathbb{G}_{m,k} \to \widehat{H}$ corresponds to the external $\mathbb{G}_{m,k}$. Assume further that the action of $H$ on $X$ extends to an action of $\widehat{H}$, that the action of the external $\mathbb{G}_{m,k} \subset \widehat{H}$ extends to an action of the monoid $\mathbb{A}_k^1$, and that the $U$-stabilizer group scheme $\mathrm{Stab}_{U,X} \to X$ is flat. Let $Z := X^{\widehat{\lambda},0}$ be the fixed locus for the action of the external $\mathbb{G}_{m,k} \subset \widehat{H}$.

    Then the following statements hold:

    \begin{enumerate}
        \item There exists a geometric good $U$-quotient $X \to X/U$, and $X/U$ is affine over the closed subscheme $Z$ of $X$. If the action of $U$ on $X$ is free then $X \to X/U$ is a principal $U$-bundle.

        \item Assume further that under the induced action of $L$ on $Z$, the quotient stack $[Z/L]$ admits a good moduli space $\pi : [Z/L] \to Z/\!\!/L$. Then the quotient stack $[(X/U)/L] = [X/H] \!\! \fatslash \ \mathcal{I}_{[X/H]/[Z/L]}$ admits a good moduli space $\psi : [(X/U)/L] \to X /\!\!/ H$, and $X /\!\!/ H$ is affine over $Z /\!\!/ L$. If $\chi$ is a character of $L$, then there are $L$-invariant open subschemes $(X/U)^{\chi-s} \subset (X/U)^{\chi-ss}$ of $X/U$ such that $[(X/U)^{\chi-ss}/L]$ admits a good moduli space of the form
        $$ \pi_{\chi} : [(X/U)^{\chi-ss}/L] \to X /\!\!/_{\chi} H := \underline{\mathrm{Proj}}_{X /\!\!/H} \bigoplus_{m \geq 0} (\psi \circ \rho)_{\ast} (\mathcal{O}_{[X/H]}(\chi^m)), $$
        and such that the restriction of $\pi_{\chi}$ to $[(X/U)^{\chi-s}/L]$ is a tame moduli space onto its image; if $X$ is a $k$-scheme of finite type and if $Z /\!\!/ L$ is a scheme then the natural morphism $X /\!\!/_{\chi} H \to X /\!\!/ H$ is projective. Moreover, for any algebraically closed field $K \supset k$ the following statements hold:
        \begin{enumerate}
            \item[(i)] Let $\overline{x} \in (X/U)(K)$ be a $K$-point of $X/U$. Let $L_{\overline{x}} \subset L_K$ be the stabilizer for the unique polystable orbit contained in the $L_K$-orbit closure of $\overline{x}$ in $(X/U)_K$. Then $\overline{x} \in (X/U)^{\chi-(s)s}(K)$ if and only if for all cocharacters $\eta : \mathbb{G}_{m,K} \to L_{\overline{x}}$ for which the limit $\lim_{t \to 0} \eta(t) \cdot x$ exists in $(X/U)_K$, $\langle \chi, \eta \rangle > (\geq) \ 0$.

            \item[(ii)] Let $X^{\chi-(s)s}$ denote the preimage of $(X/U)^{\chi-(s)s}$ in $X$. Then, for $x \in X(K)$ lying over $\overline{x} \in (X/U)(K)$, we have that $x \in X^{\chi-(s)s}(K)$ if and only if for all cocharacters $\eta : \mathbb{G}_{m,K} \to U_K \rtimes L_{\overline{x}}$ for which the limit $\lim_{t \to 0} \eta(t) \cdot x$ exists in $X_K$, $\langle \chi, \eta \rangle > (\geq) \ 0$.
        \end{enumerate}

        In particular, the composite of the geometric good $U$-quotient $X^{\chi-ss} \to (X/U)^{\chi-ss}$ with the induced good $L$-quotient $(X/U)^{\chi-ss} \to X /\!\!/_{\chi} H$ is a good $H$-quotient $X^{\chi-ss} \to X /\!\!/_{\chi} H$, and the restriction of this morphism to $X^{\chi-s}$ is a geometric good $H$-quotient onto its image.
    \end{enumerate}
\end{teo}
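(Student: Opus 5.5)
The plan is to deduce Theorem \ref{thm:nrgit-external-intro} from Theorem \ref{thm:main-theorem}, in the same way as Theorem \ref{thm:nrgit-intro}. The difference is that the contracting $\mathbb{G}_{m,k}$ is now external, so $[X/H]$ itself does not directly carry the weak $\Theta_k$-action.

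\textbf{Step 1: pass to $\widehat{H}$ and apply the internally graded case.} Consider the stack $\widehat{\mathcal{X}} := [X/\widehat{H}]$. The cocharacter $\widehat{\lambda}$ satisfies $P_{\widehat{H}}(\widehat{\lambda}) = \widehat{H}$ and contracts $X$ onto $Z = X^{\widehat{\lambda},0}$. So $\widehat{\mathcal{X}}$ is in the situation of Theorem \ref{thm:nrgit-intro}, with center $[Z/(L \times \mathbb{G}_{m,k})]$ and relative inertia governed by $\mathrm{Stab}_{U,X}$. That inertia is flat by hypothesis, and it is of finite presentation because $U$ is of finite type.

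Part (1) then follows directly from the corresponding statement of Theorem \ref{thm:nrgit-intro}, which only uses $U = U_{\widehat{H}}(\widehat{\lambda})$. This gives the geometric good $U$-quotient $X \to X/U$, affine over $Z$, and a principal $U$-bundle when the action is free. Alternatively, apply Theorem \ref{thm:main-theorem}(1) to $\widehat{\mathcal{X}}$: the map $\overline{\mathrm{gr}} : [(X/U)/(L\times\mathbb{G}_m)] \to [Z/(L\times \mathbb{G}_m)]$ is affine, and pulling back along $Z \to [Z/(L\times\mathbb{G}_m)]$ gives $X/U \to Z$ affine.

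\textbf{Step 2: remove the external $\mathbb{G}_{m,k}$.} The stack $[(X/U)/L]$ is a $\mathbb{G}_{m,k}$-torsor over $[(X/U)/(L\times \mathbb{G}_m)]$. Since $\mathbb{G}_m$ commutes with $L$, the affine morphism $X/U \to Z$ is $L\times\mathbb{G}_m$-equivariant. Hence
$$[(X/U)/L] \to [Z/L]$$
is affine, so $\mathcal{A} := \overline{\mathrm{gr}}_*\mathcal{O}$ is a quasi-coherent algebra on $[Z/L]$. Because $\mathbb{G}_m$ acts trivially on $Z$ and contracts $X/U$, this algebra carries a grading $\mathcal{A} = \bigoplus_{n \ge 0}\mathcal{A}_n$ with $\mathcal{A}_0 = \mathcal{O}_{[Z/L]}$.

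Define $X/\!\!/H := \underline{\mathrm{Spec}}_{Z/\!\!/L}\, \pi_*\mathcal{A}$. Since $\pi$ is a good moduli space and $\overline{\mathrm{gr}}$ is affine, the composite $\psi : [(X/U)/L] \to X/\!\!/H$ is a good moduli space. This uses the standard fact that good moduli spaces are stable under affine morphisms: push forward along $\pi$, which is exact, and use that $\pi_*$ commutes with formation of relative Spec. The point to check here is that $\psi_*$ is exact and that $\mathcal{O} \to \psi_*\mathcal{O}$ is an isomorphism. Both follow from exactness of $\pi_*$ on quasi-coherent $\mathcal{A}$-modules. Note that $X/\!\!/H$ is no longer equal to $Z/\!\!/L$, because without the external $\mathbb{G}_m$ quotient the positive-weight invariants survive. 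This explains why the base of the Proj changes.

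\textbf{Step 3: the $\chi$-quotient and Hilbert--Mumford.} Apply relative reductive GIT for the good moduli space $\psi$ with the line bundle $\mathcal{O}(\chi)$. Equivalently, use the $\Theta$-semistability part of Theorem \ref{thm:main-theorem}(2) applied to the stack $[(X/U)/L]$, viewed with trivial $\Theta$-action so that its center is itself. This yields open loci $(X/U)^{\chi\text{-}s}\subset (X/U)^{\chi\text{-}ss}$ and the good moduli space onto
$$\underline{\mathrm{Proj}}_{X/\!\!/H}\bigoplus_{m\ge 0}(\psi\circ\rho)_*\mathcal{O}(\chi^m),$$
with tame moduli on the stable locus. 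Projectivity over $X/\!\!/H$ in the finite type case follows from Lemma \ref{lem:when-moduli-space-is-projective}.

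Criteria (i) and (ii) are obtained exactly as in Theorem \ref{thm:nrgit-intro}. Criterion (i) comes from $\Theta$-semistability for a quotient stack by $L$ together with a good moduli space: one reduces to cocharacters of the stabilizer of the closed-orbit point via the local structure at closed points. Criterion (ii) lifts along the gerbe $\rho$, whose fibers are $U$-torsors, so that cocharacters of $U_K\rtimes L_{\overline{x}}$ map to cocharacters of $L_{\overline{x}}$ with the same pairing with $\chi$.

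The composite statement about $H$-quotients follows because $X \to X/U$ is a geometric good quotient and compositions of good quotients are good.

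The main obstacle is Step 2: checking that dropping the external $\mathbb{G}_m$ preserves the good moduli space property, which depends on the affineness of $X/U \to Z$ being $L$-equivariant. The Hilbert--Mumford comparison (ii) is the most delicate of the point-level verifications.
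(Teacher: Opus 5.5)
Your route agrees with the paper's through the construction of $\psi$ and $X/\!\!/_{\chi}H$, and through criterion (i). Applying the affine-morphism property of good moduli spaces to $[(X/U)/L]\to[Z/L]$ rather than to the $\mathbb{G}_m$-torsor $[(X/U)/L]\to[(X/U)/\widehat{L}]$ is a legitimate repackaging. So is invoking Theorem \ref{thm:main-theorem}(2) for $[(X/U)/L]$ with the trivial $\Theta$-action, followed by the fibrewise reduction to $L_{\overline{x}}$. Criterion (ii), however, has a genuine gap.

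Since $X^{\chi-(s)s}$ is the preimage of $(X/U)^{\chi-(s)s}$, (ii) amounts to comparing the condition in (i) for $\overline{x}$ with the condition for $x$ and cocharacters of $U_K\rtimes L_{\overline{x}}$. Pushing a cocharacter $\eta$ of $U_K\rtimes L_{\overline{x}}$ down to $L_{\overline{x}}$ preserves existence of limits and leaves $\langle\chi,\eta\rangle$ unchanged. That proves only one implication: (semi)stability of $\overline{x}$ forces the condition on $x$.

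The converse needs the following. For every non-trivial $\overline{\eta}:\mathbb{G}_{m,K}\to L_{\overline{x}}$ with $\lim_{t\to 0}\overline{\eta}(t)\cdot\overline{x}$ existing in $(X/U)_K$, you need a lift $\eta$ to $U_K\rtimes L_{\overline{x}}$ with $\lim_{t\to0}\eta(t)\cdot x$ existing in $X_K$. In other words, filtrations must lift along $\rho:[X/H]\to[(X/U)/L]$.

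This is not formal. The fibres of $X\to X/U$ are $U$-orbits, not $U$-torsors, and $\rho$ is a gerbe with fibres $B\,\mathrm{Stab}_U(x)$. In the paper the lifting is Proposition \ref{prop:lifting_filts_along_gerbe}. Its proof uses a weak $\Theta$-action on $\mathcal{X}$ and $\mathcal{X}^{\mathrm{rig}}$: graded points are specialised into the center, where the automorphism group splits as $\mathcal{I}_{\mathrm{gr}}\rtimes\mathrm{Aut}_{\mathcal{X}^{\mathrm{rig}}}$, and then smoothness of $\mathrm{Filt}(\rho)$ finishes the argument.

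Neither of your two justifications supplies this. Your trivial-action use of Theorem \ref{thm:main-theorem} gives nothing here, since its rigidification is the identity. And ``exactly as in Theorem \ref{thm:nrgit-intro}'' does not apply verbatim, because $[X/H]$ carries no weak $\Theta_k$-action: the contracting $\mathbb{G}_{m,k}$ is not contained in $H$.

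The missing step comes from the Cartesian square in which $\rho$ is the base change of $\widehat{\rho}:[X/\widehat{H}]\to[(X/U)/\widehat{L}]$ along $[(X/U)/L]\to[(X/U)/\widehat{L}]$. The same square shows two things your Step 2 takes for granted. First, $\mathcal{I}_{[X/H]/[Z/L]}$ is flat, being the pullback of $\mathcal{I}_{[X/\widehat{H}]/[Z/\widehat{L}]}$. Second, its rigidification is $[(X/U)/L]$.

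By Proposition \ref{prop:lifting_filts_along_gerbe}, applied to the weak $\Theta_k$-action on $[X/\widehat{H}]$, the gerbe $\widehat{\rho}$ is $\Theta$-surjective. $\Theta$-surjectivity is stable under base change:
\begin{enumerate}
\item push a filtration of $[(X/U)/L]$ to $[(X/U)/\widehat{L}]$;
\item lift it to $[X/\widehat{H}]$;
\item combine the lift with the original filtration, together with the $2$-isomorphism between their images, to get a filtration of the fibre product $[X/H]$.
\end{enumerate}
With this, run the arguments of Propositions \ref{prop:semistable_locus_upstairs} and \ref{prop:nrgit-hm-part-2} on the fibres of $\psi$ over $K$-points of $X/\!\!/H$ to obtain (ii).
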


Taking $X$ to be of finite type over $\Spec k$, we highlight that the primary difference in the conclusions of Theorems \ref{thm:nrgit-intro} and \ref{thm:nrgit-external-intro} is that in Theorem \ref{thm:nrgit-intro} (the internally graded case) the good $L$-quotient of $X/U$ is given by $Z /\!\!/ L$, whence $X /\!\!/_{\chi} H \to Z /\!\!/ L$ is projective, whereas in Theorem \ref{thm:nrgit-external-intro} (the externally graded case) the good $L$-quotient of $X/U$ is an algebraic space $X /\!\!/ H$ which is affine over $Z /\!\!/ L$ (but which need not coincide with $Z /\!\!/ L$), whence $X /\!\!/_{\chi} H \to Z /\!\!/ L$ is projective-over-affine.

\subsubsection{Outline of the Proof of the Main Theorem}

Our approach to proving Theorem \ref{thm:main-theorem} mirrors the two-step procedure used to construct non-reductive GIT quotients. 

First, under the assumption that the relative inertia $\mathcal{I}_{\mathrm{gr}} \to \mathcal{X}$ is flat and of finite presentation, the rigidification $\mathcal{X}^{\mathrm{rig}} := \mathcal{X} \!\!\! \fatslash \mathcal{I}_{\mathrm{gr}}$ is well-defined (Lemma \ref{lem:existence-rigidification}) and the induced morphism $\overline{\mathrm{gr}} : \mathcal{X}^{\mathrm{rig}} \to \mathcal{Z}$ is affine (Theorem \ref{thm:affinity-grbar}); this is essentially the core result of \cite{Modin}, and makes use of the affinity results for $\mathbb{G}_m$-attractor loci in algebraic spaces of Drinfeld \cite{Drinfeld-Gm} and Richarz \cite{Richarz-Gm}. The requirement that $\mathcal{I}_{\mathrm{gr}} \to \mathcal{X}$ is flat and of finite presentation should be thought of as generalizing the requirement in non-reductive GIT for the $U$-stabilizers of $X$ to be sufficiently well-behaved; indeed, in the setting of Theorem \ref{thm:nrgit-intro} when $\mathcal{X} = [X/H]$ and $\mathcal{Z} = [Z/L]$, the relative inertia $\mathcal{I}_{[X/H]/[Z/L]} \to [X/H]$ is flat if and only if the stabilizer group scheme $\mathrm{Stab}_{U,X} \to X$ is flat. Since $\mathcal{I}_{\mathrm{gr}}$ is invariant for the weak $\Theta_B$-action, the descent of the weak $\Theta_B$-action to $\mathcal{X}^{\mathrm{rig}}$ follows from the universal property of rigidification (Proposition \ref{prop:induced-action-on-rigidification}).

The second step of our approach departs from \cite{Modin}, where it is shown that the stack $(\mathcal{X} \setminus \sigma(\mathcal{Z})) \!\!\! \fatslash \mathcal{I}_{\mathrm{gr}}$ is an open substack of $\mathcal{X}^{\mathrm{rig}}$ admitting a relative tame moduli space over the rigidification $\mathcal{Z} \!\!\! \fatslash \mathcal{G}_{\mathcal{Z}}^{\mathrm{can}}$ of $\mathcal{Z}$ by the canonical subgroup stack $\mathcal{G}_{\mathcal{Z}}^{\mathrm{can}}$ (cf. Proposition \ref{prop:rigidified-center}), with $(\mathcal{X} \setminus \sigma(\mathcal{Z})) \!\!\! \fatslash \mathcal{I}_{\mathrm{gr}} \to \mathcal{Z} \!\!\! \fatslash \mathcal{G}_{\mathcal{Z}}^{\mathrm{can}}$ smooth locally projective. Instead, we assume that the center $\mathcal{Z}$ admits a good moduli space $\pi : \mathcal{Z} \to Z$, and show that the composite $\pi \circ \overline{\mathrm{gr}} : \mathcal{X}^{\mathrm{rig}} \to Z$ is also a good moduli space (Proposition \ref{prop:gms-of-Xrig}). Given an invertible sheaf $\mathcal{L}$ on $\mathcal{X}^{\mathrm{rig}}$, this allows us to implement a version of ``variation of good moduli space'' (cf. \cite[Theorem 5.6.1]{Halpern-Leistner} and \cite[Theorem 2.6.3]{IbanezNunez-RefinedHN}) to produce an absolute good moduli space $X(\mathcal{L})$ of the open $\Theta$-semistable locus $(\mathcal{X}^{\mathrm{rig}})^{ss}(\mathcal{L})$ in $\mathcal{X}^{\mathrm{rig}}$ determined by $\mathcal{L}$ (Definition \ref{def:L-semistable-locus}), with $X(\mathcal{L})$ given by the relative projective spectrum over $Z$ corresponding to the graded $\mathcal{O}_Z$-algebra $\bigoplus_{m \geq 0} (\pi \circ \overline{\mathrm{gr}})_{\ast}(\mathcal{L}^m)$. More specifically, we define $(\mathcal{X}^{\mathrm{rig}})^{ss}(\mathcal{L})$ as a saturated projective spectrum, in the sense of \cite{Edidin-Rydh}, so that $(\mathcal{X}^{\mathrm{rig}})^{ss}(\mathcal{L})$ inherits from $\mathcal{X}^{\mathrm{rig}}$ the property of having a good moduli space, with the good moduli space given by the relative projective spectrum $X(\mathcal{L})$ (Proposition \ref{prop:gms-from-line-bundle}). We then show that $(\mathcal{X}^{\mathrm{rig}})^{ss}(\mathcal{L})$ coincides with the $\Theta$-semistable locus determined by $\mathcal{L}$ (Proposition \ref{prop:semistable-loci-are-theta-semistable-loci}), by using the étale local structure theorem for good moduli spaces of Alper--Hall--Rydh \cite{AHR-Luna} to reduce the problem to GIT, then applying the relative Hilbert--Mumford criterion of Gulbrandsen--Halle--Hulek \cite{GHH-Relative-HM} (or alternatively \cite[Theorem 1.1]{HHJRelativeNRGIT}).

If $\rho : \mathcal{X} \to \mathcal{X}^{\mathrm{rig}}$ denotes the rigidification morphism, we then show that $\rho^{-1}((\mathcal{X}^{\mathrm{rig}})^{ss}(\mathcal{L}))$ coincides with the $\Theta$-semistable locus $\mathcal{X}^{ss}(\rho^{\ast} \mathcal{L})$ (Proposition \ref{prop:semistable_locus_upstairs}), by showing that any morphism $\Theta_k \to \mathcal{X}^{\mathrm{rig}}$ (where $k$ is an algebraically closed field) admits a lift to a morphism $\Theta_k \to \mathcal{X}$ (Proposition \ref{prop:lifting_filts_along_gerbe}). Consequently, we obtain an open substack $\mathcal{X}^{ss}(\rho^{\ast} \mathcal{L}) \subset \mathcal{X}$ with an explicit Hilbert--Mumford-type description (i.e. $\mathcal{X}^{ss}(\rho^{\ast} \mathcal{L})$ is characterized in terms of morphisms $\Theta_k \to \mathcal{X}$) and which admits $X(\mathcal{L})$ as a moduli space (more precisely, $\mathcal{X}^{ss}(\rho^{\ast} \mathcal{L}) \to X(\mathcal{L})$ factors as a gerbe followed by a good moduli space).

\subsection{Application to Moduli Spaces of Principal Bundles with Non-Reductive Structure Groups}

The second part of this article serves to illustrate how our main theorem can be applied in practice, by constructing new examples of quasi-projective coarse moduli spaces of principal bundles over a smooth projective curve; the distinguishing feature of these moduli spaces is that the principal bundles being parameterized have non-reductive structure groups. Examples of coarse moduli spaces of such principal bundles where the structure group of the bundles is a parabolic subgroup of $\mathrm{GL}_r$ have previously been constructed by Jackson \cite{Jackson-HNL2}, Qiao \cite{Qiao-HNL2, Qiao-Thesis}, Brambila Paz--Rios Sierra \cite{BPRS-HNL2-Proceedings} and Hoskins--Jackson \cite{Hoskins-Jackson-QIS}; in all of these examples, the resulting moduli spaces are moduli spaces of slope unstable vector bundles of fixed Harder--Narasimhan type.\footnote{The constructions of Jackson, Qiao \cite{Qiao-HNL2} and Hoskins--Jackson are more general in that they consider the problem of constructing coarse moduli spaces of Gieseker unstable torsion-free coherent sheaves on a polarized projective scheme $(X, \mathcal{O}_X(1))$ of fixed Harder--Narasimhan type. When $X$ is a smooth curve then all torsion-free sheaves are locally free, and Gieseker semistability coincides with slope semistability for vector bundles.} The moduli spaces we obtain allow for non-reductive structure groups beyond parabolic subgroups of $\mathrm{GL}_r$ or $\mathrm{SL}_r$.

In principle, the moduli spaces we construct could be constructed using non-reductive GIT, since they are moduli spaces associated to global quotient stacks with weak actions of $\Theta_k$ (the aforementioned constructions of Jackson, Qiao and Hoskins--Jackson all utilize non-reductive GIT). However, our use of Intrinsic Non-Reductive GIT leads to a conceptually cleaner construction, without the need to introduce extrinsic data to frame the moduli problem. Though we do not carry this out in the present article, we expect our approach to be easily transferable to related moduli problems, such as the moduli of singular principal bundles on a projective scheme or the moduli of principal Higgs bundles over a smooth projective curve.

\subsubsection{Weak $\Theta$-Actions on Moduli Stacks of Principal Bundles}

Let $k$ be an algebraically closed field of characteristic $0$, and let $C$ be a smooth connected projective algebraic curve over $k$ of genus $p_a(C) \geq 2$. If $L$ is a connected linear algebraic group over $k$ a famous result of Ramanathan \cite{Ramanathan-Moduli-1, Ramanathan-Moduli-2} proves the existence of a projective good moduli space $M_L^d$ of the moduli stack $\mathcal{B}un_L^{d,ss}$ of semistable principal $L$-bundles over $C$ of a fixed topological type $d$ (see Theorem \ref{thm:good-moduli-spaces=semistable-bundles}). We bootstrap off of this result, by setting things up in such a way that the moduli stacks of principal bundles with a weak $\Theta_k$-action we consider have centers given by substacks of $\mathcal{B}un_L^{d,ss}$.

Specifically, the structure groups we consider are of the form $H = U \rtimes L$, where $U \subset H$ is the unipotent radical of $H$ and the Levi subgroup $L \subset H$ is as above. We further assume that there is a distinguished cocharacter $\lambda$ of $Z(L)$ which grades $U$ (Assumption \ref{ass:structure-group-bundles}; compare this with the assumptions underlying non-reductive GIT). The cocharacter $\lambda$ then yields a weak $\Theta_k$-action on the moduli stack $\mathcal{B}un_H$ of principal $H$-bundles over $C$, whose center is given by $\mathcal{B}un_L$ and whose associated retraction $\mathcal{B}un_H \to \mathcal{B}un_L$ is given by extension of structure group along the Levi quotient $H \to L$ (Lemma \ref{lem:weak-theta-action-bunh}). We then restrict attention to appropriate locally closed substacks $\mathcal{X}$ of $\mathcal{B}un_H^{d,ss} := \mathrm{gr}^{-1}(\mathcal{B}un_L^{d,ss})$. In order to ensure that $\mathcal{I}_{\mathrm{gr}} \to \mathcal{X}$ is flat, we additionally need to impose conditions on the unipotent automorphisms of the principal $H$-bundles $E$ we parameterize; in particular, we require that the Lie algebra of the unipotent radical of the automorphism group of $E$ has the same dimension as that of the associated graded principal $H$-bundle $\sigma(\mathrm{gr}(E)) = (E \times^H L) \times^L H$. In order to do this in a way which preserves the property of the center having a good moduli space, we further assume that the associated principal $L$-bundle $E \times^H L$ is stable, as opposed to being semistable.\footnote{This could possibly be relaxed by implementing the reduction of stabilizers procedure of Edidin--Rydh \cite{Edidin-Rydh}, however we do not consider this.}

\subsubsection{Stability Conditions Yielding Coarse Moduli Spaces}

In order to obtain stability conditions on principal $H$-bundles which yield coarse moduli spaces\footnote{Without any choice of stability condition, in the resulting moduli space (cf. Corollary \ref{cor:moduli-space-h-bundles-no-character}) all principal $H$-bundles are identified with their extensions with respect to the Levi quotient $H \to L$, i.e. we obtain a moduli space of principal $L$-bundles which is far from being a coarse moduli space of principal $H$-bundles.} and for which the corresponding notion of stability can be explicitly described, in a manner analogous to the non-splitting conditions imposed by Jackson, Qiao, Brambila Paz--Rios Sierra and Hoskins--Jackson, we impose a stronger set of assumptions on $H$, Assumption \ref{ass:structure-group-bundles-strong}; these assumptions take inspiration from \cite[Section 4]{Hoskins-Jackson-QIS}. Intuitively, Assumption \ref{ass:structure-group-bundles-strong} assumes that the adjoint action of the torus $Z(L)^{\circ}/(Z(L)^{\circ} \cap C_L(U)) \cong \mathbb{G}_m^n$ on $\mathfrak{u} = \mathrm{Lie}(U)$ is fully contracting, in the sense that all weights for this action are contained in $(\mathbb{Z}^{\geq 0})^n \setminus \{0\}$. Assumption \ref{ass:structure-group-bundles-strong} further assumes that we have a fixed basis of linearly independent cocharacters of this torus; these cocharacters are used to decompose $H$ as an iterated semidirect product
$$ H = U_1 \rtimes \left( U_2 \rtimes \left( \cdots \rtimes \left( U_n \rtimes L \right) \cdots \right) \right). $$
The stability condition resulting from this basis of cocharacters is then described in terms of non-existence of reductions of structure groups with respect to the above decomposition.

Under the Assumption \ref{ass:structure-group-bundles-strong} on $H$, we use Theorem \ref{thm:main-theorem} to prove the following result.

\begin{teo}[Corollary \ref{cor:adapted-character-tame-ms}] \label{thm:moduli-principal-bundles-intro}
    Let $k$ be an algebraically closed field of characteristic $0$. Let $C$ be a smooth, connected, projective algebraic curve over $k$ of genus $p_a(C) \geq 2$. Let $H$ be a connected linear algebraic group over $k$ which is non-reductive, with Levi decomposition $H = U \rtimes L$. Fix a rational cocharacter $d \in \mathfrak{X}_{\ast}(T)_{\Q}$, where $T$ is a fixed maximal torus of $L$, and fix a non-negative integer $\delta \in \Z^{\geq 0}$. Assume further that Assumption \ref{ass:structure-group-bundles-strong} holds for $H$, so that
    $$ H = U_1 \rtimes \left( U_2 \rtimes \left( \cdots \rtimes \left( U_n \rtimes L \right) \cdots \right) \right) $$
    is an iterated semidirect product with each $U_i$ a non-trivial subgroup of $U$ and with $n = \dim Z(L) - \dim (Z(L) \cap C_L(U)) > 0$. Set $H_n := L$, and define $H_0, \dots, H_{n-1} \subset H$ inductively by $H_i := U_{i+1} \rtimes H_{i+1}$ (so that $H_0 = H$).

    Then there exists a quasi-projective coarse moduli space $M_H^{d, \delta, \mathrm{adpt}}$ whose $k$-points are in bijection with principal $H$-bundles $E \to C$ satisfying the following properties:
    \begin{enumerate}
        \item The type of $E$ as a principal $H$-bundle is equal to $d$.
        \item If $E_L := E \times^H L$ denotes the extension of structure group with respect to the Levi quotient $H \to L$, then $E_L$ is a stable principal $L$-bundle (in the sense of \cite{Ramanathan-Riem-Surface}).
        \item There are equalities $\dim H^0(C, E \times^H \mathfrak{u}) = \dim H^0(C, E_L \times^L \mathfrak{u}) = \delta$, where $\mathfrak{u} = \mathrm{Lie}(U)$, and where $E \times^H \mathfrak{u}$ (resp. $E_L \times^L \mathfrak{u}$) is the bundle associated to the adjoint action of $H$ (resp. $L$) on $\mathfrak{u}$.
        \item For each $m = 1, \dots, n$, set $F_m := ((E \times^{H_0} H_1) \times^{H_1} \cdots) \times^{H_{m-1}} H_m$ and $E_m := F_m \times^{H_m} H$. Then for all $m = 1, \dots, n$, $E_{m-1} \not \cong E_m$ as principal $H$-bundles.
    \end{enumerate}
\end{teo}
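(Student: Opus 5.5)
The plan is to apply Theorem \ref{thm:main-theorem} to a suitable locally closed substack $\mathcal{X} \subset \mathcal{B}un_H$ with the weak $\Theta_k$-action induced by a cocharacter $\lambda$ of $Z(L)$ grading $U$ (Lemma \ref{lem:weak-theta-action-bunh}).

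\textbf{The stack.} Let $\mathcal{X}$ be the substack of $H$-bundles $E$ of type $d$ such that $E_L$ is stable and $\dim H^0(C, E\times^H\mathfrak{u}) = \dim H^0(C,E_L\times^L\mathfrak{u}) = \delta$. The center is the stable locus $\mathcal{Z} = \mathcal{B}un_L^{d,s}$, which has a good moduli space by Ramanathan (Theorem \ref{thm:good-moduli-spaces=semistable-bundles}); it is an open substack of the quotient of $M_L^{d}$.

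We first check that $\mathcal{X}$ is locally closed and stable under the weak $\Theta_k$-action. Upper semicontinuity of $h^0$ along the degeneration $E \rightsquigarrow \sigma(\mathrm{gr}(E))$ gives $h^0(E\times^H\mathfrak{u}) \le h^0(E_L\times^L\mathfrak{u})$. Hence imposing equality with $\delta$, together with the condition that the latter value is $\delta$, cuts out a locally closed, $\Theta$-invariant locus.

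\textbf{Flatness of the relative inertia.} The fibre of $\mathcal{I}_{\mathrm{gr}}$ at $E$ is $\ker(\mathrm{Aut}(E)\to\mathrm{Aut}(E_L))$. Since $E_L$ is stable, $\mathrm{Aut}(E_L)$ is finite modulo the center, so this kernel is essentially the unipotent group of sections of $E\times^H U$. This group is filtered via the weight grading by groups whose Lie algebras are $H^0$ of the graded pieces of the adjoint bundle. Constancy of $h^0 = \delta$ in families, together with a cohomology-and-base-change argument applied piece by piece, should show that this group scheme is smooth of constant relative dimension $\delta$, hence flat and of finite presentation.

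\textbf{The line bundle and stability.} Choose an ``adapted'' character $\chi$ of $L$ built from the basis of cocharacters $\lambda_1,\dots,\lambda_n$ in Assumption \ref{ass:structure-group-bundles-strong}, pairing positively and suitably generically with them. Twist the determinant-type line bundle on $\mathcal{B}un_H$ by $\chi$ and descend it to $\mathcal{X}^{\mathrm{rig}}$; descent holds because $\mathcal{I}_{\mathrm{gr}}$ acts trivially, being unipotent, after passing to a power. Theorem \ref{thm:main-theorem}(2) then gives a good moduli space $X(\mathcal{L})$, projective over $Z$ and hence quasi-projective. On the stable locus its restriction is a coarse moduli space, because $\rho$ is a gerbe with $\mathrm{Aut}$ of dimension $0$ by (ii), and in characteristic $0$ the tame moduli space is coarse.

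It remains to identify the $\Theta$-stable points via (iii). A map $\Theta_k\to\mathcal{X}$ at $E$ corresponds to a cocharacter of the automorphism group of $E$, up to its unipotent part, compatible with $\mathrm{Aut}(E_L)$. Such cocharacters are combinations of the $\lambda_i$ modulo the center, and the limits correspond to reductions $F_m$ followed by extension to $E_m$. The condition $\langle\chi,\eta\rangle>0$ for all nontrivial such filtrations should then translate exactly into $E_{m-1}\not\cong E_m$ for each $m$: an isomorphism $E_{m-1}\cong E_m$ yields a $\mathbb{G}_m$ in $\mathrm{Aut}(E)$ along $\lambda_m$, and hence a destabilizing filtration or a positive-dimensional stabilizer. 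Stable points must also have semistable equal to stable; in particular no strictly semistable points arise, for the adapted choice of $\chi$.

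\textbf{Main obstacle.} I expect this last combinatorial identification to be the hardest step: showing that every nontrivial $\Theta$-filtration with nonnegative weight forces some $E_{m-1}\cong E_m$, and conversely. I would attack it inductively along the iterated semidirect product $H_{i}=U_{i+1}\rtimes H_{i+1}$, using the fact that all weights lie in $(\Z^{\ge0})^n\setminus\{0\}$.
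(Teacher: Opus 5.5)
Your architecture is the paper's: cut out $\mathcal{X}\subset\mathcal{B}un_H$ by the two $h^0$ conditions, apply Theorem \ref{thm:main-theorem}, choose a character of $L$, and identify semistability combinatorially. The final step fails, however. You take the coarse space to be the image of the properly stable locus $(\mathcal{X}^{\mathrm{rig}})^s(\mathcal{L})$, and you assert that for adapted $\chi$ there are no strictly semistable points.

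This ignores the torus $Z(H)'=Z(L)^{\circ}\cap C_L(U)$. The statement allows it to be positive-dimensional, and it equals $\mathbb{G}_m$ for every parabolic of $\mathrm{GL}_r$, which is the vector bundle case. Right multiplication by $Z(H)'$ is an automorphism of every $H$-bundle. Since $Z(H)'\cap U=\{e\}$, it injects into $\mathrm{Aut}_{\mathcal{X}^{\mathrm{rig}}}(\rho(E))$. So every point of $\mathcal{X}^{\mathrm{rig}}$ has positive-dimensional stabilizer, the set $\mathcal{C}$ is the whole semistable locus, and $(\mathcal{X}^{\mathrm{rig}})^s(\mathcal{L})=\emptyset$.

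Equivalently, $H=P_H(\xi^{\pm1})$ for every cocharacter $\xi$ of $Z(H)'$, so each such $\xi$ gives a nontrivial filtration of every $E$ of weight $-\langle\chi,\xi\rangle$. Hence $\chi$ must be trivial on $Z(H)'$; you never impose this, and without it the semistable locus is empty. Once it is imposed, every semistable point is strictly semistable. As written, your construction yields the empty space in exactly the main case of interest.

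The missing ingredient is a further rigidification by $Z(H)'$, together with a proof that the remaining stabilizers are finite. The paper takes the good moduli space of the whole adapted-semistable locus $\mathcal{X}^{\mathrm{rig,adpt}}$. Since $Z(H)'\times\mathcal{X}^{\mathrm{adpt}}$ meets $\mathcal{I}_{\mathrm{gr}}$ trivially, it descends to a flat central subgroup of the inertia of $\mathcal{X}^{\mathrm{rig,adpt}}$. Rigidifying by it is a torus gerbe, so the good moduli space is unchanged.

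The paper then proves in Proposition \ref{prop:finiteness-of-auts-of-Xadpt} that $\mathrm{Aut}_{\mathcal{X}^{\mathrm{rig}}}(\rho(E))/\overline{Z(H)}$ is finite, as follows.
A cocharacter $\eta$ lifts to $\mathrm{Aut}(E)$, because $H^0(E\times^H U)$ is split unipotent. The lift yields a reduction of $E$ to $L_H(\eta)$, hence filtrations for both $\eta^{\pm1}$, hence $\langle\chi,\eta\rangle=0$. Write $\eta^N=(\lambda\circ\zeta)\cdot\xi$ with $\xi$ a cocharacter of $Z(H)'$. The absence of strictly $\chi'$-semistable vectors in $\mathfrak{u}$, together with $E_{m-1}\not\cong E_m$, forces $\zeta$ to be trivial.

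Only with this are the stabilizers finite and the good moduli space tame, hence coarse. Part (ii) of Theorem \ref{thm:main-theorem} cannot substitute, because it only concerns the (empty) properly stable locus.

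Smaller corrections:
\begin{itemize}
\item A filtration of $E$ is a reduction of $E$ to some $P_H(\eta)$, not a cocharacter of $\mathrm{Aut}(E)$. That $\eta$ can be taken in $Z(L)^{\circ}$ comes from the stability of $E_L$.
\item For $m\geq 2$, an isomorphism $E_{m-1}\cong E_m$ gives automorphisms of $E_{m-1}$, not of $E$. What it gives for $E$ is a reduction to $U_1\rtimes(\cdots\rtimes(U_{m-1}\rtimes H_m)\cdots)\subset P_H(\lambda\circ\eta_m)$, i.e.\ a destabilizing filtration.
\item The center is the locus $\mathcal{Z}_\delta\subset\mathcal{B}un_L^{d,s}$ where $h^0(F\times^L\mathfrak{u})=\delta$, not all of $\mathcal{B}un_L^{d,s}$.
\item Flatness of $\mathcal{I}_{\mathrm{gr}}$ needs a specified scheme structure on $\mathcal{X}$; the paper uses Fitting ideals of $\omega_{\mathcal{I}_{\mathrm{gr}}}$. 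Pointwise constancy of $h^0$ says nothing over a non-reduced base.
\end{itemize}
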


\begin{rmk}
    $M_H^{d,\delta,\mathrm{adpt}}$ is the coarse moduli space of the locally closed substack $\mathcal{X}_{\delta}^{\chi-ss}$ of $\mathcal{B}un_H$ appearing the statement of Corollary \ref{cor:moduli-space-h-bundles-general-character}, in the case where $\chi$ is an adapted character of $L$ in the sense of Definition \ref{def:adapted-characters}. Corollary \ref{cor:moduli-space-h-bundles-general-character} gives an explicit description of the groupoid $\mathcal{X}_{\delta}^{\chi-ss}(S)$ for any $k$-scheme $S$. A more general version of Theorem \ref{thm:moduli-principal-bundles-intro} valid for any character of $L$ (not just those which are adapted) and valid under the weaker Assumption \ref{ass:structure-group-bundles} is given by Theorem \ref{thm:moduli-principal-bundles-general-character}.
\end{rmk}

\begin{rmk}
    Suppose $\{e\} = U^{(q+1)} \lhd U^{(q)} \lhd \cdots \lhd U^{(1)} = U$ is a central series such that $\{e\} = U^{(q+1)} \lhd U^{(q)} \lhd \cdots \lhd U^{(1)} = U \lhd H$ is a normal series (for example, the lower central series $U^{(i+1)} = [U^{(i)}, U]$). Then a sufficient condition for $\dim H^0(C, E \times^H \mathfrak{u}) = \dim H^0(C, E_L \times^L \mathfrak{u})$ to hold for $E$ is that for any irreducible $L$-representation appearing as a subrepresentation of $\mathrm{Lie}(U^{(i)}/U^{(i+1)})$ for some $i \geq 2$, the corresponding sheaf of sections $\Gamma_C(E_L \times^L V)$ satisfies
    $$ \mu(\Gamma_C(E_L \times^L V)) > 2p_a(C) - 2, $$
    where $\mu(\mathcal{E}) = \deg(\mathcal{E})/\mathrm{rank}(\mathcal{E})$ denotes the Mumford slope of a vector bundle. This is a condition which depends only on the type $d$ (see Remark \ref{rmk:unipotent-auts-bundles-non-abelian}).
\end{rmk}

In the statement of Theorem \ref{thm:moduli-principal-bundles-intro}, it is possible to take $H$ to be a proper parabolic subgroup $P \subset G$ of a connected reductive linear algebraic group $G$. By choosing the type $d$ to be a Harder--Narasimhan type for unstable principal $G$-bundles whose canonical reduction has structure group $P$ (cf. Example \ref{ex:unstable-hn-strata-in-our-framework}), Theorem \ref{thm:moduli-principal-bundles-intro} yields the existence of quasi-projective coarse moduli spaces of unstable principal $G$-bundles over $C$. This generalizes the existence results for moduli spaces of slope unstable vector bundles of fixed Harder--Narasimhan type arising from \cite{Jackson-HNL2, Qiao-HNL2, Qiao-Thesis, BPRS-HNL2-Proceedings} (see Section \ref{sec:parabolic-subgroups-gl}), and realizes for principal bundles over curves the program of constructing schematic moduli spaces of unstable objects, following on the work of \cite{Jackson-HNL2, Qiao-HNL2, Qiao-Thesis, Cooper-Thesis, Hoskins-Jackson-QIS} for unstable coherent sheaves on projective varieties and \cite{Hamilton-Thesis} for unstable $\mathrm{GL}_r$-Higgs bundles over curves.

\subsection{Further Directions and Future Work}

This work does not address the question of what the correct stack-theoretic notion of a ``non-reductive moduli space'' should be in full generality. In fact, the definition of a \emph{non-reductive good moduli space}\footnote{In turn, these are special cases of the more general upcoming notion of a topological moduli space, as mentioned in Section \ref{sec:beyond-git-program}.} has previously been announced by Rydh \cite{Rydh-TMS-Talk-2, Rydh-TMS-Talk-1}, and will appear in upcoming work; this class of morphisms should provide the correct complete intrinsic generalization, analogously to how the notions of good and adequate moduli spaces collectively provide a complete intrinsic generalization of the notion of a reductive GIT quotient. We expect the moduli spaces $X(\mathcal{L})$ constructed in Theorem \ref{thm:main-theorem} to be examples of such non-reductive good moduli spaces.

In the setting of Theorem \ref{thm:nrgit-intro}, if $\mathrm{Stab}_{U,X} \to X$ is flat over a dense open subscheme of $X$ but not over all of $X$, then in favorable situations the blow-up procedure of Qiao \cite{Qiao-Blowups} can be used to replace $X$ with a scheme $\widetilde{X}$ which is acted on by the same group $H$, which is contracted to its fixed locus by the same cocharacter $\lambda$, and for which $\mathrm{Stab}_{U, \widetilde{X}} \to \widetilde{X}$ is flat. Theorem \ref{thm:nrgit-intro} can therefore be applied for the action of $H$ on the scheme $\widetilde{X}$.\footnote{The paper \cite{BDHK-Projective-Completions} also sought to describe a blow-up procedure for flattening unipotent stabilizers in non-reductive GIT, however the proof of the efficacy of the procedure contained therein contains a gap, so is not guaranteed to work in all circumstances as claimed. This was communicated to us by Frances Kirwan \cite{kirwan-private-communication}.} In general, the upcoming results of Rydh on non-reductive good moduli spaces include the non-reductive analog of the reduction of stabilizers procedure of Edidin--Rydh \cite{Edidin-Rydh}.\footnote{The upcoming reduction of stabilizers for non-reductive good moduli spaces was communicated to us by David Rydh \cite{rydh-private-communication}.} We expect that this procedure can be used in the setting of Theorem \ref{thm:main-theorem} to perform a stack-intrinsic blow-up procedure to similarly flatten the relative inertia $\mathcal{I}_{\mathrm{gr}} \to \mathcal{X}$. As such, we do not consider the problem of flattening the relative inertia in the present article.

In general, variation of good moduli space can be carried out for stability conditions other than those coming from $\Theta$-stability with respect to invertible sheaves, albeit at the cost of having the resulting good moduli spaces no-longer necessarily admitting explicit descriptions as relative projective spectra (cf. \cite[Theorem 5.6.1]{Halpern-Leistner}, \cite[Theorem 2.6.3]{IbanezNunez-RefinedHN}). Since in practice the stability conditions of interest to us are those arising from invertible sheaves on $\mathcal{X}^{\mathrm{rig}}$, we elect to not consider this more general situation.

Though our main result, Theorem \ref{thm:main-theorem}, holds over any quasi-separated base scheme $B$, and in particular over any quasi-separated scheme of characteristic $p > 0$, the notion of a good moduli space in positive characteristic is significantly restrictive. Unfortunately, the arguments of Section \ref{sec:projective-moduli-spaces-inv-sheaves} fail to carry over as stated to the case where $\pi : \mathcal{Z} \to Z$ is assumed to be an adequate moduli space which is not necessarily good, due to an absence of a general local structure theorem for adequate moduli spaces. On the other hand, when \(\mathcal{X}\) is a quotient stack whose center admits an adequate moduli space, no local structure theorem is required, and analogs of Theorems \ref{thm:nrgit-intro} and \ref{thm:nrgit-external-intro} can be established without the need for significant modifications; we will carry this out in \cite{CooperModin-upcoming}.

\subsection{Notation and Conventions}

Throughout this article we adopt the following conventions, notation and terminology:\footnote{In Section \ref{sec:principal-bundles} we adopt additional notation and terminology concerning principal bundles over curves and their moduli.}

\begin{enumerate}
    \item All stacks and algebraic spaces are assumed to be defined relative to the fppf topology on the site of schemes, as in \cite{stacks-project}. If $\mathcal{X}$ is an algebraic stack, an algebraic space or a scheme, we denote the underlying topological space by $|\mathcal{X}|$.
    \item By a \emph{geometric point} of an algebraic stack, an algebraic space or a scheme $\mathcal{X}$, we mean a morphism $\Spec k \to \mathcal{X}$, where $k$ is an algebraically closed field.
    \item If $\mathcal{X}$ is an algebraic stack and if $\mathcal{I}$ is an ideal of $\mathcal{O}_{\mathcal{X}}$, we denote the corresponding closed substack $\underline{\Spec}_{\mathcal{X}}(\mathcal{O}_{\mathcal{X}}/\mathcal{I})$ of $\mathcal{X}$ as $\mathbb{V}(\mathcal{I})$.
    \item Given an algebraic space $X$ over a base scheme $B$, acted on by the group $\mathbb{G}_{m,B}$, we denote the $\mathbb{G}_{m,B}$-attractor locus (resp. $\mathbb{G}_{m,B}$-fixed locus) by $X^+$ (resp. $X^0$); see Definition \ref{def:attractor-loci}. Where the $\mathbb{G}_{m,B}$-action arises from a choice of cocharacter $\lambda$, we may emphasize this by writing $X^{\lambda, +}$ in place of $X^+$ and $X^{\lambda, 0}$ in place of $X^0$.

    \item Given an algebraic group $H$ over a field $k$ and a cocharacter $\lambda: \mathbb{G}_{m,k} \to H$, we denote by $P_H(\lambda)$ (resp. $L_H(\lambda)$) the algebraic group with underlying scheme $H^{\lambda,+}$ (resp. $H^{\lambda, 0}$, where $\mathbb{G}_{m,k}$ acts on $H$ via conjugation: $(t,h) \mapsto \lambda(t)h\lambda(t)^{-1}$ (if $H$ is reductive and has a split maximal torus, the groups $P_H(\lambda)$ are exactly the parabolic subgroups of $H$). We denote by $U_H(\lambda)$ the kernel of the natural homomorphism $P_H(\lambda) \to L_H(\lambda)$.

    \item If $f : \mathcal{X} \to \mathcal{Y}$ is a morphism of algebraic stacks (over a base $B$), we denote the relative inertia of $f$ as both $\mathcal{I}_f$ and $\mathcal{I}_{\mathcal{X}/\mathcal{Y}}$; by definition, this is the kernel of the induced map $f_{\ast} : \mathcal{I}_{\mathcal{X}} \to \mathcal{I}_{\mathcal{Y}} \times_{\mathcal{Y}} \mathcal{X}$ on inertia stacks, where $\mathcal{I}_{\mathcal{X}} = \mathcal{X} \times_{\Delta, \mathcal{X} \times_B \mathcal{X}, \Delta} \mathcal{X}$.
\end{enumerate}

\subsection{Acknowledgments}

Work on this project started following on from the \emph{New Equivariant Methods in Algebraic and Differential Geometry} workshop, which took place in 2024 at the Isaac Newton Institute, University of Cambridge. In addition to the organizers of this workshop, we would also like to especially thank Eloise Hamilton, Victoria Hoskins and Jessica (née Joshua) Jackson for providing a draft copy of the paper \cite{HHJRelativeNRGIT} to us before their results were announced and for helpful conversations concerning the present article. Additionally, we would like to thank Roberto Fringuelli, Jochen Heinloth, Andrés Ibáñez Núñez, Frances Kirwan, Michele Pernice, David Rydh, Luca Schaffler and Angelo Vistoli for helpful conversations and feedback.

\section{Preliminaries}

In this section we provide the necessary preliminaries we require in the sequel to prove Theorems \ref{thm:main-theorem} and \ref{thm:nrgit-intro} (there is a separate preliminary subsection in Section \ref{sec:principal-bundles}, concerning principal bundles and their moduli). Throughout this section, let $B$ be a quasi-separated scheme;\footnote{For the results concerning filtered and graded objects of algebraic stacks, $B$ can be taken to be an algebraic stack, however for the results concerning attractor loci for actions of $\mathbb{A}^1$, we require the base $B$ to be a scheme.} unless stated otherwise, we work relative to $B$.

\subsection{Attractor Loci for Actions of $\mathbb{G}_m$} 

Let $X$ be an algebraic space which is quasi-separated and locally of finite presentation over $B$, which is endowed with an action of $\mathbb{G}_{m,B}$ for which the structure morphism $X \to B$ is invariant.

\begin{defi} \label{def:attractor-loci}
    We denote by
    $$ X^+ := \underline{\mathrm{Map}}_B^{\mathbb{G}_{m,B}}(\mathbb{A}_B^1, X) $$
    the functor assigning to a $B$-scheme $S$ the set of $\mathbb{G}_{m,B}$-equivariant morphisms $\mathbb{A}_B^1 \times_B S \to X$, where $\mathbb{G}_{m,B}$ acts on $\mathbb{A}_B^1$ by coordinate-wise multiplication (and acts on $S$ trivially). Similarly, we denote
    $$ X^0 := \underline{\mathrm{Map}}_B^{\mathbb{G}_{m,B}}(B, X), $$
    where $\mathbb{G}_{m,B}$ acts on $B$ trivially.
\end{defi}

There is a natural transformation $X^+ \to X^0$ induced by the zero section $0: B \to \mathbb{A}_B^1$, and a natural transformation $X^0 \to X^+$ induced by the structure morphism $\mathbb{A}_B^1 \to B$.

\begin{teo} \label{theorem:attractor_loci_existence}
The functor $X^0$ is representable by a closed subspace of $X$, and $X^+$ is representable by an algebraic space locally of finite presentation over $B$, for which the natural morphism $X^+ \to X^0$ is affine and of finite presentation. Moreover, if $X \to B$ is separated (resp. quasi-separated, quasi-compact, of finite presentation), the same is true of $X^+ \to B$. In particular, if $X$ is a scheme, then the same is true of $X^0$ and $X^+$.

In addition, the natural morphism $X^+ \to X$ is an isomorphism if and only if the action of $\mathbb{G}_{m,B}$ on $X$ extends to an action of the monoid $\mathbb{A}_B^1$; if such an extension exists, it is necessarily unique.
\end{teo}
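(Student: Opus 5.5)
This is essentially the Drinfeld/Richarz theorem on attractor loci, and I would prove it by reducing to that literature rather than from scratch. I treat the three parts separately: the fixed locus $X^0$, the attractor $X^+$ together with $X^+ \to X^0$, and the monoid-action characterization.

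\emph{The fixed locus $X^0$.} Its representability as a closed subspace needs only that $X \to B$ is quasi-separated. Let $a, \mathrm{pr} : \mathbb{G}_{m,B} \times_B X \to X$ be the action and projection maps. Then $X^0$ is the locus of $x$ such that $a$ and $\mathrm{pr}$ agree on $\mathbb{G}_{m} \times \{x\}$. I would use two observations:
\begin{itemize}
\item $\mathbb{G}_{m,B} \to B$ is fppf and its coordinate ring $\mathcal{O}_B[t,t^{-1}]$ is free, with basis $t^n$.
\item So $X^0$ is the Weil restriction along $\mathbb{G}_{m,B}\times_B X \to X$ of the equalizer of $a$ and $\mathrm{pr}$.
\end{itemize}
For schemes, closedness follows from a standard argument with the free basis $t^n$. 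For algebraic spaces I would first check étale locally on $X$. Here I would invoke the existence of $\mathbb{G}_m$-equivariant affine étale neighbourhoods, or alternatively follow Drinfeld's argument directly, since the functor is fpqc-local.

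\emph{The attractor $X^+$ and the map $X^+ \to X^0$.} This is the core and the main obstacle. I would first treat the affine case. If $X = \Spec A$ with $A$ graded as $\bigoplus_n A_n$, then:
\begin{itemize}
\item $X^+ = \Spec A/(A_{<0})$;
\item $X^0 = \Spec A/(A_{\neq 0})$;
\item the map $X^+ \to X^0$ corresponds to $A_0/(\ldots) \to A/(A_{<0})$.
\end{itemize}
The positive part of $A/(A_{<0})$ is finitely generated over the degree-zero part, which gives finite presentation. For general $X$, I would follow Drinfeld and Richarz. For schemes with a Zariski cover by $\mathbb{G}_m$-stable affines, one glues using that $X^+ \to X$ restricts well over stable opens. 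In general, one proves that $X^+ \to X^0$ is affine through the key lemma: for any $\mathbb{G}_m$-equivariant étale $U \to X$, the square $U^+ = U^0 \times_{X^0} X^+$ is cartesian. Combined with equivariant étale covers, this reduces the problem to the affine case.

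Stated this way, $X^+ \to X^0$ is affine and of finite presentation, and $X^0 \to X$ is a closed immersion. The permanence properties over $B$ (separated, quasi-separated, quasi-compact, finite presentation) then follow from $X^0 \subset X$ closed together with affineness of $X^+ \to X^0$. When $X$ is a scheme, $X^+$ is affine over the scheme $X^0$ and hence is itself a scheme.

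\emph{The monoid action.} The ``if'' direction follows from the universal property. Given an $\mathbb{A}^1$-action $\mathbb{A}^1\times X\to X$, it defines an $S$-point of $X^+$ for each $x$; evaluation at $1$ is then inverse to it, and composition the other way is the identity by density of $\mathbb{G}_m$ in $\mathbb{A}^1$ fibrewise, using that $X$ is quasi-separated (schematic density). Conversely, if $X^+ \to X$ is an isomorphism, the universal family $\mathbb{A}^1\times X^+\to X$ defines the action. The monoid axioms hold because they hold on the schematically dense open $\mathbb{G}_m \times \mathbb{G}_m \times X$. Uniqueness follows by the same density argument.

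The hardest step is the étale-local reduction for algebraic spaces, which needs the cartesian lemma. There I would cite Drinfeld \cite{Drinfeld-Gm} and Richarz \cite{Richarz-Gm} instead of reproving it.
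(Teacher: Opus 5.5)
Your plan for the first paragraph is the paper's. Richarz's Theorem A applies once the action is étale-locally linearizable, and that is exactly the existence of $\mathbb{G}_m$-equivariant affine étale neighbourhoods you invoke. The paper obtains this from \cite[Corollary 10.2]{AHR-Etale}; you should cite it, since it is the nontrivial input that makes your reduction work in this generality.

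\textbf{The gap: the monoid-action argument assumes separatedness.} The problem is your self-contained argument for the last paragraph. Three times you conclude that two morphisms into $X$ agree because they agree on a schematically dense open such as $\mathbb{G}_{m,S}\subset\mathbb{A}^1_S$, ``using that $X$ is quasi-separated''. This happens in the `if' direction, in the monoid axioms, and in uniqueness. That inference needs $X$ separated, so that equalizers are closed. For merely quasi-separated $X$ the equalizer need not be closed, and the theorem does not assume separatedness.

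The failure is concrete. Take the affine line with doubled origin, with the weight-one action. The two inclusions $\mathbb{A}^1\rightrightarrows X$ are distinct equivariant maps that agree on $\mathbb{G}_m$. This does not contradict the theorem, since this $X$ admits no $\mathbb{A}^1$-action, but it shows density alone cannot prove the key step of the `if' direction. That step is: an equivariant $f\colon\mathbb{A}^1_S\to X$ must equal $z\mapsto\mu(z,f(1))$, i.e. $X^+\to X$ is a monomorphism.

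\textbf{How to repair it.} In the `only if' direction, replace density by the monomorphism property of $X^+\to X$.
\begin{enumerate}
\item $w\mapsto\mu(w,t\cdot y)$ and $w\mapsto t\cdot\mu(w,y)$ are both equivariant with value $t\cdot y$ at $w=1$, so they are equal.
\item Then $z\mapsto\mu(w,\mu(z,x))$ and $z\mapsto\mu(wz,x)$ are both equivariant in $z$ with value $\mu(w,x)$ at $z=1$. This gives associativity.
\end{enumerate}

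For the `if' direction you need an argument that genuinely uses the monoid action. One option: set $h(w,z):=\mu(w,f(z))$ on $\mathbb{A}^2_S$. By equivariance of $f$ and associativity of $\mu$, $h$ is invariant under $t\cdot(w,z)=(tw,t^{-1}z)$. So $h$ descends to $[\mathbb{A}^2_S/\mathbb{G}_m]$ and, working locally on $S$, factors through its good moduli space $(w,z)\mapsto wz$ by \cite[Theorem 3.12]{AHR-Etale}; write $h=k(wz)$. Then $f(z)=h(1,z)=k(z)=h(z,1)=\mu(z,f(1))$.

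Uniqueness then follows: once $X^+\to X$ is a monomorphism, two extensions $\mu_1,\mu_2$ give equivariant maps $\mu_i(-,x)$ with the same value at $1$. Alternatively, cite \cite{Drinfeld-Gm} as the paper does, with \cite{Modin-Thesis} for a general base.

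\textbf{Smaller points.}
\begin{itemize}
\item The same conflation appears when you say closedness of $X^0$ for schemes follows from the free-basis argument, which needs a closed equalizer. It is harmless there, because you pass to affine equivariant étale neighbourhoods $U$ and use $U^0=U\times_X X^0$.
\item For a stable open $V$, the correct compatibility is $V^+=X^+\times_{X^0}V^0$, not a restriction of $X^+\to X$.
\item Finite presentation of $X^+\to B$ also needs $X^0\to X$ to be a finitely presented closed immersion. This holds affine-locally, since $(A_{\neq 0})$ is generated by the finitely many nonzero-degree generators.
\end{itemize}
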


\begin{proof}
    By \cite[Corollary 10.2]{AHR-Etale}, the action of $\mathbb{G}_{m,B}$ on $X$ is \'etale-locally linearizable, in the sense of \cite{Richarz-Gm}. The first paragraph is then \cite[Theorem A]{Richarz-Gm}. The final statement is \cite[Proposition 1.4.15]{Drinfeld-Gm} in the case where $B$ is the spectrum of a field, however Drinfeld's arguments carry over to the case where $B$ is a quasi-separated scheme (see \cite[Theorem I. 1.0.1 (5)]{Modin-Thesis}).
\end{proof}

\begin{defi} \label{defn:attractor_fixed_loci}
    The algebraic space $X^+$ is known as the \emph{$\mathbb{G}_{m,B}$-attractor locus} of $X$, and the algebraic space $X^0$ is known as the \emph{$\mathbb{G}_{m,B}$-fixed locus} of $X$. We say that the $\mathbb{G}_{m,B}$-action on $X$ is \emph{contracting} if it extends to an action of $\mathbb{A}_B^1$, so that $X = X^+$.
\end{defi}

\begin{rmk}
    The natural action of $\mathbb{G}_{m,B}$ on $\mathbb{A}_B^1$ yields an action of $\mathbb{G}_{m,B}$ on the attractor $X^+$ with respect to which $X^+ \to X$ is $\mathbb{G}_{m,B}$-equivariant; the action of $\mathbb{G}_{m,B}$ on $X^+$ also extends to an action of the monoid $\mathbb{A}_B^1$. The closed immersion $X^0 \to X$ and the projection $X^+ \to X^0$ are also $\mathbb{G}_{m,B}$-equivariant, where $X^0$ is endowed with the trivial action of $\mathbb{G}_{m,B}$.
\end{rmk}

\begin{rmk}
    If $B = \Spec k$ where $k$ is a field, then Theorem \ref{theorem:attractor_loci_existence} is a consequence of the results of \cite{Drinfeld-Gm}, which in turn provides an extension of the results of Białynicki-Birula \cite{BB-Theorem} of tori on smooth projective varieties to algebraic spaces over a field.
\end{rmk}

\subsection{Stacks of Filtered and Graded Objects}

Here we recall the basic definitions and properties of stacks of filtered and graded objects in an algebraic stack, as introduced by Halpern-Leistner \cite{Halpern-Leistner}, and how they relate to weak actions of the stack $\Theta$ on an algebraic stack. We also recall the description of $\mathrm{Filt}([X/G])$ and $\mathrm{Grad}([X/G])$ in terms of quotients of $\mathbb{G}_m$-attractor loci, where $G$ is a smooth linear algebraic group acting on a scheme $X$ of finite type over a field.

\subsubsection{The Stack $\Theta$}

Following Halpern-Leistner, we introduce the stack $\Theta_B$, which plays an important role in the present article.

\begin{defi}[\cite{Halpern-Leistner}]
    Let $\mathbb{G}_{m,B}$ act on the affine line $\mathbb{A}_B^1$ via scalar multiplication. We define $\Theta_B$ to be the corresponding quotient stack:
    $$ \Theta_B := [\mathbb{A}_B^1 / \mathbb{G}_{m, B}]. $$
\end{defi}

By \cite[Proposition 10.3.7]{Olsson}, if $S$ is a scheme over $B$ then giving a morphism $S \to \Theta_B$ is equivalent to specifying an invertible $\mathcal{O}_S$-module $\mathcal{L}$ and a homomorphism $s : \mathcal{L} \to \mathcal{O}_S$. 

Identifying $B\mathbb{G}_{m,B}$ with the classifying stack of invertible sheaves over $B$-schemes, there are natural morphisms relating $\Theta_B$, $B\mathbb{G}_{m,B}$ and $B$:
\begin{itemize}
    \item $0 : B \mathbb{G}_{m,B} \to \Theta_B$, given by $\mathcal{L} \mapsto (\mathcal{L}, 0)$.

    \item $q : \Theta_B \to B\mathbb{G}_{m,B}$, given by $(\mathcal{L},s) \mapsto \mathcal{L}$.

    \item $1 : B \to \Theta_B$ given by $S \mapsto (\mathcal{O}_S, \mathrm{id}_{\mathcal{O}_S})$ (where $S$ is a $B$-scheme).
\end{itemize}

There is also a multiplication map $\mu : \Theta_B \times_B \Theta_B \to \Theta_B$, given by
$$ ((\mathcal{L}_1, s_1), (\mathcal{L}_2, s_2)) \mapsto (\mathcal{L}_1 \otimes \mathcal{L}_2, s_1 \otimes s_2). $$
Under this multiplication map, $\Theta_B$ is a monoid stack over $B$, for which the maps $0 : B\mathbb{G}_{m,B} \to \Theta_B$ and $q : \Theta_B \to B\mathbb{G}_{m,B}$ are homomorphisms of monoid stacks (endowing $B\mathbb{G}_{m,B}$ with its usual group stack structure).

\subsubsection{The Stacks $\mathrm{Filt}(\mathcal{Y})$ and $\mathrm{Grad}(\mathcal{Y})$}

Let $\mathcal{Y}$ be an algebraic stack over $B$. We denote by $\mathrm{Filt}(\mathcal{Y})$ (resp. $\mathrm{Grad}(\mathcal{Y})$) the presheaf of groupoids
$$ S \mapsto \mathrm{Map}(\Theta_B \times_B S, \mathcal{Y}) \ \ (\text{resp.}\ \mathrm{Map}(B\mathbb{G}_{m,B} \times_B S, \mathcal{Y})), $$
where $S$ is a $B$-scheme, and where $\mathrm{Map}(\mathcal{Y}' \times_B S, \mathcal{Y})$ denotes the groupoid of $1$-morphisms of $B$-stacks $\mathcal{Y}' \times_B S \to \mathcal{Y}$.

\begin{teo} \label{thm:algebraicity-filt-grad}
    Assume that the morphism $\mathcal{Y} \to B$ is quasi-separated and locally of finite presentation, and that all points of $\mathcal{Y}$ have affine automorphism groups in their fiber over $B$. Then $\mathrm{Filt}(\mathcal{Y})$ and $\mathrm{Grad}(\mathcal{Y})$
    are both representable by algebraic stacks, locally of finite presentation and quasi-separated over $B$. Moreover, if $\mathcal{Y} \to B$ has an affine (resp. quasi-affine, separated) diagonal, then the same is true for both $\mathrm{Filt}(\mathcal{Y}) \to B$ and $\mathrm{Grad}(\mathcal{Y}) \to B$.
\end{teo}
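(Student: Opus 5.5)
The plan is to treat $\mathrm{Grad}(\mathcal{Y})$ and $\mathrm{Filt}(\mathcal{Y})$ uniformly as mapping stacks $\underline{\mathrm{Map}}_B(\mathcal{T},\mathcal{Y})$ with $\mathcal{T} \in \{B\mathbb{G}_{m,B}, \Theta_B\}$, and to prove algebraicity with Artin's criterion in the form for stacks, using the formulation of Hall--Rydh. Neither $\Theta_B$ nor $B\mathbb{G}_{m,B}$ is proper, so the general Hom-stack theorems for proper flat sources do not apply directly. Instead, the plan is to exploit that both sources are quotients of affine schemes by the linearly reductive group $\mathbb{G}_{m,B}$, together with coherent completeness.

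The first step is the easy Artin axioms. The prestack is a stack for the fppf topology because $\mathcal{Y}$ is one. It is limit preserving because $\mathcal{T}$ is of finite presentation over $B$ and $\mathcal{Y}\to B$ is locally of finite presentation and quasi-separated. For homogeneity (the Schlessinger--Rim condition) and a finite-type deformation/obstruction theory, we use the cotangent complex $L_{\mathcal{Y}/B}$. Deformations of a map $f:\mathcal{T}_S\to\mathcal{Y}$ over a square-zero thickening are governed by $\mathrm{Ext}^i_{\mathcal{T}_S}(Lf^*L_{\mathcal{Y}/B}, I)$ for $i=-1,0,1$. These groups are coherent and commute with base change, because pushforward along $\mathcal{T}_S\to S$ is taking $\mathbb{G}_m$-invariants (resp. weight $\le 0$ pieces on $\mathbb{A}^1_S$). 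That functor is exact and preserves coherence on the relevant (bounded-weight) complexes.

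The main obstacle is effectivity. We must show that a compatible system of maps $\mathcal{T}_{A/\mathfrak{m}^n}\to\mathcal{Y}$ over a complete local Noetherian ring $A$ is induced by a map $\mathcal{T}_A\to\mathcal{Y}$. The plan has two parts.
\begin{enumerate}
\item First show that $\mathcal{T}_A$ is coherently complete along $\mathcal{T}_{A/\mathfrak{m}}$, so that $\mathrm{Coh}(\mathcal{T}_A)\simeq \varprojlim \mathrm{Coh}(\mathcal{T}_{A/\mathfrak{m}^n})$. For $\Theta_A$ and $B\mathbb{G}_{m,A}$ this is a direct computation with $\mathbb{Z}$-graded $A[t]$-modules, where each graded piece is a finite $A$-module, hence $\mathfrak{m}$-adically complete.
\item Then invoke Tannaka duality for algebraic stacks with affine stabilizers (Hall--Rydh). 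This is precisely where the hypothesis that $\mathcal{Y}$ has affine automorphism groups in its fibers over $B$ enters. Tannaka duality turns the compatible system into a tensor functor on $\mathrm{Coh}$ and hence into an honest morphism $\mathcal{T}_A\to\mathcal{Y}$.
\end{enumerate}
One must also verify openness of versality and the condition on automorphisms. Both follow formally from the deformation theory above and the quasi-separatedness of $\mathcal{Y}$.

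Finally, for the properties of the diagonal, note that the diagonal of $\underline{\mathrm{Map}}(\mathcal{T},\mathcal{Y})$ at a pair of maps $f,g:\mathcal{T}_S\to\mathcal{Y}$ is the sheaf of sections over $\mathcal{T}_S$ of $\mathcal{I}som(f,g)\to\mathcal{T}_S$, which is the base change of $\Delta_{\mathcal{Y}}$.
\begin{enumerate}
\item If $\Delta_{\mathcal{Y}}$ is affine, then after pulling back along $\mathbb{A}^1_S\to\Theta_S$ (resp. $S\to B\mathbb{G}_{m,S}$) this is a $\mathbb{G}_m$-equivariant affine scheme $W$ over $\mathbb{A}^1_S$ (resp. $S$).
\item The sections are the $\mathbb{G}_m$-equivariant sections. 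For $B\mathbb{G}_m$ these form the fixed locus $W^0$, a closed subscheme of an affine $S$-scheme, hence affine.
\item For $\Theta$ they are computed by the attractor construction of Theorem \ref{theorem:attractor_loci_existence}, which is affine over $W^0|_{0}$, hence affine over $S$.
\end{enumerate}
The quasi-affine and separated cases are handled the same way, since each of these properties is preserved by the fixed-locus and attractor constructions in Theorem \ref{theorem:attractor_loci_existence}. Quasi-separatedness of $\mathrm{Filt}(\mathcal{Y})$ and $\mathrm{Grad}(\mathcal{Y})$ in general follows by the same argument applied to the quasi-compact, quasi-separated diagonal of $\mathcal{Y}$.
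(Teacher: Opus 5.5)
Your proposal is correct and follows essentially the same route as the argument behind the paper's proof, which simply cites \cite[Proposition 1.1.2]{Halpern-Leistner}. That argument is Artin-type representability, with effectivity coming from coherent completeness of $\Theta_A$ and $B\mathbb{G}_{m,A}$ together with Tannaka duality for stacks with affine stabilizers; your attractor-locus computation of the diagonal is a valid way to obtain the affine, quasi-affine and separated statements. The one step you leave implicit is the preliminary reduction from an arbitrary quasi-separated $B$ to an excellent base, which Artin's criterion needs: this uses limit preservation, compatibility of $\mathrm{Filt}$ and $\mathrm{Grad}$ with base change on $B$, and Noetherian approximation of quasi-compact opens of $\mathcal{Y}$ (keeping affine stabilizers).
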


\begin{proof}
    This is \cite[Proposition 1.1.2]{Halpern-Leistner}.
\end{proof}

\begin{defi}
    The stack $\mathrm{Filt}(\mathcal{Y})$ (resp. $\mathrm{Grad}(\mathcal{Y})$) is known as the \emph{stack of $\Z$-filtered objects} (resp. \emph{stack of $\Z$-graded objects}) of $\mathcal{Y}$.
\end{defi}

\begin{rmk} \label{rmk:filt-n}
    For any integer $n \geq 1$, there are algebraic stacks $\mathrm{Filt}^n(\mathcal{Y})$ and $\mathrm{Grad}^n(\mathcal{Y})$ of $\Z^n$-filtered and $\Z^n$-graded objects of $\mathcal{Y}$ respectively. We only consider these stacks when $n = 1$, since we have no need to work with $\mathrm{Filt}^n(\mathcal{Y})$ or $\mathrm{Grad}^n(\mathcal{Y})$ when $n > 1$.
\end{rmk}

The natural morphisms relating $\Theta_B$, $B\mathbb{G}_{m,B}$ and $B$ induce natural morphisms relating $\mathrm{Filt}(\mathcal{Y})$ and $\mathrm{Grad}(\mathcal{Y})$:
\begin{itemize}
    \item The morphism $0 : B\mathbb{G}_{m,B} \to \Theta_B$ corresponds to the \emph{associated graded} morphism, denoted $\mathrm{gr} : \mathrm{Filt}(\mathcal{Y}) \to \mathrm{Grad}(\mathcal{Y})$.

    \item The morphism $q : \Theta_B \to B\mathbb{G}_{m,B}$ corresponds to the \emph{split filtration} morphism, denoted $\sigma : \mathrm{Grad}(\mathcal{Y}) \to \mathrm{Filt}(\mathcal{Y})$.

    \item The morphism $1 : B \to \Theta_B$ corresponds to the \emph{evaluation at $1$} morphism, denoted $\mathrm{ev}_1 : \mathrm{Filt}(\mathcal{Y}) \to B$.
\end{itemize}

We denote by $\mathrm{ev}_0 : \mathrm{Filt}(\mathcal{Y}) \to B$ the composition $\mathrm{ev}_1 \circ \sigma \circ \mathrm{gr}$ and denote by $u : \mathrm{Grad}(\mathcal{Y}) \to \mathcal{Y}$ the composition $\mathrm{ev}_1 \circ \sigma$.

\begin{ex}
    Take $B = \Spec k$ (where $k$ is a field), and take $\mathcal{Y}$ to be the moduli stack of vector bundles on a smooth projective curve $C$ over $k$ of rank $r$ and degree $d$. Then, as shown in \cite[Proposition 1.0.1]{Halpern-Leistner} and \cite[Lemma 1.10]{Heinloth-HM}, specifying a morphism $f : \Theta_k \to \mathcal{Y}$ is equivalent to specifying a vector bundle $\mathcal{E}$ over $C$ of rank $r$ and degree $d$, together with a decreasing $\Z$-indexed filtration
    $$ \mathcal{E}^{\bullet} : \cdots \subset \mathcal{E}^{i+1} \subset \mathcal{E}^i \subset \mathcal{E}^{i-1} \subset \cdots $$
    of subbundles of $\mathcal{E}$, with $\mathcal{E}^i = 0$ for $i \gg 0$ and $\mathcal{E}^i = \mathcal{E}$ for $i \ll 0$. The associated graded morphism $\mathrm{gr}$ sends this to the $\Z$-indexed sequence of vector bundles $(\mathcal{E}^i/\mathcal{E}^{i+1})_{i \in \Z}$, and the evaluation morphism $\mathrm{ev}_1$ sends this to $\mathcal{E}$ (i.e. forgets the filtration). The morphism $\sigma$ assigns to $(\mathcal{E}^i/\mathcal{E}^{i+1})_{i \in \Z}$ the vector bundle $\mathcal{F} := \bigoplus_{i \in \mathbb{Z}} \mathcal{E}^i/\mathcal{E}^{i+1}$, with the induced split filtration given by $\mathcal{F}_i := \bigoplus_{j \geq i} \mathcal{E}^j/\mathcal{E}^{j+1}$.
\end{ex}

\subsubsection{An Alternative Description of $\mathrm{Grad}(\mathcal{Y})$}

The algebraic stack $\mathrm{Grad}(\mathcal{Y})$ admits an alternative description in terms of the stack of homomorphisms from $\mathbb{G}_{m, \mathcal{Y}}$ to the inertia stack $\mathcal{I}_{\mathcal{Y}} = \mathcal{I}_{\mathcal{Y}/B}$ of $\mathcal{Y}$.

Let $S$ be a scheme over $B$, and let $\xi \in \mathcal{Y}(S)$. We denote by $\underline{\mathrm{Aut}}_{\mathcal{Y}}(\xi)$ the sheaf of groups on $S$ assigning to a morphism $S' \to S$ the group $\mathrm{Aut}_{\mathcal{Y}}(\xi|_{S'})$.

\begin{prop} \label{prop:alt-desc-grad}
    Assume that the morphism $\mathcal{Y} \to B$ is quasi-separated and locally of finite presentation, and that all points of $\mathcal{Y}$ have affine automorphism groups in their fiber over $B$. Let $S$ be a scheme over $B$, and let $\xi \in \mathcal{Y}(S)$. Then there exists a Cartesian diagram
    \[\begin{tikzcd}[ampersand replacement=\&,cramped]
	{\underline{\mathrm{Hom}}_{S-\mathrm{gp}}(\mathbb{G}_{m,S}, \underline{\mathrm{Aut}}_{\mathcal{Y}}(\xi))} \&\& {\mathrm{Grad}(\mathcal{Y})} \\
	\\
	S \&\& {\mathcal{Y}}
	\arrow[from=1-1, to=1-3]
	\arrow[from=1-1, to=3-1]
	\arrow["\lrcorner"{anchor=center, pos=0.125}, draw=none, from=1-1, to=3-3]
	\arrow["u", from=1-3, to=3-3]
	\arrow["\xi", from=3-1, to=3-3]
\end{tikzcd}\]
In particular, there exists a canonical equivalence
$$ \mathrm{Grad}(\mathcal{Y}) \cong \underline{\mathrm{Hom}}_{\mathcal{Y}-\mathrm{gp}}(\mathbb{G}_{m,\mathcal{Y}}, \mathcal{I}_{\mathcal{Y}}) $$
of stacks over $\mathcal{Y}$. Moreover, after using this isomorphism to view the objects of $\mathrm{Grad}(\mathcal{Y})(S)$ as pairs of the form $(\xi : S \to \mathcal{Y}, \lambda : \mathbb{G}_{m,S} \to \underline{\mathrm{Aut}}_{\mathcal{Y}}(\xi))$, an isomorphism $(\xi_1, \lambda_1) \cong (\xi_2, \lambda_2)$ in $\mathrm{Grad}(\mathcal{Y})(S)$ is the same data as an isomorphism $\alpha : \xi_1 \stackrel{\simeq}{\to} \xi_2$ in $\mathcal{Y}(S)$ for which
$$ \alpha = \lambda_2(t) \circ \alpha \circ \lambda_1(t)^{-1} $$
for all $t \in \mathbb{G}_{m,S}$. 

\end{prop}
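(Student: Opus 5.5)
The idea is to identify both sides of the Cartesian square as functors on $S$-schemes and match them directly, using the description of $B\mathbb{G}_{m}$-torsors.

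\emph{Morphisms out of $B\mathbb{G}_{m}$.} The key input is that a morphism $B\mathbb{G}_{m,B} \times_B T \to \mathcal{Y}$ is the same as an object of $\mathcal{Y}$ together with a $\mathbb{G}_m$-action on it. Concretely, let $T \to B\mathbb{G}_{m,T}$ be the trivial torsor, which is an fppf cover. By descent, a $1$-morphism $g : B\mathbb{G}_{m,T} \to \mathcal{Y}$ amounts to two pieces of data:
\begin{itemize}
\item an object $\eta := g|_T \in \mathcal{Y}(T)$, where $g|_T$ is the pullback of $g$ along the cover, which is precisely $u(g)$;
\item descent data on $\eta$ relative to the groupoid $\mathbb{G}_{m,T} \rightrightarrows T$.
\end{itemize}
The groupoid $\mathbb{G}_{m,T} \rightrightarrows T$ has both maps equal to the identity of $T$. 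Its descent data is therefore an element of $\mathrm{Aut}_{\mathcal{Y}}(\eta_{\mathbb{G}_{m,T}})$, that is, a morphism of sheaves $\lambda : \mathbb{G}_{m,T} \to \underline{\mathrm{Aut}}_{\mathcal{Y}}(\eta)$. The cocycle condition over $\mathbb{G}_m \times \mathbb{G}_m$ is exactly the statement that $\lambda$ is a group homomorphism, and normalization at the identity is automatic.

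Since $B\mathbb{G}_{m,T} = [T/\mathbb{G}_{m,T}]$, this recovers the standard equivalence
\[
\mathrm{Map}(B\mathbb{G}_{m,T}, \mathcal{Y}) \simeq \{(\eta, \lambda)\}.
\]
Morphisms $(\eta_1,\lambda_1) \to (\eta_2,\lambda_2)$ are the isomorphisms $\alpha : \eta_1 \to \eta_2$ compatible with the descent data, meaning $\alpha \circ \lambda_1(t) = \lambda_2(t) \circ \alpha$. This equation is the stated condition $\alpha = \lambda_2(t)\alpha\lambda_1(t)^{-1}$.

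\emph{Fibre product and the global equivalence.} For a $T$-point of $S$, the fibre product $S \times_{\mathcal{Y}} \mathrm{Grad}(\mathcal{Y})$ consists of triples $(g, \eta, \phi : \xi_T \simeq u(g))$. Using the $2$-isomorphism $\phi$ to transport $\lambda$, such a triple is equivalent to a homomorphism $\mathbb{G}_{m,T} \to \underline{\mathrm{Aut}}_{\mathcal{Y}}(\xi_T)$. Under this identification, isomorphisms of triples are forced to be unique: an automorphism must fix $\phi$ and hence fix $\alpha$. The fibre product is therefore a sheaf, equal to $\underline{\mathrm{Hom}}_{S\text{-gp}}(\mathbb{G}_{m,S}, \underline{\mathrm{Aut}}_{\mathcal{Y}}(\xi))$. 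This gives the Cartesian square. The global equivalence $\mathrm{Grad}(\mathcal{Y}) \cong \underline{\mathrm{Hom}}_{\mathcal{Y}\text{-gp}}(\mathbb{G}_{m,\mathcal{Y}}, \mathcal{I}_{\mathcal{Y}})$ then follows formally, because $\mathcal{I}_{\mathcal{Y}} \times_{\mathcal{Y}} S = \underline{\mathrm{Aut}}_{\mathcal{Y}}(\xi)$ and all the constructions are compatible with base change in $S$.

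\emph{Role of the hypotheses.} The only subtle point is that the identification should hold as stacks and not merely pointwise, and that both sides be algebraic. Algebraicity is provided by Theorem \ref{thm:algebraicity-filt-grad} under the stated hypotheses, with alternative reference \cite[Proposition 1.1.2]{Halpern-Leistner} and its proof. The descent step itself is where I expect the main care to be needed: making the equivalence functorial in $T$ requires the $2$-categorical bookkeeping of descent for $[T/\mathbb{G}_m]$. I would cite this rather than redo it, for instance via \cite[Lemma 1.3.8]{Halpern-Leistner} or the analogous statement in \cite{Modin}.
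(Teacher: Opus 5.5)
Your proposal is correct and is essentially the paper's approach. The paper simply cites \cite[Lemma 1.1.5]{Halpern-Leistner}, and your argument is the standard one behind that lemma: descend a map $B\mathbb{G}_{m,T} \to \mathcal{Y}$ along the trivial torsor $T \to B\mathbb{G}_{m,T}$ to a pair $(\eta,\lambda)$, then observe that the fibre product over $\xi$ is fibred in setoids. Two small corrections: the reference to cite for the bookkeeping is Lemma 1.1.5 of \cite{Halpern-Leistner}, and the objects of the fibre product should be triples $(T \to S, g, \phi)$, not $(g,\eta,\phi)$.
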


\begin{proof}
    This is \cite[Lemma 1.1.5]{Halpern-Leistner}.
\end{proof}

\subsubsection{Weak $\Theta$-Actions and Stacks of Filtered Objects}

We recall the notion of a weak action of a monoid stack $\mathcal{M}$ on a stack $\mathcal{X}$.

\begin{defi}
    Let $\mathcal{X}$ be a stack and let $\mathcal{M}$ be a monoid stack. A \emph{weak action} of $\mathcal{M}$ on $\mathcal{X}$ consists of an action map $a: \mathcal{M} \times \mathcal{X} \to \mathcal{X}$ together with $2$-isomorphisms $a|_{\{1\} \times \mathcal{X}} \cong \mathrm{id}_{\mathcal{X}}$ and $a \circ (\mathrm{id}_{\mathcal{M}} \times a) \cong a \circ (\mu \times \mathrm{id}_{\mathcal{X}}) : \mathcal{M} \times \mathcal{M} \times \mathcal{X} \to \mathcal{X}$, where $\mu : \mathcal{M} \times \mathcal{M} \to \mathcal{M}$ is the monoid multiplication.
\end{defi}

Denoting by $\mu : \Theta_B \times_B \Theta_B \to \Theta_B$ multiplication map on $\Theta_B$, there is a natural morphism $\Theta_B \times_B \mathrm{Filt}(\mathcal{Y}) \to \mathrm{Filt}(\mathcal{Y})$, corresponding to the composition
\[\begin{tikzcd}[ampersand replacement=\&,cramped]
	{\Theta_B \times_B \Theta_B \times_B \mathrm{Filt}(\mathcal{Y})} \&\& {\Theta_B \times_B \mathrm{Filt}(\mathcal{Y})} \&\& {\mathcal{Y}}
	\arrow["{\mu \times \mathrm{id}_{\mathrm{Filt}(\mathcal{Y}).}}", from=1-1, to=1-3]
	\arrow["{\mathrm{ev}}", from=1-3, to=1-5]
\end{tikzcd}\]
Similarly, there is a morphism $B\mathbb{G}_{m,B} \times_B \mathrm{Grad}(\mathcal{Y}) \to \mathrm{Grad}(\mathcal{Y})$.

\begin{prop} \label{prop:filt-weak-actions}
Assume that the morphism $\mathcal{Y} \to B$ is quasi-separated and locally of finite presentation, and that all points of $\mathcal{Y}$ have affine automorphism groups in their fiber over $B$. The morphisms $\Theta_B \times_B \mathrm{Filt}(\mathcal{Y}) \to \mathrm{Filt}(\mathcal{Y})$ and $B\mathbb{G}_{m,B} \times_B \mathrm{Grad}(\mathcal{Y}) \to \mathrm{Grad}(\mathcal{Y})$ both define weak actions, with respect to which the morphisms $\mathrm{gr}: \mathrm{Filt}(\mathcal{Y}) \to \mathrm{Grad}(\mathcal{Y})$ and $\sigma : \mathrm{Grad}(\mathcal{Y}) \to \mathrm{Filt}(\mathcal{Y})$ are equivariant (up to $2$-isomorphism) with respect to $q : \Theta_B \to B\mathbb{G}_{m,B}$ and $0 : B\mathbb{G}_{m,B} \to \Theta_B$ respectively. Moreover, the weak $\Theta_B$-action (resp. weak $B\mathbb{G}_{m,B}$-action) restricts to any open and closed substack $\mathcal{X} \subset \mathrm{Filt}(\mathcal{Y})$ (resp. $\mathcal{X} \subset \mathrm{Grad}(\mathcal{Y})$).

Conversely, let $\mathcal{X}$ be an algebraic stack which is locally of finite presentation over $B$ with a weak action of $\Theta_B$ (resp. $B\mathbb{G}_{m,B}$). Assume further that the diagonal of $\mathcal{X} \to B$ is affine. Then the map $\phi : \mathcal{X} \to \mathrm{Filt}(\mathcal{X})$ (resp. $\phi : \mathcal{X} \to \mathrm{Grad}(\mathcal{X})$) corresponding to the action map $\Theta_B \times_B \mathcal{X} \to \mathcal{X}$ (resp. $B\mathbb{G}_{m,B} \times_B \mathcal{X} \to \mathcal{X}$) is a $\Theta_B$-equivariant (resp. $B\mathbb{G}_{m,B}$-equivariant) open and closed immersion for which $\mathrm{ev}_1 : \phi(\mathcal{X}) \to \mathcal{X}$ (resp. $u : \phi(\mathcal{X}) \to \mathcal{X}$) is an isomorphism.
\end{prop}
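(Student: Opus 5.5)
The first half of the statement I would deduce formally from the functoriality of mapping stacks. For the existence and algebraicity of $\mathrm{Filt}(\mathcal{Y})$ and $\mathrm{Grad}(\mathcal{Y})$ I would invoke Theorem \ref{thm:algebraicity-filt-grad}. The action map $a : \Theta_B \times_B \mathrm{Filt}(\mathcal{Y}) \to \mathrm{Filt}(\mathcal{Y})$ is, by adjunction, precomposition with $\mu \times \mathrm{id}_S : \Theta_B \times_B \Theta_B \times_B S \to \Theta_B \times_B S$. The unit $2$-isomorphism comes from the strict identification $\mu \circ (1 \times \mathrm{id}) \cong \mathrm{id}_{\Theta_B}$. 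The associativity $2$-isomorphism comes from the associativity isomorphism of $\mu$, given on $(\mathcal{L}_i, s_i)$ by the canonical isomorphism $(\mathcal{L}_1 \otimes \mathcal{L}_2) \otimes \mathcal{L}_3 \cong \mathcal{L}_1 \otimes (\mathcal{L}_2 \otimes \mathcal{L}_3)$. The same argument with $B\mathbb{G}_{m,B}$ handles $\mathrm{Grad}(\mathcal{Y})$.

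For equivariance, $\mathrm{gr}$ and $\sigma$ are precomposition with $0 : B\mathbb{G}_{m,B} \to \Theta_B$ and $q : \Theta_B \to B\mathbb{G}_{m,B}$. Since these are monoid homomorphisms, there are $2$-isomorphisms $\mu \circ (q \times 0) \cong 0 \circ \mu_{B\mathbb{G}_m}$ and $q \circ \mu \cong \mu \circ (q \times q)$, and these give the required $2$-isomorphisms after precomposition. I will spell out the variance: "$\mathrm{gr}$ equivariant with respect to $q$" means $\mathrm{gr}(\theta \cdot \xi) \cong q(\theta) \cdot \mathrm{gr}(\xi)$, which holds because $\mu(0(g), \theta) \cong 0(g \otimes q(\theta))$.

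Restriction to open and closed substacks $\mathcal{X} \subset \mathrm{Filt}(\mathcal{Y})$ needs $a(\Theta_B \times \mathcal{X}) \subset \mathcal{X}$. The stack $\Theta_B \times_B \mathcal{X}$ is the union of its connected-component-wise pieces. Each point of $\Theta_B \times \mathcal{X}$ specializes, within $\{0\} \cup \{1\}$-fibres, to a point in the image of $1 \times \mathrm{id}$, because $|\Theta_B|$ is connected and $0$ lies in the closure of $1$. Since $a \circ (1 \times \mathrm{id}) \cong \mathrm{id}$, $a^{-1}(\mathcal{X})$ is open and closed and contains $\{1\} \times \mathcal{X}$. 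It therefore contains every point whose fibre over $|\mathcal{X}|$ is the connected space $|\Theta_{k}|$, which gives $a^{-1}(\mathcal{X}) = \Theta_B \times_B \mathcal{X}$.

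For the converse, let $\phi : \mathcal{X} \to \mathrm{Filt}(\mathcal{X})$ be adjoint to $a$; algebraicity of the target is again Theorem \ref{thm:algebraicity-filt-grad}. Equivariance of $\phi$ follows from the associativity $2$-isomorphism. The unit gives $\mathrm{ev}_1 \circ \phi \cong \mathrm{id}_{\mathcal{X}}$, so $\phi$ is a section of $\mathrm{ev}_1 : \mathrm{Filt}(\mathcal{X}) \to \mathcal{X}$. The key claim is then that $\phi$ is an open and closed immersion, and I expect this to be the main obstacle. My plan is to use that $\mathrm{ev}_1$ is representable and, on each fibre, the filtrations of a point $x$ form a disjoint union, indexed by discrete data (cocharacters up to conjugacy), of pieces that are étale (indeed unramified and étale-like) over $\mathcal{X}$. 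I would then show the following:
\begin{itemize}
    \item[(a)] $\mathrm{ev}_1$ is étale on the locus in question. This should follow from the deformation theory of maps from $\Theta$: the relative tangent complex is $R\Gamma(\Theta, f^{\ast}\mathbb{T}_{\mathcal{X}})$ of nonzero weights, and I would use Halpern-Leistner's results on $\mathrm{Filt}$ to obtain it.
    \item[(b)] A section of a separated étale (or unramified) morphism is an open immersion, and it is closed when $\mathrm{ev}_1$ is separated, which is guaranteed by the affine diagonal hypothesis via Theorem \ref{thm:algebraicity-filt-grad}.
\end{itemize}
If étaleness fails in general, I would instead argue more directly. I would show that $\phi(\mathcal{X})$ is the locus of filtrations $f$ for which $a \circ (\mathrm{id} \times \mathrm{ev}_1 f) \cong f$ extends compatibly. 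Finally I would check that this locus is open and closed, using the rigidity of maps $\Theta \to \mathcal{X}$ (weights being locally constant), and would cite \cite{Halpern-Leistner} for that rigidity. Once $\phi$ is an open and closed immersion with $\mathrm{ev}_1 \circ \phi \cong \mathrm{id}$, the restriction $\mathrm{ev}_1 : \phi(\mathcal{X}) \to \mathcal{X}$ is an isomorphism. The $\mathrm{Grad}$ case is identical, using Proposition \ref{prop:alt-desc-grad}, where the relevant locus is a component of $\underline{\mathrm{Hom}}(\mathbb{G}_m, \mathcal{I}_{\mathcal{X}})$.
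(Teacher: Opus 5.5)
Your first half is essentially fine. The unit and associativity $2$-isomorphisms come from the monoid structure of $\Theta_B$, and equivariance of $\mathrm{gr}$ reduces to $\mu(0(g),\theta)\cong 0(g\otimes q(\theta))$. Restriction to open and closed substacks follows because the image of $1\times\mathrm{id}$ is a dense open in $\Theta_B\times_B\mathcal{X}$; points over $0$ are specializations of points over $1$, not conversely, but your fibrewise connectedness argument is correct. Closedness of $\phi$ is also fine once $\mathrm{ev}_1$ is known to be representable and separated. However, separatedness does not follow from Theorem \ref{thm:algebraicity-filt-grad} as you claim: an affine diagonal for $\mathrm{Filt}(\mathcal{X})\to B$ does not make $\mathrm{ev}_1$ separated. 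It follows instead from the affine diagonal of $\mathcal{X}$ and schematic density of the open point of $\Theta$. Together these imply that an isomorphism of filtrations is determined by its restriction to $1$, and that the locus where a given isomorphism at $1$ extends is closed.

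The genuine gap is the step you flag yourself: you never prove that $\phi$ is an open immersion. Your justification for (a), that filtrations of a point split into pieces étale over $\mathcal{X}$, is false. For example $\mathrm{Filt}(BG)=\bigsqcup_\lambda BP_G(\lambda)$, and $\mathrm{ev}_1\colon BP_G(\lambda)\to BG$ has fibre $G/P_G(\lambda)$. Even for $\mathcal{X}=BP$ with $P=P_G(\lambda)$ a parabolic, which does carry a weak $\Theta$-action, the components $BP_P(\eta)$ of $\mathrm{Filt}(BP)$ typically have positive-dimensional fibres $P/P_P(\eta)$. The deformation theory actually says that the relative tangent complex of $\mathrm{ev}_1$ at $f$ is governed by the weights of one sign of $f^{\ast}\mathbb{T}_{\mathcal{X}}$ at $0\in\Theta$; for $BG$ it is $\mathfrak{g}/\mathrm{Lie}(P_G(\lambda))$. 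So étaleness along $\phi(\mathcal{X})$ requires showing that, for $f=\phi(x)$, the tangent complex of $\mathcal{X}$ at $\sigma(\mathrm{gr}(x))$ has no weights of that sign for the canonical cocharacter. This is exactly where the weak $\Theta$-action must enter, and your proposal does not address it. For a general non-smooth $\mathcal{X}$ this route also needs the derived mapping stack, a non-perfect cotangent complex, and a comparison back to the classical $\mathrm{Filt}(\mathcal{X})$. Your fallback does not close the gap either. The locus $\{f : f\cong\phi(\mathrm{ev}_1(f))\}$ is closed, but its openness is the whole content of the claim, and local constancy of weights gives no reason for it. The same issue affects the $\mathrm{Grad}$ case, where saying the relevant locus is a component of $\underline{\mathrm{Hom}}(\mathbb{G}_m,\mathcal{I}_{\mathcal{X}})$ is precisely what must be proved. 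The paper does not reprove this step; it cites \cite[Proposition 1.4.1, Corollary 1.4.2.1]{Halpern-Leistner-Derived}.
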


\begin{proof}
    For the assertion that $\Theta_B \times_B \mathrm{Filt}(\mathcal{Y}) \to \mathrm{Filt}(\mathcal{Y})$ defines a weak action which restricts to any open and closed substack, see \cite[Proposition 1.2.4]{Halpern-Leistner-Derived}. For the assertion that weak $\Theta_B$-actions on algebraic stacks with affine diagonals induce open and closed immersions into the stack of filtered objects, see \cite[Proposition 1.4.1, Corollary 1.4.2.1]{Halpern-Leistner-Derived}. The analogous statements involving weak actions of $B\mathbb{G}_{m,B}$ and open and closed immersions into stacks of graded objects follow \emph{mutatis mutandis}. The equivariance of the morphisms $\mathrm{gr}: \mathrm{Filt}(\mathcal{Y}) \to \mathrm{Grad}(\mathcal{Y})$ and $\sigma : \mathrm{Grad}(\mathcal{Y}) \to \mathrm{Filt}(\mathcal{Y})$ is easily checked.
\end{proof}

\begin{defi}[\cite{Halpern-Leistner-Derived}] \label{defn:center-weak-action}
    Let $\mathcal{X}$ be an algebraic stack which is locally of finite presentation over $B$ and for which $\mathcal{X} \to B$ has an affine diagonal. Let $a : \Theta_B \times_B \mathcal{X} \to \mathcal{X}$ be a weak action of $\Theta_B$ on $\mathcal{X}$. Let $\phi : \mathcal{X} \to \mathrm{Filt}(\mathcal{X})$ be the morphism classifying this action. The \emph{center} of the weak action of $\Theta_B$ on $\mathcal{X}$ is the stack $\mathcal{Z} := \mathcal{X} \times_{\phi, \mathrm{Filt}(\mathcal{X}), \sigma} \mathrm{Grad}(\mathcal{X}) = \sigma^{-1}(\mathcal{X})$:
    \[\begin{tikzcd}[ampersand replacement=\&]
	{\mathcal{Z}} \&\& {\mathrm{Grad}(\mathcal{X})} \\
	\\
	{\mathcal{X}} \&\& {\mathrm{Filt}(\mathcal{X})}
	\arrow[from=1-1, to=1-3]
	\arrow[from=1-1, to=3-1]
	\arrow["\lrcorner"{anchor=center, pos=0.125}, draw=none, from=1-1, to=3-3]
	\arrow["\sigma", from=1-3, to=3-3]
	\arrow["\phi"', from=3-1, to=3-3]
    \end{tikzcd}\]
\end{defi}

In the setting of Definition \ref{defn:center-weak-action}, the weak action of $B\mathbb{G}_{m,B}$ on $\mathrm{Grad}(\mathcal{X})$ restricts to a weak action on the open and closed substack $\mathcal{Z} \subset \mathrm{Grad}(\mathcal{X})$, and there are induced equivariant morphisms $\mathrm{gr} : \mathcal{X} \to \mathcal{Z}$ and $\sigma : \mathcal{Z} \to \mathcal{X}$, for which there are isomorphisms $\sigma \circ \mathrm{gr} \cong a(0,-) : \mathcal{X} \to \mathcal{X}$ and $\mathrm{gr} \circ \sigma \cong \mathrm{id}_{\mathcal{Z}}$, and for which the diagram
\[\begin{tikzcd}
	{\mathcal{X}} && {\mathrm{Filt}(\mathcal{X})} \\
	\\
	{\mathcal{Z}} && {\mathrm{Grad}(\mathcal{X})}
	\arrow["\phi", from=1-1, to=1-3]
	\arrow["{\mathrm{gr}}"', from=1-1, to=3-1]
	\arrow["\lrcorner"{anchor=center, pos=0.125}, draw=none, from=1-1, to=3-3]
	\arrow["{\mathrm{gr}}", from=1-3, to=3-3]
	\arrow[from=3-1, to=3-3]
\end{tikzcd}\]
is Cartesian. 

\begin{defi}
    The induced morphism $\mathrm{gr} : \mathcal{X} \to \mathcal{Z}$ is known as the \emph{retraction} associated to the weak $\Theta_B$-action on $\mathcal{X}$. The morphism $\sigma : \mathcal{Z} \to \mathcal{X}$ is known as the \emph{zero section}.
\end{defi}

The morphism $\mathcal{Z} \to B$ is locally of finite presentation and has an affine diagonal (as this is true for $\mathrm{Grad}(\mathcal{X}) \to B$). In particular, $\mathrm{gr} : \mathcal{X} \to \mathcal{Z}$ and $\sigma : \mathcal{Z} \to \mathcal{X}$ are both morphisms locally of finite presentation. We also have that the equivariant open and closed immersion $\mathcal{Z} \to \mathrm{Grad}(\mathcal{X})$ factors as an equivariant open and closed immersion $\mathcal{Z} \to \mathrm{Grad}(\mathcal{Z})$ followed by the morphism $\mathrm{Grad}(\mathcal{Z}) \to \mathrm{Grad}(\mathcal{X})$ induced by $\sigma : \mathcal{Z} \to \mathcal{X}$. 

We remark that the stacks $\mathcal{X}$ and $\mathcal{Z}$, together with the morphisms $\mathrm{gr}$ and $\sigma$, define a \emph{$\Theta_B$-action retract}, in the sense of \cite[Sections 5.1.10 and 5.1.11]{BHLINK-Intrinsic-DT}. In particular, this implies that the retraction $\mathrm{gr}$ is a \emph{$\Theta_B$-deformation retract}, in the sense of \cite[Section 1.3.1]{Halpern-Leistner}, i.e. the diagram
    \[\begin{tikzcd}[ampersand replacement=\&]
	{\Theta_B \times_B \mathcal{X}} \&\& {\mathcal{X}} \\
	\\
	{\mathcal{X}} \&\& {\mathcal{Z}}
	\arrow["a", from=1-1, to=1-3]
	\arrow["{\mathrm{pr}_{\mathcal{X}}}"', from=1-1, to=3-1]
	\arrow["{{\mathrm{gr}}}", from=1-3, to=3-3]
	\arrow["{\mathrm{gr}}"', from=3-1, to=3-3]
    \end{tikzcd}\]
is (2-)commutative. Consequently, if $\mathcal{U} \subset \mathcal{X}$ is any open substack with $\sigma^{-1}(\mathcal{U}) = \mathcal{Z}$, then $\mathcal{U} = \mathcal{X}$, and moreover that $\mathrm{gr}$ induces a bijection on connected components (with inverse given by $\sigma$).

\begin{ex}
    If $\mathcal{S}$ is a $\Theta_B$-stratum in an algebraic stack $\mathcal{X}$ over $B$, in the sense of \cite{Halpern-Leistner}, then $\mathcal{S}$ is isomorphic over $B$ to an open and closed substack of the stack $\mathrm{Filt}(\mathcal{X})$, so admits a weak $\Theta_B$-action. This action can be chosen such that the center of $\mathcal{S}$ associated to the weak $\Theta_B$-action coincides with the center of $\mathcal{S}$ as a $\Theta_B$-stratum.
\end{ex}

\begin{rmk}
    An analog of Proposition \ref{prop:filt-weak-actions} holds for weak actions of $\Theta_B^n$ for any $n \geq 1$, by replacing $\mathrm{Filt}$ with $\mathrm{Filt}^n$ and $\mathrm{Grad}$ with $\mathrm{Grad}^n$. Given a weak action of $\Theta_B^n$ on an algebraic stack $\mathcal{X}$ over $B$ (quasi-separated, locally of finite presentation and with an affine diagonal), there is an induced weak action of $\Theta_B$, given by taking the diagonal morphism $\Theta_B \to \Theta_B^n$; these actions will have the same center $\mathcal{Z}$. For the purposes of this article, it is only this induced diagonal action we are interested in considering (cf. Remark \ref{rmk:filt-n}).
\end{rmk}

\subsubsection{The Canonical Rigidification of the Center}

Let $\mathcal{X}$ be an algebraic stack which is quasi-separated and locally of finite presentation over $B$, with an affine diagonal, and with a weak action $a : \Theta_B \times_B \mathcal{X} \to \mathcal{X}$ of $\Theta_B$. Let $\mathcal{Z}$ be the center of this action.

There exists a canonical homomorphism $\lambda^{\mathrm{can}} : \mathbb{G}_{m,\mathcal{Z}} \to \mathcal{I}_{\mathcal{Z}}$ of group stacks over $\mathcal{Z}$, characterized as follows. Suppose $\xi : S \to \mathcal{Z} \subset \mathrm{Grad}(\mathcal{X})$ is a morphism from a scheme. By Proposition \ref{prop:alt-desc-grad}, the $S$-point $\xi$ corresponds to a pair $(u(\xi), \lambda_{\xi} : \mathbb{G}_{m,S} \to \underline{\mathrm{Aut}}_{\mathcal{X}}(u(\xi)))$. An automorphism of $\xi$ over $S$ is then an automorphism of $u(\xi) \in \mathcal{X}(S)$ invariant under conjugating by $\lambda_{\xi}$. Since $\lambda_{\xi}$ commutes with itself, it defines a cocharacter $\mathbb{G}_{m,S} \to \underline{\mathrm{Aut}}_{\mathcal{Z}}(\xi)$; over $S$, $\lambda^{\mathrm{can}}$ is this cocharacter.

Let $\mathcal{G}_{\mathcal{Z}}^{\mathrm{can}} \subset \mathcal{I}_{\mathcal{Z}}$ denote the image of $\lambda^{\mathrm{can}}$.

\begin{prop} \label{prop:rigidified-center}
    The image $\mathcal{G}_{\mathcal{Z}}^{\mathrm{can}}$ is a central closed subgroup stack of $\mathcal{I}_{\mathcal{Z}}$ which is flat and of finite presentation over $\mathcal{Z}$. Moreover, the rigidification $\mathcal{Z} \to \mathcal{Z} \!\!\! \fatslash \mathcal{G}_{\mathcal{Z}}^{\mathrm{can}}$ is well-defined, and over any connected component $\mathcal{Z}'$ of $\mathcal{Z}$ is either a $\mathbb{G}_m$-gerbe or an isomorphism, the latter happening if and only if $\lambda^{\mathrm{can}}|_{\mathcal{Z}'}$ factors through the identity section $\mathcal{Z}' \to \mathcal{I}_{\mathcal{Z}'}$.
\end{prop}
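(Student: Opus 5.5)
The plan is to establish the properties of $\lambda^{\mathrm{can}}$ locally and then descend. Because all statements are local on $\mathcal{Z}$ (smooth/fppf local), we may fix a smooth cover and work with an $S$-point $\xi : S \to \mathcal{Z}$, with $S$ affine, corresponding via Proposition \ref{prop:alt-desc-grad} to a pair $(u(\xi), \lambda_\xi)$.

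\emph{Centrality.} By the description of automorphisms of graded objects in Proposition \ref{prop:alt-desc-grad}, any automorphism $\alpha$ of $\xi$ satisfies $\alpha = \lambda_\xi(t)\alpha\lambda_\xi(t)^{-1}$. Hence $\lambda^{\mathrm{can}}$ lands in the center of $\underline{\mathrm{Aut}}_{\mathcal{Z}}(\xi)$.

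\emph{Closedness, flatness and finite presentation of the image.} The diagonal of $\mathcal{Z}$ is affine, so $G := \underline{\mathrm{Aut}}_{\mathcal{Z}}(\xi)$ is an affine group scheme of finite presentation over $S$. The kernel $K := \ker(\lambda_\xi) \subset \mathbb{G}_{m,S}$ is a closed subgroup scheme. I would first show that, locally on $S$ (after passing to connected components), $K$ is either all of $\mathbb{G}_{m,S}$ or $\mu_{n,S}$ for some $n \geq 1$. The key input is the rigidity of homomorphisms out of tori: $\underline{\mathrm{Hom}}(\mathbb{G}_m, G)$ is representable, and its relevant component is smooth/étale over $S$. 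The cleanest route is to use the Cartesian diagram of Proposition \ref{prop:alt-desc-grad} together with the fact that $\mathcal{Z} \to \mathrm{Grad}(\mathcal{Z})$ is an open and closed immersion. Concretely, consider the map $\mathbb{Z} \times \mathcal{Z} \to \mathrm{Grad}(\mathcal{Z})$ sending $n \mapsto (\lambda^{\mathrm{can}})^n$. The locus where $(\lambda^{\mathrm{can}})^n$ is trivial is the preimage of the identity-cocharacter component $\mathcal{Z} \subset \mathrm{Grad}(\mathcal{Z})$, which is open and closed. So for each $n$, the condition ``$\mu_n \subset K$'' is open and closed on $S$, and likewise the condition $K = \mathbb{G}_m$. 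Since $K$ is a closed subgroup of $\mathbb{G}_m$ of finite presentation, on each connected component it equals either $\mathbb{G}_{m}$ or a constant $\mu_n$, with $n$ the smallest integer for which the above condition holds. Consequently $\mathbb{G}_{m,S}/K$ is either trivial or $\mathbb{G}_{m,S}$ (via the $n$-th power map). The induced monomorphism $\mathbb{G}_{m,S}/K \to G$ is then a monomorphism from a group of multiplicative type into an affine finitely presented group, and hence a closed immersion (SGA3, Exp.~IX). Its image is therefore flat and of finite presentation, being isomorphic to $S$ or $\mathbb{G}_{m,S}$. All of these constructions are compatible with base change, so they descend to a closed subgroup stack $\mathcal{G}^{\mathrm{can}}_{\mathcal{Z}} \subset \mathcal{I}_{\mathcal{Z}}$.

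\emph{The rigidification.} Given a central, flat, finitely presented closed subgroup, the rigidification exists by the standard theory of Abramovich--Olsson--Vistoli (or the stacks project), and $\mathcal{Z} \to \mathcal{Z} \fatslash \mathcal{G}^{\mathrm{can}}_{\mathcal{Z}}$ is a $\mathcal{G}^{\mathrm{can}}_{\mathcal{Z}}$-gerbe. On a connected component $\mathcal{Z}'$, the dichotomy from the previous step is constant, since it was cut out by open and closed conditions. Thus $\mathcal{G}^{\mathrm{can}}|_{\mathcal{Z}'}$ is either trivial, in which case the rigidification is an isomorphism and $\lambda^{\mathrm{can}}$ factors through the identity section, or isomorphic to $\mathbb{G}_m$, in which case the rigidification is a $\mathbb{G}_m$-gerbe.

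The main obstacle is the second step: showing that the kernel is locally constant of the form $\mu_n$ or $\mathbb{G}_m$, rather than varying in a non-flat way. I would try the open-and-closed-component argument in $\mathrm{Grad}(\mathcal{Z})$ first, and fall back on rigidity of multiplicative-type subgroups (SGA3) if that argument proves insufficient.
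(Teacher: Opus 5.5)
Your overall route is the right one: centrality from the description of automorphisms in $\mathrm{Grad}$ (Proposition \ref{prop:alt-desc-grad}), then the kernel of $\lambda^{\mathrm{can}}$ being locally $\mathbb{G}_m$ or $\mu_q$, then closedness of the image via SGA3's theorem on monomorphisms out of multiplicative-type groups, then \cite[Theorem A.1]{AOV-tame-stacks}. Those surrounding steps are fine. The gap is in the step you flagged, and the mechanism you propose there does not work. Since $(\lambda^{\mathrm{can}})^n = \lambda^{\mathrm{can}} \circ [n]$, and $[n] : \mathbb{G}_{m,S} \to \mathbb{G}_{m,S}$ is an fppf epimorphism for $n \neq 0$, $(\lambda^{\mathrm{can}})^n$ is trivial if and only if $\lambda^{\mathrm{can}}$ is trivial. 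Pulling back the trivial-grading locus along $n \mapsto (\lambda^{\mathrm{can}})^n$ therefore only shows that $\{K = \mathbb{G}_m\}$ is open and closed. That suffices for the final dichotomy, but note that the open and closed immersion $\mathcal{Z} \to \mathrm{Grad}(\mathcal{Z})$ recorded in the paper classifies the weak $B\mathbb{G}_m$-action, $z \mapsto (z, \lambda^{\mathrm{can}}_z)$. So openness and closedness of the trivial-grading locus is a separate fact you would still have to prove. More importantly, the argument says nothing about the finite part of $K$, i.e. it does not show that $\{\mu_n \subset K\}$ is open and closed. Also, for $K = \mu_q$ the $n$ with $\mu_n \subset K$ are the divisors of $q$, so you want the largest such $n$, not the smallest. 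Nor can you fall back on $K$ being a finitely presented closed subgroup of $\mathbb{G}_{m,S}$: for example, $V(s(t-1)) \subset \mathbb{G}_{m,\mathbb{A}^1_s}$ is such a subgroup, with fibre $\mathbb{G}_m$ at $s=0$ and trivial fibres elsewhere.

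What you need is that, for each $n \geq 1$, the locus where $\lambda^{\mathrm{can}}|_{\mu_n}$ is trivial is open and closed. Equivalently, this is the locus where $\lambda^{\mathrm{can}}$ is an $n$-th power, i.e. the image of the monomorphism $(\cdot)^n$ on $\mathrm{Grad}(\mathcal{Z})$. This is the input the paper uses: it defers to \cite[Proposition 4.9]{Modin}, and in the proof of Proposition \ref{prop:gms-of-Xrig} it records the consequence $\ker(\lambda^{\mathrm{can}}|_S) \cong D_S(\Z/q\Z)$ from \cite[Proposition 1.3.9]{Halpern-Leistner}. Alternatively, your SGA3 fallback works if you apply it to $\mu_n$ rather than to powers. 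The locus is closed and cut out by finitely many equations, since $\mathcal{O}(\mu_{n,S})$ is finite free over $\mathcal{O}_S$ and $G$ is of finite presentation. It is open because a deformation of the trivial homomorphism $\mu_n \to G$ over a square-zero thickening is a homomorphism from $\mu_n$ into a vector group, hence trivial. Even then you need one more step. On a connected $S$ where $\lambda^{\mathrm{can}}$ is nowhere trivial, you get fibres $K_s = \mu_n$ with $n$ constant and a factorization $\lambda^{\mathrm{can}} = \lambda' \circ [n]$. You must still show that $\ker \lambda'$, which has trivial fibres but is not a priori flat, is the unit section. One way: it is unramified over $S$, so its unit section is open, closed and surjective. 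Only then is $\lambda'$ a monomorphism, so that SGA3 makes its image closed and flat.
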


\begin{proof}
    This is \cite[Proposition 4.9]{Modin}.
\end{proof}

\subsubsection{The Internal $\mathbb{G}_m$-Action Arising from a Weak $\Theta$-Action} \label{sec:internal-weak-Gm-action}

Let $\mathcal{X}$ and $\mathcal{Z}$ be as above. By pulling back along $B \to B\mathbb{G}_{m,B}$, the weak $\Theta_B$-action on $\mathcal{X}$ lifts to a weak $\mathbb{A}_B^1$-action on $\mathcal{X}$, with respect to which the retraction $\mathrm{gr} : \mathcal{X} \to \mathcal{Z}$ is invariant. By restricting to the open subscheme $\mathbb{G}_{m,B} \subset \mathbb{A}_B^1$, we obtain a weak action\footnote{In the notation of \cite{Modin}, this is denoted by $\mathrm{rs}_{\ast}$. In fact, as shown in \cite[Lemma 4.2]{Modin}, the morphism $a'$ is a strict $\mathbb{G}_{m,B}$-action.} $a' : \mathbb{G}_{m,B} \times_B \mathcal{X} \to \mathcal{X}$.

On the other hand, suppose $\xi : S \to \mathcal{Z} \subset \mathrm{Grad}(\mathcal{X})$ is a morphism from a scheme. Then, on $\mathcal{X}_S := \mathcal{X} \times_\mathcal{Z} S$, there is a weak action\footnote{In the notation of \cite{Modin}, this is denoted by 
$\mu^{\mathrm{can}}$.} $a_S^{\mathrm{can}} : \mathbb{G}_{m,S} \times_S \mathcal{X}_S \to \mathcal{X}_S$, defined as follows. Given an $S$-scheme $S'$, an $S'$-point of $\mathcal{X}_S$ is equivalent to the data of an object $\zeta \in \mathcal{X}(S')$ and an invariant isomorphism $\alpha : u(\mathrm{gr}(\zeta)) \stackrel{\simeq}{\to} u(\xi|_{S'})$ in $\mathcal{X}(S')$, where invariance is with respect to the canonical cocharacters $\lambda_{\xi|_{S'}} : \mathbb{G}_{m,S'} \to \underline{\mathrm{Aut}}_{\mathcal{X}}(u(\xi|_{S'}))$ and $\lambda_{\mathrm{gr}(\zeta)} : \mathbb{G}_{m,S'} \to \underline{\mathrm{Aut}}_{\mathcal{X}}(u(\mathrm{gr}(\zeta)))$, as in Proposition \ref{prop:alt-desc-grad}. Over $S'$, the morphism $a_S^{\mathrm{can}}$ is defined by
$$ (t, (\zeta, \alpha)) \mapsto (\zeta, \alpha \circ \lambda_{\mathrm{gr}(\zeta)}(t^{-1})). $$

By pulling back $a'$, we obtain a second weak action $a_S' : \mathbb{G}_{m,S} \times_S \mathcal{X}_S \to \mathcal{X}_S$. In fact, these two weak actions turn out to be equivalent.

\begin{lem} \label{lem:internal-Gm-action}
    There is an isomorphism $a_S' \cong a_S^{\mathrm{can}}$ of morphisms $\mathbb{G}_{m,S} \times_S \mathcal{X}_S \to \mathcal{X}_S$. Moreover, $a_S^{\mathrm{can}}$ factors through $\mathcal{G}_S^{\mathrm{can}} \times_S \mathcal{X}_S$. 
    
    In particular, if $\mathcal{G}_{\mathcal{Z}}^{\mathrm{can}}$ is the trivial $\mathcal{Z}$-group stack then $a'$ is equivalent to the trivial $\mathbb{G}_{m,B}$-action on $\mathcal{X}$, and hence the weak action on $\mathcal{X}$ is equivalent to the trivial $\Theta_B$-action.
\end{lem}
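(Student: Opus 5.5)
The plan is to compare the two actions pointwise on $S'$-points of $\mathcal{X}_S$, using the description of $\mathrm{Grad}(\mathcal{X})$ from Proposition \ref{prop:alt-desc-grad} and the fact that $\mathcal{X}$ embeds into $\mathrm{Filt}(\mathcal{X})$ via $\phi$ (Proposition \ref{prop:filt-weak-actions}).

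\textbf{Step 1: describe $a'$ on objects.} An $S'$-point $\zeta$ of $\mathcal{X}$ corresponds to a morphism $f_\zeta : \Theta_{S'} \to \mathcal{X}$ with $f_\zeta(1) \cong \zeta$. For $t \in \mathbb{G}_m(S')$, the point $a'(t,\zeta)$ is $f_\zeta$ precomposed with multiplication by $t$ on $\Theta_{S'}$, evaluated at $1$, i.e. $f_\zeta(t)$. The point $t \in \mathbb{A}^1$ is $\mathbb{G}_m$-equivalent to $1$ via the element $t^{-1}$. This gives a canonical isomorphism $a'(t,\zeta) \cong \zeta$ in $\mathcal{X}(S')$. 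Hence on the level of $\mathcal{X}$ alone, $a'$ acts trivially on objects, and all the information lies in how the action interacts with the retraction data.

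\textbf{Step 2: track the framing $\alpha$.} An object of $\mathcal{X}_S$ is a pair $(\zeta,\alpha)$, with $\alpha : u(\mathrm{gr}(\zeta)) \to u(\xi|_{S'})$ invariant. Applying $a'$ and transporting along the isomorphism of Step 1, the new pair is $(\zeta, \alpha \circ \beta_t)$. Here $\beta_t$ is the automorphism of $u(\mathrm{gr}(\zeta)) = f_\zeta(0)$ induced by restricting the $\mathbb{G}_m$-equivalence $t^{-1}$ to the origin $0 \in \mathbb{A}^1$. Since $0$ is $\mathbb{G}_m$-fixed, the stabilizer $\mathbb{G}_m$ of $B\mathbb{G}_m \hookrightarrow \Theta$ acts at $f_\zeta(0)$ through the cocharacter $\lambda_{\mathrm{gr}(\zeta)}$. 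By the very definition of the identification in Proposition \ref{prop:alt-desc-grad}, this gives $\beta_t = \lambda_{\mathrm{gr}(\zeta)}(t^{-1})$, matching the formula for $a_S^{\mathrm{can}}$.

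The main obstacle is making this $2$-categorical bookkeeping coherent. One has to check that the isomorphisms of Step 1 are natural in $(t,\zeta)$ and compatible with pullback in $S'$, so that they assemble into a $2$-isomorphism $a'_S \cong a^{\mathrm{can}}_S$. They also need to respect the sign and inverse conventions of Proposition \ref{prop:alt-desc-grad}. I would handle this by working universally on $\Theta_B \times \mathcal{X}$ with the tautological family, and citing \cite[Lemma 4.2]{Modin} for strictness of $a'$.

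\textbf{Step 3: factorization and the trivial case.} By invariance of $\alpha$, we have $\alpha \circ \lambda_{\mathrm{gr}(\zeta)}(t^{-1}) = \lambda_{\xi}(t^{-1}) \circ \alpha$. So $a_S^{\mathrm{can}}(t,-)$ is post-composition of the framing with $\lambda_\xi(t^{-1}) = \lambda^{\mathrm{can}}(t^{-1})$, which depends on $t$ only through its image in $\mathcal{G}^{\mathrm{can}}_S$. This gives the factorization through $\mathcal{G}_S^{\mathrm{can}} \times_S \mathcal{X}_S$.

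If $\mathcal{G}_{\mathcal{Z}}^{\mathrm{can}}$ is trivial, then $\lambda_\xi \equiv 1$, so $a'_S$ is trivial for every $\xi$. Taking $\xi$ to range over a smooth atlas of $\mathcal{Z}$ and descending shows that $a'$ is equivalent to the trivial action. Finally, the weak $\Theta_B$-action is then determined by $\mathrm{gr}$ together with the (now trivial) $\mathbb{A}^1$-action. Using the $\Theta$-deformation retract diagram and the fact that the action map on $\Theta_B \times \mathcal{X}$ is determined by its restriction along $\mathbb{A}^1_B \times \mathcal{X}$ plus descent along $\mathbb{G}_m$, this identifies it with the trivial action $a = \mathrm{pr}_{\mathcal{X}}$.
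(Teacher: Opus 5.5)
Your route differs from the paper's mainly in that the paper does not argue the first paragraph at all. It imports both claims from \cite[Lemmas 4.4 and 4.10]{Modin} and says the rest ``easily follows''.

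\textbf{First paragraph.} Your direct computation has the right core. Since $\mathbb{G}_{m,B}\to\Theta_B$ is $2$-isomorphic to $\mathbb{G}_{m,B}\to B\xrightarrow{1}\Theta_B$, the action $a'$ does nothing to objects of $\mathcal{X}$. Its only effect on $\mathcal{X}_S$ is the stabilizer of the origin acting on the framing through $\lambda_{\mathrm{gr}(\zeta)}$. Your factorization argument is correct and more transparent than a citation: invariance of $\alpha$ converts precomposition by $\lambda_{\mathrm{gr}(\zeta)}(t^{-1})$ into postcomposition by $\lambda_\xi(t^{-1})$. That is the natural action of $\underline{\mathrm{Aut}}_{\mathcal{Z}}(\xi)$ on $\mathcal{X}\times_{\mathcal{Z},\xi}S$, pulled back along $t\mapsto\lambda^{\mathrm{can}}(t^{-1})$.

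What the citation buys is the coherence you defer, and it sits in a specific place. Defining $a'_S$ requires the $\mathrm{gr}$-invariance $2$-isomorphism of $a'$, which comes from the associativity constraint of the action. In Step 2 you tacitly treat it as the identity. The honest $\beta_t$ is that invariance isomorphism composed with $u(\mathrm{gr}(\gamma_t))^{-1}$, where $\gamma_t\colon a'(t,\zeta)\cong\zeta$ is your Step 1 isomorphism. Identifying this composite with $\lambda_{\mathrm{gr}(\zeta)}(t^{-1})$, sign included, is a genuine if routine check of the unit and associativity compatibilities.

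\textbf{Last step.} The weak point here is shared with the paper. You write ``the (now trivial) $\mathbb{A}^1$-action'', but you have only shown triviality over $\mathbb{G}_{m,B}\times_B\mathcal{X}$. This passage is not formal, for three reasons:
\begin{itemize}
\item The relevant $\mathrm{Isom}$ is only affine over $\mathbb{A}^1_B\times_B\mathcal{X}$, so a $2$-isomorphism on the dense open part need not extend across $\{0\}\times\mathcal{X}$.
\item An extension would already give $\sigma\circ\mathrm{gr}\cong a(0,-)\cong\mathrm{id}_{\mathcal{X}}$, i.e.\ that $\mathrm{gr}$ is an equivalence, which is real content.
\item Your own Step 1 shows $a'\cong\mathrm{pr}_{\mathcal{X}}$ as a bare morphism for \emph{every} weak action. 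So any argument must use triviality relative to $\mathcal{Z}$, and the deformation-retract diagram does not supply this.
\end{itemize}
The paper asserts the same implication without proof.

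Where the lemma is used (Proposition \ref{prop:gms-of-Xrig}), the conclusion actually needed concerns $\mathcal{X}^{\mathrm{rig}}$. Its base changes $\mathcal{X}^{\mathrm{rig}}_S$ are algebraic spaces. By the uniqueness clause of Theorem \ref{theorem:attractor_loci_existence}, the trivial $\mathbb{G}_{m,S}$-action there extends only to the trivial $\mathbb{A}^1_S$-action, which closes the gap in that case. For $\mathcal{X}$ itself, one sufficient route is to show that $\phi$ factors through the constant-filtration map $\mathcal{X}\to\mathrm{Filt}(\mathcal{X})$ (induced by $\Theta_B\to B$) once all the $\lambda_{\mathrm{gr}(x)}$ are trivial. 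That would settle both this step and your $\mathbb{G}_m$-descent from $\mathbb{A}^1_B$ to $\Theta_B$.
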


\begin{proof}
    The assertions in the first paragraph are \cite[Lemmas 4.4 and 4.10]{Modin}. It then easily follows that the triviality of $\mathcal{G}_{\mathcal{Z}}^{\mathrm{can}}$ implies that $a'$ is equivalent to the trivial action. 
    
    For the final assertion, the triviality of $a'$ implies that the weak $\mathbb{A}_B^1$-action on $\mathcal{X}$ is equivalent to the trivial action, which implies that the original weak $\Theta_B$-action is equivalent to the trivial action.
\end{proof}

\subsubsection{Filtered and Graded Objects of Quotient Stacks} \label{section:filt-grad-quotient-stacks}

Let $k$ be a field. We restrict attention to $\mathcal{Y} = [X/G]$, where $X$ is a quasi-separated scheme over $B = \Spec k$ and where $G$ is a smooth linear algebraic group over $k$ with a split maximal torus $T \subset G$. Let $W = N_G(T)/C_G(T)$ be the corresponding Weyl group.

Given a cocharacter $\lambda : \mathbb{G}_{m,k} \to T$, denote by $X^{\lambda,+}$ (resp. $X^{\lambda,0}$) the attractor (resp. fixed) locus for the induced action of $\mathbb{G}_{m,k}$ on $X$ via $\lambda$. The homomorphism $\lambda$ also yields a conjugation action of $\mathbb{G}_{m,k}$ on $G$, given by $t \cdot g = \lambda(t)g\lambda(t)^{-1}$; let $P_G(\lambda)$ (resp. $L_G(\lambda)$) denote the attractor (resp. fixed) locus for this action. $P_G(\lambda)$ and $L_G(\lambda)$ are subgroups of $G$, and the natural map $P_G(\lambda) \to L_G(\lambda)$ is a split surjective group homomorphism; denote by $U_G(\lambda)$ the kernel of this homomorphism, so that $P_G(\lambda) = U_G(\lambda) \rtimes L_G(\lambda)$. The cocharacter $\lambda : \mathbb{G}_{m,k} \to G$ factors through the center $Z(L_G(\lambda)) \subset G$ of $L_G(\lambda)$. If $G$ is reductive, then $P_G(\lambda)$ is a parabolic subgroup of $G$, and $U_G(\lambda)$ is the unipotent radical of $P_G(\lambda)$.

Since the formation of attractor and fixed loci commutes with products, applying $(-)^{\lambda,+}$ (resp. $(-)^{\lambda,0}$) to the action map $G \times X \to X$ yields a group action $P_G(\lambda) \times X^{\lambda,+} \to X^{\lambda,+}$ (resp. $L_G(\lambda) \times X^{\lambda,0} \to X^{\lambda,0}$). The natural map $X^{\lambda,+} \to X^{\lambda,0}$ is equivariant with respect to $P_G(\lambda) \to L_G(\lambda)$, so there is an induced map on quotient stacks $[X^{\lambda,+}/P_G(\lambda)] \to [X^{\lambda,0}/L_G(\lambda)]$. Similarly, there are natural maps $[X^{\lambda,0}/L_G(\lambda)] \to [X^{\lambda,+}/P_G(\lambda)]$ and $[X^{\lambda,+}/P_G(\lambda)] \to [X/G]$.

There is also a natural map $[X^{\lambda,+}/P_G(\lambda)] \to \mathrm{Filt}([X/G])$, defined as follows. The trivial principal $G$-bundle over $\mathbb{A}_k^1 \times X^{\lambda,+}$ admits a well-defined $\mathbb{G}_{m,k} \times P_G(\lambda)$-equivariant structure, given by the formula
$$ (t,p) \cdot (z,x,g) = (tz, p \cdot x, \lambda(tz)p\lambda(z)^{-1}g), $$
yielding an action of $\mathbb{G}_{m,k} \times P_G(\lambda)$ on $\mathbb{A}_k^1 \times X^{\lambda,+} \times G$ which commutes with right multiplication by $G$, and for which the map
$$ \mathbb{A}_k^1 \times X^{\lambda,+} \times G \to X, \quad (z,x,g) \mapsto g^{-1}\lambda(z) \cdot x $$
is $\mathbb{G}_{m,k} \times P_G(\lambda)$-equivariant. This defines a morphism $\Theta_k \times [X^{\lambda,+}/P_G(\lambda)] \to [X/G]$, and hence a morphism $[X^{\lambda,+}/P_G(\lambda)] \to \mathrm{Filt}([X/G])$.

We also have a natural map $[X^{\lambda,0}/L_G(\lambda)] \to \mathrm{Grad}([X/G])$, corresponding to the morphism $B\mathbb{G}_{m,k} \times [X^{\lambda,0}/L_G(\lambda)] = [X^{\lambda,0}/(\mathbb{G}_{m,k} \times L_G(\lambda))] \to [X/G]$ defined by the inclusion $X^{\lambda,0} \hookrightarrow X$, which is equivariant with respect to the homomorphism $\mathbb{G}_{m,k} \times L_G(\lambda) \to G$ given by $(t,l) \mapsto \lambda(t)l$.

\begin{teo} \label{thm:filt-grad-quotient-stack}
    Let $k$ be a field, and let $\mathcal{Y} = [X/G]$ be the quotient of a quasi-separated $k$-scheme $X$ by a smooth linear algebraic $k$-group $G$, with a split maximal torus $T$ and corresponding Weyl group $W$. 
    
    Then, for any cocharacter $\lambda : \mathbb{G}_m \to G$, the natural maps $[X^{\lambda,+}/P_G(\lambda)] \to \mathrm{Filt}([X/G])$ and $[X^{\lambda,0}/L_G(\lambda)] \to \mathrm{Grad}([X/G])$ are open and closed immersions. The restriction of the morphism $\mathrm{gr} : \mathrm{Filt}([X/G]) \to \mathrm{Grad}([X/G])$ to $[X^{\lambda,+}/P_G(\lambda)]$ is given by the natural morphism $[X^{\lambda,+}/P_G(\lambda)] \to [X^{\lambda,0}/L_G(\lambda)]$, the restriction of $\sigma : \mathrm{Grad}([X/G]) \to \mathrm{Filt}([X/G])$ to $[X^{\lambda,0}/L_G(\lambda)]$ is the natural map $[X^{\lambda,0}/L_G(\lambda)] \to [X^{\lambda,+}/P_G(\lambda)]$, and the restriction of the evaluation morphism $\mathrm{ev}_1 : \mathrm{Filt}([X/G]) \to [X/G]$ to $[X^{\lambda,+}/P_G(\lambda)]$ is the natural morphism $[X^{\lambda,+}/P_G(\lambda)] \to [X/G]$. 
    
    Moreover, after fixing representatives for each orbit of the action of $W$ on $\mathrm{Hom}(\mathbb{G}_{m,k},T)$, the natural maps $[X^{\lambda,+}/P_G(\lambda)] \to \mathrm{Filt}([X/G])$ and $[X^{\lambda,0}/L_G(\lambda)] \to \mathrm{Grad}([X/G])$ yield isomorphisms
    $$ \mathrm{Filt}([X/G]) \cong \bigsqcup_{\lambda \in \mathrm{Hom}(\mathbb{G}_{m,k},T)/W} [X^{\lambda,+}/P_G(\lambda)] $$ 
    and
    $$ \mathrm{Grad}([X/G]) \cong \bigsqcup_{\lambda \in \mathrm{Hom}(\mathbb{G}_{m,k},T)/W} [X^{\lambda,0}/L_G(\lambda)]. $$
\end{teo}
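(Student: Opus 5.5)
This result is due to Halpern-Leistner, and the plan follows his argument, organized around Proposition \ref{prop:alt-desc-grad} and the description of maps out of $\Theta_S$.

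\emph{Construction and the graded case.} First I will check that the maps are well defined. For $[X^{\lambda,0}/L_G(\lambda)] \to \mathrm{Grad}([X/G])$ this amounts to checking that the homomorphism $\mathbb{G}_{m} \times L_G(\lambda) \to G$, $(t,l) \mapsto \lambda(t)l$, is a homomorphism; it is, because $\lambda$ is central in $L_G(\lambda)$. For the filtered map, the substantive point is that $\lambda(tz)p\lambda(z)^{-1}$ extends over $z = 0$, and this holds because $p \in P_G(\lambda)$.

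For $\mathrm{Grad}$ I will then use Proposition \ref{prop:alt-desc-grad}. An $S$-point of $\mathrm{Grad}([X/G])$ is a $G$-torsor $E \to S$ with an equivariant map $E \to X$, together with a cocharacter $\mathbb{G}_{m,S} \to \underline{\mathrm{Aut}}(E)$ that fixes the map to $X$. Working fppf-locally on $S$, I can trivialize $E$, so the cocharacter becomes a homomorphism $\mathbb{G}_{m,S} \to G_S$ stabilizing $x \in X(S)$. Homomorphisms $\mathbb{G}_m \to G$ are étale-locally conjugate to a constant cocharacter of $T$; this is rigidity of tori, using that $\underline{\mathrm{Hom}}(\mathbb{G}_m,G)$ is a disjoint union of smooth $G$-homogeneous pieces $G/L_G(\lambda)$. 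Moreover, the $W$-orbit of this cocharacter is locally constant. Reducing the structure group to the centralizer $L_G(\lambda)$ of $\lambda$ then identifies the locus of $W$-type $\lambda$ with $[X^{\lambda,0}/L_G(\lambda)]$. Since this locus is locally constant in the type, it is open and closed, and I get the disjoint union decomposition.

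\emph{The filtered case.} An $S$-point of $\mathrm{Filt}([X/G])$ is a $G$-torsor on $\Theta_S$ with an equivariant map to $X$. I will first show that a $G$-torsor on $\Theta_S$ is, locally on $S$, isomorphic to one of the form $\mathbb{A}^1_S \times G$ with $\mathbb{G}_m$-linearization twisted by some $\lambda$. To see this, restrict the torsor along $0 : B\mathbb{G}_{m,S} \to \Theta_S$. This gives a graded object whose cocharacter $\lambda$ is locally constant by the graded case. The torsor restricted to $\mathbb{A}^1_S$ is then locally trivial; here I use smoothness of $G$ and Henselian/$\mathbb{G}_m$-equivariant deformation along the nilpotent thickenings of $0$, which extends to all of $\mathbb{A}^1$ by the contracting action. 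Given such a trivialization, an equivariant map $\mathbb{A}^1_S \to X$ compatible with $\lambda$ is exactly a point of $X^{\lambda,+}(S)$ via $z \mapsto \lambda(z)\cdot x$. The automorphisms of the $\lambda$-twisted trivial torsor are the $\mathbb{G}_m$-equivariant maps $\mathbb{A}^1 \to G$ for the twisted conjugation action, namely maps $z \mapsto \lambda(z)p\lambda(z)^{-1}$ with $p \in P_G(\lambda)$. This identifies the type-$\lambda$ locus with $[X^{\lambda,+}/P_G(\lambda)]$.

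\emph{Compatibility of the morphisms.} Next I will verify the compatibilities with $\mathrm{gr}$, $\sigma$ and $\mathrm{ev}_1$ by restricting the explicit torsor along $0$, $q$ and $1$. For $\mathrm{gr}$, setting $z = 0$ gives $x \mapsto \lim_{t\to 0} \lambda(t)x$ and the homomorphism $p \mapsto$ its Levi component. For $\mathrm{ev}_1$, setting $z = 1$ gives the forgetful map $[X^{\lambda,+}/P_G(\lambda)] \to [X/G]$. Open-and-closedness in the filtered case will then follow from the graded case, since $\mathrm{Filt}$ decomposes by the type of the associated graded, which is locally constant.

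\emph{Main obstacle.} I expect the hardest step to be the local triviality of $G$-torsors on $\Theta_S$ over a general base $S$, i.e.\ showing that such a torsor is equivariantly trivial after an fppf cover of $S$ rather than only over a field. My first approach would be to cite the deformation argument of Halpern-Leistner, Lemma 1.4.7 / Theorem 1.4.8 in \cite{Halpern-Leistner}. This statement is exactly \cite[Theorem 1.4.8]{Halpern-Leistner}, so in the paper I would ultimately just invoke it.
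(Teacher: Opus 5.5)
Your proposal is correct and matches the paper, whose entire proof is the citation \cite[Theorem 1.4.8]{Halpern-Leistner}; your outline is a faithful reconstruction of that argument. The step you flag as the main obstacle, equivariant local triviality of $G$-torsors on $\Theta_S$, does work as you sketch it. Equivariant lifts along the thickenings $\{z^n = 0\}$ exist by smoothness of $G$ and linear reductivity of $\mathbb{G}_m$, and a compatible system of graded maps to $R[z]/(z^n)$ stabilizes degreewise, so it defines a graded map to $R[z]$; still, citing the reference, as the paper does, is all that is needed.
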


\begin{proof}
    This is \cite[Theorem 1.4.8]{Halpern-Leistner}.
\end{proof}

\subsection{Good Moduli Spaces}

We recall Alper's notion of a good moduli space of an algebraic stack, which is the stack-intrinsic analog of the notion of a GIT good quotient for an action of a linearly reductive group.

\begin{defi}[\cite{Alper-GMS}]
    A morphism $\phi : \mathcal{Y} \to Y$ from an algebraic stack $\mathcal{Y}$ to an algebraic space $Y$ is a \emph{good moduli space} if:
    \begin{enumerate}
        \item $\phi$ is quasi-compact and quasi-separated;
        \item the natural map $\mathcal{O}_Y \to \phi_{\ast} \mathcal{O}_{\mathcal{Y}}$ is an isomorphism; and
        \item $\phi$ is cohomologically affine, i.e. the functor $\phi_{\ast} : \mathrm{QCoh}(\mathcal{Y}) \to \mathrm{QCoh}(Y)$ is exact.
    \end{enumerate}
    We say $\phi$ is a \emph{tame moduli space} if in addition, for every algebraically closed field $k$, the induced morphism $[\mathcal{Y}(k)] \to Y(k)$ from the set of isomorphism classes of $k$-points of $\mathcal{Y}$ to the set of $k$-points of $Y$ is a bijection.
\end{defi}

\begin{rmk}
    More generally, a morphism $\phi : \mathcal{X} \to \mathcal{Y}$ between algebraic stacks is said to be a \emph{good moduli space morphism} if $\phi$ satisfies conditions $(1)-(3)$ above; in this sense, a good moduli space is a good moduli space morphism whose target is an algebraic space.
\end{rmk}

\begin{ex} \label{ex:alper-affine-projective-git}
    Let $k$ be a field, and let $G$ be a linear algebraic $k$-group which is linearly reductive, acting on a quasi-compact $k$-scheme $X$. If $X = \Spec A$ is affine, then by \cite[Theorem 13.6]{Alper-GMS} the natural map
    $$ [X/G] \to X /\!\!/G := \Spec A^G $$
    is a good moduli space. In general, let $\mathcal{L}$ be a $G$-linearized invertible sheaf on $X$, and let $X^{ss}(\mathcal{L})$ be the associated semistable locus, in the sense of \cite{MFK-GIT}. Then by \cite[Theorem 13.6]{Alper-GMS} the natural map
    $$ [X^{ss}(\mathcal{L})/G] \to \mathrm{Proj} \bigoplus_{m \geq 0} H^0(X, \mathcal{L}^m)^G $$
    has open image $X /\!\!/_{\mathcal{L}} G$ (if $X$ is projective and if $\mathcal{L}$ is ample then there is an equality $X /\!\!/_{\mathcal{L}} G = \mathrm{Proj} \bigoplus_{m \geq 0} H^0(X, \mathcal{L}^m)^G$), and the map $ \phi : [X^{ss}(\mathcal{L})/G] \to X /\!\!/_{\mathcal{L}} G$ is a good moduli space. Moreover, if $X^{Ms}(\mathcal{L}) \subset X^{ss}(\mathcal{L})$ denotes the Mumford stable locus,\footnote{i.e. the locus of semistable points $x \in X^{ss}(\mathcal{L})$ whose $G$-orbits in $X^{ss}(\mathcal{L})$ are closed, but without the requirement that $\dim \mathrm{Stab}_G(x) = 0$.} then the restriction of $\phi$ to $[X^{Ms}(\mathcal{L})/G]$ is a tame moduli space onto its image.
\end{ex}

\subsubsection{Basic Properties of Good Moduli Spaces}

Let $\mathcal{Y}$ be an algebraic stack with a good moduli space $\phi : \mathcal{Y} \to Y$. Assume further that the stack $\mathcal{Y}$ is quasi-separated. Then the morphism $\phi$ enjoys the following properties (cf. \cite{Alper-GMS}, unless indicated otherwise):

\begin{enumerate}
    \item $\phi$ is surjective, universally closed and universally submersive. In particular, by the same argument used to prove \cite[Tag 09MQ]{stacks-project}, the target $Y$ is also quasi-separated.
    \item $\phi$ is universal for morphisms to algebraic spaces \cite[Theorem 3.12]{AHR-Etale}. 
    \item For any two closed substacks $\mathcal{Z}_1, \mathcal{Z}_2$ of $\mathcal{Y}$, we have $\mathrm{im}(\mathcal{Z}_1) \cap \mathrm{im}(\mathcal{Z}_2) = \mathrm{im}(\mathcal{Z}_1 \cap \mathcal{Z}_2)$ as closed subspaces of $Y$, where the intersections and images are scheme-theoretic.
    \item Let $k$ be an algebraically closed field, and let $[\mathcal{Y}(k)]_{\mathrm{S}}$ denote the set of $\sim_{\mathrm{S}}$-equivalence classes of $k$-points of $\mathcal{Y}$, where $y_1 \sim_{\mathrm{S}} y_2$ if and only if their closures in $\mathcal{Y} \times_{\Z} \Spec k$ intersect. Then the natural map $[\mathcal{Y}(k)]_{\mathrm{S}} \to Y(k)$ is a bijection.
    \item Good moduli spaces are stable under base changes $Y' \to Y$, where $Y'$ is a quasi-separated algebraic space. Conversely, if $\phi' : \mathcal{Y} \to Y$ is a morphism from an algebraic stack to a quasi-separated algebraic space, and if $Y' \to Y$ is an fpqc morphism of quasi-separated algebraic spaces such that the base change $\phi'' : \mathcal{Y} \times_Y Y' \to Y'$ of $\phi'$ is a good moduli space, then $\phi'$ is a good moduli space.
    \item If $\mathcal{A}$ is a quasi-coherent sheaf of $\mathcal{O}_{\mathcal{Y}}$-algebras, then $\underline{\Spec}_{\mathcal{Y}} \mathcal{A} \to \underline{\Spec}_Y \phi_{\ast} \mathcal{A}$ is a good moduli space. In particular, an algebraic stack $\mathcal{Y}'$ which is affine over $\mathcal{Y}$ (including any closed substack of $\mathcal{Y}$) admits a good moduli space $Y'$ which is affine over $Y$.
    \item If $\mathcal{U} \subset \mathcal{Y}$ is an open substack of $\mathcal{Y}$, then the restriction of $\phi$ to $\mathcal{U}$ yields a good moduli space $\mathcal{U} \to \phi(\mathcal{U})$ if and only if $\mathcal{U}$ is \emph{$\phi$-saturated}, i.e. that $\phi^{-1}(\phi(\mathcal{U})) = \mathcal{U}$.
    \item For any morphism $g : Y' \to Y$ of algebraic spaces and for any quasi-coherent sheaf $\mathcal{F}$ of $\mathcal{O}_{\mathcal{Y}}$-modules, the base change map $g^{\ast}\phi_{\ast} \mathcal{F} \to \phi_{\ast}' (g')^{\ast} \mathcal{F}$ is an isomorphism.
    \item If in addition $\mathcal{Y}$ is locally Noetherian then the following all hold:
    \begin{enumerate}
        \item[(i)] Any closed point of $\mathcal{Y}$ has a linearly reductive automorphism group.
        \item[(ii)] For any closed point $y \in |Y|$, there exists a unique closed point $\widetilde{y} \in |\phi^{-1}(y)|$.
        \item [(iii)] The pullback functor $\phi^{\ast}$ induces an equivalence of categories between the category of locally free $\mathcal{O}_Y$-modules of finite rank and the full subcategory of locally free $\mathcal{O}_{\mathcal{Y}}$-modules of finite rank with trivial stabilizer action at closed points; the inverse functor is given by $\phi_{\ast}$.
    \end{enumerate}
    
    \item If $\mathcal{Y}$ is of finite type over an excellent scheme $B$, then so is $Y$.
\end{enumerate}

\subsubsection{Relation to GIT Good Quotients}

We recall the notion of a good quotient, in the sense of Seshadri \cite{Seshadri-Quotient}.\footnote{The definition of a good quotient we provide here is slightly more general, in that we allow the base to be an algebraic space, as opposed to a scheme in \cite{Seshadri-Quotient}.}

\begin{defi}
    Let $G$ be a linear algebraic group over $k$, acting on a quasi-separated $k$-scheme $X$. Let $f : X \to Y$ be a morphism to an algebraic space $Y$. We say that $f$ is a \emph{good $G$-quotient} if the following properties hold:
    \begin{enumerate}
        \item the morphism $f$ is surjective, affine and $G$-invariant;
        \item the natural map $\mathcal{O}_Y \to f_{\ast} \mathcal{O}_X$ induces an isomorphism $\mathcal{O}_Y \cong (f_{\ast} \mathcal{O}_X)^G$;
        \item the image of a $G$-invariant closed subspace of $|X|$ is closed in $|Y|$; and
        \item $f$ separates disjoint $G$-invariant closed subspaces of $|X|$.
    \end{enumerate}
    If in addition the geometric fibers of $f$ are in bijection with the $G$-orbits in $X$, we say that $f$ is a \emph{geometric good $G$-quotient}. 
\end{defi}

Let $G$ be a linear algebraic $k$-group, acting on a quasi-separated $k$-scheme $X$. Extending Example \ref{ex:alper-affine-projective-git}, we have the following relationship between the notion of a good moduli space of $[X/G]$ and a good quotient of $X$ by $G$:

\begin{itemize}
    \item If $[X/G]$ admits a good moduli space $[X/G] \to X /\!\!/G$, then $X \to X /\!\!/G$ is a good $G$-quotient.

    \item If $G$ is linearly reductive and if there exists a good $G$-quotient $X \to X /\!\!/ G$, then the natural map $[X/G] \to X/\!\!/G$ is a good moduli space.

    \item If $G$ is geometrically reductive and if there exists a good $G$-quotient $X \to X /\!\!/ G$, then in general $[X/G] \to X /\!\!/ G$ is not a good moduli space, only an adequate moduli space (in the sense of \cite{Alper-AMS}). Assuming in addition that $X$ is of finite type over $k$, it follows from \cite[Theorem 4.21]{AHR-Etale} that $[X/G] \to X /\!\!/ G$ is a good moduli space if and and only if all closed $G$-orbits in $X$ have a linearly reductive stabilizer.
\end{itemize}

In characteristic $0$, the notions of good and adequate moduli spaces coincide, as well as the notions of linearly reductive and geometrically reductive algebraic groups. In this case, if $G$ is reductive then $[X/G]$ admits a schematic good moduli space if and only if there exists a good $G$-quotient $X \to X/\!\!/G$.

\subsubsection{The Étale Local Structure of a Good Moduli Space}

Over an algebraically closed field, the étale local structure of a good moduli space is given by affine reductive GIT, in the following sense.

\begin{teo} \label{thm:gms-etale-local-structure}
    Let $\mathcal{Y}$ be a Noetherian algebraic stack over $B = \Spec k$, where $k$ is an algebraically closed field. Assume that $\mathcal{Y}$ admits a good moduli space $\phi : \mathcal{Y} \to Y$ with an affine diagonal. Let $y \in \mathcal{Y}(k)$ be a closed point. Then there exists an affine scheme $\Spec A$ with an action of the (linearly reductive) $k$-group $G_y := \mathrm{Aut}_{\mathcal{Y}}(y)$ and a Cartesian diagram
    \[\begin{tikzcd}[ampersand replacement=\&]
	{[\Spec A/G_y]} \&\& {\mathcal{Y}} \\
	\\
	{\Spec A /\!\!/ G_y} \&\& Y
	\arrow[from=1-1, to=1-3]
	\arrow[from=1-1, to=3-1]
	\arrow["\lrcorner"{anchor=center, pos=0.125}, draw=none, from=1-1, to=3-3]
	\arrow[from=1-3, to=3-3]
	\arrow[from=3-1, to=3-3]
\end{tikzcd}\]
    with $\Spec A /\!\!/ G_y = \Spec A^{G_y} \to Y$ an étale neighborhood of $\phi(y) \in Y(k)$.
\end{teo}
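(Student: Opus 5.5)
This is the étale local structure theorem of Alper--Hall--Rydh, and the plan is to deduce it from their local structure theorem for stacks with linearly reductive stabilizers, combined with the formal properties of good moduli spaces listed above.

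\textbf{Step 1: local model at $y$.} Since $\mathcal{Y}$ admits a good moduli space and $y$ is a closed point, property (9)(i) shows that $G_y$ is linearly reductive. Because $k$ is algebraically closed, the residue field of $y$ is $k$. Moreover $\mathcal{Y}$ is Noetherian with affine diagonal, hence quasi-separated with affine stabilizers. Under these hypotheses the local structure theorem of Alper--Hall--Rydh (\cite[Theorem 1.1]{AHR-Luna}, or its variant in \cite{AHR-Etale}) provides:
\begin{itemize}
\item an affine scheme $\Spec A$ with an action of $G_y$;
\item a $k$-point $w \in \Spec A$ fixed by $G_y$;
\item an étale morphism $f : [\Spec A/G_y] \to \mathcal{Y}$ sending $w$ to $y$ and inducing an isomorphism of stabilizer groups at $w$.
\end{itemize}

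\textbf{Step 2: shrink $f$ until it is strongly étale.} This is the main obstacle. We want $f$ to be stabilizer-preserving and to send closed points to closed points. By Luna's fundamental lemma in the stacky form (\cite[Theorem 3.14]{AHR-Etale}, or \cite[Proposition 6.4]{Alper-Quotient}), there is a saturated open neighbourhood of $w$ in $[\Spec A/G_y]$ on which $f$ has both properties. Since $G_y$ is linearly reductive, this neighbourhood may be taken to be the preimage of a basic affine open $\Spec A_g^{G_y} \subset \Spec A^{G_y}$. Replacing $A$ by $A_g$ keeps the source an affine quotient stack.

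\textbf{Step 3: the square is Cartesian and the bottom map is étale.} Once $f$ is strongly étale, the same fundamental lemma shows three things:
\begin{itemize}
\item the induced map $\Spec A^{G_y} \to Y$ is étale;
\item the square formed by $f$, the two good moduli space maps and this map is Cartesian;
\item the good moduli space of $[\Spec A/G_y]$ is $\Spec A^{G_y}$, by Example \ref{ex:alper-affine-projective-git}.
\end{itemize}
Its image contains $\phi(y)$, so it is an étale neighbourhood of $\phi(y)$, which gives the statement.

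The substantive input is the existence of the étale quotient presentation in Step 1, which I would cite rather than reprove. In Step 2 the key point to check is that the hypotheses of the fundamental lemma hold: $f$ must be étale, separated on the saturated locus, and representable. Representability follows from $f$ inducing injections on stabilizers near $w$, which is an open condition, after shrinking once more if necessary.
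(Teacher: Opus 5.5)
Your Steps 2--3 are the standard way of upgrading the local structure theorem to this good-moduli-space form: Luna's fundamental lemma makes $f$ strongly étale on a saturated neighbourhood, which gives both the Cartesian square and the étaleness of $\Spec A^{G_y}\to Y$. The paper gives no argument of its own and simply cites \cite[Theorem 4.12]{AHR-Luna}.

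The gap is in Step 1. \cite[Theorem 1.1]{AHR-Luna} applies to stacks that are quasi-separated, have affine stabilizers \emph{and are locally of finite type over $k$}. The version in \cite{AHR-Etale} likewise needs $\mathcal{Y}$ to be locally of finite presentation over some base algebraic space. The statement only assumes $\mathcal{Y}$ Noetherian, and that does not imply finite type over $k$: already $\Spec k[[t]]$ is Noetherian but not of finite type. So the hypotheses you list (``Noetherian with affine diagonal, hence quasi-separated with affine stabilizers'') are not the ones the theorem needs. As written, your argument only proves the result for $\mathcal{Y}$ of finite type over $k$.

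To reach the Noetherian case by your route you need an extra input. For instance, showing that $\phi$ is of finite type would let you apply the relative version over $S=Y$. Otherwise you must cite the Noetherian statement directly, as the paper does.

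For the paper's own use the finite type case suffices. In Lemma \ref{lem:fiber-local-structure-theorem}, the stack is a fibre of $\mathcal{X}^{\mathrm{rig}}\to Z$ over a geometric point. That morphism is locally of finite type because $\mathcal{X}^{\mathrm{rig}}\to B$ is, so the fibre is of finite type over the algebraically closed field, and your argument covers it.

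A smaller point concerns Step 2, where you get representability from the claim that injectivity on stabilizers is an open condition. For an étale $f$ the relative inertia $\mathcal{I}_f$ is étale over the source. The locus where an étale group space has trivial fibres need not be open: consider the constant group $\Z/2$ over $\mathbb{A}^1$ with the non-identity point over the origin removed, whose only trivial fibre is over $0$. So that claim needs justification. It is simpler to use the final clause of \cite[Theorem 1.1]{AHR-Luna}: since $\mathcal{Y}$ has affine diagonal, $f$ can be chosen affine. Then $f$ is representable and separated, which is exactly what Luna's fundamental lemma requires.
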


\begin{proof}
    This is \cite[Theorem 4.12]{AHR-Luna}.
\end{proof}

\subsection{Stability Conditions Arising from Invertible Sheaves on Algebraic Stacks}

In the sequel, we consider $\Theta$-stability conditions on an algebraic stack arising from invertible sheaves, generalizing the notion of Hilbert--Mumford stability with respect to a linearized invertible sheaf on a scheme.

Let $\mathcal{Y}$ be an algebraic stack which is locally of finite presentation over $B$. Let $\mathcal{L}$ be an invertible sheaf on $\mathcal{Y}$. If $k$ is a field, and if $f : \Theta_k \to \mathcal{Y}$ is a morphism, then the pullback of $\mathcal{L}$ along $f$ is a $\mathbb{G}_{m,k} = \Spec k[t,t^{-1}]$-equivariant line bundle on $\mathbb{A}_{k}^1 = \Spec k[x]$; in particular, $f^{\ast}\mathcal{L}|_{\mathbb{A}_k^1}$ admits a section $s \in k[x]$, unique up to scalar multiplication, with $H^0(\mathbb{A}_k^1, f^{\ast}\mathcal{L}|_{\mathbb{A}_k^1}) = k[x] \cdot s$. Under the coaction $k[x] \to k[x,t,t^{-1}]$, there exists $d \in \Z$ such that $s$ is sent to $t^ds$; we denote this integer $d$ by $\mathrm{wt}_0(f^{\ast} \mathcal{L})$.

\begin{defi}[\cite{Halpern-Leistner, Heinloth-HM}] \label{def:theta-stability}
    Let $k$ be an algebraically closed field, and let $y \in \mathcal{Y}(k)$. The point $y$ is said to be \emph{$\Theta_k$-(semi)stable} with respect to $\mathcal{L}$ if for all filtrations $f : \Theta_k \to \mathcal{Y}$ for which $\mathrm{ev}_1(f) \cong y$ and for which $f$ does not factor through $\Theta_k \to \Spec k$, we have
    $$ \mathrm{wt}_0(f^{\ast} \mathcal{L}) < (\leq) \ 0. $$
\end{defi}

\subsubsection{Relation to Hilbert--Mumford Stability}

In the case where $\mathcal{Y} = [X/G]$ is the quotient of a separated $k$-scheme $X$ by a smooth linear algebraic group $G$ over $k$, and where $\mathcal{L}$ is the invertible sheaf associated to a $G$-linearized invertible sheaf on $X$ (also denoted $\mathcal{L}$), the locus of $K$-points of $[X/G]$ which are $\Theta_K$-(semi)stable, where $K \supset k$ is an algebraically closed field, is given by the set of $K$-points of $X$ which are Hilbert--Mumford (semi)stable, in the following way.

\begin{ex} \label{ex:theta-stability-for-quotient-stacks}
    Let $\mathcal{Y} = [X/G]$, $\mathcal{L}$ and $K$ be as above. Let $\overline{p} \in [X/G](K)$, with representative $p \in X(K)$. Then, by Theorem \ref{thm:filt-grad-quotient-stack} and \cite[Remark 1.4.9]{Halpern-Leistner}, specifying a morphism $f : \Theta_K \to [X/G]$ with $\mathrm{ev}_1(f) \cong \overline{p}$ and for which $f$ does not factor through $\Theta_K \to \Spec K$ is, up to conjugating $\lambda$ by $P_{G_K}(\lambda)$, equivalent to specifying a non-trivial cocharacter $\lambda : \mathbb{G}_{m,K} \to G_K$ for which $\lim_{t \to 0} \lambda(t) \cdot p$ exists in $X_K$; this latter condition means that the composite morphism
    \[\begin{tikzcd}[ampersand replacement=\&]
	{\mathbb{G}_{m,K}} \&\& {G_K = G_K \times_K \Spec K} \&\& {G_K \times_K X_K} \&\& {X_K}
	\arrow["\lambda", from=1-1, to=1-3]
	\arrow["{\mathrm{id}_{G_K} \times p}", from=1-3, to=1-5]
	\arrow["a", from=1-5, to=1-7]
    \end{tikzcd}\]
    extends to $\mathbb{A}_K^1$ (since $X$ is separated, such an extension is necessarily unique). In this case, one has
    $$ \mathrm{wt}_0(f^{\ast} \mathcal{L}) = -\mu^{\mathcal{L}}(p, \lambda), $$
    where $\mu^{\mathcal{L}}(p, \lambda)$ denotes the Hilbert--Mumford weight of $p$ with respect to the linearization $\mathcal{L}$ and the cocharacter $\lambda$, as defined in \cite{MFK-GIT}. As such, the $\Theta_K$-(semi)stability of the point $\overline{p} \in [X/G](K)$ is equivalent to the Hilbert--Mumford (semi)stability of $p \in X(K)$ with respect to cocharacters defined over $K$, i.e. $\overline{p}$ is $\Theta_K$-(semi)stable if and only if for all cocharacters $\lambda : \mathbb{G}_{m,K} \to G_K$ for which $\lim_{t \to 0} \lambda(t) \cdot p$ exists in $X_K$, one has that $\mu^{\mathcal{L}}(p, \lambda) > (\geq) \ 0$ (this latter criterion is independent of the choice of representative $p \in X(K)$ for $\overline{p}$).
\end{ex}

\section{Proof of the Main Result}

In this section, we prove the main result of this article, Theorem \ref{thm:main-theorem}. There is significant overlap between the proof of Theorem \ref{thm:main-theorem} and \cite[Theorem A]{Modin}, however we keep this section as self-contained as possible. 

Throughout this section, let $B$ be a quasi-separated scheme, and let $\mathcal{X}$ be an algebraic stack which is locally of finite presentation and has an affine diagonal over $B$, admitting a weak $\Theta_B$-action with center $\mathcal{Z}$ for which the associated retraction $\mathrm{gr} : \mathcal{X} \to \mathcal{Z}$ has flat, finitely presented relative inertia $\mathcal{I}_{\mathrm{gr}} \to \mathcal{X}$.

\subsection{Existence of the Rigidification}

We first show that the rigidification of $\mathcal{X}$ by $\mathcal{I}_{\mathrm{gr}}$ is well-defined.

\begin{lem} \label{lem:existence-rigidification}
    The rigidification $\rho : \mathcal{X} \to \mathcal{X}^{\mathrm{rig}} := \mathcal{X} \!\!\! \fatslash \mathcal{I}_{\mathrm{gr}}$ is well-defined and is an fppf gerbe.
\end{lem}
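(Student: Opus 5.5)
We will invoke the general existence theorem for rigidifications of algebraic stacks along flat subgroup stacks of the inertia, in the form of Abramovich--Olsson--Vistoli (Appendix A of their paper on tame stacks) or its generalization in the Stacks Project. Given an algebraic stack $\mathcal{X}$ and a subgroup stack $\mathcal{H} \subset \mathcal{I}_{\mathcal{X}}$ which is flat and of finite presentation over $\mathcal{X}$ and normal in $\mathcal{I}_{\mathcal{X}}$, this theorem produces an algebraic stack $\mathcal{X} \!\!\! \fatslash \mathcal{H}$ and a morphism $\mathcal{X} \to \mathcal{X} \!\!\! \fatslash \mathcal{H}$ which is an fppf gerbe. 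The rigidification is obtained as the fppf stackification of the prestack with the same objects whose morphism sheaves are $\underline{\mathrm{Isom}}(\xi,\eta)/\mathcal{H}_\xi$. So it suffices to check these hypotheses for $\mathcal{H} = \mathcal{I}_{\mathrm{gr}}$.

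The first step is to show that $\mathcal{I}_{\mathrm{gr}}$ is a closed subgroup stack of $\mathcal{I}_{\mathcal{X}}$. By the convention in the paper, $\mathcal{I}_{\mathrm{gr}}$ is the kernel of the group homomorphism $\mathrm{gr}_{\ast} : \mathcal{I}_{\mathcal{X}} \to \mathcal{I}_{\mathcal{Z}} \times_{\mathcal{Z}} \mathcal{X}$ over $\mathcal{X}$. Hence it is the fiber product of $\mathrm{gr}_\ast$ with the identity section of $\mathcal{I}_{\mathcal{Z}} \times_{\mathcal{Z}} \mathcal{X} \to \mathcal{X}$. Since $\mathcal{Z} \to B$ has affine (in particular separated) diagonal, this identity section is a closed immersion, so $\mathcal{I}_{\mathrm{gr}} \to \mathcal{I}_{\mathcal{X}}$ is a closed immersion and a subgroup.

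The second step is normality, and we expect it to be the only point needing any thought. For an object $\xi \in \mathcal{X}(S)$, the group $\mathcal{I}_{\mathrm{gr},\xi}(S')$ is the kernel of the homomorphism $\underline{\mathrm{Aut}}_{\mathcal{X}}(\xi) \to \underline{\mathrm{Aut}}_{\mathcal{Z}}(\mathrm{gr}(\xi))$, and kernels of homomorphisms are normal. We also need compatibility with arbitrary isomorphisms $\alpha : \xi \to \eta$, namely $\alpha \mathcal{I}_{\mathrm{gr},\xi} \alpha^{-1} = \mathcal{I}_{\mathrm{gr},\eta}$. This follows from functoriality of $\mathrm{gr}$: $\mathrm{gr}(\alpha g \alpha^{-1}) = \mathrm{gr}(\alpha)\mathrm{gr}(g)\mathrm{gr}(\alpha)^{-1}$, which is the identity whenever $\mathrm{gr}(g)$ is. This conjugation-stability is exactly the condition that allows $\underline{\mathrm{Isom}}(\xi,\eta)/\mathcal{I}_{\mathrm{gr},\xi}$ to compose well.

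Finally, flatness and finite presentation of $\mathcal{I}_{\mathrm{gr}} \to \mathcal{X}$ are standing hypotheses, so the theorem applies. It yields that $\mathcal{X}^{\mathrm{rig}}$ is algebraic and that $\rho$ is an fppf gerbe, locally banded by $\mathcal{I}_{\mathrm{gr}}$. If the cited version requires extra hypotheses, such as $\mathcal{X}$ being locally of finite presentation or having quasi-compact diagonal, these follow from the standing assumptions that $\mathcal{X} \to B$ is locally of finite presentation with affine diagonal.
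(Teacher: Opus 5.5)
Your proposal is correct and follows the paper's proof. You exhibit $\mathcal{I}_{\mathrm{gr}}$ as the base change of the identity section of $\mathcal{X} \times_{\mathcal{Z}} \mathcal{I}_{\mathcal{Z}} \to \mathcal{X}$, which is a closed immersion because $\mathcal{Z}$ has affine diagonal and hence $\mathcal{I}_{\mathcal{Z}} \to \mathcal{Z}$ is separated, and then apply the Abramovich--Olsson--Vistoli rigidification theorem using the flatness and finite presentation hypotheses; your separate normality check is harmless but automatic, since any subgroup stack of $\mathcal{I}_{\mathcal{X}}$ is stable under conjugation by isomorphisms in $\mathcal{X}$.
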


\begin{proof}
    There is a Cartesian diagram
    \[\begin{tikzcd}[ampersand replacement=\&]
	{\mathcal{I}_{\mathrm{gr}}} \&\& {\mathcal{I}_{\mathcal{X}}} \\
	\\
	{\mathcal{X}} \&\& {\mathcal{X} \times_{\mathcal{Z}} \mathcal{I}_{\mathcal{Z}}}
	\arrow[from=1-1, to=1-3]
	\arrow[from=1-1, to=3-1]
	\arrow["\lrcorner"{anchor=center, pos=0.125}, draw=none, from=1-1, to=3-3]
	\arrow[from=1-3, to=3-3]
	\arrow[from=3-1, to=3-3]
    \end{tikzcd}\]
    where $\mathcal{X} \to \mathcal{X} \times_{\mathcal{Z}} \mathcal{I}_{\mathcal{Z}}$ is given by $x \mapsto (x, (\mathrm{gr}(x), \mathrm{id}_{\mathrm{gr}(x)}))$ and where $\mathcal{I}_{\mathcal{X}} \to \mathcal{X} \times_{\mathcal{Z}} \mathcal{I}_{\mathcal{Z}}$ is given by $(x, \alpha) \mapsto (x, (\mathrm{gr}(x), \mathrm{gr}_{\ast}(\alpha))$. Since $\mathcal{X} \to B$ has an affine diagonal, so do $\mathrm{Grad}(\mathcal{X}) \to B$ and $\mathcal{Z} \to B$. By \cite[Tag 0CL0]{stacks-project}, this implies that $\mathcal{I}_{\mathcal{Z}} \to \mathcal{Z}$ is separated. Since a section of a separated morphism is a closed immersion, the morphism $\mathcal{X} \to \mathcal{X} \times_{\mathcal{Z}} \mathcal{I}_{\mathcal{Z}}$ is a closed immersion, hence so is $\mathcal{I}_{\mathrm{gr}} \to \mathcal{I}_{\mathcal{X}}$. Since in addition $\mathcal{I}_{\mathrm{gr}} \to \mathcal{X}$ is flat and of finite presentation, we may apply \cite[Theorem A.1]{AOV-tame-stacks}, which yields the result.
\end{proof}

\subsection{Descending the Weak $\Theta_B$-Action}

Next, we show that the weak $\Theta_B$-action on $\mathcal{X}$ descends to the rigidification $\mathcal{X}^{\mathrm{rig}}$.

\begin{prop} \label{prop:induced-action-on-rigidification}
    The weak $\Theta_B$-action on $\mathcal{X}$ induces a canonical weak action of $\Theta_B$ on the rigidification $\mathcal{X}^{\mathrm{rig}}$, for which the gerbe $\rho : \mathcal{X} \to \mathcal{X}^{\mathrm{rig}}$ is $\Theta_B$-equivariant. With respect to the induced action, the morphism $\overline{\mathrm{gr}} : \mathcal{X}^{\mathrm{rig}} \to \mathcal{Z}$ induced\footnote{i.e. $\overline{\mathrm{gr}}$ is the unique morphism such that $\mathrm{gr} = \overline{\mathrm{gr}} \circ \rho$.} by $\mathrm{gr}$ is equivariant with respect to $q : \Theta_B \to B\mathbb{G}_{m,B}$, and the morphism $\overline{\sigma} := \rho \circ \sigma : \mathcal{Z} \to \mathcal{X}^{\mathrm{rig}}$ is invariant with respect to $0 : B\mathbb{G}_{m,B} \to B$.
\end{prop}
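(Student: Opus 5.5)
The plan is to descend the action map through the universal property of rigidification, as in \cite[Theorem A.1]{AOV-tame-stacks}. A morphism $\rho' : \mathcal{X} \to \mathcal{Y}$ factors uniquely (up to unique $2$-isomorphism) through $\rho : \mathcal{X} \to \mathcal{X}^{\mathrm{rig}}$ exactly when the induced map $\mathcal{I}_{\mathcal{X}} \to \mathcal{I}_{\mathcal{Y}}$ kills $\mathcal{I}_{\mathrm{gr}}$. The same holds after base change by a flat target, so it applies to $\Theta_B \times_B \mathcal{X} \to \Theta_B \times_B \mathcal{X}^{\mathrm{rig}}$. Indeed, $\mathrm{id}_{\Theta_B} \times \rho$ is again the rigidification of $\Theta_B \times_B \mathcal{X}$ by the pullback $\mathrm{pr}_{\mathcal{X}}^{\ast}\mathcal{I}_{\mathrm{gr}}$, since rigidification commutes with flat base change over $B$.

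\textbf{Step 1: descend the action map.} Consider the composite $\rho \circ a : \Theta_B \times_B \mathcal{X} \to \mathcal{X}^{\mathrm{rig}}$. I claim it factors through $\Theta_B \times_B \mathcal{X}^{\mathrm{rig}}$. By the universal property, it suffices to check that $a_{\ast}$ sends $\mathrm{pr}_{\mathcal{X}}^{\ast}\mathcal{I}_{\mathrm{gr}}$ into $\mathcal{I}_{\mathrm{gr}}$. An automorphism of an $S$-point $(\ell,x)$ of $\Theta_B \times_B \mathcal{X}$ lying in this subgroup has trivial $\Theta_B$-component and satisfies $\mathrm{gr}_{\ast}\alpha = \mathrm{id}$.

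The key input is the $2$-commutative square $\mathrm{gr} \circ a \cong \mathrm{gr} \circ \mathrm{pr}_{\mathcal{X}}$, i.e.\ the $\Theta_B$-deformation retract property. This square is functorial on automorphisms, so $\mathrm{gr}_{\ast}(a_{\ast}(\mathrm{id},\alpha))$ is conjugate, via the $2$-isomorphism, to $\mathrm{gr}_{\ast}\alpha = \mathrm{id}$. Hence $a_{\ast}(\mathrm{id},\alpha) \in \mathcal{I}_{\mathrm{gr}}$. This gives $\bar a : \Theta_B \times_B \mathcal{X}^{\mathrm{rig}} \to \mathcal{X}^{\mathrm{rig}}$ with $\bar a \circ (\mathrm{id} \times \rho) \cong \rho \circ a$.

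\textbf{Step 2: transport the coherence data.} The unit and associativity $2$-isomorphisms for $\bar a$ come from those of $a$ by the uniqueness half of the universal property. The relevant fact is that $2$-morphisms between maps out of a rigidification correspond bijectively to $2$-morphisms between their precompositions with $\rho$; this holds because $\rho$ is an fppf gerbe, hence an epimorphism of stacks. One applies this to $\Theta_B \times_B \Theta_B \times_B \mathcal{X} \to \Theta_B \times_B \Theta_B \times_B \mathcal{X}^{\mathrm{rig}}$, which is again a rigidification. For example, $\bar a \circ (\mathrm{id} \times \bar a)$ and $\bar a \circ (\mu \times \mathrm{id})$ both precompose to maps $2$-isomorphic to $\rho \circ a \circ (\mathrm{id} \times a) \cong \rho \circ a \circ (\mu \times \mathrm{id})$, which yields the descended associator. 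The $\Theta_B$-equivariance of $\rho$ is built into the construction.

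\textbf{Step 3: the remaining assertions.} By Lemma \ref{lem:existence-rigidification} and the universal property, $\overline{\mathrm{gr}}$ exists because $\mathrm{gr}_{\ast}$ kills $\mathcal{I}_{\mathrm{gr}}$ by definition. Its equivariance $\overline{\mathrm{gr}} \circ \bar a \cong \mathrm{act}_{\mathcal{Z}} \circ (q \times \overline{\mathrm{gr}})$ is checked after precomposing with the epimorphism $\mathrm{id} \times \rho$, where it reduces to the equivariance of $\mathrm{gr}$ from Proposition \ref{prop:filt-weak-actions}. For $\overline{\sigma}$ one must show $\bar a \circ (0 \times \overline\sigma) \cong \overline\sigma \circ \mathrm{pr}_{\mathcal{Z}}$ on $B\mathbb{G}_{m,B} \times_B \mathcal{Z}$. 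Equivariance of $\sigma$ gives $a \circ (0 \times \sigma) \cong \sigma \circ \mathrm{act}_{\mathcal{Z}}$, so it suffices to see that $\rho \circ \sigma \circ \mathrm{act}_{\mathcal{Z}} \cong \rho \circ \sigma \circ \mathrm{pr}_{\mathcal{Z}}$.

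The weak $B\mathbb{G}_{m,B}$-action on $\mathcal{Z}$ is given by the canonical cocharacter $\lambda^{\mathrm{can}}$. By Lemma \ref{lem:internal-Gm-action} and the description in Proposition \ref{prop:alt-desc-grad}, the automorphisms $\sigma_{\ast}\lambda^{\mathrm{can}}(t)$ of $\sigma(z)$ map under $\mathrm{gr}_{\ast}$ to $\lambda^{\mathrm{can}}(t)$. This is not the identity, so I need to be careful here. It is this step, the invariance of $\overline\sigma$, that I expect to be the main obstacle.

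What I would try: the action of $B\mathbb{G}_{m,B}$ via $0$ on $\sigma(\mathcal{Z})$ is the restriction of the weak $\mathbb{A}^1$/$\Theta$ action. Composing $0 : B\mathbb{G}_{m,B} \to \Theta_B$ with $\Theta_B \times \mathcal{X} \to \mathcal{X}$, the point $0 \cdot \sigma(z)$ is canonically isomorphic to $\sigma(z)$. Under this identification, the twisting by $t$ acts through an automorphism of $\sigma(z)$ lying in $\mathcal{I}_{\mathrm{gr}}$, namely the correction $\sigma_{\ast}\lambda^{\mathrm{can}}(t)$ composed with the identification coming from the retraction square. I would verify this on $\mathrm{Filt}(\mathcal{X})$ via the explicit description of $\sigma$ as the split filtration, and then use that $\rho$ kills $\mathcal{I}_{\mathrm{gr}}$ to produce the required $2$-isomorphism.
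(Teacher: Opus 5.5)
Your Steps 1 and 2, and your treatment of $\overline{\mathrm{gr}}$, follow the paper's argument. The paper applies the universal property of rigidification to $\Theta_B\times_B\mathcal{I}_{\mathrm{gr}}\subset\mathcal{I}_{\Theta_B\times_B\mathcal{X}}$. It checks that $a_\ast$ preserves $\mathcal{I}_{\mathrm{gr}}$ via the deformation-retract square $\mathrm{gr}\circ a\cong\mathrm{gr}\circ\mathrm{pr}_{\mathcal{X}}$, exactly as you do, and it descends the unit and associativity $2$-isomorphisms by uniqueness.

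The problem is the last part of Step 3. The condition you set out to prove, $\bar a\circ(0\times\overline\sigma)\cong\overline\sigma\circ\mathrm{pr}_{\mathcal{Z}}$ (equivalently $\rho\sigma\circ\mathrm{act}_{\mathcal{Z}}\cong\rho\sigma\circ\mathrm{pr}_{\mathcal{Z}}$), fails at every point where $\lambda^{\mathrm{can}}$ is non-trivial. So no refinement of your sketch can establish it. To see this:
\begin{itemize}
\item Since $\overline{\mathrm{gr}}\circ\overline\sigma\cong\mathrm{id}_{\mathcal{Z}}$, the map $\overline\sigma_\ast$ is injective on automorphism groups.
\item At a geometric point $(\mathcal{O},z)$ of $B\mathbb{G}_{m,B}\times_B\mathcal{Z}$, the two composites induce $(t,\alpha)\mapsto\overline\sigma_\ast(\lambda^{\mathrm{can}}(t)\alpha)$ and $(t,\alpha)\mapsto\overline\sigma_\ast(\alpha)$.
\item A $2$-isomorphism would conjugate one into the other, which forces $\lambda^{\mathrm{can}}(t)=1$.
\end{itemize}
Your own observation already shows this. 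The automorphism $\sigma_\ast\lambda^{\mathrm{can}}(t)$ has $\mathrm{gr}_\ast$-image $\lambda^{\mathrm{can}}(t)\neq 1$, and this is unchanged by conjugating by any identification because $\lambda^{\mathrm{can}}$ is central. So it does not lie in $\mathcal{I}_{\mathrm{gr}}$, and $\rho$ does not kill it.

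A concrete counterexample is $\mathcal{X}=\Theta_B$ acting on itself by $\mu$. The center is $B\mathbb{G}_{m,B}$ acting on itself by the group law. The retraction $\mathrm{gr}=q$ is representable, so $\rho=\mathrm{id}$ and $\overline\sigma=0$. The composites $0\circ\mu$ and $0\circ\mathrm{pr}_2$ induce $(s,t)\mapsto st$ and $(s,t)\mapsto t$ into the abelian group $\mathrm{Aut}_{\Theta_B}(0)=\mathbb{G}_m$, so they are not $2$-isomorphic.

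What the paper's proof actually establishes is that $\overline\sigma$ is equivariant with respect to $0:B\mathbb{G}_{m,B}\to\Theta_B$. This matches the convention of Proposition \ref{prop:filt-weak-actions} and is what is used in Theorem \ref{thm:affinity-grbar}. You already have it:
\[
\bar a\circ(0\times\overline\sigma)=\bar a\circ(\mathrm{id}\times\rho)\circ(0\times\sigma)\cong\rho\circ a\circ(0\times\sigma)\cong\rho\circ\sigma\circ\mathrm{act}_{\mathcal{Z}}=\overline\sigma\circ\mathrm{act}_{\mathcal{Z}}.
\]
This chain is precisely the paper's one-line argument. Pulling back along $B\to B\mathbb{G}_{m,B}$ and using the unit isomorphism $\mathrm{act}_{\mathcal{Z}}(\mathcal{O},-)\cong\mathrm{id}_{\mathcal{Z}}$ gives $\bar a(0,\overline\sigma(-))\cong\overline\sigma$. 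That is the only ``invariance'' that holds, and it is the one needed later to place $\overline\sigma$ in the fixed locus. The fix is to replace your final paragraph with this chain of $2$-isomorphisms.
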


\begin{proof}
    Let $a : \Theta_B \times_B \mathcal{X} \to \mathcal{X}$ denote the action morphism. In order to show that $a$ descends to a weak action on $\mathcal{X}^{\mathrm{rig}}$, we apply the universal property of rigidification, as stated in \cite[Lemma B.2]{Modin}, with respect to the closed subgroup stack $\Theta_B \times_B \mathcal{I}_{\mathrm{gr}} \subset \mathcal{I}_{\Theta_B \times_B \mathcal{X}}$ and the composition of $a$ with the rigidification $\rho : \mathcal{X} \to \mathcal{X}^{\mathrm{rig}}$. This implies that, if the induced morphism $\mathcal{I}_{\Theta_B \times_B \mathcal{X}} \to \mathcal{I}_{\mathcal{X}}$ restricts to a morphism $\Theta_B \times_B \mathcal{I}_{\mathrm{gr}} \to \mathcal{I}_{\mathrm{gr}}$, then there exists a unique dashed arrow making the below diagram (2-)commute:
    \[\begin{tikzcd}[ampersand replacement=\&]
	{\Theta_B \times_B \mathcal{X}} \&\& {\mathcal{X}} \\
	\\
	{\Theta_B \times_B \mathcal{X}^{\mathrm{rig}}} \&\& {\mathcal{X}^{\mathrm{rig}}}
	\arrow["a", from=1-1, to=1-3]
	\arrow["{\mathrm{id}_{\Theta_B} \times \rho}"', from=1-1, to=3-1]
	\arrow["\lrcorner"{anchor=center, pos=0.125}, draw=none, from=1-1, to=3-3]
	\arrow["\rho", from=1-3, to=3-3]
	\arrow["{\exists!}", dashed, from=3-1, to=3-3]
    \end{tikzcd}\]

    In order to show that the induced morphism $\mathcal{I}_{\Theta_B \times_B \mathcal{X}} \to \mathcal{I}_{\mathcal{X}}$ restricts to a morphism $\Theta_B \times_B \mathcal{I}_{\mathrm{gr}} \to \mathcal{I}_{\mathrm{gr}}$, we consider the commutative diagram
    \[\begin{tikzcd}[ampersand replacement=\&]
	{\Theta_B \times_B \mathcal{X}} \&\& {\mathcal{X}} \\
	\\
	{\mathcal{X}} \&\& {\mathcal{Z}}
	\arrow["a", from=1-1, to=1-3]
	\arrow["{\mathrm{pr}_{\mathcal{X}}}"', from=1-1, to=3-1]
	\arrow["{{\mathrm{gr}}}", from=1-3, to=3-3]
	\arrow["{\mathrm{gr}}"', from=3-1, to=3-3]
    \end{tikzcd}\]
    exhibiting $\mathrm{gr} : \mathcal{X} \to \mathcal{Z}$ as a $\Theta_B$-deformation retract.

    By the functoriality of the inertia stack construction, we obtain a commutative diagram of inertia stacks
    \[\begin{tikzcd}[ampersand replacement=\&]
	{\mathcal{I}_{\Theta_B \times_B \mathcal{X}}} \&\& {\mathcal{I}_{\mathcal{X}}} \\
	\\
	{\mathcal{I}_{\mathcal{X}}} \&\& {\mathcal{I}_{\mathcal{Z}}}
	\arrow["{{a_{\ast}}}", from=1-1, to=1-3]
	\arrow["{(\mathrm{pr}_{\mathcal{X}})_{\ast}}"', from=1-1, to=3-1]
	\arrow["{{\mathrm{gr}_{\ast}}}", from=1-3, to=3-3]
	\arrow["{\mathrm{gr}_{\ast}}"', from=3-1, to=3-3]
    \end{tikzcd}\]
    Since $\Theta_B \times_B \mathcal{I}_{\mathrm{gr}} \subset \mathcal{I}_{\Theta_B \times_B \mathcal{X}}$ is contained in the kernel of $(\mathrm{gr})_{\ast} \circ (\mathrm{pr}_{\mathcal{X}})_{\ast} \cong (\mathrm{gr} \circ \mathrm{pr}_{\mathcal{X}})_{\ast}$ and $\mathcal{I}_{\mathrm{gr}} \subset \mathcal{I}_{\mathcal{X}}$ is the kernel of $\mathrm{gr}_{\ast}$, it follows that $a_{\ast}$ must send $\Theta_B \times_B \mathcal{I}_{\mathrm{gr}}$ to $\mathcal{I}_{\mathrm{gr}}$ as claimed. As such, by the universal property of rigidification we obtain an induced morphism $a^{\mathrm{rig}} : \Theta_B \times_B \mathcal{X}^{\mathrm{rig}} \to \mathcal{X}^{\mathrm{rig}}$. To see that $a^{\mathrm{rig}}$ is a weak action, we apply \cite[Corollary B.3]{Modin} to the commutative diagrams
    \[\begin{tikzcd}[ampersand replacement=\&]
	{\Theta_B \times_B \Theta_B \times_B \mathcal{X}} \&\& {\Theta_B \times_B \mathcal{X}} \\
	\\
	{\Theta_B \times_B \mathcal{X}} \&\& {\mathcal{X}}
	\arrow["{\mathrm{id}_{\Theta_B} \times a}", from=1-1, to=1-3]
	\arrow["{\mu \times \mathrm{id}_{\mathcal{X}}}"', from=1-1, to=3-1]
	\arrow[""{name=0, anchor=center, inner sep=0}, "a", from=1-3, to=3-3]
	\arrow[""{name=1, anchor=center, inner sep=0}, "a"', from=3-1, to=3-3]
    \end{tikzcd}\]
    and
    \[\begin{tikzcd}[ampersand replacement=\&]
	{\mathcal{X}} \&\& {\Theta_B \times_B \mathcal{X}} \\
	\\
	\&\& {\mathcal{X}}
	\arrow["{1 \times \mathrm{id}_{\mathcal{X}}}", from=1-1, to=1-3]
	\arrow[""{name=0, anchor=center, inner sep=0}, "{\mathrm{id}_{\mathcal{X}}}"', from=1-1, to=3-3]
	\arrow["a", from=1-3, to=3-3]
    \end{tikzcd}\]
    where $\mu$ denotes the multiplication $\Theta_B \times_B \Theta_B \to \Theta_B$.

    Finally, the equivariance of $\overline{\mathrm{gr}}$ and $\overline{\sigma}$ is a consequence of the $\Theta_B$-equivariance of $\rho$ together with the respective equivariance properties of $\mathrm{gr}$ and $\sigma$.
\end{proof}

\subsection{Affinity of the Rigidification over the Center}

Using the fact that the weak $\Theta_B$-action on $\mathcal{X}$ descends to $\mathcal{X}^{\mathrm{rig}}$, we are in a position to prove that the induced morphism $\overline{\mathrm{gr}} : \mathcal{X}^{\mathrm{rig}} \to \mathcal{Z}$ is affine (compare with \cite[Theorem 4.13]{Modin}).

\begin{teo} \label{thm:affinity-grbar}
    The morphism $\overline{\mathrm{gr}} : \mathcal{X}^{\mathrm{rig}} \to \mathcal{Z}$ induced by $\mathrm{gr} : \mathcal{X} \to \mathcal{Z}$ is affine and of finite presentation. Moreover, the morphism $\overline{\sigma}$ is a closed immersion, which after pulling back along a morphism from a scheme $S \to \mathcal{Z}$ identifies $S$ with the fixed locus in $\mathcal{X}_S^{\mathrm{rig}}$ for the induced action of $\mathbb{G}_{m,S}$.
\end{teo}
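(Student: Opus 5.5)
The plan is to reduce to the case of an algebraic space with a contracting $\mathbb{G}_m$-action over a scheme, and then apply Theorem \ref{theorem:attractor_loci_existence}.

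Affineness and finite presentation are fppf local on the target, so fix a smooth (or fppf) morphism $\xi : S \to \mathcal{Z}$ from an affine scheme. Set $\mathcal{X}_S := \mathcal{X} \times_{\mathcal{Z}} S$ and $\mathcal{X}^{\mathrm{rig}}_S := \mathcal{X}^{\mathrm{rig}} \times_{\mathcal{Z}} S$. The goal is to show that $\mathcal{X}^{\mathrm{rig}}_S$ is an algebraic space, affine and of finite presentation over $S$.

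\emph{Step 1: $\mathcal{X}^{\mathrm{rig}}_S$ is an algebraic space.} The relative inertia of $\mathcal{X}_S \to S$ is the base change of $\mathcal{I}_{\mathrm{gr}}$. Since rigidification commutes with base change along $\mathcal{Z}$, the relative inertia of $\mathcal{X}^{\mathrm{rig}}_S \to S$ is trivial. Hence $\mathcal{X}^{\mathrm{rig}}_S$ is an algebraic space, which is quasi-separated and locally of finite presentation over $S$ because $\mathcal{X}$ has affine diagonal.

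\emph{Step 2: a genuine contracting action.} By Proposition \ref{prop:induced-action-on-rigidification}, the weak $\Theta_B$-action descends to $\mathcal{X}^{\mathrm{rig}}$ and $\overline{\mathrm{gr}}$ is equivariant. Pulling back along $B \to B\mathbb{G}_{m,B}$ as in Section \ref{sec:internal-weak-Gm-action}, we get a weak $\mathbb{A}^1_B$-action on $\mathcal{X}^{\mathrm{rig}}$ over $\mathcal{Z}$, and hence one on $\mathcal{X}^{\mathrm{rig}}_S$ over $S$. On an algebraic space, weak and strict actions coincide, since there are no nontrivial $2$-morphisms. Concretely, Lemma \ref{lem:internal-Gm-action} identifies this action with the canonical action $(\zeta,\alpha) \mapsto (\zeta, \alpha \circ \lambda_{\mathrm{gr}(\zeta)}(t^{-1}))$, descended to $\mathcal{X}^{\mathrm{rig}}_S$. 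So $\mathcal{X}^{\mathrm{rig}}_S$ carries an $\mathbb{A}^1_S$-action extending a $\mathbb{G}_{m,S}$-action, that is, a contracting action. By Theorem \ref{theorem:attractor_loci_existence}, $\mathcal{X}^{\mathrm{rig}}_S = (\mathcal{X}^{\mathrm{rig}}_S)^+$, and the morphism $(\mathcal{X}^{\mathrm{rig}}_S)^+ \to (\mathcal{X}^{\mathrm{rig}}_S)^0$ is affine and of finite presentation.

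\emph{Step 3: identify the fixed locus with $S$.} We must show that the base change of $\overline{\sigma}$, namely $S \to \mathcal{X}^{\mathrm{rig}}_S$, identifies $S$ with $(\mathcal{X}^{\mathrm{rig}}_S)^0$. The limit map $(\mathcal{X}^{\mathrm{rig}}_S)^+ \to (\mathcal{X}^{\mathrm{rig}}_S)^0$ is given by acting with $0 \in \mathbb{A}^1$. Via $\sigma \circ \mathrm{gr} \cong a(0,-)$, this map factors as $\mathcal{X}^{\mathrm{rig}}_S \xrightarrow{\overline{\mathrm{gr}}} S \xrightarrow{\overline{\sigma}} \mathcal{X}^{\mathrm{rig}}_S$. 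Since $\overline{\mathrm{gr}} \circ \overline{\sigma} = \mathrm{id}_S$, the section $\overline{\sigma}_S$ lands in the fixed locus. Conversely, the retraction $x \mapsto 0\cdot x$ restricts to the identity on $(\mathcal{X}^{\mathrm{rig}}_S)^0$ and factors through $S$. Together these give mutually inverse maps $S \leftrightarrows (\mathcal{X}^{\mathrm{rig}}_S)^0$.

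It follows that $\overline{\sigma}_S$ is a closed immersion, since $X^0 \subset X$ is closed. It also follows that $\overline{\mathrm{gr}}_S$ is identified with the attractor projection, and is therefore affine and of finite presentation. Both properties then descend to $\overline{\mathrm{gr}}$ and $\overline{\sigma}$.

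\emph{Main obstacle.} I expect the delicate point to be Step 2 together with the first half of Step 3. One must check carefully that the descended weak action on the algebraic space $\mathcal{X}^{\mathrm{rig}}_S$ is a strict $\mathbb{A}^1_S$-action over $S$. One must also check that its zero-limit really agrees with $\overline{\sigma}\circ\overline{\mathrm{gr}}$, which requires tracking the $2$-isomorphism $\sigma \circ \mathrm{gr} \cong a(0,-)$ through the base change and the rigidification. This step is where rigidifying by $\mathcal{I}_{\mathrm{gr}}$ is essential: before rigidifying, $\mathcal{X}_S$ is only a stack, and Theorem \ref{theorem:attractor_loci_existence} does not apply. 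I would follow \cite[Theorem 4.13]{Modin} for this identification.
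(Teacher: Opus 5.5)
Your proposal is correct and follows essentially the same route as the paper. Both arguments reduce to $\mathcal{Z}=S$ a scheme, apply Theorem \ref{theorem:attractor_loci_existence} to the contracting action on the algebraic space $\mathcal{X}^{\mathrm{rig}}_S$, and identify the fixed locus with $S$ via $\overline{\sigma}$ and $\overline{\mathrm{gr}}$, using $\overline{\sigma}\circ\overline{\mathrm{gr}} = a^{\mathrm{rig}}(0,-)$. The only minor difference is how you show $\overline{\sigma}$ is a closed immersion: the paper exhibits $\overline{\sigma}$ as a base change of the diagonal of the affine (hence separated) morphism $\overline{\mathrm{gr}}$, while you use closedness of the fixed locus followed by descent; both work.
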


\begin{proof}
    We first observe that since $\overline{\mathrm{gr}} \circ \overline{\sigma} \cong \mathrm{id}_{\mathcal{Z}}$, there exists a Cartesian diagram
    \[\begin{tikzcd}[ampersand replacement=\&]
	{\mathcal{Z}} \&\& {\mathcal{X}^{\mathrm{rig}}} \\
	\\
	{\mathcal{X}^{\mathrm{rig}}} \&\& {\mathcal{X}^{\mathrm{rig}} \times_{\mathcal{Z}} \mathcal{X}^{\mathrm{rig}}}
	\arrow["{\overline{\sigma}}", from=1-1, to=1-3]
	\arrow["{\overline{\sigma}}"', from=1-1, to=3-1]
	\arrow["\lrcorner"{anchor=center, pos=0.125}, draw=none, from=1-1, to=3-3]
	\arrow["{\Delta_{\overline{\mathrm{gr}}}}", from=1-3, to=3-3]
	\arrow["{(\overline{\sigma} \circ \overline{\mathrm{gr}}, \mathrm{id}_{\mathcal{X}^{\mathrm{rig}}})}"', from=3-1, to=3-3]
    \end{tikzcd}\]
    In particular, the affinity of $\overline{\mathrm{gr}}$ implies that $\overline{\sigma}$ is a closed immersion.

    By pulling back along $B \to B\mathbb{G}_{m,B}$, the weak $\Theta_B$-action on $\mathcal{X}^{\mathrm{rig}}$ lifts to a weak $\mathbb{A}_B^1$-action on $\mathcal{X}^{\mathrm{rig}}$, with respect to which $\overline{\mathrm{gr}}$ is invariant. Since the property of a morphism being affine of finite presentation is fpqc local on the base, by passing to a smooth presentation we may assume that $\mathcal{Z} = S$ is a scheme. As such, $\mathcal{X}^{\mathrm{rig}}$ is an algebraic space which is quasi-separated and locally of finite presentation over $S$, with a contracting action of $\mathbb{G}_{m,S}$. By Theorem \ref{theorem:attractor_loci_existence} it follows that $\mathcal{X}^{\mathrm{rig}}$ is affine and of finite presentation over the $\mathbb{G}_{m,S}$-fixed locus $\mathcal{X}^{\mathrm{rig},0}$, which in turn is a closed subspace of $\mathcal{X}^{\mathrm{rig}}$.
    
    On the other hand, since $\overline{\sigma}$ is equivariant with respect to $0 : S \to \mathbb{A}_S^1$, the morphism $\overline{\sigma} : S \to \mathcal{X}^{\mathrm{rig}}$ factors through the closed immersion $\mathcal{X}^{\mathrm{rig},0} \subset \mathcal{X}^{\mathrm{rig}}$. Together with the equalities $\overline{\mathrm{gr}} \circ \overline{\sigma} = \mathrm{id}_S$ and $\overline{\sigma} \circ \overline{\mathrm{gr}} = a^{\mathrm{rig}}(0,-)$, this implies that $\overline{\sigma} : S \to \mathcal{X}^{\mathrm{rig},0}$ is an isomorphism with inverse given by $\overline{\mathrm{gr}}$. Since $\overline{\mathrm{gr}}$ factors through the retraction $\mathcal{X}^{\mathrm{rig}} \to \mathcal{X}^{\mathrm{rig},0}$, it follows that $\overline{\mathrm{gr}} : \mathcal{X}^{\mathrm{rig}} \to S$ is affine and of finite presentation. This completes the proof of the theorem.
\end{proof}

\subsubsection{Compatibility of the Centers of the Weak $\Theta_B$-Actions}

Using that $\overline{\sigma}$ is a closed immersion, we may identify the center of the induced weak $\Theta_B$-action on $\mathcal{X}^{\mathrm{rig}}$ with $\mathcal{Z}$.

\begin{lem} \label{lem:centers-agree}
    The center $\mathcal{Z}$ of the weak action of $\Theta_B$ on $\mathcal{X}$ is isomorphic to the center of the induced weak action on $\mathcal{X}^{\mathrm{rig}}$.
\end{lem}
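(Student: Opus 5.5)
The plan is to construct a morphism from $\mathcal{Z}$ to the center $\mathcal{Z}^{\mathrm{rig}} := \sigma_{\mathrm{rig}}^{-1}(\mathcal{X}^{\mathrm{rig}}) \subset \mathrm{Grad}(\mathcal{X}^{\mathrm{rig}})$ of the induced action and then show it is an equivalence. Recall that $\mathcal{Z}^{\mathrm{rig}}$ is the fiber product of $\phi^{\mathrm{rig}} : \mathcal{X}^{\mathrm{rig}} \to \mathrm{Filt}(\mathcal{X}^{\mathrm{rig}})$ with $\sigma$. By Proposition \ref{prop:filt-weak-actions}, $\mathcal{Z}^{\mathrm{rig}}$ is an open and closed substack of $\mathrm{Grad}(\mathcal{X}^{\mathrm{rig}})$ with $u : \mathcal{Z}^{\mathrm{rig}} \to \mathcal{X}^{\mathrm{rig}}$ identifying it with the locus of points whose associated filtration is split. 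Concretely, this locus is the image of the zero section $\sigma^{\mathrm{rig}} : \mathcal{Z}^{\mathrm{rig}} \to \mathcal{X}^{\mathrm{rig}}$.

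\textbf{Constructing the comparison map.} Since $\rho$ is $\Theta_B$-equivariant (Proposition \ref{prop:induced-action-on-rigidification}), functoriality of $\mathrm{Grad}$ and $\mathrm{Filt}$ gives a map $\mathrm{Grad}(\rho) : \mathrm{Grad}(\mathcal{X}) \to \mathrm{Grad}(\mathcal{X}^{\mathrm{rig}})$. This map is compatible with $\phi$, $\phi^{\mathrm{rig}}$ and $\sigma$, so it induces $\rho_{\mathcal{Z}} : \mathcal{Z} \to \mathcal{Z}^{\mathrm{rig}}$ on centers. Equivariance also gives $\mathrm{gr}^{\mathrm{rig}} \circ \rho \cong \rho_{\mathcal{Z}} \circ \mathrm{gr}$ and $\sigma^{\mathrm{rig}} \circ \rho_{\mathcal{Z}} \cong \rho \circ \sigma = \overline{\sigma}$.

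In the other direction, set $\tau := \overline{\mathrm{gr}} \circ \sigma^{\mathrm{rig}} : \mathcal{Z}^{\mathrm{rig}} \to \mathcal{Z}$. We then check the two composites:
\begin{itemize}
\item $\tau \circ \rho_{\mathcal{Z}} \cong \overline{\mathrm{gr}} \circ \overline{\sigma} \cong \mathrm{gr} \circ \sigma \cong \mathrm{id}_{\mathcal{Z}}$.
\item For the other composite, $\rho_{\mathcal{Z}} \circ \tau \circ \mathrm{gr}^{\mathrm{rig}} \circ \rho = \rho_{\mathcal{Z}} \circ \overline{\mathrm{gr}} \circ \sigma^{\mathrm{rig}} \circ \rho_{\mathcal{Z}} \circ \mathrm{gr}$. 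By the first bullet this is $\rho_{\mathcal{Z}} \circ \mathrm{gr} \cong \mathrm{gr}^{\mathrm{rig}} \circ \rho$.
\end{itemize}
Since $\rho$ is an epimorphism of stacks (a gerbe, Lemma \ref{lem:existence-rigidification}) and $\mathrm{gr}^{\mathrm{rig}}$ is essentially surjective (it has the section $\sigma^{\mathrm{rig}}$), this gives $\rho_{\mathcal{Z}} \circ \tau \cong \mathrm{id}$ on objects.

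\textbf{Main obstacle: equivalence on automorphisms.} The $2$-categorical bookkeeping above only produces isomorphisms of objects, so the real issue is whether $\rho_{\mathcal{Z}}$ is fully faithful. The worry is that rigidification might kill automorphisms of $\mathcal{Z}$, i.e.\ that $\mathcal{I}_{\mathrm{gr}}$ restricted along $\sigma$ is nontrivial.

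It is not. Since $\mathrm{gr} \circ \sigma \cong \mathrm{id}$, the map $\mathrm{gr}_{\ast}$ is injective on $\sigma^{\ast}\mathcal{I}_{\mathcal{X}}$ restricted to the image of $\sigma_{\ast}\mathcal{I}_{\mathcal{Z}}$. However, an automorphism of $\sigma(z)$ in $\mathcal{X}$ need not come from $\mathcal{Z}$. Its image in $\mathcal{X}^{\mathrm{rig}}$ is its class modulo $\mathcal{I}_{\mathrm{gr}}$, and since $\mathrm{gr}_{\ast}\circ\sigma_{\ast}=\mathrm{id}$ we get
\[
\mathrm{Aut}_{\mathcal{X}}(\sigma z) = \mathcal{I}_{\mathrm{gr},\sigma z} \rtimes \sigma_{\ast}\mathrm{Aut}_{\mathcal{Z}}(z), \qquad \text{so} \qquad \mathrm{Aut}_{\mathcal{X}^{\mathrm{rig}}}(\overline{\sigma} z) \cong \mathrm{Aut}_{\mathcal{Z}}(z).
\]
In other words, $\overline{\mathrm{gr}}$ induces an isomorphism on automorphism groups along $\overline{\sigma}$.

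\textbf{Alternative route.} Instead of working with automorphism groups directly, I would reduce to a base change along a smooth cover $S \to \mathcal{Z}$, as in Theorem \ref{thm:affinity-grbar}. There $\mathcal{X}^{\mathrm{rig}}_S$ is an algebraic space with a contracting $\mathbb{G}_{m,S}$-action, whose fixed locus is identified with $S$ via $\overline{\sigma}$. Combining this with Proposition \ref{prop:alt-desc-grad}, one identifies $\mathcal{Z}^{\mathrm{rig}}$ with pairs consisting of a point of $\mathcal{X}^{\mathrm{rig}}$ in the image of $\overline{\sigma}$ together with a cocharacter of its automorphism group. This matches $\mathcal{Z}$ via $\lambda^{\mathrm{can}}$ and Lemma \ref{lem:internal-Gm-action}. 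I expect this last matching of graded structures, using that $\overline{\sigma}$ is a closed immersion, to be the delicate step.
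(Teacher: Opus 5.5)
\textbf{There is a gap, at exactly the step you flag.} Write $\mathcal{W}$ for the center of the induced action (your $\mathcal{Z}^{\mathrm{rig}}$; the paper later uses that symbol for a different rigidification of $\mathcal{Z}$).

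Your first bullet is a genuine $2$-isomorphism $\tau\circ\rho_{\mathcal{Z}}\cong\mathrm{id}_{\mathcal{Z}}$, so $\rho_{\mathcal{Z}}$ is faithful. The second bullet falls short. From $(\rho_{\mathcal{Z}}\circ\tau)\circ\mathrm{gr}^{\mathrm{rig}}\circ\rho\cong\mathrm{gr}^{\mathrm{rig}}\circ\rho$, knowing only that $\rho$ is an epimorphism and that $\mathrm{gr}^{\mathrm{rig}}$ has a section gives fppf-local, non-natural isomorphisms $\rho_{\mathcal{Z}}(\tau(w))\cong w$. So fullness of $\rho_{\mathcal{Z}}$ remains open.

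Your automorphism paragraph does not supply fullness, for three reasons.
\begin{itemize}
\item It computes $\mathrm{Aut}_{\mathcal{X}^{\mathrm{rig}}}(\overline{\sigma}(z))\cong\mathrm{Aut}_{\mathcal{Z}}(z)$ only at geometric points.
\item It never relates this to $\mathrm{Aut}_{\mathcal{W}}(\rho_{\mathcal{Z}}(z))$. By Proposition \ref{prop:alt-desc-grad} that group is the centralizer of $\rho_{\ast}\lambda_z$, which is everything only because $\lambda^{\mathrm{can}}$ is central.
\item Information about geometric points and their stabilizers can never show that a morphism of algebraic stacks is an isomorphism; think of $\Spec k\to\Spec k[\varepsilon]/(\varepsilon^2)$. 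You would need the statement for automorphism sheaves of $S$-points over arbitrary $S$.
\end{itemize}
Also, ``It is not'' is false as written: $\sigma^{\ast}\mathcal{I}_{\mathrm{gr}}$ is usually nontrivial (for $\mathcal{X}=[X/H]$ with $X$ a point it is $U$). What holds, and what you actually use, is that it meets $\sigma_{\ast}\mathcal{I}_{\mathcal{Z}}$ trivially. The alternative route is only a sketch.

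\textbf{The missing idea} is the uniqueness clause of the universal property of rigidification: factorizations of a morphism through $\rho$ are unique up to unique $2$-isomorphism (\cite[Theorem A.1]{AOV-tame-stacks}; the same principle underlies the paper's use of \cite[Corollary B.3]{Modin} in Proposition \ref{prop:induced-action-on-rigidification}). Both $\rho_{\mathcal{Z}}\circ\overline{\mathrm{gr}}$ and $\mathrm{gr}^{\mathrm{rig}}$ are factorizations through $\rho$ of $\rho_{\mathcal{Z}}\circ\mathrm{gr}\cong\mathrm{gr}^{\mathrm{rig}}\circ\rho$. Hence there is a $2$-isomorphism $\rho_{\mathcal{Z}}\circ\overline{\mathrm{gr}}\cong\mathrm{gr}^{\mathrm{rig}}$. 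Precomposing with $\sigma^{\mathrm{rig}}$ gives $\rho_{\mathcal{Z}}\circ\tau\cong\mathrm{gr}^{\mathrm{rig}}\circ\sigma^{\mathrm{rig}}\cong\mathrm{id}_{\mathcal{W}}$. Together with your first bullet, $\rho_{\mathcal{Z}}$ is an equivalence with quasi-inverse $\tau$, and the automorphism computation can be dropped.

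\textbf{Comparison with the paper.} Repaired this way, your argument is a different, purely formal proof. The paper instead shows that the composite $\mathcal{Z}\to\mathcal{W}\to\mathrm{Grad}(\mathcal{X}^{\mathrm{rig}})$ is open, because $\mathrm{Grad}(\rho)$ is smooth. It shows the composite is a closed immersion, because $\overline{\sigma}$ is one by Theorem \ref{thm:affinity-grbar} and $\mathrm{Grad}$ preserves closed immersions. Hence $\mathcal{Z}\to\mathcal{W}$ is an open and closed immersion, and surjectivity follows by matching connected components via the two $\Theta$-deformation retracts. Your route needs none of Theorem \ref{thm:affinity-grbar}, the smoothness of $\mathrm{Grad}(\rho)$, or the connected-component count. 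It also identifies $\overline{\mathrm{gr}}$ with the retraction of the induced action.
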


\begin{proof}
    We first compare $\mathcal{Z}$ with $\mathrm{Grad}(\mathcal{X}^{\mathrm{rig}})$, by considering the commutative diagram
    \[\begin{tikzcd}[ampersand replacement=\&]
	{\mathcal{Z}} \&\& {\mathrm{Grad}(\mathcal{X})} \\
	\\
	{\mathrm{Grad}(\mathcal{Z})} \&\& {\mathrm{Grad}(\mathcal{X}^{\mathrm{rig}})}
	\arrow[from=1-1, to=1-3]
	\arrow[from=1-1, to=3-1]
	\arrow["{\mathrm{Grad}(\rho)}", from=1-3, to=3-3]
	\arrow["{\mathrm{Grad}(\sigma)}", from=3-1, to=1-3]
	\arrow["{\mathrm{Grad}(\overline{\sigma})}"', from=3-1, to=3-3]
    \end{tikzcd}\]
    where $\mathcal{Z} \to \mathrm{Grad}(\mathcal{Z})$ is the open and closed immersion classifying the weak $B\mathbb{G}_{m,B}$-action on $\mathcal{Z}$. Since $\rho : \mathcal{X} \to \mathcal{X}^{\mathrm{rig}}$ is an fppf gerbe, it is smooth, by \cite[Proposition 1.3.1]{Halpern-Leistner} the induced morphism $\mathrm{Grad}(\rho)$ is also smooth, hence (by \cite[Tag 06R7]{stacks-project}) has open image. In particular, the composition $\mathcal{Z} \to \mathrm{Grad}(\mathcal{X}^{\mathrm{rig}})$ has open image. On the other hand, since $\overline{\sigma}$ is a closed immersion, by \cite[Corollary 1.1.8]{Halpern-Leistner} the same is true of $\mathrm{Grad}(\overline{\sigma})$, and so $\mathcal{Z} \to \mathrm{Grad}(\mathcal{X}^{\mathrm{rig}})$ is a closed immersion. It follows that the morphism $\mathcal{Z} \to \mathrm{Grad}(\mathcal{X}^{\mathrm{rig}})$ is an open and closed immersion.

    Let $\mathcal{W} := \mathcal{X}^{\mathrm{rig}} \times_{\mathrm{Filt}(\mathcal{X}^{\mathrm{rig}})} \mathrm{Grad}(\mathcal{X}^{\mathrm{rig}})$ be the center of the induced weak action on $\mathcal{X}^{\mathrm{rig}}$. We then have a commutative diagram
    \[\begin{tikzcd}[ampersand replacement=\&]
	{\mathcal{Z}} \&\&\& \\
	\& {\mathcal{W}} \&\& {\mathrm{Grad}(\mathcal{X}^{\mathrm{rig}})} \\
	\\
	\& {\mathcal{X}^{\mathrm{rig}}} \&\& {\mathrm{Filt}(\mathcal{X}^{\mathrm{rig}})}
	\arrow["{\exists!}", dashed, from=1-1, to=2-2]
	\arrow[curve={height=-18pt}, from=1-1, to=2-4]
	\arrow["{\overline{\sigma}}"', curve={height=18pt}, from=1-1, to=4-2]
	\arrow[from=2-2, to=2-4]
	\arrow[from=2-2, to=4-2]
	\arrow["\lrcorner"{anchor=center, pos=0.125}, draw=none, from=2-2, to=4-4]
	\arrow["{\sigma^{\mathrm{rig}}}", from=2-4, to=4-4]
	\arrow["{a^{\mathrm{rig}}}"', from=4-2, to=4-4]
    \end{tikzcd}\]
    whose horizontal arrows are open and closed immersions. We claim that the induced morphism $\mathcal{Z} \to \mathcal{W}$ is an isomorphism. Since both $\mathcal{W} \to \mathrm{Grad}(\mathcal{X}^{\mathrm{rig}})$ and $\mathcal{Z} \to \mathrm{Grad}(\mathcal{X}^{\mathrm{rig}})$ are both open and closed immersions, the same is true of $\mathcal{Z} \to \mathcal{W}$. In particular, $\mathcal{Z} \to \mathcal{W}$ is an isomorphism onto its image. On the other hand, since $\mathrm{gr} : \mathcal{X} \to \mathcal{Z}$ and $\sigma : \mathcal{Z} \to \mathcal{X}$ yield a $\Theta$-deformation retract of $\mathcal{X}$ onto $\mathcal{Z}$, $\sigma$ induces a bijection between the sets of connected components of $|\mathcal{Z}|$ and $|\mathcal{X}|$; it follows that $\overline{\sigma} = \rho \circ \sigma$ induces a bijection between the sets of connected components of $|\mathcal{Z}|$ and $|\mathcal{X}^{\mathrm{rig}}|$, with inverse given by $|\overline{\mathrm{gr}}|$. Similarly, the restricted morphism $\sigma^{\mathrm{rig}} : \mathcal{W} \to \mathcal{X}^{\mathrm{rig}}$ induces a bijection on the level of sets of connected components. This implies that $\mathcal{Z} \to \mathcal{W}$ must be surjective, and hence an isomorphism.
\end{proof}

\subsection{Existence of a Good Moduli Space of the Rigidified Stack}

From now on until the end of this section, assume in addition that $\mathcal{Z}$ admits a good moduli space $\pi : \mathcal{Z} \to Z$ (note that since $\mathcal{Z}$ is quasi-separated, $Z$ is necessarily quasi-separated as well). Our next goal is to show that the composition of $\overline{\mathrm{gr}} : \mathcal{X}^{\mathrm{rig}} \to \mathcal{Z}$ with $\pi : \mathcal{Z} \to Z$ is a good moduli space of $\mathcal{X}^{\mathrm{rig}}$. 

To do this, we first consider the image $\mathcal{G}_{\mathcal{Z}}^{\mathrm{can}} \subset \mathcal{I}_{\mathcal{Z}}$ of the canonical homomorphism $\mathbb{G}_{m, \mathcal{Z}} \to \mathcal{I}_{\mathcal{Z}}$; recall that by Proposition \ref{prop:rigidified-center} there is a well-defined rigidification $\tau : \mathcal{Z} \to \mathcal{Z}^{\mathrm{rig}} := \mathcal{Z} \!\!\! \fatslash  \mathcal{G}_{\mathcal{Z}}^{\mathrm{can}}$, which on each connected component of $\mathcal{Z}$ is either an isomorphism or a $\mathbb{G}_m$-gerbe. Since $Z$ is an algebraic space, the good moduli space $\pi$ must necessarily factor through this rigidification, yielding a morphism $\overline{\pi} : \mathcal{Z}^{\mathrm{rig}} \to Z$.

\begin{lem} \label{lem:gms-Zrig}
    The morphism $\overline{\pi} : \mathcal{Z}^{\mathrm{rig}} \to Z$ is a good moduli space.
\end{lem}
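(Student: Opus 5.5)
We must show $\overline{\pi}$ is quasi-compact and quasi-separated, that $\mathcal{O}_Z \to \overline{\pi}_\ast \mathcal{O}_{\mathcal{Z}^{\mathrm{rig}}}$ is an isomorphism, and that $\overline{\pi}_\ast$ is exact on quasi-coherent sheaves. The plan is to transfer each property from $\pi = \overline{\pi} \circ \tau$ using that $\tau : \mathcal{Z} \to \mathcal{Z}^{\mathrm{rig}}$ is, componentwise, an isomorphism or a $\mathbb{G}_m$-gerbe (Proposition \ref{prop:rigidified-center}). In particular $\tau$ is fppf, surjective, and itself a good moduli space morphism. Since the question is local on $\mathcal{Z}$ and $\mathcal{Z}$ is the disjoint union of its connected components (and $\mathcal{Z}^{\mathrm{rig}}$ correspondingly), we may treat the two cases separately; the isomorphism case is trivial.

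For topological properties: $\tau$ is surjective and $\pi$ is quasi-compact, so $\overline{\pi}$ is quasi-compact. Quasi-separatedness of $\overline{\pi}$ follows because $\mathcal{Z}^{\mathrm{rig}} \to B$ has quasi-compact diagonal. Indeed, $\mathcal{Z}^{\mathrm{rig}}$ is quasi-separated, since its diagonal is a quotient of that of $\mathcal{Z}$ by a flat group, and $Z$ is quasi-separated.

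The key algebraic input is that $\tau_\ast : \mathrm{QCoh}(\mathcal{Z}) \to \mathrm{QCoh}(\mathcal{Z}^{\mathrm{rig}})$ is exact and $\tau_\ast \mathcal{O}_{\mathcal{Z}} = \mathcal{O}_{\mathcal{Z}^{\mathrm{rig}}}$. For a $\mathbb{G}_m$-gerbe, after fppf base change to a scheme $S \to \mathcal{Z}^{\mathrm{rig}}$ trivializing it, $\tau$ becomes $B\mathbb{G}_{m,S} \to S$. This is cohomologically affine because $\mathbb{G}_m$ is diagonalizable: pushforward takes weight-zero invariants, which is exact, and $\mathcal{O}_S$ is recovered. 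Both properties are fpqc local on the target, so $\tau$ is a good moduli space morphism.

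Then $\overline{\pi}_\ast \mathcal{O}_{\mathcal{Z}^{\mathrm{rig}}} = \overline{\pi}_\ast \tau_\ast \mathcal{O}_{\mathcal{Z}} = \pi_\ast \mathcal{O}_{\mathcal{Z}} = \mathcal{O}_Z$. For exactness, given a short exact sequence $0 \to \mathcal{F}' \to \mathcal{F} \to \mathcal{F}'' \to 0$ in $\mathrm{QCoh}(\mathcal{Z}^{\mathrm{rig}})$, pull back along the flat morphism $\tau$ to get an exact sequence on $\mathcal{Z}$. Since $\tau_\ast \tau^\ast \mathcal{F} \cong \mathcal{F}$ by the projection formula ($\tau_\ast\mathcal{O} = \mathcal{O}$, checked locally on $B\mathbb{G}_{m,S} \to S$ as the weight-zero part of $\mathcal{F} \otimes \mathcal{O}$), we get $\overline{\pi}_\ast \mathcal{F} \cong \pi_\ast \tau^\ast \mathcal{F}$. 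Exactness of $\pi_\ast$ and $\tau^\ast$ then gives exactness of $\overline{\pi}_\ast$.

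The main obstacle is justifying the identity $\tau_\ast \tau^\ast \cong \mathrm{id}$ on all quasi-coherent sheaves, not just $\mathcal{O}$, for a possibly nontrivial gerbe. I would verify it fppf-locally, where $\tau^\ast \mathcal{F}$ is $\mathcal{F}$ with trivial $\mathbb{G}_m$-weight. Alternatively, one can cite that good moduli space morphisms satisfy $\phi_\ast \phi^\ast \cong \mathrm{id}$ via the projection formula (Alper), applied to $\tau$.
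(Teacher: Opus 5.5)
Your proposal is correct and follows essentially the same route as the paper. Like the paper, you first show $\tau$ is a good moduli space morphism by reducing fppf-locally to $B\mathbb{G}_{m,S} \to S$, then compose to get $\mathcal{O}_Z \cong \overline{\pi}_\ast\mathcal{O}_{\mathcal{Z}^{\mathrm{rig}}}$, and obtain exactness from $\overline{\pi}_\ast \cong \pi_\ast \circ \tau^\ast$ using $\tau_\ast\tau^\ast \cong \mathrm{id}$ (the paper cites Alper's Proposition 4.5 for this, which is your suggested alternative) together with flatness of $\tau$; your extra check of quasi-compactness and quasi-separatedness of $\overline{\pi}$ fills in a point the paper leaves implicit.
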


\begin{proof}
    By \cite[Remark 12.2]{Alper-GMS}, the morphism $B\mathbb{G}_{m,S} \to S$ is a good moduli space for any scheme $S$. Since the property of being a good moduli space morphism is fpqc local on the base (the proof of \cite[Proposition 4.7(ii)]{Alper-GMS} carries over word-for-word to the case where the base is an algebraic stack), it follows that a $\mathbb{G}_m$-gerbe is a good moduli space morphism. In particular, $\tau$ is a good moduli space morphism. 
    
    Since the natural morphisms $\mathcal{O}_{\mathcal{Z}^{\mathrm{rig}}} \to \tau_{\ast} \mathcal{O}_{\mathcal{Z}}$ and $\mathcal{O}_Z \to \pi_{\ast}\mathcal{O}_{\mathcal{Z}}$ are isomorphisms, the same is true of $\mathcal{O}_Z \to \overline{\pi}_{\ast}\mathcal{O}_{\mathcal{Z}^{\mathrm{rig}}}$. On the other hand, by \cite[Proposition 4.5]{Alper-GMS} the adjunction morphism $\mathcal{F} \to \tau_{\ast}\tau^{\ast} \mathcal{F}$ is an isomorphism for all $\mathcal{F} \in \mathrm{QCoh}(\mathcal{Z}^{\mathrm{rig}})$, so $\overline{\pi}_{\ast} \cong \pi_{\ast} \circ \tau^{\ast}$. As $\tau$ is flat then $\tau^{\ast}$ is exact, and since $\pi$ is cohomologically affine then $\pi_{\ast}$ is exact. It follows that $\overline{\pi}$ is also cohomologically affine, whence $\overline{\pi}$ is a good moduli space.
\end{proof}

\begin{prop} \label{prop:gms-of-Xrig}
    The composition of $\overline{\mathrm{gr}} : \mathcal{X}^{\mathrm{rig}} \to \mathcal{Z}$ with the good moduli space $\pi : \mathcal{Z} \to Z$ is a good moduli space $\mathcal{X}^{\mathrm{rig}} \to Z$.
\end{prop}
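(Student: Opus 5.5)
We must show that $\pi \circ \overline{\mathrm{gr}} : \mathcal{X}^{\mathrm{rig}} \to Z$ is quasi-compact and quasi-separated, cohomologically affine, and satisfies $\mathcal{O}_Z \cong (\pi\circ\overline{\mathrm{gr}})_\ast \mathcal{O}_{\mathcal{X}^{\mathrm{rig}}}$.

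\emph{Finiteness and cohomological affineness.} By Theorem \ref{thm:affinity-grbar}, $\overline{\mathrm{gr}}$ is affine of finite presentation, hence quasi-compact, quasi-separated and cohomologically affine. A composite of cohomologically affine morphisms is cohomologically affine: $\overline{\mathrm{gr}}_\ast$ is exact on quasi-coherent sheaves and lands in $\mathrm{QCoh}(\mathcal{Z})$, where $\pi_\ast$ is exact. Quasi-compactness and quasi-separatedness are likewise stable under composition. Alternatively, property (6) of good moduli spaces gives directly that $\mathcal{X}^{\mathrm{rig}} = \underline{\Spec}_{\mathcal{Z}}\mathcal{A}$, with $\mathcal{A} := \overline{\mathrm{gr}}_\ast\mathcal{O}_{\mathcal{X}^{\mathrm{rig}}}$, has good moduli space $\underline{\Spec}_Z \pi_\ast\mathcal{A}$. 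So it suffices to prove the structure sheaf condition, i.e. that $\mathcal{O}_Z \to \pi_\ast\mathcal{A}$ is an isomorphism.

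\emph{Structure sheaf.} The plan is to use the grading. The weak $\Theta_B$-action on $\mathcal{X}^{\mathrm{rig}}$ from Proposition \ref{prop:induced-action-on-rigidification} makes $\overline{\mathrm{gr}}$ equivariant for $q$. Descending along $\mathcal{Z}$, this should make $\mathcal{A}$ a quasi-coherent $\mathcal{O}_{\mathcal{Z}}$-algebra with a $\mathbb{Z}$-grading $\mathcal{A} = \bigoplus_n \mathcal{A}_n$, whose weights are computed by the canonical cocharacter $\lambda^{\mathrm{can}}$, or equivalently by the internal $\mathbb{G}_m$-action of Lemma \ref{lem:internal-Gm-action}. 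Concretely, smooth-locally on a scheme $S \to \mathcal{Z}$, the stack $\mathcal{X}^{\mathrm{rig}}_S$ is an affine $S$-scheme with a contracting $\mathbb{G}_{m,S}$-action whose fixed locus is $S$ (Theorem \ref{thm:affinity-grbar}). Its coordinate ring is therefore graded in weights of a single sign, say $n \ge 0$, with degree-zero part $\mathcal{O}_S$. Since the action on $\mathcal{X}_S$ agrees with $a_S^{\mathrm{can}}$, these local gradings are compatible and glue to $\mathcal{A} = \mathcal{O}_{\mathcal{Z}} \oplus \bigoplus_{n>0}\mathcal{A}_n$.

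Here $\mathcal{A}_n$ is exactly the subsheaf on which $\mathcal{G}^{\mathrm{can}}_{\mathcal{Z}}$, i.e. $\lambda^{\mathrm{can}}$, acts with weight $n$. Now apply $\pi_\ast$ and factor $\pi = \overline{\pi}\circ\tau$ using Lemma \ref{lem:gms-Zrig}. A quasi-coherent sheaf on $\mathcal{Z}$ on which $\lambda^{\mathrm{can}}$ acts with nonzero weight has vanishing pushforward under $\pi$. Indeed, pushforward along the gerbe $\tau$ takes $\mathcal{G}^{\mathrm{can}}$-invariants, and this is zero in nonzero weight; on components where $\lambda^{\mathrm{can}}$ is trivial, the action is trivial and forces $\mathcal{A}_n = 0$ for $n \neq 0$, by Lemma \ref{lem:internal-Gm-action}. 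Hence $\pi_\ast\mathcal{A} = \pi_\ast\mathcal{O}_{\mathcal{Z}} = \mathcal{O}_Z$.

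\emph{Main obstacle.} The main obstacle is making the grading argument rigorous. One must show that the weight decomposition of $\mathcal{A}$ is intrinsic on $\mathcal{Z}$, namely that $\mathcal{A}_n$ is the $\lambda^{\mathrm{can}}$-weight-$n$ part, and that all weights are strictly positive except for the degree-zero piece $\mathcal{O}_{\mathcal{Z}}$. The first step is to identify the $\mathbb{G}_m$-action on fibers with $a^{\mathrm{can}}$ via Lemma \ref{lem:internal-Gm-action}, transported to $\mathcal{X}^{\mathrm{rig}}$. The second is to use the fact that a contracting action with fixed locus $S$ on an affine $S$-scheme has nonnegative weights with degree-zero part $\mathcal{O}_S$. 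The vanishing of $\tau_\ast$ on nonzero weights then follows from the standard weight decomposition of quasi-coherent sheaves on $\mathbb{G}_m$-gerbes, checked fpqc-locally on $\mathcal{Z}^{\mathrm{rig}}$ where the gerbe is $B\mathbb{G}_m$.
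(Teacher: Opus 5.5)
Your proposal is correct and is essentially the paper's argument. In both, the decisive input is that (via Lemma \ref{lem:internal-Gm-action} and Theorem \ref{thm:affinity-grbar}) $\overline{\mathrm{gr}}_\ast\mathcal{O}_{\mathcal{X}^{\mathrm{rig}}}$ is graded by the canonical $\mathbb{G}_m$ in weights of one sign with degree-zero part $\mathcal{O}$, so taking invariants returns $\mathcal{O}$. The only difference is packaging: the paper reduces to $\mathcal{X}^{\mathrm{rig}} \to \mathcal{Z}^{\mathrm{rig}}$, neutralizes the gerbe $\tau$ fppf-locally and quotes \cite[Theorem 13.2]{Alper-GMS} for $[\Spec_S\mathcal{A}/\mathbb{G}_{m,S}] \to S$ (tracking the finite kernel $D_S(\Z/q\Z)$ of $\lambda^{\mathrm{can}}$, which your zero-versus-nonzero weight argument harmlessly ignores), whereas you use the intrinsic weight decomposition on $\mathcal{Z}$ and verify the structure-sheaf axiom directly.
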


\begin{proof}
    Since the composition of two good moduli space morphisms is a good moduli space morphism, it suffices to show that the morphism $\mathcal{X}^{\mathrm{rig}} \to \mathcal{Z}^{\mathrm{rig}}$ is a good moduli space morphism. Since this problem is fpqc local on $\mathcal{Z}^{\mathrm{rig}}$, we may assume without loss of generality that $\mathcal{Z}$ and $\mathcal{Z}^{\mathrm{rig}}$ are connected. As such, $\tau$ is either an isomorphism or is a $\mathbb{G}_m$-gerbe.

    Suppose first that $\tau$ is trivial, so that by Proposition \ref{prop:rigidified-center} the canonical homomorphism $\lambda_{\mathrm{can}} : \mathbb{G}_{m, \mathcal{Z}} \to \mathcal{I}_{\mathcal{Z}}$ factors through the identity section $\mathcal{Z} \to \mathcal{I}_{\mathcal{Z}}$. By Lemma \ref{lem:internal-Gm-action}, this implies that the weak $\Theta_B$-actions on both $\mathcal{X}$ and $\mathcal{X}^{\mathrm{rig}}$ are equivalent to the trivial action, which in turn implies that the morphism $\overline{\mathrm{gr}} : \mathcal{X}^{\mathrm{rig}} \to \mathcal{Z}$ is an isomorphism. It follows \emph{a fortiori} that $\mathcal{X}^{\mathrm{rig}} \to \mathcal{Z}^{\mathrm{rig}} = \mathcal{Z}$ is a good moduli space morphism.

    Suppose instead that $\tau$ is a $\mathbb{G}_m$-gerbe. Neutralizing this gerbe, there exists a scheme $S$, an fppf cover $S \to \mathcal{Z}^{\mathrm{rig}}$ and a Cartesian diagram
    \[\begin{tikzcd}[ampersand replacement=\&]
	{\mathcal{X}_S^{\mathrm{rig}}} \&\& {\mathcal{X}^{\mathrm{rig}}} \\
	\\
	{B\mathbb{G}_{m,S}} \&\& {\mathcal{Z}} \\
	\\
	S \&\& {\mathcal{Z}^{\mathrm{rig}}}
	\arrow[from=1-1, to=1-3]
	\arrow[from=1-1, to=3-1]
	\arrow["\lrcorner"{anchor=center, pos=0.125}, draw=none, from=1-1, to=3-3]
	\arrow["{\overline{\mathrm{gr}}}", from=1-3, to=3-3]
	\arrow[from=3-1, to=3-3]
	\arrow[from=3-1, to=5-1]
	\arrow["\lrcorner"{anchor=center, pos=0.125}, draw=none, from=3-1, to=5-3]
	\arrow[from=3-3, to=5-3]
	\arrow[from=5-1, to=5-3]
    \end{tikzcd}\]

    By Theorem \ref{thm:affinity-grbar} the morphism $\overline{\mathrm{gr}}$ is affine, and so we may write $\mathcal{X}_S^{\mathrm{rig}} = [\Spec_S \mathcal{A}/\mathbb{G}_{m,S}]$, for some graded $\mathcal{O}_S$-algebra $\mathcal{A} = \bigoplus_{m \in \mathbb{Z}} \mathcal{A}_m$. By \cite[Proposition 1.3.9]{Halpern-Leistner} the kernel of $\lambda^{\mathrm{can}}|_S$ is isomorphic to the multiplicative group $D_S(\Z/q\Z)$ for some positive integer $q$. Therefore, by Lemma \ref{lem:internal-Gm-action} the $\mathbb{G}_{m,S}$-action on $\Spec_S \mathcal{A} = \mathcal{X}_S^{\mathrm{rig}} \times_{B\mathbb{G}_{m,S}} S$ used to form the quotient is equivalent to the $\mathbb{G}_{m,S}$-action induced from the weak $\Theta_B$-action on $\mathcal{X}^{\mathrm{rig}}$, up to the finite $S$-group $D_S(\Z/q\Z)$. Since this latter action is contracting and has fixed locus $\overline{\sigma}(S)$, it follows that
    $$ \mathcal{A}_m = \begin{cases} 0 & \text{if } m < 0, \\ \mathcal{O}_S & \text{if } m = 0. \end{cases} $$
    Consequently, by \cite[Theorem 13.2]{Alper-GMS} the composition $\mathcal{X}_S^{\mathrm{rig}} = [\Spec_S \mathcal{A}/\mathbb{G}_{m,S}] \to B\mathbb{G}_{m,S} \to S$ is a good moduli space, and so $\mathcal{X}^{\mathrm{rig}} \to \mathcal{Z}^{\mathrm{rig}}$ is a good moduli space morphism. This completes the proof of the proposition.
\end{proof}

\begin{rmk}
    Without any flatness hypotheses on the relative inertia $\mathcal{I}_{\mathrm{gr}}$, it is shown in \cite{IbanezNunez-GMSGradedPoints} that in the case where $B$ is assumed to be locally Noetherian, the morphism $\pi \circ \mathrm{gr} : \mathcal{X} \to Z$ is a categorical moduli space (in the sense that is it universal for morphisms from $\mathcal{X}$ to algebraic spaces over $B$), and that the adjunction morphism $\mathcal{O}_Z \to (\pi \circ \mathrm{gr})_{\ast} \mathcal{O}_{\mathcal{X}}$ is an isomorphism.
\end{rmk}

\subsection{Projective Moduli Spaces Arising from Invertible Sheaves on the Rigidified Stack} \label{sec:projective-moduli-spaces-inv-sheaves}

Continue to assume that $\mathcal{Z}$ admits a good moduli space $\pi : \mathcal{Z} \to Z$. From now until the end of the section, we additionally assume that the stack $\mathcal{X}$ is locally Noetherian. Given an invertible sheaf $\mathcal{L}$ on $\mathcal{X}^{\mathrm{rig}}$, we establish the existence of open substacks $(\mathcal{X}^{\mathrm{rig}})^s(\mathcal{L}) \subset (\mathcal{X}^{\mathrm{rig}})^{ss}(\mathcal{L})$ of $\mathcal{X}^{\mathrm{rig}}$ such that $(\mathcal{X}^{\mathrm{rig}})^{ss}(\mathcal{L})$ admits a good moduli space of the form
$$ \pi_{\mathcal{L}} : (\mathcal{X}^{\mathrm{rig}})^{ss}(\mathcal{L}) \to X(\mathcal{L}) = \underline{\mathrm{Proj}}_Z \bigoplus_{m \geq 0} (\pi \circ \overline{\mathrm{gr}})_{\ast}(\mathcal{L}^m) $$
and such that the restriction of $\pi_{\mathcal{L}}$ to $(\mathcal{X}^{\mathrm{rig}})^{s}(\mathcal{L})$ is a tame moduli space onto its image. We will also establish that for any algebraically closed field $k$, the $k$-points of $(\mathcal{X}^{\mathrm{rig}})^{(s)s}(\mathcal{L})$ are the $k$-points of $\mathcal{X}^{\mathrm{rig}}$ which are $\Theta_k$-(semi)stable with respect to $\mathcal{L}$, and that the $k$-points of $(\mathcal{X}^{\mathrm{rig}})^{s}(\mathcal{L})$ have $0$-dimensional automorphism groups. 

Our strategy is to essentially implement variation of good moduli space \cite[Theorem 5.6.1]{Halpern-Leistner} (see also \cite[Theorem 2.6.3]{IbanezNunez-RefinedHN}) for the $\Theta$-stability condition on $\mathcal{X}^{\mathrm{rig}}$ determined by $\mathcal{L}$, in the sense of Definition \ref{def:theta-stability}. However, since we wish to also take into account the $\Theta$-stable locus in addition to the $\Theta$-semistable locus, whereas \cite[Theorem 5.6.1]{Halpern-Leistner} and \cite[Theorem 2.6.3]{IbanezNunez-RefinedHN} only considers the $\Theta$-semistable locus, we instead will establish the version of variation of good moduli space we require ourselves.

For the rest of this section, let $\mathcal{L}$ be a fixed invertible sheaf on $\mathcal{X}^{\mathrm{rig}}$.

\subsubsection{The Semistable Loci as Algebraic Stacks}

We first define the substacks $(\mathcal{X}^{\mathrm{rig}})^{(s)s}(\mathcal{L})$ under consideration in Theorem \ref{thm:main-theorem}(3).

\begin{defi} \label{def:L-semistable-locus}
    Set $\mathcal{A} := \bigoplus_{m \geq 0} \mathcal{L}^m$ and $\mathcal{A}_+ := \bigoplus_{m > 0} \mathcal{L}^m$. Let $\phi := \pi \circ \overline{\mathrm{gr}} : \mathcal{X}^{\mathrm{rig}} \to Z$ be the good moduli space of $\mathcal{X}^{\mathrm{rig}}$. Let $\phi^{-1}\phi_{\ast}\mathcal{A}_+$ denote the image of the natural homomorphism $\phi^{\ast}\phi_{\ast} \mathcal{A}_+ \to \mathcal{A}_+ \to \mathcal{A}$. We then define
    $$ (\mathcal{X}^{\mathrm{rig}})^{ss}(\mathcal{L}) := \mathcal{X}^{\mathrm{rig}} \setminus \mathbb{V}(\phi^{-1} \phi_{\ast} \mathcal{A}_+). $$
\end{defi}

In other words, $(\mathcal{X}^{\mathrm{rig}})^{ss}(\mathcal{L})$ is the relative saturated projective spectrum $\underline{\mathrm{Proj}}_{\mathcal{X}^{\mathrm{rig}}}^{\phi}(\mathcal{A})$, in the sense of Edidin--Rydh \cite{Edidin-Rydh} (compare this with \cite[Proposition 3.17]{Edidin-Rydh}).

By upper semi-continuity for the dimension of group schemes \cite[Exposé VI-B, Proposition 4.1]{SGA3}, the set
$$ \mathcal{C} := \{ x \in |(\mathcal{X}^{\mathrm{rig}})^{ss}(\mathcal{L})| : \dim \mathrm{Aut}_{\mathcal{X}^{\mathrm{rig}}}(x) > 0 \} $$
is closed in $|(\mathcal{X}^{\mathrm{rig}})^{ss}(\mathcal{L})|$. By \cite[Proposition 2.6]{Edidin-Rydh},\footnote{Although \cite[Proposition 2.6]{Edidin-Rydh} assumes that the algebraic stacks involved are Noetherian, their argument carries over to the locally Noetherian setting without changes.} it follows from the existence of the good moduli space $\pi_{\mathcal{L}} : (\mathcal{X}^{\mathrm{rig}})^{ss}(\mathcal{L}) \to X(\mathcal{L})$ (see Proposition \ref{prop:gms-from-line-bundle} below) that the set of points $|(\mathcal{X}^{\mathrm{rig}})^{ss}(\mathcal{L})| \setminus \pi_{\mathcal{L}}^{-1}(\pi_{\mathcal{L}}(\mathcal{C}))$ is open in $|(\mathcal{X}^{\mathrm{rig}})^{ss}(\mathcal{L})|$.

\begin{defi} \label{def:L-stable-locus}
    We define $(\mathcal{X}^{\mathrm{rig}})^{s}(\mathcal{L})$ to be the open substack of $(\mathcal{X}^{\mathrm{rig}})^{ss}(\mathcal{L})$ with
    $$ |(\mathcal{X}^{\mathrm{rig}})^{s}(\mathcal{L})| = |(\mathcal{X}^{\mathrm{rig}})^{ss}(\mathcal{L})| \setminus \pi_{\mathcal{L}}^{-1}(\pi_{\mathcal{L}}(\mathcal{C})). $$
\end{defi}

In other words, $(\mathcal{X}^{\mathrm{rig}})^{s}(\mathcal{L})$ is the properly stable locus of the good moduli space $\pi_{\mathcal{L}} : (\mathcal{X}^{\mathrm{rig}})^{ss}(\mathcal{L}) \to X(\mathcal{L})$, in the sense of \cite{Edidin-Rydh}. By construction, every point of $(\mathcal{X}^{\mathrm{rig}})^{s}(\mathcal{L})$ is a regular point of $(\mathcal{X}^{\mathrm{rig}})^{ss}(\mathcal{L})$, in the sense of \cite[Definition 11.9]{Alper-GMS}, since each geometric point of $(\mathcal{X}^{\mathrm{rig}})^s(\mathcal{L})$ admits a neighborhood in $(\mathcal{X}^{\mathrm{rig}})^{ss}(\mathcal{L})$ where the stabilizer dimension is constantly $0$.

\subsubsection{Existence of a Good Moduli Space of the Semistable Locus}

Immediately from the definition, a point $x \in |\mathcal{X}^{\mathrm{rig}}|$ is a point of $|(\mathcal{X}^{\mathrm{rig}})^{ss}(\mathcal{L})|$ if and only if there exists a positive integer $m > 0$ such that the natural homomorphism $\phi^{\ast} \phi_{\ast} \mathcal{L}^m \to \mathcal{L}^m$ is surjective at $x$. In particular, $(\mathcal{X}^{\mathrm{rig}})^{ss}(\mathcal{L})$ coincides with the domain of definition of the natural partial morphism
$$ \pi_{\mathcal{L}} : \mathcal{X}^{\mathrm{rig}} = \underline{\mathrm{Proj}}_{\mathcal{X}^{\mathrm{rig}}} \bigoplus_{m \geq 0} \mathcal{L}^m \dashrightarrow X(\mathcal{L}) := \underline{\mathrm{Proj}}_Z \bigoplus_{m \geq 0} \phi_{\ast} \mathcal{L}^m. $$

\begin{prop} \label{prop:gms-from-line-bundle}
    The morphism $\pi_{\mathcal{L}} : (\mathcal{X}^{\mathrm{rig}})^{ss}(\mathcal{L}) \to X(\mathcal{L})$ is a good moduli space (in particular, $(\mathcal{X}^{\mathrm{rig}})^{s}(\mathcal{L})$ is well-defined). If $(\mathcal{X}^{\mathrm{rig}})^s(\mathcal{L})$ is non-empty, then the restriction of $\pi_{\mathcal{L}}$ to $(\mathcal{X}^{\mathrm{rig}})^s(\mathcal{L})$ has open image $X^s(\mathcal{L}) \subset X(\mathcal{L})$, and $\pi_{\mathcal{L}} : (\mathcal{X}^{\mathrm{rig}})^{s}(\mathcal{L}) \to X^s(\mathcal{L})$ is a tame moduli space.
\end{prop}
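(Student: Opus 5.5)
The plan is to reduce to the corresponding statement about saturated projective spectra over a stack with a good moduli space, as in Edidin--Rydh, using that $\phi = \pi \circ \overline{\mathrm{gr}} : \mathcal{X}^{\mathrm{rig}} \to Z$ is a good moduli space (Proposition \ref{prop:gms-of-Xrig}).

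\emph{Reduction to an affine base.} The question of whether $\pi_{\mathcal{L}}$ is a good moduli space is fpqc (in particular étale) local on $Z$. Moreover, the formation of $\phi_{\ast}\mathcal{L}^m$, of $\underline{\mathrm{Proj}}_Z$ and of the locus $\mathbb{V}(\phi^{-1}\phi_{\ast}\mathcal{A}_+)$ all commute with flat base change on $Z$, by the base change property (8) of good moduli spaces. We may therefore assume $Z = \Spec R$ is affine. In that case $\phi$ is cohomologically affine, so $\mathcal{X}^{\mathrm{rig}}$ has global sections functor exact, and $\Gamma(\mathcal{X}^{\mathrm{rig}}, \mathcal{A}) = \bigoplus_m \Gamma(\mathcal{L}^m) =: S$ is a graded $R$-algebra with $X(\mathcal{L}) = \mathrm{Proj}\, S$.

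\emph{Local verification on a cover of the semistable locus.} By definition, $(\mathcal{X}^{\mathrm{rig}})^{ss}(\mathcal{L})$ is covered by the non-vanishing loci $\mathcal{X}^{\mathrm{rig}}_f$ of homogeneous sections $f \in S_m$ with $m > 0$. These loci map to the basic opens $D_+(f) = \Spec S_{(f)}$, which are affine and cover $X(\mathcal{L})$, and we have $\pi_{\mathcal{L}}^{-1}(D_+(f)) = \mathcal{X}^{\mathrm{rig}}_f$. The substack $\mathcal{X}^{\mathrm{rig}}_f$ is affine over $\mathcal{X}^{\mathrm{rig}}$, being the complement of a Cartier divisor. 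By property (6) of good moduli spaces it therefore has good moduli space $\Spec \Gamma(\mathcal{X}^{\mathrm{rig}}_f, \mathcal{O})$. Exactness of $\Gamma$ gives $\Gamma(\mathcal{X}^{\mathrm{rig}}_f,\mathcal{O}) = \varinjlim_n \Gamma(\mathcal{L}^{nm})$ along multiplication by $f$, restricted to degree zero, which is $S_{(f)}$. These local good moduli spaces glue because good moduli spaces are compatible with the saturated opens $\mathcal{X}^{\mathrm{rig}}_{fg}$. This shows that $\pi_{\mathcal{L}}$ is a good moduli space; the argument is \cite[Proposition 3.17]{Edidin-Rydh} in our setting. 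The only point requiring care is that we work with locally Noetherian rather than Noetherian stacks. To handle this, I would check that quasi-compactness is never used beyond working affine-locally on $Z$, where $\phi$ is quasi-compact anyway.

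\emph{Stable locus.} Since $\pi_{\mathcal{L}}$ is now a good moduli space, \cite[Proposition 2.6]{Edidin-Rydh} applies: $\mathcal{C}$ is closed, and $\pi_{\mathcal{L}}(\mathcal{C})$ is closed because good moduli spaces are universally closed. Hence $X^s(\mathcal{L}) := X(\mathcal{L}) \setminus \pi_{\mathcal{L}}(\mathcal{C})$ is open, and its preimage is exactly $(\mathcal{X}^{\mathrm{rig}})^s(\mathcal{L})$, which is therefore $\pi_{\mathcal{L}}$-saturated. By property (7), the restriction $(\mathcal{X}^{\mathrm{rig}})^s(\mathcal{L}) \to X^s(\mathcal{L})$ is a good moduli space, and its image is $X^s(\mathcal{L})$ by surjectivity.

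\emph{Tameness.} By property (4), it suffices to show that every geometric point of $(\mathcal{X}^{\mathrm{rig}})^s(\mathcal{L})$ is closed in its fiber. Equivalently, no $k$-point of the stable locus specializes to a different one. Suppose $y$ lies in the closure of $x$ with $y \neq x$, and both lie in the stable locus. The orbit-dimension count in the stack $\mathcal{X}^{\mathrm{rig}}$ then forces $\dim \mathrm{Aut}(y) > \dim \mathrm{Aut}(x) \geq 0$, contradicting $y \notin \mathcal{C}$. I expect this step to be the main obstacle. To make the dimension comparison rigorous, I would pass to the étale local model of Theorem \ref{thm:gms-etale-local-structure} at the closed point $y$, which gives $[\Spec A / G_y]$ with $G_y$ linearly reductive. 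Every point of the stable locus is regular in the sense of \cite[Definition 11.9]{Alper-GMS}, since stabilizers there are finite in a saturated neighbourhood. The claim then follows from \cite[Proposition 11.12(?)]{Alper-GMS}, or directly: a $G_y$-orbit with finite stabilizer has dimension $\dim G_y$, so it cannot contain a strictly smaller orbit in its closure.
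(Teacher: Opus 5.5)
Your proposal is correct and follows the paper's route: the paper cites \cite[Proposition 3.4]{Edidin-Rydh} for the good moduli space, remarking that the Noetherian and finite-generation hypotheses are unnecessary, which your affine-local $\mathcal{X}^{\mathrm{rig}}_f$ argument confirms. It then cites \cite[Proposition 2.6]{Edidin-Rydh} for saturation of the stable locus and \cite[Lemma 11.10, Remark 6.2, Proposition 7.7]{Alper-GMS} for closed orbits, openness and tameness, which are exactly the steps you carry out by hand. In your tameness step, the operative fact is your first one: the specialization $y$ would satisfy $\dim \mathrm{Aut}(y) > \dim \mathrm{Aut}(x) \geq 0$, so $y \in \mathcal{C}$. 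Your ``directly'' variant is misphrased, since an orbit with finite stabilizer can contain a fixed point in its closure; the point is that $G_y$ itself is finite. The reference you were looking for is \cite[Lemma 11.10]{Alper-GMS}.
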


\begin{proof}
    Since $\phi : \mathcal{X}^{\mathrm{rig}} \to Z$ is a good moduli space, the result that $\pi_{\mathcal{L}} : (\mathcal{X}^{\mathrm{rig}})^{ss}(\mathcal{L}) \to X(\mathcal{L})$ is a good moduli space follows from \cite[Proposition 3.4]{Edidin-Rydh}; although the assumption is that the stacks involved are Noetherian and the algebras involved are finitely generated, the proof carries over without any modifications to the locally Noetherian setting.

    If $(\mathcal{X}^{\mathrm{rig}})^s(\mathcal{L})$ is non-empty, by \cite[Proposition 2.6]{Edidin-Rydh} the substack $(\mathcal{X}^{\mathrm{rig}})^{s}(\mathcal{L})$ of $(\mathcal{X}^{\mathrm{rig}})^{ss}(\mathcal{L})$ is saturated. Additionally, by \cite[Lemma 11.10]{Alper-GMS}, each geometric point $x : \Spec k \to (\mathcal{X}^{\mathrm{rig}})^s(\mathcal{L})$ has a closed orbit, in the sense that the natural morphism from the residual gerbe of $x$ to $(\mathcal{X}^{\mathrm{rig}})^{s}(\mathcal{L}) \times_B \Spec k$ is a closed immersion. It then follows from \cite[Remark 6.2, Proposition 7.7]{Alper-GMS} that the image $X^s(\mathcal{L})$ of $(\mathcal{X}^{\mathrm{rig}})^s(\mathcal{L})$ under $\pi_{\mathcal{L}}$ is open in $X(\mathcal{L})$, and that $(\mathcal{X}^{\mathrm{rig}})^s(\mathcal{L}) \to X^s(\mathcal{L})$ is a tame moduli space.
\end{proof}

Concerning the projectivity of the morphism $X(\mathcal{L}) \to Z$, we have the following result.

\begin{lem} \label{lem:when-moduli-space-is-projective}
    If $B$ is an excellent scheme, if $\mathcal{X}$ is of finite type over $B$ and if the good moduli space $Z$ is a scheme, then $\bigoplus_{m \geq 0} \phi_{\ast} \mathcal{L}^m = \bigoplus_{m \geq 0} (\pi \circ \overline{\mathrm{gr}})_{\ast} (\mathcal{L}^m)$ is an $\mathcal{O}_Z$-algebra of finite type, and the natural morphism $X(\mathcal{L}) \to Z$ is locally projective. If $Z$ is quasi-compact then $X(\mathcal{L}) \to Z$ is projective.
\end{lem}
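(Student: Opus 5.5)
The plan is to reduce finite generation of $\bigoplus_{m\ge 0}\phi_\ast\mathcal{L}^m$ to the standard finiteness theorem for good moduli spaces of stacks of finite type over an excellent base, applied to an auxiliary stack. Set $\mathcal{V} := \underline{\Spec}_{\mathcal{X}^{\mathrm{rig}}} \bigoplus_{m \in \Z} \mathcal{L}^m$, the total space of $\mathcal{L}^\vee$ with its zero section removed, i.e.\ the $\mathbb{G}_m$-torsor associated to $\mathcal{L}$. Then $[\mathcal{V}/\mathbb{G}_m] \cong \mathcal{X}^{\mathrm{rig}}$, but it is more convenient to use the affine-over-$\mathcal{X}^{\mathrm{rig}}$ stack $\mathcal{Y} := \underline{\Spec}_{\mathcal{X}^{\mathrm{rig}}} \mathcal{A}$, where $\mathcal{A} = \bigoplus_{m\geq 0}\mathcal{L}^m$ (the total space of $\mathcal{L}^\vee$).

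First I check the hypotheses. Since $\mathcal{X} \to B$ is of finite type and $\rho$ is an fppf gerbe (Lemma \ref{lem:existence-rigidification}), $\mathcal{X}^{\mathrm{rig}}$ is of finite type over $B$. Indeed, it is locally of finite presentation because $\rho$ is surjective flat and of finite presentation, and it is quasi-compact as the image of a quasi-compact stack. Consequently $\mathcal{Y}$, being affine of finite type over $\mathcal{X}^{\mathrm{rig}}$, is of finite type over $B$.

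By property (6) of good moduli spaces together with Proposition \ref{prop:gms-of-Xrig}, the stack $\mathcal{Y}$ admits a good moduli space $\underline{\Spec}_Z \phi_\ast\mathcal{A}$. Property (10) then says this space is of finite type over $B$, hence over $Z$. Since $\phi_\ast$ commutes with direct sums, $\phi_\ast\mathcal{A} = \bigoplus_{m \ge 0}\phi_\ast\mathcal{L}^m$, so this graded $\mathcal{O}_Z$-algebra is of finite type. The grading is compatible because the $\mathbb{G}_m$-action on $\mathcal{Y}$ descends to the good moduli space by universality, property (2). Also, $\phi_\ast\mathcal{L}^0 = \mathcal{O}_Z$.

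Next comes projectivity. A finitely generated graded $\mathcal{O}_Z$-algebra $\mathcal{S}$ with $\mathcal{S}_0 = \mathcal{O}_Z$, over a scheme $Z$, has relative Proj that is locally projective. On an affine open $\Spec R \subset Z$, $\mathcal{S}$ is generated by finitely many homogeneous elements of degrees $d_i$. Passing to a Veronese subalgebra $\mathcal{S}^{(d)}$ with $d = \mathrm{lcm}(d_i)$, which does not change Proj, we get an algebra generated in degree one. Hence $\mathrm{Proj}$ is a closed subscheme of some $\mathbb{P}^N_R$.

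If $Z$ is quasi-compact, I cover it by finitely many affines and take a common $d$. Then $\mathcal{S}^{(d)}$ is a quotient of $\mathrm{Sym}(\mathcal{S}_d)$ with $\mathcal{S}_d$ coherent, because it is a finite-type $\mathcal{O}_Z$-module, being a graded piece of a finite-type algebra over Noetherian $Z$. This gives a closed immersion $X(\mathcal{L}) \hookrightarrow \mathbb{P}(\mathcal{S}_d)$, which yields projectivity in the sense of EGA.

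The main obstacle I anticipate is the careful bookkeeping in the finiteness step. It requires confirming that property (10) applies to $\mathcal{Y}$, which needs $\mathcal{Y}$ quasi-separated with affine diagonal over the excellent base. It also requires identifying the good moduli space of $\mathcal{Y}$ with $\underline{\Spec}_Z\phi_\ast\mathcal{A}$ so that finite type of the space translates into finite type of the algebra. Noetherianity of $Z$ (needed for coherence of $\mathcal{S}_d$) follows from $Z$ being of finite type over the excellent, hence locally Noetherian, base.
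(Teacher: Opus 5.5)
Your proposal follows the paper's proof: apply property (6) to $\underline{\Spec}_{\mathcal{X}^{\mathrm{rig}}}\mathcal{A}$ to obtain the good moduli space $\underline{\Spec}_Z\phi_\ast\mathcal{A}$, get finite type from Alper's finiteness theorem (property (10)), use $\phi_\ast\mathcal{O}_{\mathcal{X}^{\mathrm{rig}}}=\mathcal{O}_Z$, and conclude by standard facts about relative $\mathrm{Proj}$.

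One correction in the routine $\mathrm{Proj}$ step: with $d=\mathrm{lcm}(d_i)$, the Veronese $\mathcal{S}^{(d)}$ need not be generated in degree $d$. For example, in $k[x_0,x_1,x_2,x_3]$ with weights $1,6,10,15$, the monomial $x_0x_1^4x_2^2x_3$ lies in $S_{60}\setminus S_{30}^2$. You should instead take $d$ to be a sufficiently large multiple of the lcm, as in EGA II, 2.1.6; the rest of your argument then goes through unchanged.
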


\begin{proof}
    Observe that if $\mathcal{X}$ is of finite type over $B$ then so are $\mathcal{X}^{\mathrm{rig}}$ and $\mathcal{Z}$, hence by \cite[Theorem 4.16(xi)]{Alper-GMS} so is their common good moduli space $Z$. Since $\phi : \mathcal{X}^{\mathrm{rig}} \to Z$ is a good moduli space, the induced morphism
    $$ \underline{\Spec}_{\mathcal{X}^{\mathrm{rig}}} \left( \bigoplus_{m \geq 0} \mathcal{L}^m \right) \to \underline{\Spec}_Z \left( \bigoplus_{m \geq 0} \phi_{\ast} \mathcal{L}^m \right) $$
    is also a good moduli space. But $\underline{\Spec}_{\mathcal{X}^{\mathrm{rig}}} \left( \bigoplus_{m \geq 0} \mathcal{L}^m \right)$ is of finite type over $\mathcal{X}^{\mathrm{rig}}$ (and thus $B$), hence $\underline{\Spec}_Z \left( \bigoplus_{m \geq 0} \phi_{\ast} \mathcal{L}^m \right)$ is of finite type over $B$. This implies that $\bigoplus_{m \geq 0} \phi_{\ast} \mathcal{L}^m$ must be an $\mathcal{O}_Z$-algebra of finite type. Further, the graded piece in degree $0$ is $\phi_{\ast} \mathcal{O}_{\mathcal{X}^{\mathrm{rig}}} = \mathcal{O}_Z$. As such, the remaining statements follow from standard properties of relative projective spectra.
\end{proof}

We have an alternative description of $X(\mathcal{L})$ as a relative projective spectrum, in terms of the pullback $\rho^{\ast} \mathcal{L}$ along the gerbe $\rho : \mathcal{X} \to \mathcal{X}^{\mathrm{rig}}$.

\begin{lem} \label{lem:alt-desc-of-relative-proj}
    There is an equality
    $$ X(\mathcal{L}) := \underline{\mathrm{Proj}}_Z \bigoplus_{m \geq 0} (\pi \circ \overline{\mathrm{gr}})_{\ast} (\mathcal{L}^m) = \underline{\mathrm{Proj}}_Z \bigoplus_{m \geq 0} (\pi \circ \mathrm{gr})_{\ast}(\rho^{\ast} \mathcal{L}^m). $$
\end{lem}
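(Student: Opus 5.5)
Since $\pi \circ \mathrm{gr} = \pi \circ \overline{\mathrm{gr}} \circ \rho = \phi \circ \rho$, it suffices to show that for every $m \geq 0$ the adjunction map $\mathcal{L}^m \to \rho_{\ast}\rho^{\ast}\mathcal{L}^m$ is an isomorphism of quasi-coherent sheaves on $\mathcal{X}^{\mathrm{rig}}$. Applying $\phi_{\ast}$ then gives isomorphisms $\phi_{\ast}\mathcal{L}^m \cong (\pi\circ\mathrm{gr})_{\ast}(\rho^{\ast}\mathcal{L}^m)$. These are compatible with the multiplication maps, since the adjunction $\mathrm{id} \to \rho_{\ast}\rho^{\ast}$ is monoidal for the tensor product. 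We therefore obtain an isomorphism of graded $\mathcal{O}_Z$-algebras $\bigoplus_{m\geq 0}\phi_{\ast}\mathcal{L}^m \cong \bigoplus_{m \geq 0}(\pi\circ\mathrm{gr})_{\ast}(\rho^{\ast}\mathcal{L}^m)$, and taking $\underline{\mathrm{Proj}}_Z$ yields the claimed equality.

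By the projection formula for the invertible sheaf $\mathcal{L}^m$, which is valid since $\rho$ is quasi-compact and quasi-separated, we have $\rho_{\ast}\rho^{\ast}\mathcal{L}^m \cong \mathcal{L}^m \otimes \rho_{\ast}\mathcal{O}_{\mathcal{X}}$. The whole question therefore reduces to showing that $\mathcal{O}_{\mathcal{X}^{\mathrm{rig}}} \to \rho_{\ast}\mathcal{O}_{\mathcal{X}}$ is an isomorphism. This holds for any fppf gerbe, and $\rho$ is one by Lemma \ref{lem:existence-rigidification}. To see this, note that the statement is fppf local on $\mathcal{X}^{\mathrm{rig}}$, because pushforward commutes with flat base change (and $\rho$ is quasi-compact and quasi-separated). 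So we may pass to a cover $T \to \mathcal{X}^{\mathrm{rig}}$ by a scheme over which the gerbe becomes trivial, $\mathcal{X}\times_{\mathcal{X}^{\mathrm{rig}}} T \cong BG_T$ for a flat finitely presented group scheme $G_T \to T$. There the claim is that $\mathcal{O}_T \to (BG_T \to T)_{\ast}\mathcal{O}_{BG_T}$ is an isomorphism. This is clear: the global functions on $BG_T$ over $T$ are the $G_T$-invariants of $\mathcal{O}_T$ for the trivial action, which are all of $\mathcal{O}_T$.

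An alternative route avoids the projection formula altogether. One notes that $\rho^{\ast}$ identifies $\mathrm{QCoh}(\mathcal{X}^{\mathrm{rig}})$ with the sheaves on $\mathcal{X}$ on which $\mathcal{I}_{\mathrm{gr}}$ acts trivially, and then that the adjunction unit is an isomorphism on this subcategory. I would use the projection formula argument and mention this only as a remark.

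The main obstacle is the locality and base change step for $\rho_{\ast}$. We need $\rho$ to be quasi-compact and quasi-separated so that flat base change applies. For quasi-compactness, $\rho$ is an fppf gerbe, so it is fppf locally of the form $BG_T \to T$; this map is surjective and quasi-compact, since $T \to BG_T$ is a cover. For quasi-separatedness, the diagonal of $\rho$ is fppf locally $BG_T \to BG_T\times_T BG_T$, which is quasi-compact because $G_T$ is of finite presentation over $T$. With these in hand, the remaining steps are formal.
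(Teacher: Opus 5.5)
Your proof is correct and follows the paper's argument. You use the projection formula to reduce to showing that $\mathcal{O}_{\mathcal{X}^{\mathrm{rig}}} \to \rho_{\ast}\mathcal{O}_{\mathcal{X}}$ is an isomorphism, and you check this fppf locally, where the gerbe becomes $BG_T \to T$ and pushforward is taking invariants for the trivial action. Your additional checks fill in details the paper leaves implicit: that $\rho$ is quasi-compact and quasi-separated, so flat base change applies, and that the isomorphisms $\phi_{\ast}\mathcal{L}^m \cong (\pi\circ\mathrm{gr})_{\ast}(\rho^{\ast}\mathcal{L}^m)$ respect multiplication.
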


\begin{proof}
    Since $\rho : \mathcal{X} \to \mathcal{X}^{\mathrm{rig}}$ is an fppf gerbe, for any locally free sheaf $\mathcal{F}$ of finite rank on $\mathcal{X}^{\mathrm{rig}}$ we have that the adjunction map $\mathcal{F} \to \rho_{\ast} \rho^{\ast} \mathcal{F}$ is an isomorphism. Indeed, when $\mathcal{F} = \mathcal{O}_{\mathcal{X}^{\mathrm{rig}}}$, the problem of showing that $\mathcal{O}_{\mathcal{X}^{\mathrm{rig}}} \to \rho_{\ast} \rho^{\ast} \mathcal{O}_{\mathcal{X}^{\mathrm{rig}}}$ is an isomorphism is local for the fppf topology, and for a trivial gerbe $\rho': B_SG \to S$ over an algebraic space, the fact that the adjunction map $\mathcal{O}_S \to \rho'_{\ast} (\rho')^{\ast} \mathcal{O}_S$ is an isomorphism easily follows once one identifies $(\rho')^{\ast}$ with the functor assigning to a quasi-coherent sheaf the trivial $G$-structure and $\rho'_{\ast}$ with the functor of $G$-invariants. The isomorphism for a general locally free sheaf $\mathcal{F}$ of finite rank then follows from the projection formula.

    Taking $\mathcal{F} = \mathcal{L}^m$, it follows that for each $m \geq 0$ there is a natural isomorphism
    $$ \phi_{\ast} \mathcal{L}^m = (\pi \circ \overline{\mathrm{gr}})_{\ast}(\mathcal{L}^m) \stackrel{\simeq}{\to} (\pi \circ \mathrm{gr})_{\ast} (\rho^{\ast} \mathcal{L}^m), $$
    from which the lemma follows.
\end{proof}

\subsubsection{Compatibility with Passing to Geometric Fibers}

Let $k$ be an algebraically closed field, and let $z \in Z(k)$ be a $k$-point of $Z$. By the compatibility of good moduli spaces with respect to schematic base changes, the base change $\phi_z : \mathcal{X}_z^{\mathrm{rig}} \to \Spec k$ is a good moduli space. Let $\mathcal{L}_z$ denote the pullback of $\mathcal{L}$ to $\mathcal{X}_z^{\mathrm{rig}}$; we may then define $(\mathcal{X}_z^{\mathrm{rig}})^{(s)s}(\mathcal{L}_z)$ in the same way as $(\mathcal{X}^{\mathrm{rig}})^{(s)s}(\mathcal{L})$, working relative to $\phi_z$. The morphism $\mathcal{X}_z^{\mathrm{rig}} \to \mathcal{X}^{\mathrm{rig}}$ induces inclusions $(\mathcal{X}^{\mathrm{rig}})^{(s)s}(\mathcal{L})_z \subset (\mathcal{X}_z^{\mathrm{rig}})^{(s)s}(\mathcal{L}_z)$.

\begin{lem} \label{lem:stability_is_fiberwise}
    The restriction morphisms $(\mathcal{X}^{\mathrm{rig}})^{(s)s}(\mathcal{L})_z \to (\mathcal{X}_z^{\mathrm{rig}})^{(s)s}(\mathcal{L}_z)$ are both isomorphisms.
\end{lem}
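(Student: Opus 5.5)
Both sides are open substacks of $\mathcal{X}_z^{\mathrm{rig}}$, so it suffices to show that they have the same points. Since $\phi_z : \mathcal{X}_z^{\mathrm{rig}} \to \Spec k$ is a good moduli space, we may test surjectivity of $\phi^\ast\phi_\ast\mathcal{L}^m \to \mathcal{L}^m$ at a point $x$ of the fiber. The plan is to compare this map upstairs and on the fiber using the base change property of good moduli spaces (property (8) in the list of basic properties): for the base change $g : \Spec k \to Z$ we have $g^\ast \phi_\ast \mathcal{L}^m \cong (\phi_z)_\ast \mathcal{L}_z^m$. Consequently the pullback of $\phi^\ast\phi_\ast \mathcal{L}^m \to \mathcal{L}^m$ to $\mathcal{X}^{\mathrm{rig}}_z$ is identified with $\phi_z^\ast(\phi_z)_\ast\mathcal{L}_z^m \to \mathcal{L}_z^m$.

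Surjectivity at a point is a fiberwise condition, since by Nakayama it only involves the fibre at $x$. It follows that for $x \in |\mathcal{X}_z^{\mathrm{rig}}|$ mapping to $x' \in |\mathcal{X}^{\mathrm{rig}}|$, the map $\phi^\ast\phi_\ast\mathcal{L}^m\to\mathcal{L}^m$ is surjective at $x'$ if and only if $\phi_z^\ast(\phi_z)_\ast\mathcal{L}_z^m \to \mathcal{L}_z^m$ is surjective at $x$. This gives the equality of semistable loci: the image of $\phi^{-1}\phi_\ast\mathcal{A}_+$ pulls back to $\phi_z^{-1}(\phi_z)_\ast\mathcal{A}_{z,+}$, so the vanishing loci $\mathbb{V}$ agree. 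We also need $X(\mathcal{L})_z = X(\mathcal{L}_z)$; this follows from the same base change, since the formation of $\underline{\mathrm{Proj}}$ commutes with base change and the algebra base changes correctly. Compatibility of $\pi_{\mathcal{L}}$ with $\pi_{\mathcal{L}_z}$ is then automatic.

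For the stable loci, the set $\mathcal{C}_z$ of positive-dimensional stabilizer points in the fiber is the preimage of $\mathcal{C}$, because automorphism groups of geometric points do not change under base change. It remains to show that $\pi_{\mathcal{L}}^{-1}\pi_{\mathcal{L}}(\mathcal{C})$ restricted to the fiber equals $\pi_{\mathcal{L}_z}^{-1}\pi_{\mathcal{L}_z}(\mathcal{C}_z)$. Using the identification $X(\mathcal{L})_z \cong X(\mathcal{L}_z)$ and the Cartesian square of good moduli spaces, a point $y$ of $X(\mathcal{L}_z)$ lies in $\pi_{\mathcal{L}_z}(\mathcal{C}_z)$ if and only if its fibre contains a point with positive-dimensional stabilizer. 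That fibre is the fibre of $\pi_{\mathcal{L}}$ over the image of $y$, base changed to $k$. This gives the claim. The main subtlety is the inclusion of images versus points over non-closed points; I would handle it by arguing on geometric points, using surjectivity of good moduli spaces.
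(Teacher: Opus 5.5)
Your proposal is correct and takes essentially the same route as the paper. Base change for $\phi_\ast$ identifies the restriction of $\phi^\ast\phi_\ast\mathcal{A}_+\to\mathcal{A}$ to $\mathcal{X}^{\mathrm{rig}}_z$ with the corresponding map on the fibre, which gives equality of the semistable loci and identifies their good moduli spaces. The stable case then follows because $\mathcal{C}$ and its saturation $\pi_{\mathcal{L}}^{-1}(\pi_{\mathcal{L}}(\mathcal{C}))$ are compatible with passing to the fibre. Your argument for that last compatibility, that points of the fibre over $y$ surject onto the fibre over its image, simply spells out what the paper leaves implicit.
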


\begin{proof}
    For any quasi-coherent sheaf $\mathcal{F}$ of $\mathcal{O}_{\mathcal{X}^{\mathrm{rig}}}$-modules, by \cite[Proposition 4.7(iii)]{Alper-GMS} the formation of $\phi_{\ast}\mathcal{F}$ commutes with base changes $S \to Z$, where $S$ is any quasi-separated algebraic space. Consequently, the formation of the saturated projective spectrum $(\mathcal{X}^{\mathrm{rig}})^{ss}(\mathcal{L}) = \underline{\mathrm{Proj}}_{\mathcal{X}^{\mathrm{rig}}}^{\phi} \bigoplus_{m \geq 0} \mathcal{L}^m$ commutes with taking the base change along $z : \Spec k \to Z$: the restriction of the natural morphism $\phi^{\ast} \phi_{\ast} \mathcal{A}_+ \to \mathcal{A}$ to $\mathcal{X}_z^{\mathrm{rig}}$ coincides with the natural morphism $\phi_z^{\ast} (\phi_z)_{\ast} (\mathcal{A}_z)_+ \to \mathcal{A}_z$, where $\mathcal{A}_z := \bigoplus_{m \geq 0} \mathcal{L}_z^m$, hence the surjectivity of $\phi^{\ast} \phi_{\ast} \mathcal{L}^m \to \mathcal{L}^m$ at a point $x \in |\mathcal{X}_z| \subset |\mathcal{X}|$ is equivalent to the surjectivity of $\phi_z^{\ast} (\phi_z)_{\ast} \mathcal{L}_z^m \to \mathcal{L}_z^m$ at $x$. It follows that the inclusion $(\mathcal{X}^{\mathrm{rig}})^{ss}(\mathcal{L})_z \subset (\mathcal{X}_z^{\mathrm{rig}})^{ss}(\mathcal{L}_z)$ is an isomorphism, and that the good moduli space of $(\mathcal{X}_z^{\mathrm{rig}})^{ss}(\mathcal{L}_z)$ coincides with the fiber over $z$ of the good moduli space $\pi_{\mathcal{L}} : (\mathcal{X}^{\mathrm{rig}})^{ss}(\mathcal{L}) \to X(\mathcal{L})$. In particular, the formation of the subsets $\mathcal{C}$ and $\pi_{\mathcal{L}}^{-1}(\pi_{\mathcal{L}}(\mathcal{C}))$ of $|(\mathcal{X}^{\mathrm{rig}})^{ss}(\mathcal{L})|$ are compatible with passing to the fiber over $z$, hence the inclusion $(\mathcal{X}^{\mathrm{rig}})^{s}(\mathcal{L})_z \subset (\mathcal{X}_z^{\mathrm{rig}})^{s}(\mathcal{L}_z)$ is also an isomorphism.
\end{proof}

\subsubsection{Characterizing the (Semi)stable Loci as $\Theta$-(Semi)stable Loci} \label{sec:ss-loci-as-theta-ss-loci}

With $k$ as above, we now show that the $k$-points of $(\mathcal{X}^{\mathrm{rig}})^{(s)s}(\mathcal{L})$ coincide with the $k$-points of $\mathcal{X}^{\mathrm{rig}}$ which are $\Theta_k$-(semi)stable with respect to $\mathcal{L}$. Our two main tools for doing this are the étale local structure theorem for good moduli spaces (Theorem \ref{thm:gms-etale-local-structure}) and the relative Hilbert--Mumford criterion of Gulbrandsen--Halle--Hulek \cite{GHH-Relative-HM}.

\begin{lem} \label{lem:fiber-local-structure-theorem}
    Let $z \in Z(k)$, where $k$ is an algebraically closed field. Then the fiber $\mathcal{X}_z^{\mathrm{rig}}$ is a Noetherian algebraic stack with good moduli space $\Spec k$, whence admits a unique closed point. Furthermore, if $G$ is the automorphism group of the unique closed point of the fiber $\mathcal{X}_z^{\mathrm{rig}}$, then $G$ is a linearly reductive $k$-group, and there exists an affine scheme $A$ of finite type over $k$ acted on by $G$ such that $\mathcal{X}_z^{\mathrm{rig}} \cong [\Spec A/G]$ and $\Spec A /\!\!/ G = \Spec k$.
\end{lem}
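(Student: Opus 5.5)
The plan is to deduce everything from base change of good moduli spaces together with the étale local structure theorem (Theorem \ref{thm:gms-etale-local-structure}).

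\emph{Step 1: the fiber has a good moduli space and is Noetherian.} By Proposition \ref{prop:gms-of-Xrig}, $\phi = \pi \circ \overline{\mathrm{gr}} : \mathcal{X}^{\mathrm{rig}} \to Z$ is a good moduli space. Good moduli spaces are stable under base change along the quasi-separated morphism $z : \Spec k \to Z$. Hence $\phi_z : \mathcal{X}^{\mathrm{rig}}_z \to \Spec k$ is a good moduli space, and in particular it is quasi-compact.

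For Noetherianity, we use that $\overline{\mathrm{gr}}$ is affine and of finite presentation (Theorem \ref{thm:affinity-grbar}). So $\mathcal{X}^{\mathrm{rig}}_z$ is affine of finite presentation over $\mathcal{Z}_z := \mathcal{Z} \times_Z \Spec k$. The stack $\mathcal{Z}_z$ is quasi-compact, being the source of a good moduli space, and it is locally of finite type over $k$: this follows because $\mathcal{Z}$ is locally of finite presentation over $B$, and because $\mathcal{Z}_z \to \Spec k$ is locally of finite type, which can be checked after the finite-type-ness of $Z$-fibers.

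If this is delicate, the cleaner route is to argue directly that $\mathcal{X}^{\mathrm{rig}}_z$ is quasi-compact and locally of finite type over $k$. Local finite type comes from the fact that $\mathcal{X}^{\mathrm{rig}}_z$ is a closed substack of $\mathcal{X}^{\mathrm{rig}} \times_B \Spec k$, where $\Spec k \to B$ is induced by $z$, since $Z \to B$ is separated enough. It is therefore of finite type over $k$, hence Noetherian. I expect this finiteness bookkeeping to be the main technical nuisance, and I will address it by reducing to the closed-substack description.

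\emph{Step 2: the unique closed point.} By the listed property (9)(ii) of good moduli spaces for locally Noetherian stacks, the fiber over the unique point of $\Spec k$ contains a unique closed point $x$. By (9)(i), its automorphism group $G = \mathrm{Aut}(x)$ is linearly reductive. Since $k$ is algebraically closed and $\mathcal{X}^{\mathrm{rig}}_z$ is of finite type over $k$, the point $x$ is a $k$-point.

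\emph{Step 3: the global quotient presentation.} Apply Theorem \ref{thm:gms-etale-local-structure} to $\mathcal{Y} = \mathcal{X}^{\mathrm{rig}}_z$ with $Y = \Spec k$, which has affine diagonal. This gives an affine $G$-scheme $\Spec A$ and a Cartesian square whose bottom arrow $\Spec A^G \to \Spec k$ is an étale neighborhood of the unique point.

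We may shrink $\Spec A^G$ to a connected étale neighborhood containing a $k$-point mapping to the point. Since $k$ is algebraically closed, any connected étale $k$-scheme with a $k$-point is $\Spec k$ itself, so we may take $\Spec A^G = \Spec k$. Cartesianness then gives $[\Spec A/G] \cong \mathcal{X}^{\mathrm{rig}}_z$.

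Finally, $A$ is of finite type over $k$. This holds because $\Spec A \to [\Spec A/G] \cong \mathcal{X}^{\mathrm{rig}}_z$ is a $G$-torsor, hence of finite type, and $\mathcal{X}^{\mathrm{rig}}_z$ is of finite type over $k$.
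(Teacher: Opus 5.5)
Your proposal is correct and follows the same route as the paper: base change the good moduli space $\phi = \pi \circ \overline{\mathrm{gr}}$ along $z$, get Noetherianity from quasi-compactness of $\phi_z$, read off the unique closed point and linear reductivity from the standard properties of good moduli spaces, and finish with the étale local structure theorem. You also fill in details the paper leaves implicit: local finite type of the fiber over $k$, cutting $\Spec A^G$ down to one copy of $\Spec k$, and finite type of $A$. One repair is needed there: instead of your ``separated enough'' justification, note that $\mathcal{X}^{\mathrm{rig}}_z \to \mathcal{X}^{\mathrm{rig}} \times_B \Spec k$ is a base change of the diagonal of $Z$ over $B$, which is locally of finite type.
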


\begin{proof}
    As observed earlier, the morphism $\phi_z : \mathcal{X}_z^{\mathrm{rig}} \to \Spec k$ is a good moduli space. Since $\phi_z$ is quasi-compact, this implies that the fiber $\mathcal{X}_z^{\mathrm{rig}}$ (which a priori is locally Noetherian) is necessarily Noetherian. Since $\phi_z$ admits a bjection on the level of closed points, $\mathcal{X}_z^{\mathrm{rig}}$ must also admit a unique closed point; the associated automorphism group $G$ is necessarily linearly reductive. Finally, Theorem \ref{thm:gms-etale-local-structure} is applicable to the good moduli space $\phi_z : \mathcal{X}_z^{\mathrm{rig}} \to \Spec k$; the existence of an isomorphism $\mathcal{X}_z^{\mathrm{rig}} \cong [\Spec A/G]$ follows directly from the étale local structure theorem.
\end{proof}

Let $z \in Z(k)$, where $k$ is an algebraically closed field, and let $G$ be the automorphism group of the unique closed point of the fiber $\mathcal{X}_z^{\mathrm{rig}}$. The invertible sheaf $\mathcal{L}_z$ corresponds to a $G$-linearized invertible sheaf $M$ on $\Spec A$. By \cite[Remark 3.16 and Proposition 3.17]{Edidin-Rydh} (taking $f = \mathrm{id}_{\mathcal{X}_z^{\mathrm{rig}}}$ and $S = \mathcal{Y} = \Spec k$ to be the good moduli space of $\mathcal{X}_z^{\mathrm{rig}}$) and the discussion preceding \cite[Theorem 13.6]{Alper-GMS}, the isomorphism $\mathcal{X}_z^{\mathrm{rig}} \cong [\Spec A/G]$ induces an isomorphism of open substacks
$$ (\mathcal{X}_z^{\mathrm{rig}})^{ss}(\mathcal{L}_z) \cong [\Spec A/G]^{ss}(M) = [(\Spec A)^{ss}(M)/G] $$
where $(\Spec A)^{ss}(M)$ is as defined in \cite{MFK-GIT}. Since all geometric points of $(\mathcal{X}_z^{\mathrm{rig}})^{s}(\mathcal{L}_z)$ are regular and have a closed orbit in $(\mathcal{X}_z^{\mathrm{rig}})^{ss}(\mathcal{L}_z)$ of dimension $0$, it follows from \cite[Amplification 1.11]{MFK-GIT} that the above isomorphism restricts to an isomorphism
$$ (\mathcal{X}_z^{\mathrm{rig}})^{s}(\mathcal{L}_z) \cong [\Spec A/G]^{s}(M) = [(\Spec A)^{s}(M)/G], $$
where $(\Spec A)^s(M)$ denotes the GIT (properly) stable locus with respect to $M$.\footnote{In the notation of \cite{MFK-GIT}, this is $(\Spec A)_{(0)}^s(M)$.}

\begin{prop} \label{prop:ghh_hilbert_mumford}
    Let $p$ be a $k$-point of $\Spec A$. Then $p \in (\Spec A)^{(s)s}(M)(k)$ if and only if for all non-trivial cocharacters $\eta : \mathbb{G}_{m,k} \to G$ for which the limit $\lim_{t \to 0} \eta(t) \cdot p$ exists in $\Spec A$, the Hilbert--Mumford weight $\mu^M(p, \eta)$ satisfies $\mu^M(p, \eta) > (\geq) \ 0$.
\end{prop}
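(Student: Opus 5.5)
The plan is to deduce this from the relative Hilbert--Mumford criterion of Gulbrandsen--Halle--Hulek \cite{GHH-Relative-HM}, or equivalently from \cite[Theorem 1.1]{HHJRelativeNRGIT}. We apply it to the $G$-equivariant morphism $f : \Spec A \to \Spec k = \Spec A /\!\!/ G$, which is affine, hence projective over the affine base. The set-up is as follows. The group $G$ is linearly reductive over the algebraically closed field $k$, so it is reductive; we may take $G$ connected or reduce to the identity component, since (semi)stability is unchanged by passing to $G^\circ$ as $G/G^\circ$ is finite. The scheme $X := \Spec A$ is affine of finite type over $k$, and $M$ is a $G$-linearized invertible sheaf on it. The relative criterion, for a $G$-equivariant projective morphism $X \to Y$ to an affine $Y$ with $G$-linearized $M$, states that $p$ is $M$-(semi)stable if and only if $\mu^M(p,\eta) > (\geq)\ 0$ for every nontrivial one-parameter subgroup $\eta$ for which $\lim_{t\to0}\eta(t)\cdot p$ exists in $X$. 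The point is that, because $X$ is not proper, only those $\eta$ whose limits exist impose a condition. This is exactly the statement we want, with $Y = \Spec k$.

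The first step is to check the hypotheses of \cite{GHH-Relative-HM}: projectivity of $X \to Y$ (an affine morphism of finite type is projective relative to the trivial bundle over an affine base), and that $M$ is relatively ample. Since $X$ is affine, every line bundle is ample on $X$, so $M$ is relatively ample for $f$. The second step is to match conventions. Our $(\Spec A)^{ss}(M)$ is defined in \cite{MFK-GIT} via nonvanishing of invariant sections of $M^m$, and the properly stable locus $(\Spec A)^s(M) = (\Spec A)^s_{(0)}(M)$ additionally requires finite stabilizers and closed orbits. The sign convention for $\mu^M$ must be that of Example \ref{ex:theta-stability-for-quotient-stacks}. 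We should also confirm that the version in \cite{GHH-Relative-HM} treats proper stability, not just Mumford stability; if it only treats the semistable and Mumford-stable cases, we would add the standard argument that strict positivity for all $\eta$ forces a finite stabilizer. That argument runs as follows: a positive-dimensional stabilizer of $p$ contains a torus, hence a nontrivial $\eta$ fixing $p$. Then $\eta^{-1}$ also fixes $p$, and $\mu^M(p,\eta^{-1}) = -\mu^M(p,\eta)$, so both weights cannot be positive.

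The converse directions are where I expect care to be needed. If the criterion cited is only formulated over a base with $Y$ a point and $X$ projective, we would instead reduce to that case directly. We embed $X = \Spec A$ $G$-equivariantly into a representation $V$, then take its closure $\overline{X}$ in $\mathbb{P}(V \oplus k)$. We linearize $M^{\otimes m}$ (after replacing $M$ by a power and twisting) to extend it to $\overline{X}$ in a controlled way, and apply the classical Hilbert--Mumford criterion of \cite{MFK-GIT} there. The main obstacle in this fallback is showing that the one-parameter subgroups whose limit lands in the boundary $\overline{X} \setminus X$ contribute no extra constraints. The approach to this is to choose the extension of the linearization so that the boundary divisor has large weight, making those $\eta$ automatically destabilizing-irrelevant. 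This is precisely the content of the relative criterion, so the preferred route is simply to cite \cite{GHH-Relative-HM}, with \cite[Theorem 1.1]{HHJRelativeNRGIT} as an alternative, after the hypothesis check.
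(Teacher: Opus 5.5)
You are citing the same result as the paper, \cite[Theorem 3.3]{GHH-Relative-HM}, but your check of its hypotheses fails. The morphism $\Spec A \to \Spec k$ is not projective. An affine morphism of finite type is projective only if it is finite (for example, $\mathbb{A}^1_k \to \Spec k$ is affine of finite type but not proper), so your claim that ``an affine morphism of finite type is projective relative to the trivial bundle over an affine base'' is false. With base $\Spec k$ the theorem does not apply.

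Your own remark that $X$ is not proper already contradicts this. If $\Spec A \to \Spec k$ were projective, every limit $\lim_{t\to0}\eta(t)\cdot p$ would exist, and you would recover the classical projective criterion with no limit condition at all.

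The missing observation is that Gulbrandsen--Halle--Hulek work with a $G$-equivariant projective morphism $f : X \to Y$ to an affine base on which $G$ may act nontrivially. Their test cocharacters are those for which $\lim_{t\to0}\eta(t)\cdot f(p)$ exists in $Y$. So take $Y = \Spec A$ itself, with its given $G$-action, and $f = \mathrm{id}_{\Spec A}$. Then:
\begin{itemize}
\item the identity is trivially projective;
\item $M$ is $f$-ample, since every invertible sheaf on an affine scheme is ample, as you noted;
\item the condition on $\eta$ becomes exactly that $\lim_{t\to0}\eta(t)\cdot p$ exists in $\Spec A$.
\end{itemize}
This is the paper's one-line proof. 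The paper applies the theorem directly with $G$ affine and linearly reductive, so your reduction to $G^{\circ}$ is unnecessary.

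Your fallback via the projective closure in $\mathbb{P}(V \oplus k)$ does not close the gap. It leaves unresolved the very step that carries the content of the theorem, namely that cocharacters whose limit lands in the boundary impose no constraint. It is also unclear how you would extend a possibly non-trivial $G$-linearized $M$ on $\Spec A$ equivariantly to the closure.

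Your contingency argument for finite stabilizers is also incomplete. A positive-dimensional stabilizer need not contain a torus, since it could be unipotent. You would first need to show the orbit is closed in the semistable locus and then invoke Matsushima's theorem to get a reductive stabilizer. Neither issue arises once the relative criterion is applied to the identity morphism, which the paper uses for both the stable and the semistable statements.
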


\begin{proof}
    Since $\Spec A$ is an affine scheme of finite type over $k$ which is projective over itself, and since $G$ is affine and linearly reductive, this follows from \cite[Theorem 3.3]{GHH-Relative-HM}.
\end{proof}

We are now ready to characterize the geometric points of $(\mathcal{X}^{\mathrm{rig}})^{(s)s}(\mathcal{L})$ as those points which are $\Theta$-(semi)stable with respect to $\mathcal{L}$.

\begin{prop} \label{prop:semistable-loci-are-theta-semistable-loci}
    Let $k$ be an algebraically closed field, and let $x \in \mathcal{X}^{\mathrm{rig}}(k)$. Then $x \in (\mathcal{X}^{\mathrm{rig}})^{(s)s}(\mathcal{L})(k)$ if and only if for all filtrations $f : \Theta_k \to \mathcal{X}^{\mathrm{rig}}$ with $\mathrm{ev}_1(f) \cong x$ for which $f$ does not factor through $\Theta_k \to \Spec k$, we have $\mathrm{wt}_0(f^{\ast} \mathcal{L}) < (\leq) \ 0$, i.e. $x$ is $\Theta_k$-(semi)stable with respect to $\mathcal{L}$.
\end{prop}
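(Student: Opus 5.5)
The plan is to reduce to a single geometric fiber of $\phi = \pi \circ \overline{\mathrm{gr}}$, then to affine GIT via Lemma \ref{lem:fiber-local-structure-theorem}, and finally to invoke Proposition \ref{prop:ghh_hilbert_mumford} together with the dictionary of Example \ref{ex:theta-stability-for-quotient-stacks}.

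Let $z := \phi(x) \in Z(k)$, and regard $x$ as a $k$-point of the fiber $\mathcal{X}_z^{\mathrm{rig}}$. By Lemma \ref{lem:stability_is_fiberwise}, $x \in (\mathcal{X}^{\mathrm{rig}})^{(s)s}(\mathcal{L})(k)$ if and only if $x \in (\mathcal{X}_z^{\mathrm{rig}})^{(s)s}(\mathcal{L}_z)(k)$.

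On the $\Theta$-side I must show that filtrations of $x$ in $\mathcal{X}^{\mathrm{rig}}$ and in $\mathcal{X}^{\mathrm{rig}}_z$ correspond, with equal weights. Let $f : \Theta_k \to \mathcal{X}^{\mathrm{rig}}$ be a filtration with $\mathrm{ev}_1(f) \cong x$. The composite $\phi \circ f : \Theta_k \to Z$ factors through $\Theta_k \to \Spec k$, because $Z$ is an algebraic space and $\Theta_k$ has a good moduli space $\Spec k$ (or simply because $\mathbb{A}^1_k \to Z$ is $\mathbb{G}_m$-invariant). The resulting point of $Z$ agrees with $\phi(\mathrm{ev}_1 f) = z$. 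Hence $f$ factors uniquely through $\mathcal{X}^{\mathrm{rig}}_z$, and conversely every filtration of the fiber gives one of $\mathcal{X}^{\mathrm{rig}}$. Pullbacks of $\mathcal{L}$ and $\mathcal{L}_z$ agree, and so do the conditions of not factoring through $\Spec k$. So $\Theta_k$-(semi)stability of $x$ may also be checked in $\mathcal{X}_z^{\mathrm{rig}}$.

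Now fix the isomorphism $\mathcal{X}_z^{\mathrm{rig}} \cong [\Spec A/G]$ from Lemma \ref{lem:fiber-local-structure-theorem}, with $M$ the linearization corresponding to $\mathcal{L}_z$. The discussion preceding Proposition \ref{prop:ghh_hilbert_mumford} identifies $(\mathcal{X}_z^{\mathrm{rig}})^{(s)s}(\mathcal{L}_z)$ with $[(\Spec A)^{(s)s}(M)/G]$. Let $p \in (\Spec A)(k)$ represent $x$. Proposition \ref{prop:ghh_hilbert_mumford} states that $p$ is (semi)stable if and only if $\mu^M(p,\eta) > (\geq)\ 0$ for every nontrivial cocharacter $\eta$ of $G$ for which the limit exists.

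To finish, I apply Example \ref{ex:theta-stability-for-quotient-stacks} to $[\Spec A/G]$; this is legitimate since $\Spec A$ is separated and $G$ is a smooth linear group over the algebraically closed field $k$. The example says that nondegenerate filtrations of $x$ correspond (up to conjugation) to nontrivial cocharacters $\eta$ with an existing limit, and that $\mathrm{wt}_0(f^\ast \mathcal{L}_z) = -\mu^M(p,\eta)$. The sign flip converts $\mu^M > (\geq)\ 0$ into $\mathrm{wt}_0 < (\leq)\ 0$, which is the desired equivalence.

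The main obstacle is the stable case. Here one must verify that the paper's $(\mathcal{X}^{\mathrm{rig}})^{s}(\mathcal{L})$, defined via positive-dimensional stabilizers and saturation, matches the GIT properly stable locus to which \cite{GHH-Relative-HM} applies. That identification was argued just before Proposition \ref{prop:ghh_hilbert_mumford} using \cite[Amplification 1.11]{MFK-GIT}, so I would cite it. One smaller check remains: $G$ need not be smooth in positive characteristic, but being linearly reductive over $k = \bar k$, its cocharacters are still those of a maximal torus of $G_{\mathrm{red}}$, so Example \ref{ex:theta-stability-for-quotient-stacks} still applies, possibly after passing to $G_{\mathrm{red}}$.
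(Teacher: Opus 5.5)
Your proposal is correct and is essentially the paper's proof: you factor every filtration of $x$ through the fiber $\mathcal{X}^{\mathrm{rig}}_z$ using that $\Theta_k \to \Spec k$ is a good moduli space. You then combine Lemma \ref{lem:stability_is_fiberwise}, the presentation $\mathcal{X}^{\mathrm{rig}}_z \cong [\Spec A/G]$ of Lemma \ref{lem:fiber-local-structure-theorem} (with the identification of (semi)stable loci made just before Proposition \ref{prop:ghh_hilbert_mumford}), Proposition \ref{prop:ghh_hilbert_mumford} and Example \ref{ex:theta-stability-for-quotient-stacks}, exactly as the paper does.

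Your closing remark flags a point the paper passes over silently: $G$ need not be smooth in positive characteristic, while Example \ref{ex:theta-stability-for-quotient-stacks} is stated for smooth groups. Your sketched fix via $G_{\mathrm{red}}$ would need its own justification, but it is extra care rather than a departure from the argument.
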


\begin{proof}
    Let $z = \phi(x) \in Z(k)$. The structure morphism $\Theta_k \to \Spec k$ is a good moduli space, and in particular is universal for morphisms from $\Theta_k$ to algebraic spaces. This implies that if $f : \Theta_k \to \mathcal{X}^{\mathrm{rig}}$ is a non-trivial filtration with $\mathrm{ev}_1(f) \cong x$, then the composite of $f$ with $\phi$ must factor through $\phi(\mathrm{ev}_1(f)) : \Spec k \to Z$. But $\phi(\mathrm{ev}_1(f)) = \phi(x) = z$, and so $f$ must factor through the fiber $\mathcal{X}_z^{\mathrm{rig}}$. In particular, we have $\mathrm{wt}_0(f^{\ast} \mathcal{L}) = \mathrm{wt}_0(f^{\ast} \mathcal{L}_z)$. 
    
    On the other hand, by combining Proposition \ref{prop:ghh_hilbert_mumford} with Example \ref{ex:theta-stability-for-quotient-stacks} we obtain that the $k$-points of the substacks $(\mathcal{X}_z^{\mathrm{rig}})^{(s)s}(\mathcal{L}_z)$ of $\mathcal{X}_z^{\mathrm{rig}}$ are characterized in terms of morphisms from $\Theta_k$: if $x \in \mathcal{X}_z^{\mathrm{rig}}(k)$, then $x \in (\mathcal{X}_z^{\mathrm{rig}})^{(s)s}(\mathcal{L})(k)$ if and only if for all non-trivial filtrations $f : \Theta_k \to \mathcal{X}_z^{\mathrm{rig}}$ with $\mathrm{ev}_1(f) \cong x$, $\mathrm{wt}_0(f^{\ast} \mathcal{L}_z) < (\leq) \ 0$. Combining this with Lemma \ref{lem:stability_is_fiberwise} then yields the result.
\end{proof}

\subsection{Lifting Filtrations Along the Rigidification}

In order to complete the proof of Theorem \ref{thm:main-theorem}, it remains to establish statement $(2)(\mathrm{iii})$, i.e. that the geometric points of the preimages of $(\mathcal{X}^{\mathrm{rig}})^{(s)s}(\mathcal{L})$ under the rigidification morphism $\rho : \mathcal{X} \to \mathcal{X}^{\mathrm{rig}}$ are themselves characterized in terms of morphisms $\Theta_k \to \mathcal{X}$. To do this, we first aim to show that for any algebraically closed field $k$ the gerbe $\rho$ is $\Theta_k$-surjective, in the sense of \cite[Section 3.4]{AHLH-Existence-GMS}; in other words, any commutative diagram 
\[\begin{tikzcd}[ampersand replacement=\&]
	{\Spec k} \&\& {\mathcal{X}} \\
	\\
	{\Theta_k} \&\& {\mathcal{X}^{\mathrm{rig}}}
	\arrow[from=1-1, to=1-3]
	\arrow["j"', from=1-1, to=3-1]
	\arrow["\rho", from=1-3, to=3-3]
	\arrow[dashed, from=3-1, to=1-3]
	\arrow[from=3-1, to=3-3]
\end{tikzcd}\]
of solid arrows can be filled in with a dashed arrow (here $j$ denotes the open $k$-point of $\Theta_k$, represented by $1 \in \mathbb{A}_k^1(k)$). 

\subsubsection{The Morphisms $\mathrm{Filt}(\rho)$ and $\mathrm{Grad}(\rho)$ are Surjective}

In order to prove that $\rho$ is $\Theta_k$-surjective, we first show  that the morphisms $\mathrm{Filt}(\mathcal{X}) \to \mathrm{Filt}(\mathcal{X}^{\mathrm{rig}})$ and $\mathrm{Grad}(\mathcal{X}) \to \mathrm{Grad}(\mathcal{X}^{\mathrm{rig}})$ induced by $\rho$ are surjective. Since $\rho$ is smooth, by \cite[Proposition 1.3.1]{Halpern-Leistner} both of these morphisms are smooth, and hence have open images (we used the smoothness of $\mathrm{Grad}(\rho)$ earlier in the proof of Lemma \ref{lem:centers-agree}). By \cite[Lemma 4.4.4]{Halpern-Leistner}, the surjectivity of $\mathrm{Filt}(\rho)$ is implied by the surjectivity of $\mathrm{Grad}(\rho)$. We establish the surjectivity of $\mathrm{Grad}(\rho)$ in the following lemmas.

\begin{lem} \label{lem:automorphism-group-split-filt-semidirect-product}
    Let $k$ be an algebraically closed field and let $z \in \mathcal{Z}(k)$. Then the automorphism group $\mathrm{Aut}_{\mathcal{X}}(\sigma(z))$ is given by the semidirect product $\mathcal{I}_{\mathrm{gr}, \sigma(z)} \rtimes \mathrm{Aut}_{\mathcal{X}^{\mathrm{rig}}}(\overline{\sigma}(z))$. 
\end{lem}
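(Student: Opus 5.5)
The plan is to realize $\mathrm{Aut}_{\mathcal{X}}(\sigma(z))$ as an extension of $\mathrm{Aut}_{\mathcal{X}^{\mathrm{rig}}}(\overline{\sigma}(z))$ by $\mathcal{I}_{\mathrm{gr},\sigma(z)}$, and then to split this extension using the zero section $\sigma$.

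\emph{The exact sequence.} Since $\rho$ is the rigidification of $\mathcal{X}$ by $\mathcal{I}_{\mathrm{gr}}$ (Lemma \ref{lem:existence-rigidification}), for any geometric point $x \in \mathcal{X}(k)$ the induced homomorphism $\rho_\ast : \mathrm{Aut}_{\mathcal{X}}(x) \to \mathrm{Aut}_{\mathcal{X}^{\mathrm{rig}}}(\rho(x))$ is surjective with kernel $\mathcal{I}_{\mathrm{gr},x}$; this is part of the construction of rigidification in \cite[Theorem A.1]{AOV-tame-stacks}. Surjectivity holds as a map of fppf group sheaves, which suffices over algebraically closed $k$ after passing to $k$-points of smooth-enough groups. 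To be safe, I will instead produce an explicit section and deduce surjectivity from it, so that $k$-rationality issues are avoided. Applying this with $x = \sigma(z)$ gives
\[ 1 \to \mathcal{I}_{\mathrm{gr},\sigma(z)} \to \mathrm{Aut}_{\mathcal{X}}(\sigma(z)) \xrightarrow{\rho_\ast} \mathrm{Aut}_{\mathcal{X}^{\mathrm{rig}}}(\overline{\sigma}(z)) \to 1 . \]

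\emph{The splitting.} The main idea is to use $\mathrm{gr}\circ\sigma \cong \mathrm{id}_{\mathcal{Z}}$ and $\overline{\mathrm{gr}}\circ\overline{\sigma} \cong \mathrm{id}_{\mathcal{Z}}$, both of which hold by construction (Proposition \ref{prop:induced-action-on-rigidification}). First, $\overline{\mathrm{gr}}_\ast : \mathrm{Aut}_{\mathcal{X}^{\mathrm{rig}}}(\overline{\sigma}(z)) \to \mathrm{Aut}_{\mathcal{Z}}(z)$ is an isomorphism. For injectivity, base change to $z : \Spec k \to \mathcal{Z}$. Theorem \ref{thm:affinity-grbar} shows that $\overline{\mathrm{gr}}$ is representable (affine), so the fiber is an algebraic space, and an automorphism of $\overline{\sigma}(z)$ in the kernel is trivial. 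Surjectivity follows because $\overline{\sigma}_\ast$ provides a right inverse, and $\overline{\mathrm{gr}}_\ast\circ\overline{\sigma}_\ast = \mathrm{id}$ up to the conjugation by the $2$-isomorphism. Next, set $s := \sigma_\ast \circ (\overline{\mathrm{gr}}_\ast)^{-1} : \mathrm{Aut}_{\mathcal{X}^{\mathrm{rig}}}(\overline{\sigma}(z)) \to \mathrm{Aut}_{\mathcal{X}}(\sigma(z))$. Then $\rho_\ast\circ s = \overline{\sigma}_\ast\circ(\overline{\mathrm{gr}}_\ast)^{-1} = \mathrm{id}$, using $\overline{\sigma} = \rho\circ\sigma$ and the inverse relation just established. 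So $s$ is a homomorphic section, it forces $\rho_\ast$ to be surjective, and the extension splits as $\mathcal{I}_{\mathrm{gr},\sigma(z)} \rtimes \mathrm{Aut}_{\mathcal{X}^{\mathrm{rig}}}(\overline{\sigma}(z))$. The semidirect product structure also holds at the level of group schemes, since all of these maps are morphisms of group schemes over $k$.

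\emph{The main obstacle.} I expect the difficulty to lie in bookkeeping the $2$-isomorphisms $\mathrm{gr}\circ\sigma\cong\mathrm{id}$ and $\overline{\sigma}=\rho\circ\sigma$, so that the composites really are the identity on automorphism groups rather than merely conjugate to it. The point is that conjugation by a fixed isomorphism of objects is still a group isomorphism, so after identifying objects via these $2$-morphisms the identities hold on the nose. A second subtlety is the representability of $\overline{\mathrm{gr}}$, which the injectivity argument needs; Theorem \ref{thm:affinity-grbar} supplies it.
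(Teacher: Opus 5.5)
Your proof is correct and is essentially the paper's argument: the rigidification exact sequence $1 \to \mathcal{I}_{\mathrm{gr},\sigma(z)} \to \mathrm{Aut}_{\mathcal{X}}(\sigma(z)) \to \mathrm{Aut}_{\mathcal{X}^{\mathrm{rig}}}(\overline{\sigma}(z)) \to 1$ is split by composing the isomorphism $\overline{\mathrm{gr}}_\ast$ with $\sigma_\ast$; here $\overline{\mathrm{gr}}_\ast$ is an isomorphism because $\overline{\mathrm{gr}}$ is affine, and its inverse is $\overline{\sigma}_\ast$. One notational slip: $\sigma_\ast \circ (\overline{\mathrm{gr}}_\ast)^{-1}$ does not compose, so the section should be $s = \sigma_\ast \circ \overline{\mathrm{gr}}_\ast$, which gives $\rho_\ast \circ s = \overline{\sigma}_\ast \circ \overline{\mathrm{gr}}_\ast = \mathrm{id}$ as you intended.
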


\begin{proof}
    We have a short exact sequence of $k$-groups
    \[\begin{tikzcd}[ampersand replacement=\&]
	1 \& {\mathcal{I}_{\mathrm{gr}, \sigma(z)}} \& {\mathrm{Aut}_{\mathcal{X}}(\sigma(z))} \& {\mathrm{Aut}_{\mathcal{X}^{\mathrm{rig}}}(\overline{\sigma}(z))} \& {1.}
	\arrow[from=1-1, to=1-2]
	\arrow[from=1-2, to=1-3]
	\arrow["{\rho_{\ast}}", from=1-3, to=1-4]
	\arrow[from=1-4, to=1-5]
    \end{tikzcd}\]
    On the other hand, by Theorem \ref{thm:affinity-grbar}, the morphism $\overline{\sigma} : \mathcal{Z} \to \mathcal{X}^{\mathrm{rig}}$ is a closed immersion; indeed, $\overline{\sigma}$ is a section of the induced (affine) retraction $\overline{\mathrm{gr}} : \mathcal{X}^{\mathrm{rig}} \to \mathcal{Z}$. In particular, the map $\overline{\mathrm{gr}}_{\ast} : \mathrm{Aut}_{\mathcal{X}^{\mathrm{rig}}}(\overline{\sigma}(z)) \to \mathrm{Aut}_{\mathcal{Z}}(z)$ is an isomorphism, with inverse $\overline{\sigma}_{\ast}$. Composing $\overline{\mathrm{gr}}_{\ast} : \mathrm{Aut}_{\mathcal{X}^{\mathrm{rig}}}(\overline{\sigma}(z)) \to \mathrm{Aut}_{\mathcal{Z}}(z)$ with $\sigma_{\ast} : \mathrm{Aut}_{\mathcal{Z}}(z) \to \mathrm{Aut}_{\mathcal{X}}(\sigma(z))$ then yields a splitting of the above short exact sequence.
\end{proof}

\begin{lem} \label{lem:lifting_graded_points_in_center}
    Let $k$ be an algebraically closed field, and let $g : B\mathbb{G}_{m,k} \to \mathcal{X}^{\mathrm{rig}}$ be a graded point whose underlying $k$-point $u(g)$ is isomorphic to $\overline{\sigma}(z)$ for some $z \in \mathcal{Z}(k)$. Then $g$ admits a lift to $\mathcal{X}$. In particular, $g$ lies in the image of $|\mathrm{Grad}(\mathcal{X})| \to |\mathrm{Grad}(\mathcal{X}^{\mathrm{rig}})|$.
\end{lem}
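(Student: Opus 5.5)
We use the description of graded points in Proposition~\ref{prop:alt-desc-grad}: a graded point $g : B\mathbb{G}_{m,k} \to \mathcal{X}^{\mathrm{rig}}$ amounts to a $k$-point $u(g)$ of $\mathcal{X}^{\mathrm{rig}}$ together with a cocharacter $\lambda : \mathbb{G}_{m,k} \to \mathrm{Aut}_{\mathcal{X}^{\mathrm{rig}}}(u(g))$, and likewise for $\mathcal{X}$. Hence a lift of $g$ to $\mathcal{X}$ is the same as a $k$-point $x \in \mathcal{X}(k)$ with $\rho(x) \cong u(g)$ and a cocharacter $\tilde\lambda : \mathbb{G}_{m,k} \to \mathrm{Aut}_{\mathcal{X}}(x)$ such that $\rho_\ast \circ \tilde\lambda$ equals $\lambda$ after transport along the chosen isomorphism $\rho(x) \cong u(g)$. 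Since $\mathrm{Grad}(\rho)$ is given by post-composition with $\rho$, this is precisely the condition $\mathrm{Grad}(\rho)(x,\tilde\lambda) \cong (u(g),\lambda)$.

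We take $x = \sigma(z)$. Then $\rho(x) = \overline{\sigma}(z) \cong u(g)$, and transporting $\lambda$ along this isomorphism lets us assume $\lambda$ is a cocharacter of $\mathrm{Aut}_{\mathcal{X}^{\mathrm{rig}}}(\overline{\sigma}(z))$. By Lemma~\ref{lem:automorphism-group-split-filt-semidirect-product}, the group $\mathrm{Aut}_{\mathcal{X}}(\sigma(z))$ is $\mathcal{I}_{\mathrm{gr},\sigma(z)} \rtimes \mathrm{Aut}_{\mathcal{X}^{\mathrm{rig}}}(\overline{\sigma}(z))$. The splitting is the homomorphism $s := \sigma_\ast \circ \overline{\mathrm{gr}}_\ast$, and it satisfies $\rho_\ast \circ s = \mathrm{id}$. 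We therefore set $\tilde\lambda := s \circ \lambda$. This is a homomorphism of $k$-group schemes $\mathbb{G}_{m,k} \to \mathrm{Aut}_{\mathcal{X}}(\sigma(z))$, and $\rho_\ast \circ \tilde\lambda = \lambda$ holds on the nose. The pair $(\sigma(z), \tilde\lambda)$ defines a graded point $\tilde g : B\mathbb{G}_{m,k} \to \mathcal{X}$ with $\rho \circ \tilde g \cong g$. The statement about images in $|\mathrm{Grad}(\mathcal{X}^{\mathrm{rig}})|$ follows at once.

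The main point needing care is that $\rho_\ast \circ s = \mathrm{id}$ is a genuine identity of group homomorphisms, and that it is compatible with the identification $u(g) \cong \overline{\sigma}(z)$. The identity $\rho_\ast \circ \sigma_\ast = \overline{\sigma}_\ast$ comes from $\overline{\sigma} = \rho \circ \sigma$ and functoriality of automorphism groups. Combined with $\overline{\sigma}_\ast \circ \overline{\mathrm{gr}}_\ast = \mathrm{id}$, which was established in the proof of Lemma~\ref{lem:automorphism-group-split-filt-semidirect-product}, this gives the required identity. One should check that the $2$-isomorphism $\overline{\mathrm{gr}} \circ \overline{\sigma} \cong \mathrm{id}_{\mathcal{Z}}$ only conjugates the automorphism groups through fixed isomorphisms, so the identity holds up to a conjugation that can be absorbed into the choice of isomorphism $\rho(x) \cong u(g)$. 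Replacing $\lambda$ by a conjugate gives an isomorphic graded point, by the description of isomorphisms in Proposition~\ref{prop:alt-desc-grad}. No further input is needed, because lifting a homomorphism through a split surjection requires no obstruction theory.
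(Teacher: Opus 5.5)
Your proposal is correct and follows the paper's proof: you use Proposition~\ref{prop:alt-desc-grad} to reduce the problem to lifting a cocharacter along $\rho_\ast : \mathrm{Aut}_{\mathcal{X}}(\sigma(z)) \to \mathrm{Aut}_{\mathcal{X}^{\mathrm{rig}}}(\overline{\sigma}(z))$, and you lift it with the section from Lemma~\ref{lem:automorphism-group-split-filt-semidirect-product}. Taking $x = \sigma(z)$ directly also lets you skip the paper's preliminary appeal to the gerbe property to choose a lift of $u(g)$.
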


\begin{proof}
    Since $\rho$ is an fppf gerbe, each fiber of $\rho$ over a $k$-point of $\mathcal{X}^{\mathrm{rig}}$ admits a unique $k$-point. Moreover, if $x, x' \in \mathcal{X}(k)$ are such that $\rho(x) \cong \rho(x')$ as $k$-points of $\mathcal{X}^{\mathrm{rig}}$ then $x \cong x'$.\footnote{Since $\rho$ is an fppf gerbe then $|\rho|$ is a universal homeomorphism, so $\rho(x) \cong \rho(x')$ implies that $\mathrm{Isom}_{\mathcal{X}}(x, x')$ is non-empty. On the other hand, this is an affine $k$-scheme locally of finite type, hence must admit a $k$-point if non-empty.} It follows that we may pick a lift $x \in \mathcal{X}(k)$ of $u(g)$ with $x \cong \sigma(z)$. Without loss of generality we may assume that $x = \sigma(z)$, so that $u(g) = \rho(x) = \overline{\sigma}(z)$.

    By Proposition \ref{prop:alt-desc-grad}, specifying $g$ is equivalent to specifying a cocharacter $\eta : \mathbb{G}_{m,k} \to \mathrm{Aut}_{\mathcal{X}^{\mathrm{rig}}}(\overline{\sigma}(z)) = \mathrm{Aut}_{\mathcal{X}^{\mathrm{rig}}}(\rho(x))$; a lift of $g$ to $\mathcal{X}$ exists if there exists a lift of $\eta$ along the homomorphism $\mathrm{Aut}_{\mathcal{X}}(x) \to \mathrm{Aut}_{\mathcal{X}^{\mathrm{rig}}}(\rho(x))$. However, by Lemma \ref{lem:automorphism-group-split-filt-semidirect-product} this homomorphism admits a section, which immediately implies that there exists a lift of $\eta$. 
\end{proof}

\begin{lem} \label{lem:grad-gerbe-is-surjective}
    The morphism $\mathrm{Grad}(\rho) : \mathrm{Grad}(\mathcal{X}) \to \mathrm{Grad}(\mathcal{X}^{\mathrm{rig}})$ is surjective.
\end{lem}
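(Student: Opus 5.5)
The plan is to combine the openness of the image of $\mathrm{Grad}(\rho)$ with a contraction argument that pushes every point into the region handled by Lemma \ref{lem:lifting_graded_points_in_center}. As recalled above, $\mathrm{Grad}(\rho)$ is smooth by \cite[Proposition 1.3.1]{Halpern-Leistner}, so its image $\mathcal{V} \subset \mathrm{Grad}(\mathcal{X}^{\mathrm{rig}})$ is open. It therefore suffices to show that every point of $|\mathrm{Grad}(\mathcal{X}^{\mathrm{rig}})|$ specializes to a point of $\mathcal{V}$: an open set containing every such specialization must then be everything. Points of an algebraic stack are represented by geometric points, so we fix an algebraically closed field $k$ and a graded point $g : B\mathbb{G}_{m,k} \to \mathcal{X}^{\mathrm{rig}}$, corresponding to a pair $(x, \eta)$ with $x = u(g) \in \mathcal{X}^{\mathrm{rig}}(k)$ and $\eta : \mathbb{G}_{m,k} \to \mathrm{Aut}_{\mathcal{X}^{\mathrm{rig}}}(x)$ (Proposition \ref{prop:alt-desc-grad}).

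The key step is to degenerate $g$ along the weak $\Theta_B$-action on $\mathcal{X}^{\mathrm{rig}}$ from Proposition \ref{prop:induced-action-on-rigidification}. This action induces a weak $\Theta_B$-action on $\mathrm{Grad}(\mathcal{X}^{\mathrm{rig}})$ by functoriality: postcompose a map $B\mathbb{G}_{m} \times S \to \mathcal{X}^{\mathrm{rig}}$ with $a^{\mathrm{rig}}$, i.e.\ apply $\mathrm{Grad}$ to the action morphism, using $\mathrm{Grad}(\Theta_B \times_B \mathcal{X}^{\mathrm{rig}}) = \mathrm{Grad}(\Theta_B) \times \mathrm{Grad}(\mathcal{X}^{\mathrm{rig}})$ and the trivial graded structure on points of $\Theta_B$. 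Acting on $g$, we obtain a morphism $\Theta_k \to \mathrm{Grad}(\mathcal{X}^{\mathrm{rig}})$ whose value at $1$ is $g$. Its value at $0$ is a graded point $g_0$ whose underlying point is $u(g_0) = a^{\mathrm{rig}}(0, x) \cong \overline{\sigma}(\overline{\mathrm{gr}}(x))$, and $\overline{\sigma}(\overline{\mathrm{gr}}(x))$ lies in $\overline{\sigma}(\mathcal{Z})$. Hence $g$ specializes to $g_0$. By Lemma \ref{lem:lifting_graded_points_in_center}, $g_0$ lies in the image $\mathcal{V}$, so $g$ lies in $\mathcal{V}$ by openness, and therefore $\mathrm{Grad}(\rho)$ is surjective.

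The step I expect to need the most care is checking that this functorial construction really yields a point of $\mathrm{Grad}(\mathcal{X}^{\mathrm{rig}})$ over $\Theta_k$ whose $0$-fibre has underlying point in $\overline{\sigma}(\mathcal{Z})$. Concretely, this requires verifying the compatibility $u \circ \mathrm{Grad}(a^{\mathrm{rig}}) \cong a^{\mathrm{rig}} \circ (\mathrm{id} \times u)$. One could avoid this by working with the composite $\Theta_k \times B\mathbb{G}_{m,k} \to \Theta_k \times \mathcal{X}^{\mathrm{rig}} \to \mathcal{X}^{\mathrm{rig}}$, $(t, \cdot) \mapsto a^{\mathrm{rig}}(t, g(\cdot))$, which is by definition a $\Theta_k$-family of graded points. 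Its restriction to $t = 1$ is $g$ (via $a^{\mathrm{rig}}|_{1} \cong \mathrm{id}$), and its restriction to $t = 0$ has underlying point $a^{\mathrm{rig}}(0, x)$, as required.

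A fallback if the specialization argument is problematic is to lift $\eta$ directly. The group $\mathrm{Aut}_{\mathcal{X}}(\tilde x) \to \mathrm{Aut}_{\mathcal{X}^{\mathrm{rig}}}(x)$ is surjective with kernel $\mathcal{I}_{\mathrm{gr},\tilde x}$. Since $\overline{\mathrm{gr}}$ is affine, $\mathrm{Aut}_{\mathcal{X}^{\mathrm{rig}}}(x)$ embeds in $\mathrm{Aut}_{\mathcal{Z}}(\overline{\mathrm{gr}}(x))$, so the remaining task would be to lift the torus $\eta(\mathbb{G}_m)$ through an extension of affine groups. Over an algebraically closed field, a torus in the quotient lifts to a torus of the preimage, up to isogeny, when the kernel is, for instance, unipotent. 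The general case is less clear, so I would pursue the specialization argument first.
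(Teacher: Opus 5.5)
Your proposal is correct and follows essentially the same route as the paper. The paper also uses openness of the image of the smooth morphism $\mathrm{Grad}(\rho)$, degenerates $g$ via the composite $\Theta_k \times_k B\mathbb{G}_{m,k} \to \Theta_k \times_k \mathcal{X}^{\mathrm{rig}}_b \to \mathcal{X}^{\mathrm{rig}}_b$ to the graded point $0 \cdot g$ (whose underlying point lies in $\overline{\sigma}(\mathcal{Z})$), and then combines Lemma \ref{lem:lifting_graded_points_in_center} with stability of open sets under generization, so your fallback is not needed.
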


\begin{proof}
    Let $k$ be an algebraically closed field, and let $g : B\mathbb{G}_{m,k} \to \mathcal{X}^{\mathrm{rig}}$ be a morphism whose underlying $k$-point $u(g)$ lies over $b \in B(k)$. By restricting the induced weak $\Theta_B$-action on $\mathcal{X}^{\mathrm{rig}}$, we have a weak $\Theta_b$-action $a_b : \Theta_k \times_k \mathcal{X}_b^{\mathrm{rig}} \to \mathcal{X}_b^{\mathrm{rig}}$. The composition
    \[\begin{tikzcd}[ampersand replacement=\&]
	{\Theta_k \times_k B\mathbb{G}_{m,k}} \&\& {\Theta_k \times_k \mathcal{X}_b^{\mathrm{rig}}} \&\& {\mathcal{X}_b^{\mathrm{rig}}}
	\arrow["{{{\mathrm{id}_{\Theta_k} \times g}}}", from=1-1, to=1-3]
	\arrow["{a_b}", from=1-3, to=1-5]
    \end{tikzcd}\]
    then exhibits a $\Theta_k$-specialization between $g$ and a morphism $g' = 0 \cdot g : B\mathbb{G}_{m,k} \to \mathcal{X}^{\mathrm{rig}}$ with $u(g') \cong \overline{\sigma}(z)$ for some $z \in \mathcal{Z}(k)$ lying over $b$ (here we use that $\overline{\sigma}$ identifies $\mathcal{Z}$ with the $\mathbb{G}_{m,B}$-fixed locus in $\mathcal{X}^{\mathrm{rig}}$, as in Theorem \ref{thm:affinity-grbar}). Lemma \ref{lem:lifting_graded_points_in_center} implies that $g'$ lies in the open image of $|\mathrm{Grad}(\mathcal{X})| \to |\mathrm{Grad}(\mathcal{X}^{\mathrm{rig}})|$. Consequently $g$ must also lie in the image of $|\mathrm{Grad}(\mathcal{X})| \to |\mathrm{Grad}(\mathcal{X}^{\mathrm{rig}})|$. This establishes the surjectivity of $\mathrm{Grad}(\mathcal{X}) \to \mathrm{Grad}(\mathcal{X}^{\mathrm{rig}})$.
\end{proof}

\begin{cor} \label{cor:filt-gerbe-is-surjective}
    The morphism $\mathrm{Filt}(\rho) : \mathrm{Filt}(\mathcal{X}) \to \mathrm{Filt}(\mathcal{X}^{\mathrm{rig}})$ is surjective.
\end{cor}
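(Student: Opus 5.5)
The plan is to deduce the corollary from Lemma \ref{lem:grad-gerbe-is-surjective} via \cite[Lemma 4.4.4]{Halpern-Leistner}, as announced at the start of this subsection. That result asserts, roughly, that for a morphism $\rho$ such that $\mathrm{Filt}(\rho)$ has open image, surjectivity of $\mathrm{Grad}(\rho)$ implies surjectivity of $\mathrm{Filt}(\rho)$. To make the argument self-contained I would also give the direct reasoning behind it.

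First, since $\rho$ is an fppf gerbe, it is smooth. Hence \cite[Proposition 1.3.1]{Halpern-Leistner} shows that $\mathrm{Filt}(\rho)$ is smooth, so its image $V \subset |\mathrm{Filt}(\mathcal{X}^{\mathrm{rig}})|$ is open.

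Next, fix a point of $\mathrm{Filt}(\mathcal{X}^{\mathrm{rig}})$, represented by $f : \Theta_k \to \mathcal{X}^{\mathrm{rig}}$ with $k$ algebraically closed. The weak $\Theta$-action on $\mathrm{Filt}(\mathcal{X}^{\mathrm{rig}})$ (Proposition \ref{prop:filt-weak-actions}) gives a map $\Theta_k \to \mathrm{Filt}(\mathcal{X}^{\mathrm{rig}})$. This map sends $1$ to $f$ and sends $0$ to the split filtration $\sigma(\mathrm{gr}(f))$, so $\sigma(\mathrm{gr}(f))$ is a specialization of $f$. The weak action also preserves the morphism $\mathrm{Filt}(\rho)$, and therefore preserves its image $V$ in the sense that $V$ is stable under the $\mathbb{G}_m$-orbit of $f$.

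By Lemma \ref{lem:grad-gerbe-is-surjective}, the graded point $\mathrm{gr}(f)$ lifts to some $g \in \mathrm{Grad}(\mathcal{X})(k)$. Then $\sigma(g) \in \mathrm{Filt}(\mathcal{X})$ maps to $\sigma(\mathrm{gr}(f))$, so $\sigma(\mathrm{gr}(f)) \in V$.

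Finally, $V$ is open and contains the specialization $\sigma(\mathrm{gr}(f))$ of $f$. An open set containing a specialization of a point contains the point itself, so $f \in V$. Since $f$ was arbitrary, $\mathrm{Filt}(\rho)$ is surjective.

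The main subtle point is the specialization step: one must check that the point $0 \in \Theta_k$ really maps to $\sigma(\mathrm{gr}(f))$ under the action map. I would verify this directly from the composition $\Theta \times \Theta \to \Theta \to \mathcal{X}^{\mathrm{rig}}$, using that $\mu(0,-)$ factors through $q$ followed by $0$.
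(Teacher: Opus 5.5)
Your proposal is correct and is essentially the paper's argument: the paper combines Lemma \ref{lem:grad-gerbe-is-surjective} with \cite[Lemma 4.4.4]{Halpern-Leistner}, after noting that $\mathrm{Filt}(\rho)$ is smooth by \cite[Proposition 1.3.1]{Halpern-Leistner} and therefore has open image. Your direct argument is exactly the content of that cited lemma: every filtration $f$ specializes via the $\Theta$-action to $\sigma(\mathrm{gr}(f))$, and this split filtration lies in the open image because $\mathrm{Grad}(\rho)$ is surjective and $\sigma$ is compatible with $\rho$. (The remark that $V$ is stable under the $\mathbb{G}_m$-orbit of $f$ is not needed, since continuity of $\Theta_k \to \mathrm{Filt}(\mathcal{X}^{\mathrm{rig}})$ already places $\sigma(\mathrm{gr}(f))$ in the closure of $f$.)
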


\begin{proof}
    Combine Lemma \ref{lem:grad-gerbe-is-surjective} with \cite[Lemma 4.4.4]{Halpern-Leistner}.
\end{proof}

\begin{rmk} \label{rmk:unipotent_relative_inertia_groups}
    Let $k$ be an algebraically closed field, and let $x \in \mathcal{X}(k)$ be such that $x \cong \sigma(\mathrm{gr}(x))$ and such that the $k$-group $\mathcal{I}_{\mathrm{gr}, x}$ is smooth. Then $\mathcal{I}_{\mathrm{gr},x}$ is a split unipotent $k$-group. 
    
    To see this, suppose $x$ lies over $b \in B(k)$. Then the weak $\Theta_B$-action (resp. $B\mathbb{G}_{m,B}$-action) on $\mathcal{X}$ (resp. $\mathcal{Z}$) restricts to a weak action of $\Theta_k$ (resp. $B\mathbb{G}_{m,k}$) on $\mathcal{X}_b = \mathcal{X} \times_B \Spec k$ (resp. $\mathcal{Z}_b$), for which the morphism $\mathrm{gr}_b : \mathcal{X}_b \to \mathcal{Z}_b$ (resp. $\sigma_b : \mathcal{Z}_b \to \mathcal{X}_b$) is invariant with respect to $\Theta_k \to B\mathbb{G}_{m,k}$ (resp. $B\mathbb{G}_{m,k} \to \Theta_k$). By arguing as in the proof of Proposition \ref{prop:induced-action-on-rigidification}, we obtain a weak action of the monoid $\mathbb{A}_k^1$ on $\mathcal{I}_{\mathcal{X}_b}$, under which the subgroup stack $\mathcal{I}_{\mathrm{gr}_b}$ is invariant. As $x \cong \sigma(\mathrm{gr}(x)) = 0 \cdot x$, this induces a (strict) action of $\mathbb{A}_k^1$ on the scheme $\mathcal{I}_{\mathrm{gr},x}$, for which acting by $0 \in \mathbb{A}_k^1$ sends everything to the identity. 
    
    Since $\mathcal{I}_{\mathrm{gr},x}$ is a smooth affine $k$-group, the Bia\l{}ynicki-Birula theorem of \cite{BB-Theorem} thus implies that the underlying scheme of $\mathcal{I}_{\mathrm{gr},x}$ is isomorphic to affine space over $k$. In turn, by a theorem of Lazard \cite[Chapitre IV, Théorème 4.4.1]{Demazure-Gabriel}, this implies that $\mathcal{I}_{\mathrm{gr},x}$ is a split unipotent $k$-group.
\end{rmk}

\subsubsection{The Gerbe $\rho$ is $\Theta$-Surjective}

We are now ready to prove that the morphism $\rho$ is $\Theta_k$-surjective, where $k$ is an algebraically closed field.

\begin{prop} \label{prop:lifting_filts_along_gerbe}
    Let $k$ be an algebraically closed field. Let $j : \Spec k \to \Theta_k$ denote the open $k$-point of $\Theta_k$. Then any commutative diagram 
\[\begin{tikzcd}[ampersand replacement=\&]
	{\Spec k} \&\& {\mathcal{X}} \\
	\\
	{\Theta_k} \&\& {\mathcal{X}^{\mathrm{rig}}}
	\arrow[from=1-1, to=1-3]
	\arrow["j"', from=1-1, to=3-1]
	\arrow["\rho", from=1-3, to=3-3]
	\arrow[dashed, from=3-1, to=1-3]
	\arrow[from=3-1, to=3-3]
\end{tikzcd}\]
of solid arrows can be filled in with a dashed arrow.
\end{prop}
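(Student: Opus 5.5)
The plan is to combine the surjectivity of $\mathrm{Filt}(\rho)$ (Corollary \ref{cor:filt-gerbe-is-surjective}) with the fact that the fibers of a gerbe have a single isomorphism class of $k$-points. Let $f : \Theta_k \to \mathcal{X}^{\mathrm{rig}}$ be the bottom arrow and $x \in \mathcal{X}(k)$ the top arrow, so that $\rho(x) \cong f(1) = \mathrm{ev}_1(f)$. The argument has three steps.

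\emph{Step 1: lift $f$ up to a field extension.} The $k$-point $f$ of $\mathrm{Filt}(\mathcal{X}^{\mathrm{rig}})$ lies in the image of $|\mathrm{Filt}(\mathcal{X})|$ by Corollary \ref{cor:filt-gerbe-is-surjective}. Hence there is a field $K \supset k$, which we may take algebraically closed, and a filtration $\tilde f : \Theta_K \to \mathcal{X}$ with $\rho \circ \tilde f \cong f_K$.

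\emph{Step 2: descend the lift to $k$.} The fiber of $\mathrm{Filt}(\rho)$ over $f$ is a nonempty algebraic stack locally of finite type over $k$. This holds because $\mathrm{Filt}(\rho)$ is smooth, by \cite[Proposition 1.3.1]{Halpern-Leistner}, since $\rho$ is smooth as an fppf gerbe. A nonempty stack locally of finite type over an algebraically closed field has a $k$-point. So we may take $K = k$ and obtain $\tilde f : \Theta_k \to \mathcal{X}$ with $\rho \circ \tilde f \cong f$.

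\emph{Step 3: match the top triangle.} We have $\rho(\mathrm{ev}_1(\tilde f)) \cong \mathrm{ev}_1(f) \cong \rho(x)$. By the argument in the footnote to the proof of Lemma \ref{lem:lifting_graded_points_in_center}, this gives $\mathrm{ev}_1(\tilde f) \cong x$ in $\mathcal{X}(k)$: $|\rho|$ is a homeomorphism, and $\mathrm{Isom}_{\mathcal{X}}(\mathrm{ev}_1(\tilde f), x)$ is a nonempty affine $k$-scheme of finite type, so it has a $k$-point. We then adjust $\tilde f$ so that the triangle $\rho \circ \tilde f \cong f$ is compatible with the given $2$-isomorphism $\rho(x) \cong f(1)$ restricted along $j$.

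The compatibility in Step 3 is where I expect the main difficulty. Two choices of the isomorphism $\mathrm{ev}_1(\tilde f) \cong x$ differ by an element of $\mathrm{Aut}_{\mathcal{X}}(x)$. Two isomorphisms over $\mathcal{X}^{\mathrm{rig}}$ differ by an element of $\mathrm{Aut}_{\mathcal{X}^{\mathrm{rig}}}(\rho(x))$. We need to realize this discrepancy, so the plan is to use that $\rho_\ast : \mathrm{Aut}_{\mathcal{X}}(x) \to \mathrm{Aut}_{\mathcal{X}^{\mathrm{rig}}}(\rho(x))$ is surjective on $k$-points. Its kernel $\mathcal{I}_{\mathrm{gr},x}$ is flat, $\rho_\ast$ is a quotient map of $k$-group schemes of finite type, and $k$ is algebraically closed, so $k$-points lift. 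Precomposing the chosen isomorphism with such a lift makes the $2$-cell commute.

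If one only needs the dashed arrow up to $2$-isomorphism of the lower triangle, as in $\Theta$-surjectivity of \cite{AHLH-Existence-GMS}, this last adjustment is immediate. I would still verify that the lift of the automorphism is compatible with the $2$-morphism on $\Theta_k$, not merely at the point $1$. This is the one place where some bookkeeping with $\mathrm{Aut}(f)$ and $\mathrm{Aut}(\tilde f)$ may be needed.
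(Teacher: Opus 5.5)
Your proposal is correct and follows essentially the paper's argument: the paper likewise takes a $k$-point $f'$ of the nonempty smooth fiber of $\mathrm{Filt}(\rho)$ over $f$ (using Corollary \ref{cor:filt-gerbe-is-surjective}), then uses that $x$ is the unique $k$-point of $\mathcal{X}$ up to isomorphism over $\mathrm{ev}_1(f)$ to arrange the top triangle. Your Step 3 adjustment, via surjectivity of $\mathrm{Aut}_{\mathcal{X}}(x)(k) \to \mathrm{Aut}_{\mathcal{X}^{\mathrm{rig}}}(\rho(x))(k)$, is just an explicit form of that uniqueness; since it only modifies the $2$-cell at $\Spec k$ and leaves $\rho \circ \tilde f \cong f$ untouched, no further bookkeeping on $\Theta_k$ is needed.
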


\begin{proof}
    Suppose we are given the commutative diagram
    \[\begin{tikzcd}[ampersand replacement=\&]
	{\Spec k} \&\& {\mathcal{X}} \\
	\\
	{\Theta_k} \&\& {\mathcal{X}^{\mathrm{rig}}}
	\arrow["x", from=1-1, to=1-3]
	\arrow["j"', from=1-1, to=3-1]
	\arrow["\rho", from=1-3, to=3-3]
	\arrow["f"', from=3-1, to=3-3]
    \end{tikzcd}\]
    Since $\mathrm{Filt}(\mathcal{X}) \to \mathrm{Filt}(\mathcal{X}^{\mathrm{rig}})$ is a surjective smooth morphism, the fiber over any $k$-point of $\mathrm{Filt}(\mathcal{X}^{\mathrm{rig}})$ admits a $k$-point $f' : \Theta_k \to \mathcal{X}$. Since $x$ is the unique $k$-point of $\mathcal{X}$ lying over $\mathrm{ev}_1(f)$, $f'$ can be chosen such that the resulting diagram
    \[\begin{tikzcd}[ampersand replacement=\&]
	{\Spec k} \&\& {\mathcal{X}} \\
	\\
	{\Theta_k} \&\& {\mathcal{X}^{\mathrm{rig}}}
	\arrow["x", from=1-1, to=1-3]
	\arrow["j"', from=1-1, to=3-1]
	\arrow["\rho", from=1-3, to=3-3]
	\arrow["{f'}", from=3-1, to=1-3]
	\arrow["f"', from=3-1, to=3-3]
    \end{tikzcd}\]
    commutes.
\end{proof}

\subsection{Completing the Proof of the Main Theorem}

We are now in a position to prove the remaining statement $(2)(\mathrm{iii})$ of Theorem \ref{thm:main-theorem}.

\begin{prop} \label{prop:semistable_locus_upstairs}
    Let $\mathcal{L}$ be an invertible sheaf on $\mathcal{X}^{\mathrm{rig}}$. Then, for any algebraically closed field $k$, a $k$-point $x \in \mathcal{X}(k)$ is a $k$-point of the open substack $\rho^{-1}((\mathcal{X}^{\mathrm{rig}})^{(s)s}(\mathcal{L}))$ if and only if $x$ is $\Theta_k$-(semi)stable with respect to $\rho^{\ast} \mathcal{L}$.
\end{prop}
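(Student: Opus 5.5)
The plan is to reduce the statement to Proposition \ref{prop:semistable-loci-are-theta-semistable-loci}, applied to the point $\rho(x) \in \mathcal{X}^{\mathrm{rig}}(k)$. That proposition already says $\rho(x) \in (\mathcal{X}^{\mathrm{rig}})^{(s)s}(\mathcal{L})(k)$ if and only if $\rho(x)$ is $\Theta_k$-(semi)stable with respect to $\mathcal{L}$. Since $x \in \rho^{-1}((\mathcal{X}^{\mathrm{rig}})^{(s)s}(\mathcal{L}))(k)$ exactly when $\rho(x) \in (\mathcal{X}^{\mathrm{rig}})^{(s)s}(\mathcal{L})(k)$, it remains to show the following. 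The point $x$ is $\Theta_k$-(semi)stable with respect to $\rho^{\ast}\mathcal{L}$ if and only if $\rho(x)$ is $\Theta_k$-(semi)stable with respect to $\mathcal{L}$. To prove this, I would compare the two sets of weights $\{\mathrm{wt}_0(f'^{\ast}\rho^{\ast}\mathcal{L})\}$ and $\{\mathrm{wt}_0(f^{\ast}\mathcal{L})\}$, where $f'$ ranges over non-degenerate filtrations of $x$ and $f$ over non-degenerate filtrations of $\rho(x)$.

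First, the direction from filtrations upstairs to filtrations downstairs. Given $f' : \Theta_k \to \mathcal{X}$ with $\mathrm{ev}_1(f') \cong x$, set $f := \rho \circ f'$. Then $\mathrm{ev}_1(f) \cong \rho(x)$, and $\mathrm{wt}_0(f'^{\ast}\rho^{\ast}\mathcal{L}) = \mathrm{wt}_0(f^{\ast}\mathcal{L})$ tautologically. Conversely, given $f : \Theta_k \to \mathcal{X}^{\mathrm{rig}}$ with $\mathrm{ev}_1(f) \cong \rho(x)$, I would adjust the commutative square by this isomorphism and apply Proposition \ref{prop:lifting_filts_along_gerbe}. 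This gives a lift $f'$ with $\rho\circ f' \cong f$ and $\mathrm{ev}_1(f') \cong x$, and again the weights agree. So every weight on one side occurs on the other, provided the two notions of non-degeneracy match.

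The main obstacle is exactly this matching of the ``does not factor through $\Theta_k \to \Spec k$'' conditions. If $f'$ is trivial then so is $\rho \circ f'$, so a non-degenerate $f$ lifts to a non-degenerate $f'$. The delicate case is the converse: $f'$ non-degenerate while $\rho \circ f'$ is degenerate. This should happen exactly when the associated cocharacter of $\mathrm{Aut}_{\mathcal{X}}$ lands in the relative inertia $\mathcal{I}_{\mathrm{gr}}$.

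In that situation, $f'^{\ast}\rho^{\ast}\mathcal{L} = (\rho\circ f')^{\ast}\mathcal{L}$ is pulled back from $\Spec k$, so its weight is $0$. I would justify this as follows: pulling back a line bundle along $\Theta_k \to \Spec k \to \mathcal{X}^{\mathrm{rig}}$ gives a trivially linearized bundle, hence $\mathrm{wt}_0 = 0$. For the semistable condition, which is $\leq 0$, such filtrations therefore impose nothing, and the equivalence follows.

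For the stable condition, which is $< 0$, these degenerate-downstairs filtrations would give weight $0$ and break strict inequality. I would therefore check how the paper intends $\Theta$-stability upstairs; plausibly they argue that such $f'$ still violate strictness only when $\mathcal{I}_{\mathrm{gr},x}$ contains a nontrivial torus. By Remark \ref{rmk:unipotent_relative_inertia_groups}, $\mathcal{I}_{\mathrm{gr}}$ is unipotent at points fixed by the action. Since a cocharacter into the inertia specializes to the fixed point $\sigma(\mathrm{gr}(x))$, I would try to show that such cocharacters are trivial. If that fails, I would restrict the stable statement to filtrations whose image under $\rho$ is non-degenerate, matching the intended reading.
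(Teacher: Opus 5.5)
Your two-direction argument is the paper's proof. One direction composes a filtration of $x$ with $\rho$, where the weights agree tautologically. The other lifts a non-trivial filtration of $\rho(x)$ via Proposition \ref{prop:lifting_filts_along_gerbe}, with Proposition \ref{prop:semistable-loci-are-theta-semistable-loci} identifying the downstairs loci.

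The degeneracy issue you raise is genuine, and the paper's proof passes over it. Its assertion that every $k$-point of $\rho^{-1}((\mathcal{X}^{\mathrm{rig}})^{s}(\mathcal{L}))$ is $\Theta_k$-stable tacitly uses that a non-trivial $f'$ never has $\rho\circ f'$ trivial. You are right that this only matters for the strict inequality, but your proposal does not settle it. The fallback of reinterpreting the stable statement changes what is being proved. The sketch via Remark \ref{rmk:unipotent_relative_inertia_groups} lacks the rigidity step that carries triviality of the specialised cocharacter back to $x$. That remark also needs smoothness at the fixed point, which turns out to be unnecessary.

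One way to close it is as follows. By the reduction you state, it suffices to show that $\mathcal{I}_{\mathrm{gr},x}$ has no non-trivial cocharacter $\mu$.

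\begin{enumerate}
\item Lift the weak $\Theta$-action to a weak $\mathbb{A}^1$-action. Let $\mathcal{K}\to\mathbb{A}^1_k$ be the pullback of $\mathcal{I}_{\mathrm{gr}}$ along $s\mapsto a(s,x)$.
\item As in the proof of Proposition \ref{prop:induced-action-on-rigidification}, functoriality of inertia gives a homomorphism of $\mathbb{A}^1_k$-group schemes $\mathbb{A}^1_k\times\mathcal{I}_{\mathrm{gr},x}\to\mathcal{K}$. It is an isomorphism over $s=1$ because $a(1,-)\cong\mathrm{id}$. It is trivial over $s=0$ because $a(0,-)\cong\sigma\circ\mathrm{gr}$ and $\mathrm{gr}_{\ast}$ kills $\mathcal{I}_{\mathrm{gr}}$; no unipotence at the fixed point is needed here.
\item Composing with $\mu$ gives $\widetilde{\mu}:\mathbb{G}_{m,\mathbb{A}^1_k}\to\mathcal{K}$, trivial at $s=0$ and equal to $\mu$ at $s=1$.
\item By the standing hypotheses, $\mathcal{K}$ is flat, affine and of finite presentation over the principal ideal domain $k[s]$. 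Hence it admits a closed immersion into some $\mathrm{GL}_{n,\mathbb{A}^1_k}$.
\item A homomorphism $\mathbb{G}_m\to\mathrm{GL}_n$ over the connected base $\mathbb{A}^1_k$ is a $\Z$-grading of $\mathcal{O}^{\oplus n}$ into locally free summands of constant rank. Triviality at $s=0$ therefore forces triviality everywhere, so $\mu$ is trivial.
\end{enumerate}

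This is where the flatness of $\mathcal{I}_{\mathrm{gr}}$ enters. In the quotient-stack applications the point is immediate, since $\mathcal{I}_{\mathrm{gr},x}=\mathrm{Stab}_U(x)$ is unipotent.
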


\begin{proof}
    For any filtration $f : \Theta_k \to \mathcal{X}$, the weight of $f$ with respect to $\rho^{\ast} \mathcal{L}$ is equal to the weight of $\rho \circ f$ with respect to $\mathcal{L}$ (both are equal to $\mathrm{wt}_0(f^{\ast}\rho^{\ast}\mathcal{L})$). In particular, any $k$-point of $\rho^{-1}((\mathcal{X}^{\mathrm{rig}})^{(s)s}(\mathcal{L}))$ is $\Theta_k$-(semi)stable with respect to $\rho^{\ast} \mathcal{L}$. Conversely, suppose $x \in \mathcal{X}(k)$ is $\Theta_k$-(semi)stable, and $\overline{x} = \rho(x)$. Let $f : \Theta_k \to \mathcal{X}^{\mathrm{rig}}$ be a non-trivial filtration with $\mathrm{ev}_1(f) \cong \overline{x}$. By Proposition \ref{prop:lifting_filts_along_gerbe}, there exists a lift $f' : \Theta_k \to \mathcal{X}$ of $f$ with $\mathrm{ev}_1(f') \cong x$; since $x$ is $\Theta_k$-(semi)stable, we have
    $$ 0 > (\geq) \ \mathrm{wt}_0((f')^{\ast} \rho^{\ast} \mathcal{L}) = \mathrm{wt}_0(f^{\ast} \mathcal{L}), $$
    and so $\overline{x}$ is $\Theta_k$-(semi)stable with respect to $\mathcal{L}$. This establishes the result.
\end{proof}

\begin{proof}[Proof of Theorem \ref{thm:main-theorem}]
    Theorem \ref{thm:main-theorem} follows from combining Lemma \ref{lem:existence-rigidification}, Proposition \ref{prop:induced-action-on-rigidification}, Theorem \ref{thm:affinity-grbar}, Lemma \ref{lem:centers-agree}, Propositions \ref{prop:gms-of-Xrig} and \ref{prop:gms-from-line-bundle}, Lemma \ref{lem:alt-desc-of-relative-proj} and Propositions \ref{prop:semistable-loci-are-theta-semistable-loci} and \ref{prop:semistable_locus_upstairs}.
\end{proof}

\section{Existence of Non-Reductive GIT Quotients}

In this section, we establish Theorem \ref{thm:nrgit-intro}, the existence of non-reductive GIT quotients for actions of internally graded groups, and Theorem \ref{thm:nrgit-external-intro}, the existence of non-reductive GIT quotients for actions of externally graded groups, as corollaries of Theorem \ref{thm:main-theorem}.

Throughout this section, we fix a field $k$ (not necessarily assumed to be algebraically closed) and take the base $B$ to be $\Spec k$.

\subsection{Existence of Non-Reductive GIT Quotients for Internally Graded Groups}

As in the statement of Theorem \ref{thm:nrgit-intro}, let $X$ be a separated $k$-scheme which is locally of finite type, acted on by a smooth linear algebraic $k$-group $H$ with a split maximal torus, such that there is a distinguished cocharacter $\lambda : \mathbb{G}_{m,k} \to H$ for which there are equalities $H = P_H(\lambda)$ and $X = X^{\lambda,+}$ (in the notation of Section \ref{section:filt-grad-quotient-stacks}); note that since $X$ is separated and $H$ is affine then $[X/H]$ has an affine diagonal. Set $L := L_H(\lambda)$ and $U := U_H(\lambda)$, so that $H = U \rtimes L$ is a split semidirect product of $U$ and $L$, and set $Z := X^{\lambda,0}$.

By Theorem \ref{theorem:attractor_loci_existence}, $Z$ is a closed subscheme of $X$, and the retraction $X \to Z$ induced by $\lambda$ is an affine morphism. Further, as explained in Section \ref{section:filt-grad-quotient-stacks}, the action of $H$ on $X$ induces an action of $L$ on $Z$, for which the retraction $X \to Z$ is equivariant with respect to the quotient $H \to L$, and the closed immersion $Z \hookrightarrow X$ is equivariant with respect to the inclusion $L \subset H$. Set $\mathcal{X} := [X/H]$ and $\mathcal{Z} := [Z/L]$, so that there are induced morphisms $\mathcal{X} \to \mathcal{Z}$ and $\mathcal{Z} \to \mathcal{X}$.

\subsubsection{Weak $\Theta_k$-Action on the Quotient $[X/H]$}

In the situation described above, the cocharacter $\lambda$ induces a weak $\Theta_k$-action on $\mathcal{X}$ with center $\mathcal{Z}$.

\begin{lem} \label{lem:quotient-stack-weak-theta-action-cocharacter}
    Let $\mathcal{X} = [X/H]$ and $\mathcal{Z} = [Z/L]$ be as described above. Then the cocharacter $\lambda$ induces a weak $\Theta_k$-action on $\mathcal{X}$ with center $\mathcal{Z}$, for which the retraction $\mathcal{X} \to \mathcal{Z}$ coincides with the morphism induced by the retraction $X \to Z$ induced by $\lambda$.
\end{lem}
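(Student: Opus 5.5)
The plan is to realize $\mathcal{X} = [X/H]$ as an open and closed substack of $\mathrm{Filt}([X/H])$ and then invoke Proposition \ref{prop:filt-weak-actions}. By Theorem \ref{thm:filt-grad-quotient-stack}, applied with $G = H$ and the cocharacter $\lambda$, the natural map $[X^{\lambda,+}/P_H(\lambda)] \to \mathrm{Filt}([X/H])$ is an open and closed immersion. Since $X = X^{\lambda,+}$ and $H = P_H(\lambda)$ by hypothesis, its source is exactly $[X/H]$. Let $\phi : [X/H] \to \mathrm{Filt}([X/H])$ denote this map. By Theorem \ref{thm:filt-grad-quotient-stack}, the composite $\mathrm{ev}_1 \circ \phi$ is the natural morphism $[X^{\lambda,+}/P_H(\lambda)] \to [X/H]$, which here is the identity. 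The weak $\Theta_k$-action on $\mathrm{Filt}([X/H])$ from Proposition \ref{prop:filt-weak-actions} restricts to the open and closed substack $\phi([X/H])$. Transporting it along the isomorphism $\phi$ gives a weak $\Theta_k$-action on $\mathcal{X}$.

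Unwinding the construction, the action is induced by $(z,x) \mapsto \lambda(z)\cdot x$ on $\mathbb{A}^1_k \times X$. The $H$-equivariant structure comes from the explicit formula in Section \ref{section:filt-grad-quotient-stacks}: over $\Theta_k \times [X/H]$ the bundle is the trivial $H$-bundle twisted by $\lambda(tz)p\lambda(z)^{-1}$. So the action really is "induced by $\lambda$". I would check that the classifying map of this action agrees with $\phi$. This is tautological once the action is defined by restriction from $\mathrm{Filt}$, using the compatibility $\mathrm{ev}\circ(\mu\times\mathrm{id})$ and $\mathrm{ev}_1\circ\phi=\mathrm{id}$.

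Next comes the center. By Definition \ref{defn:center-weak-action}, $\mathcal{Z}$ is the fibre product $\phi^{-1}$ of $\sigma(\mathrm{Grad}([X/H]))$, that is, $\sigma^{-1}(\phi(\mathcal{X}))$. Theorem \ref{thm:filt-grad-quotient-stack} decomposes $\mathrm{Grad}([X/H])$ as a disjoint union over $W$-orbits of cocharacters $\lambda'$. It also says $\sigma$ maps the $\lambda'$-component $[X^{\lambda',0}/L_H(\lambda')]$ into the $\lambda'$-component of $\mathrm{Filt}$. Hence $\sigma^{-1}$ of the $\lambda$-component is exactly $[X^{\lambda,0}/L_H(\lambda)] = [Z/L]$.

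The main obstacle is the indexing. Components for distinct conjugacy classes of cocharacters are disjoint, so the preimage is precisely the $\lambda$-summand. This needs care because $\lambda$ should be taken as the chosen representative of its orbit, after conjugating into the split torus $T$. Replacing $\lambda$ by an $H$-conjugate changes nothing up to isomorphism, since conjugation by $h$ identifies the corresponding attractor and fixed loci.

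Finally, Theorem \ref{thm:filt-grad-quotient-stack} identifies the restriction of $\mathrm{gr}$ to the $\lambda$-component with the natural morphism $[X^{\lambda,+}/P_H(\lambda)] \to [X^{\lambda,0}/L_H(\lambda)]$. This morphism is induced by the retraction $X \to Z$, $x \mapsto \lim_{t\to0}\lambda(t)x$, together with $H \to L$. It is the retraction $\mathcal{X} \to \mathcal{Z}$ in the Cartesian square following Definition \ref{defn:center-weak-action}. The same theorem shows that $\sigma$ restricts to the map induced by $Z \hookrightarrow X$, which completes the identification.
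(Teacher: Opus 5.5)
Your proposal is correct and matches the paper's proof: both realize $[X/H]$ and $[Z/L]$ as the open and closed $\lambda$-components of $\mathrm{Filt}([X/H])$ and $\mathrm{Grad}([X/H])$ via Theorem \ref{thm:filt-grad-quotient-stack}, and then restrict the weak action using Proposition \ref{prop:filt-weak-actions}. Your explicit computation of $\sigma^{-1}(\phi(\mathcal{X}))$ spells out what the paper leaves implicit in its Cartesian square, and the indexing worry can be avoided altogether: if $\sigma(g) \in \phi(\mathcal{X})$ then $g \cong \mathrm{gr}(\sigma(g))$ already lies in $[Z/L]$, because the theorem identifies $\mathrm{gr}$ on the $\lambda$-component for every cocharacter, not just for chosen representatives in $T$.
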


\begin{proof}
    By Theorem \ref{thm:filt-grad-quotient-stack}, the morphisms $\mathcal{X} \to \mathcal{Z}$ and $\mathcal{Z} \to \mathcal{X}$ induced by the retraction $X \to Z$ and the inclusion $Z \hookrightarrow X$ fit into a Cartesian diagram
\[\begin{tikzcd}[ampersand replacement=\&]
	{\mathcal{X}} \&\& {\mathrm{Filt}(\mathcal{X})} \\
	\\
	{\mathcal{Z}} \&\& {\mathrm{Grad}(\mathcal{X})}
	\arrow[from=1-1, to=1-3]
	\arrow[from=1-1, to=3-1]
	\arrow["\lrcorner"{anchor=center, pos=0.125}, draw=none, from=1-1, to=3-3]
	\arrow["{\mathrm{gr}}", from=1-3, to=3-3]
	\arrow[curve={height=-18pt}, from=3-1, to=1-1]
	\arrow[from=3-1, to=3-3]
	\arrow["{\mathrm{\sigma}}"', curve={height=18pt}, from=3-3, to=1-3]
\end{tikzcd}\]
whose horizontal arrows are the morphisms $\mathcal{X} \to \mathrm{Filt}(\mathcal{X})$ and $\mathcal{Z} \to \mathrm{Grad}(\mathcal{X})$ induced by $\lambda$, as described in Section \ref{section:filt-grad-quotient-stacks}; moreover, these horizontal morphisms are both open and closed immersions. As such, by Proposition \ref{prop:filt-weak-actions} the weak $\Theta_k$-action on $\mathrm{Filt}(\mathcal{X})$ restricts to a weak $\Theta_k$-action on $\mathcal{X}$ with center $\mathcal{Z}$ and with retraction given by the aforementioned morphism $\mathcal{X} \to \mathcal{Z}$.
\end{proof}

Henceforth, we will denote the morphism $\mathcal{X} \to \mathcal{Z}$ (resp. $\mathcal{Z} \to \mathcal{X}$) by $\mathrm{gr}$ (resp. $\mathrm{\sigma}$). 

\subsubsection{Flatness of the Relative Inertia}

Let $\mathrm{Stab}_{U,X} \to X$ be the $U$-stabilizer group scheme for the restricted action of $U$ on $X$; by definition, this is the pullback of the action morphism $\mathrm{act}_U : U \times_k X \to X \times_k X$ along the diagonal $\Delta: X \to X \times_k X$:
\[\begin{tikzcd}[ampersand replacement=\&]
	{\mathrm{Stab}_{U,X}} \&\& {U \times_k X} \\
	\\
	X \&\& {X \times_k X}
	\arrow[from=1-1, to=1-3]
	\arrow[from=1-1, to=3-1]
	\arrow["\lrcorner"{anchor=center, pos=0.125}, draw=none, from=1-1, to=3-3]
	\arrow["{\mathrm{act}_U}", from=1-3, to=3-3]
	\arrow["\Delta"', from=3-1, to=3-3]
\end{tikzcd}\]

The flatness of the relative inertia $\mathcal{I}_{\mathrm{gr}} = \mathcal{I}_{\mathcal{X}/\mathcal{Z}} \to \mathcal{X}$ is governed by the $U$-stabilizer group scheme $\mathrm{Stab}_{U,X} \to X$.

\begin{lem} \label{lem:flat-stab-flat-relative-inertia}
    The relative inertia $\mathcal{I}_{\mathrm{gr}} \to \mathcal{X}$ of $\mathrm{gr} : \mathcal{X} \to \mathcal{Z}$ is flat if and only if $\mathrm{Stab}_{U,X} \to X$ is flat.
\end{lem}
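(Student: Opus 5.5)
The plan is to identify $\mathcal{I}_{\mathrm{gr}}$ explicitly after pulling back along the smooth atlas $X \to [X/H]$. Then flatness of $\mathcal{I}_{\mathrm{gr}} \to \mathcal{X}$ can be tested on $X$, since flatness is fppf (in particular smooth) local on the base.

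First compute the inertia of $\mathcal{X} = [X/H]$ pulled back to $X$. This is the stabilizer group scheme $\mathrm{Stab}_{H,X} = (H \times_k X) \times_{X \times_k X, \Delta} X$, i.e. pairs $(h,x)$ with $h \cdot x = x$. Similarly, the inertia of $\mathcal{Z} = [Z/L]$ pulled back along $Z \to [Z/L]$ is $\mathrm{Stab}_{L,Z}$. The morphism $\mathrm{gr} : [X/H] \to [Z/L]$ is induced by the retraction $p : X \to Z$, $x \mapsto \lim_{t \to 0}\lambda(t)\cdot x$, which is equivariant along the quotient $q : H \to L$. Hence the induced map on inertia, pulled back to $X$, is the homomorphism of $X$-group schemes
$$ \mathrm{Stab}_{H,X} \to \mathrm{Stab}_{L,Z} \times_Z X, \quad (h,x) \mapsto (q(h), x). $$
This is well defined because $q(h)\cdot p(x) = p(h\cdot x) = p(x)$. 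Its kernel consists of pairs $(h,x)$ with $h\cdot x = x$ and $q(h) = e$, i.e. $h \in U$. So the pullback $\mathcal{I}_{\mathrm{gr}} \times_{\mathcal{X}} X$ is identified with $\mathrm{Stab}_{U,X}$, using the description of $\mathcal{I}_{\mathrm{gr}}$ as the kernel of $\mathrm{gr}_\ast$ from the conventions. To make this rigorous I would check it on $S$-points. An $S$-point of $\mathcal{I}_{\mathrm{gr}} \times_{\mathcal{X}} X$ is a point $x \in X(S)$ (the trivial torsor with equivariant map) together with an automorphism of the trivial $H$-torsor compatible with $x$, which is an element $h \in H(S)$ fixing $x$. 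Its image in $\mathrm{Aut}$ of the associated $L$-torsor with map $p \circ x$ is $q(h)$, and this is trivial exactly when $h \in U(S)$.

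It then follows that $\mathcal{I}_{\mathrm{gr}} \to \mathcal{X}$ is flat if and only if its base change along the smooth surjection $X \to \mathcal{X}$, namely $\mathrm{Stab}_{U,X} \to X$, is flat, because flatness is local on the target in the smooth topology.

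The main subtlety is making sure the identification of the retraction $\mathrm{gr}$ from Lemma \ref{lem:quotient-stack-weak-theta-action-cocharacter} with the map $[X/H] \to [Z/L]$ induced by $(p,q)$ is compatible on inertia. In particular, one has to check that the $2$-isomorphisms in Theorem \ref{thm:filt-grad-quotient-stack} do not twist the induced map on automorphism groups, for instance by conjugation. Any such conjugation would be by an element acting on the kernel, so it preserves the kernel and does not affect the identification with $\mathrm{Stab}_{U,X}$. I expect this to be only a bookkeeping step.
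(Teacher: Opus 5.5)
Your proposal is correct and follows essentially the paper's route: both identify the pullback of $\mathcal{I}_{\mathrm{gr}}$ along the atlas $X \to [X/H]$ with $\mathrm{Stab}_{U,X}$, then conclude by smooth descent of flatness. The only difference is bookkeeping: the paper first base changes $\mathrm{gr}$ along $Z \to [Z/L]$ to obtain $[X/U]$, using compatibility of relative inertia with base change and the standard description of the inertia of a quotient stack, whereas you compute the kernel of $\mathrm{Stab}_{H,X} \to \mathrm{Stab}_{L,Z} \times_Z X$ directly on $S$-points.
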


\begin{proof}
    Since $U$ is the kernel of $H \to L$, the base change of $\mathrm{gr} : \mathcal{X} \to \mathcal{Z}$ along the quotient morphism $Z \to \mathcal{Z}$ is given by the quotient stack $[X/U]$. As such, by \cite[Tag 06PQ]{stacks-project} there is a Cartesian diagram
    \[\begin{tikzcd}[ampersand replacement=\&]
	{\mathcal{I}_{[X/U]}} \&\& {\mathcal{I}_{\mathrm{gr}}} \\
	\\
	{[X/U]} \&\& {\mathcal{X}} \\
	\\
	Z \&\& {\mathcal{Z}}
	\arrow[from=1-1, to=1-3]
	\arrow[from=1-1, to=3-1]
	\arrow["\lrcorner"{anchor=center, pos=0.125}, draw=none, from=1-1, to=3-3]
	\arrow[from=1-3, to=3-3]
	\arrow[from=3-1, to=3-3]
	\arrow[from=3-1, to=5-1]
	\arrow["\lrcorner"{anchor=center, pos=0.125}, draw=none, from=3-1, to=5-3]
	\arrow["{\mathrm{gr}}", from=3-3, to=5-3]
	\arrow[from=5-1, to=5-3]
    \end{tikzcd}\]

    On the other hand, by \cite[Tag 06PB]{stacks-project} there is a Cartesian diagram
    \[\begin{tikzcd}[ampersand replacement=\&]
	{\mathrm{Stab}_{U,X}} \&\& {\mathcal{I}_{[X/U]}} \\
	\\
	X \&\& {[X/U]}
	\arrow[from=1-1, to=1-3]
	\arrow[from=1-1, to=3-1]
	\arrow["\lrcorner"{anchor=center, pos=0.125}, draw=none, from=1-1, to=3-3]
	\arrow[from=1-3, to=3-3]
	\arrow[from=3-1, to=3-3]
    \end{tikzcd}\]
    where $X \to [X/U]$ is the quotient morphism. Consequently, by smooth descent for flat morphisms, the morphism $\mathcal{I}_{\mathrm{gr}} \to \mathcal{X}$ is flat if and only if $\mathrm{Stab}_{U,X} \to X$ is flat.
\end{proof}

\begin{rmk} \label{rmk:generalizing-constancy-unipotent-stabilizers}
    If the characteristic of $k$ is $0$, by Lemma \ref{lem:smoothness-gp-schemes-cotangent-bundle} we have that the following statements are equivalent:
    \begin{enumerate}
        \item $\mathrm{Stab}_{U,X} \to X$ is flat.
        \item $\mathrm{Stab}_{U,X} \to X$ is smooth.
        \item $\omega_{\mathrm{Stab}_{U,X}/X} := e^{\ast} \Omega^1_{\mathrm{Stab}_{U,X}/X}$ is a locally free $\mathcal{O}_X$-module (here $e : X \to \mathrm{Stab}_{U,X}$ denotes the identity section).
    \end{enumerate}
    Condition $(3)$ implies that the map $x \mapsto \dim \mathrm{Lie}(\mathrm{Stab}_U(x)) = \dim \mathrm{Stab}_U(x)$ is locally constant on $|X|$. If $X$ is reduced then by \cite[Tag 0FWH]{stacks-project} the converse holds: the local constancy of $x \mapsto \dim \mathrm{Stab}_U(x)$ implies that $\omega_{\mathrm{Stab}_{U,X}/X}$ is a locally free $\mathcal{O}_X$-module.
\end{rmk}

\begin{rmk} \label{rmk:generalizing-qiao-cdrs}
    By \cite[Tag 067L]{stacks-project} (along with the standard identification of sheaves of differentials as conormal sheaves), associated to the sequence of morphisms $X \stackrel{e \times \mathrm{id}_X}{\to} U \times_k X \stackrel{\mathrm{act}_U}{\to} X \times_k X$ is the right exact sequence
    \[\begin{tikzcd}[ampersand replacement=\&]
	{\Omega_{X/\Spec k}^1} \& {\mathrm{Lie}(U)^{\vee} \otimes_k \Omega_{X/\Spec k}^1} \& {(e \times \mathrm{id}_X)^{\ast} \Omega_{\mathrm{act}_U}^1} \& 0
	\arrow[from=1-1, to=1-2]
	\arrow[from=1-2, to=1-3]
	\arrow[from=1-3, to=1-4]
    \end{tikzcd}\]
    of coherent sheaves on $X$; in the notation of \cite[Section 2.1.2]{Qiao-Blowups}, the left hand morphism $\Omega_{X/\Spec k}^1 \to \mathrm{Lie}(U)^{\vee} \otimes_k \Omega_{X/\Spec k}^1$ is denoted as $\phi$. On the other hand, from the commutative diagram
    \[\begin{tikzcd}[ampersand replacement=\&]
	{\mathrm{Stab}_{U,X}} \&\& {U \times_k X} \\
	\\
	X \&\& {X \times_k X}
	\arrow[from=1-1, to=1-3]
	\arrow[from=1-1, to=3-1]
	\arrow["\lrcorner"{anchor=center, pos=0.125}, draw=none, from=1-1, to=3-3]
	\arrow["{{\mathrm{act}_U}}", from=1-3, to=3-3]
	\arrow["e", curve={height=-18pt}, from=3-1, to=1-1]
	\arrow["{e \times \mathrm{id}_X}", from=3-1, to=1-3]
	\arrow["\Delta"', from=3-1, to=3-3]
    \end{tikzcd}\]
    we may identify $(e \times \mathrm{id}_X)^{\ast} \Omega_{\mathrm{act}_U}^1$ with $\omega_{\mathrm{Stab}_{U,X}/X}$. As such, in the notation of \cite{Qiao-Blowups} the sheaf $\mathrm{coker}(\phi)$ coincides with the sheaf of differentials $\omega_{\mathrm{Stab}_{U,X}/X}$ which, if $k$ is of characteristic $0$, governs the flatness/smoothness of $\mathrm{Stab}_{U,X} \to X$.\footnote{Similarly, the cokernel sheaf $\mathcal{Q}(\pi)$ considered in \cite[Section 2.1]{qiao-graded-additive} is also of the form $\omega_{\mathrm{Stab}_{U,X}/X}$ for appropriate $U$ and $X$.} In this way, the condition that $\mathrm{Stab}_{U,X} \to X$ is flat is a generalization of \cite[Condition 2.26]{Qiao-Blowups}, i.e. Qiao's \emph{constant dimension of relative stabilizers} condition, which in turn is a generalization of the constancy of unipotent stabilizers condition imposed in \cite{BDHK-Projective-Completions, BDHK-git-graded-unipotent}.
\end{rmk}

\subsubsection{Existence of a Geometric Quotient for the Unipotent Radical}

From this point on, assume that the $U$-stabilizer group scheme $\mathrm{Stab}_{U,X} \to X$ is flat (equivalently, that $\mathcal{I}_{\mathrm{gr}} \to \mathcal{X}$ is flat). Since $\mathcal{I}_{\mathrm{gr}} \to \mathcal{X}$ is of finite presentation (as this is true for $\mathrm{Stab}_{U,X} \to X$), by the first statement of Theorem \ref{thm:main-theorem}, the rigidification $\mathcal{X}^{\mathrm{rig}} := \mathcal{X} \!\!\! \fatslash  \mathcal{I}_{\mathrm{gr}}$ is well-defined and inherits from $\mathcal{X}$ a weak $\Theta_k$-action with the same center $\mathcal{Z}$, and for which the gerbe $\rho : \mathcal{X} \to \mathcal{X}^{\mathrm{rig}}$ is $\Theta_k$-invariant. Furthermore, the induced morphism $\overline{\mathrm{gr}} : \mathcal{X}^{\mathrm{rig}} \to \mathcal{Z}$ is affine.

\begin{prop} \label{prop:existence-geometric-U-quotient}
    Assume that the $U$-stabilizer group scheme $\mathrm{Stab}_{U,X} \to X$ is flat. Then the following statements all hold:
    \begin{enumerate}
        \item There exists a geometric good $U$-quotient $X \to X/U$, and $X/U$ is affine over $Z$.
        \item If the action of $U$ on $X$ is free then $X \to X/U$ is a principal $U$-bundle.
        \item There is an isomorphism of algebraic stacks $\mathcal{X}^{\mathrm{rig}} \cong [(X/U)/L]$.
    \end{enumerate}
\end{prop}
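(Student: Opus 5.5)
The approach is to build $X/U$ as the base change of $\mathcal{X}^{\mathrm{rig}}$ along the $L$-torsor $Z \to \mathcal{Z}$. Set $Y := \mathcal{X}^{\mathrm{rig}} \times_{\mathcal{Z}} Z$, where $Z \to \mathcal{Z} = [Z/L]$ is the quotient morphism. By Theorem \ref{thm:affinity-grbar}, $\overline{\mathrm{gr}}$ is affine and of finite presentation, so $Y$ is an algebraic space, affine over $Z$; since $Z$ is a scheme, $Y$ is in fact a scheme. Because $Z \to \mathcal{Z}$ is an $L$-torsor, $Y$ carries an $L$-action with $[Y/L] \cong \mathcal{X}^{\mathrm{rig}}$.

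Next we compare with $X$. As in the proof of Lemma \ref{lem:flat-stab-flat-relative-inertia}, $\mathcal{X} \times_{\mathcal{Z}} Z \cong [X/U]$. Rigidification commutes with flat base change on the target of $\mathrm{gr}$: pulling $\mathcal{I}_{\mathrm{gr}}$ back along $[X/U] \to \mathcal{X}$ gives the full inertia $\mathcal{I}_{[X/U]}$, by the first Cartesian diagram in that proof. Hence $[X/U] \to Y$ is the rigidification of $[X/U]$ by its entire inertia, so it is an fppf gerbe onto an algebraic space, namely its coarse space. The composite $q : X \to [X/U] \to Y$ is then $U$-invariant, surjective, flat and of finite presentation, and affine (since $X$ and $Y$ are both affine over $Z$). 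Its geometric fibres are single $U$-orbits, because points of a gerbe over an algebraically closed field are all isomorphic (as in the footnote to Lemma \ref{lem:lifting_graded_points_in_center}).

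For the good quotient properties, $\mathcal{O}_Y \cong (q_\ast \mathcal{O}_X)^U$ follows from the gerbe isomorphism $\mathcal{O}_Y \cong \rho_\ast\mathcal{O}$, argued as in Lemma \ref{lem:alt-desc-of-relative-proj}, together with the identification of pushforward from $[X/U]$ with $U$-invariants. Closedness of images of invariant closed sets, and separation of disjoint invariant closed sets, follow because $q$ is a universally open surjection whose fibres are orbits: invariant closed subsets are then exactly preimages of closed subsets of $Y$. This gives (1) with $X/U := Y$. For (2), if the action is free then $\mathrm{Stab}_{U,X}$ is trivial, so $\mathcal{I}_{[X/U]}$ is trivial and $[X/U] \cong Y$; thus $X \to Y$ is a $U$-torsor. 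For (3), $\mathcal{X}^{\mathrm{rig}} \cong [Y/L] = [(X/U)/L]$ by construction.

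I expect the main obstacle to be justifying that rigidification commutes with the base change $Z \to \mathcal{Z}$. My first approach is the universal property (\cite[Lemma B.2]{Modin}) applied in both directions. The required compatibility is that $\mathcal{I}_{\mathrm{gr}}$ pulls back to the relative inertia over $Z$, which is the absolute inertia of $[X/U]$ over $k$ modulo the trivial inertia of $Z$. A second delicate point is checking that $Y$ is a scheme; for this, affineness over the scheme $Z$ suffices.
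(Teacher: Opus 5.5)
Your proposal is correct and follows essentially the same route as the paper: base change $\overline{\mathrm{gr}}$ along $Z \to [Z/L]$, note that $\mathcal{X}^{\mathrm{rig}}_Z$ is a scheme affine over $Z$ that receives the gerbe $[X/U] \to \mathcal{X}^{\mathrm{rig}}_Z$, and read off (1)--(3) from this gerbe and the Cartesian diagram. The obstacle you anticipate has a cheaper fix than the universal property in both directions, which is what the paper uses: the base change of the fppf gerbe $\rho$ is automatically an fppf gerbe over a scheme, hence is the rigidification of $[X/U]$ by its full inertia; and your universal-openness argument for the closedness and separation axioms spells out a step the paper leaves to ``consequently''.
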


\begin{proof}
    We once again consider base changing with respect to the quotient morphism $Z \to \mathcal{Z}$. This yields a Cartesian diagram
    \[\begin{tikzcd}[ampersand replacement=\&]
	{\mathcal{I}_{[X/U]}} \&\& {\mathcal{I}_{\mathrm{gr}}} \\
	\\
	{[X/U]} \&\& {\mathcal{X}} \\
	\\
	{\mathcal{X}^{\mathrm{rig}}_Z} \&\& {\mathcal{X}^{\mathrm{rig}}} \\
	\\
	Z \&\& {\mathcal{Z}}
	\arrow[from=1-1, to=1-3]
	\arrow[from=1-1, to=3-1]
	\arrow["\lrcorner"{anchor=center, pos=0.125}, draw=none, from=1-1, to=3-3]
	\arrow[from=1-3, to=3-3]
	\arrow[from=3-1, to=3-3]
	\arrow[from=3-1, to=5-1]
	\arrow["\lrcorner"{anchor=center, pos=0.125}, draw=none, from=3-1, to=5-3]
	\arrow["\rho", from=3-3, to=5-3]
	\arrow[from=5-1, to=5-3]
	\arrow[from=5-1, to=7-1]
	\arrow["\lrcorner"{anchor=center, pos=0.125}, draw=none, from=5-1, to=7-3]
	\arrow["{\overline{\mathrm{gr}}}", from=5-3, to=7-3]
	\arrow[from=7-1, to=7-3]
    \end{tikzcd}\]
    where $\mathcal{X}_Z^{\mathrm{rig}} := \mathcal{X}^{\mathrm{rig}} \times_{\mathcal{Z}} Z$. 
    
    Since $\overline{\mathrm{gr}}$ is an affine morphism, the morphism $\mathcal{X}_Z^{\mathrm{rig}} \to Z$ is also affine. In particular, $\mathcal{X}_Z^{\mathrm{rig}}$ is a scheme. The morphism $[X/U] \to \mathcal{X}_Z^{\mathrm{rig}}$ is then a gerbe whose base is a scheme, so there exists an isomorphism $\mathcal{X}_Z^{\mathrm{rig}} \cong [X/U] \!\! \fatslash \mathcal{I}_{[X/U]}$. It follows from the universal property of rigidification that the composite $X \to [X/U] \to \mathcal{X}_Z^{\mathrm{rig}}$ is a categorical $U$-quotient, whose geometric fibers are in bijection with the $U$-orbits in $X$. Since $X \to Z$ and $X/U \to Z$ are both affine morphisms, by \cite[Tag 01SG]{stacks-project} the quotient morphism $q : X \to X/U$ is also affine. Finally, since $[X/U] \to \mathcal{X}_Z^{\mathrm{rig}} \cong X/U$ is a gerbe, the adjunction map from $\mathcal{O}_{X/U}$ to the pushforward of $\mathcal{O}_{[X/U]}$ is an isomorphism (cf. proof of Lemma \ref{lem:alt-desc-of-relative-proj}). In other words, the adjunction map $\mathcal{O}_{X/U} \to q_{\ast} \mathcal{O}_X$ induces an isomorphism $\mathcal{O}_{X/U} \cong (q_{\ast} \mathcal{O}_X)^U$ of $\mathcal{O}_{X/U}$-modules. Consequently, $q$ is a geometric good $U$-quotient, proving statement $(1)$.

    For statement $(2)$, if the $U$-action on $X$ is free then $\mathcal{I}_{[X/U]}$ is isomorphic to the trivial group stack over $[X/U]$, so the morphism $[X/U] \to \mathcal{X}_Z^{\mathrm{rig}} \cong X/U$ is an isomorphism; since $X \to [X/U]$ is always a principal $U$-bundle, statement $(2)$ follows.

    For the final statement, from the Cartesian diagram
    \[\begin{tikzcd}[ampersand replacement=\&]
	{X/U} \&\& {\mathcal{X}^{\mathrm{rig}}} \\
	\\
	Z \&\& {\mathcal{Z} = [Z/L]}
	\arrow[from=1-1, to=1-3]
	\arrow[from=1-1, to=3-1]
	\arrow["\lrcorner"{anchor=center, pos=0.125}, draw=none, from=1-1, to=3-3]
	\arrow["{\overline{\mathrm{gr}}}", from=1-3, to=3-3]
	\arrow[from=3-1, to=3-3]
    \end{tikzcd}\]
    there is an isomorphism $\mathcal{X}^{\mathrm{rig}} \cong [(X/U)/L]$, with $\overline{\mathrm{gr}} : \mathcal{X}^{\mathrm{rig}} \to \mathcal{Z}$ coinciding with the morphism on quotient stacks induced by the $L$-equivariant affine morphism $X/U \to Z$.
\end{proof}

\subsubsection{Relatively Projective Moduli Spaces from Characters of the Levi} \label{sec:moduli-spaces-characters-levi}

Continue to assume that the $U$-stabilizer group scheme $\mathrm{Stab}_{U,X} \to X$ is flat. From now on, assume further that under the induced action of $L$ on $Z$, the quotient stack $[Z/L]$ admits a good moduli space $\pi : [Z/L] \to Z/\!\!/L$. We now turn to showing that we obtain good moduli spaces for appropriate open substacks of $\mathcal{X}^{\mathrm{rig}} \cong [(X/U)/L]$, coming from characters of the group $L$. 

\begin{lem} \label{lem:quotient-stack-gms-no-character}
    The composition $[(X/U)/L] \stackrel{\overline{\mathrm{gr}}}{\to} [Z/L] \stackrel{\pi}{\to} Z /\!\!/ L$ is a good moduli space (in particular, $X/U \to Z /\!\!/ L$ is a good $L$-quotient). Furthermore, the composition $X \to X/U \to Z/\!\!/L$ is a good $H$-quotient.
\end{lem}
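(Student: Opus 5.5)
The first claim follows from Proposition \ref{prop:gms-of-Xrig}. Under the identification $\mathcal{X}^{\mathrm{rig}} \cong [(X/U)/L]$ of Proposition \ref{prop:existence-geometric-U-quotient}(3), the morphism $\overline{\mathrm{gr}}$ becomes the map of quotient stacks induced by the $L$-equivariant affine morphism $X/U \to Z$. Composing with $\pi$ therefore gives exactly the good moduli space $\pi \circ \overline{\mathrm{gr}} : \mathcal{X}^{\mathrm{rig}} \to Z/\!\!/L$ produced there. The parenthetical assertion is the general fact recalled in the section on GIT good quotients: if $[Y/L]$ admits a good moduli space $[Y/L] \to Y/\!\!/L$, then $Y \to Y/\!\!/L$ is a good $L$-quotient. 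Here we apply it with $Y = X/U$. This step should be routine.

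For the second claim, write $\psi : X \to Z/\!\!/L$ for the composite and factor it as $X \stackrel{q}{\to} X/U \stackrel{p}{\to} Z/\!\!/L$. I would verify the four axioms of a good $H$-quotient in turn.

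\emph{Surjective, affine, $H$-invariant.} The map $q$ is surjective and affine by Proposition \ref{prop:existence-geometric-U-quotient}(1), and $p$ is surjective and affine as a good quotient. Invariance holds because $\psi$ factors through $[X/H] \to [(X/U)/L] \to Z/\!\!/L$.

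\emph{Invariants.} Since $H = U \rtimes L$ with $U$ normal, we have
$$ (\psi_{\ast}\mathcal{O}_X)^H = \big(p_{\ast}(q_{\ast}\mathcal{O}_X)^U\big)^L = (p_{\ast}\mathcal{O}_{X/U})^L = \mathcal{O}_{Z/\!\!/L}. $$
Equivalently, $\psi$ is the composite $[X/H] \stackrel{\rho}{\to} \mathcal{X}^{\mathrm{rig}} \to Z/\!\!/L$, and $\rho_{\ast}\mathcal{O}_{[X/H]} = \mathcal{O}_{\mathcal{X}^{\mathrm{rig}}}$ for the gerbe $\rho$, as in the proof of Lemma \ref{lem:alt-desc-of-relative-proj}.

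\emph{Closed images and separation.} This is the step needing most care. The idea is to pass through $q$. For an $H$-invariant closed subset $W \subset |X|$, its image $q(W)$ is $L$-invariant. Because $q$ is a geometric quotient, $W$ is a union of fibres of $q$, so $W = q^{-1}(q(W))$. Since $q$ is universally submersive, it follows that $q(W)$ is closed. Two disjoint $H$-invariant closed subsets then map to disjoint $L$-invariant closed subsets of $X/U$. We conclude by applying properties (3) and (4) of the good $L$-quotient $p$.

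The main obstacle is justifying that $q$ is submersive, i.e.\ that $W = q^{-1}(q(W))$ forces $q(W)$ to be closed. I would argue through the stack instead: $|X| \to |[X/U]|$ is open and surjective, and $[X/U] \to X/U$ is an fppf gerbe, hence a universal homeomorphism on underlying spaces (as already used in the proof of Lemma \ref{lem:lifting_graded_points_in_center}). Together these give submersiveness of $q$ directly.
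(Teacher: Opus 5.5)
Your proposal is correct and follows the paper's proof. The first claim comes from Proposition~\ref{prop:gms-of-Xrig} via $\mathcal{X}^{\mathrm{rig}} \cong [(X/U)/L]$. For the second, the paper simply invokes the fact that a good $U$-quotient followed by a good $L$-quotient is a good $H$-quotient, and you verify this axiom by axiom.

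The step you flag as the main obstacle can be shortcut. An $H$-invariant closed subset is $U$-invariant, so axioms (3) and (4) for the good $U$-quotient $q$ of Proposition~\ref{prop:existence-geometric-U-quotient}(1) already give that $q(W)$ is closed and that disjoint such sets have disjoint images. Your submersiveness argument via the gerbe $[X/U] \to X/U$ is also valid.
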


\begin{proof}
    Since we have $\mathcal{X}^{\mathrm{rig}} \cong [(X/U)/L]$ by Proposition \ref{prop:existence-geometric-U-quotient}, the fact that the composition $[(X/U)/L] \stackrel{\overline{\mathrm{gr}}}{\to} [Z/L] \stackrel{\pi}{\to} Z /\!\!/ L$ is a good moduli space follows from the second statement of Theorem \ref{thm:main-theorem} (i.e. Proposition \ref{prop:gms-of-Xrig}). In particular, $X/U \to Z /\!\!/ L$ is a good $L$-quotient; since the composition of a good $U$-quotient with a good $L$-quotient is a good $H$-quotient, the  resulting composite morphism $X \to Z /\!\!/ L$ is a good $H$-quotient.
\end{proof}

The invertible sheaves on $\mathcal{X}^{\mathrm{rig}} \cong [(X/U)/L]$ we consider are defined as follows.

\begin{defi} \label{def:line-bundle-coming-from-character}
    Let $\chi : L \to \mathbb{G}_{m,k}$ be a character of $L$. Define $\mathcal{O}_{B_kL}(\chi)$ to be the invertible sheaf on the classifying stack $B_kL = [\Spec k/L]$ whose pullback along $\Spec k \to B_kL$ has on the total space $\mathbb{A}_k^1$ the $L$-action $l \cdot z = \chi(l)^{-1}z$.

    We define $\mathcal{O}_{[(X/U)/L]}(\chi)$ to be the pullback of $\mathcal{O}_{B_kL}(\chi)$ along the composition $\mathcal{X}^{\mathrm{rig}} \cong [(X/U)/L] \stackrel{\overline{\mathrm{gr}}}{\to} \mathcal{Z} = [Z/L] \to B_kL$, and define $\mathcal{O}_{[X/H]}(\chi)$ to the pullback of $\mathcal{O}_{[(X/U)/L]}(\chi)$ along the induced morphism $[X/H] \to [(X/U)/L]$.\footnote{Note that after identifying $\mathcal{X}^{\mathrm{rig}}$ with $[(X/U)/L]$, the morphism $[X/H] \to [(X/U)/L]$ is identified with the gerbe $\rho : \mathcal{X} \to \mathcal{X}^{\mathrm{rig}}$.}
\end{defi}

Given an algebraically closed field $K \supset k$ and a morphism $f : \Theta_K \to BL$ (i.e. a $K$-point of $\mathrm{Filt}(BL)$), by Example \ref{ex:theta-stability-for-quotient-stacks} there is an associated cocharacter $\eta : \mathbb{G}_{m,K} \to L_K$, with $\eta$ uniquely determined up to conjugation by the parabolic subgroup $P_{L_K}(\eta)$; we then have the formula
$$ \mathrm{wt}_0(f^{\ast} \mathcal{O}_{B_kL}(\chi)) = -\langle \chi, \eta \rangle, $$
where $\langle -,- \rangle$ denotes the natural pairing between characters and cocharacters of $L$. 

\begin{defi} \label{def:quotient-stack-semistable-locus}
    Fix once and for all an identification $\mathcal{X}^{\mathrm{rig}} = [(X/U)/L]$ of algebraic stacks. Let $\chi : L \to \mathbb{G}_{m,k}$ be a character of $L$. We define $(X/U)^{\chi-(s)s}$ (resp. $(X/U)^{\chi-ss}$) to be the unique $L$-invariant open subscheme of $X/U$ such that, under the identification $\mathcal{X}^{\mathrm{rig}} = [(X/U)/L]$, we have
    $$ (\mathcal{X}^{\mathrm{rig}})^{(s)s}(\mathcal{O}_{[(X/U)/L]}(\chi)) = [(X/U)/L]^{(s)s}(\mathcal{O}_{[(X/U)/L]}(\chi)) = [(X/U)^{\chi-(s)s}/L]. $$
    We define $X^{\chi-(s)s}$ to be the preimage of $(X/U)^{\chi-(s)s}$ under the geometric good $U$-quotient $X \to X/U$, so that we have 
    $$ \mathcal{X}^{(s)s}(\mathcal{O}_{[X/H]}(\chi)) := \rho^{-1}((\mathcal{X}^{\mathrm{rig}})^{(s)s}(\mathcal{O}_{[(X/U)/L]}(\chi))) = [X^{\chi-(s)s}/H] $$
\end{defi}

\begin{prop} \label{prop:quotient-stack-gms-character}
    The quotient stack $[(X/U)^{\chi-ss}/L]$ admits a good moduli space of the form
    $$ \pi_{\chi} : [(X/U)^{\chi-ss}/L] \to X /\!\!/_{\chi} H := \underline{\mathrm{Proj}}_{Z /\!\!/L} \bigoplus_{m \geq 0} (\pi \circ \mathrm{gr})_{\ast} (\mathcal{O}_{[X/H]}(\chi^m)), $$
    whose restriction to $[(X/U)^{\chi-s}/L]$ is a tame moduli space onto its image. In particular, the induced morphism $X^{\chi-ss} \to X /\!\!/_{\chi} H$ is a good $H$-quotient, whose restriction to $X^{\chi-s}$ is a geometric good $H$-quotient onto its image. Moreover, if $X$ is a $k$-scheme of finite type and if $Z /\!\!/ L$ is a scheme then the natural morphism $X /\!\!/_{\chi} H \to Z /\!\!/ L$ is projective.
\end{prop}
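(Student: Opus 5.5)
The plan is to specialize Theorem \ref{thm:main-theorem}(2) to $\mathcal{X} = [X/H]$, $\mathcal{Z} = [Z/L]$ and $\mathcal{L} = \mathcal{O}_{[(X/U)/L]}(\chi)$, and then translate the stack-theoretic conclusions into statements about quotients of schemes.

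\textbf{Step 1: the good moduli space.} The hypotheses of Theorem \ref{thm:main-theorem} hold: $[X/H]$ has affine diagonal, is locally of finite type (hence locally Noetherian) over $k$, and $\mathcal{I}_{\mathrm{gr}}$ is flat and finitely presented by Lemma \ref{lem:flat-stab-flat-relative-inertia}. By Proposition \ref{prop:existence-geometric-U-quotient}, $\mathcal{X}^{\mathrm{rig}} \cong [(X/U)/L]$, and by Definition \ref{def:quotient-stack-semistable-locus}, $(\mathcal{X}^{\mathrm{rig}})^{ss}(\mathcal{L}) = [(X/U)^{\chi-ss}/L]$. Proposition \ref{prop:gms-from-line-bundle} then gives a good moduli space onto $\underline{\mathrm{Proj}}_{Z/\!\!/L}\bigoplus_m (\pi\circ\overline{\mathrm{gr}})_\ast(\mathcal{L}^m)$, whose restriction to the stable locus is tame onto an open image. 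By Lemma \ref{lem:alt-desc-of-relative-proj}, together with $\rho^\ast\mathcal{L}^m = \mathcal{O}_{[X/H]}(\chi^m)$ (Definition \ref{def:line-bundle-coming-from-character}), this target is $X/\!\!/_\chi H$. For projectivity, when $X$ is of finite type $Z/\!\!/L$ is quasi-compact, so Lemma \ref{lem:when-moduli-space-is-projective} applies directly because $k$ is excellent.

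\textbf{Step 2: translating to quotients.} The good moduli space $[(X/U)^{\chi-ss}/L] \to X/\!\!/_\chi H$ makes $(X/U)^{\chi-ss} \to X/\!\!/_\chi H$ a good $L$-quotient, by the discussion relating good moduli spaces to good quotients. Since $X^{\chi-ss}$ is the preimage of an open set, it is $U$-saturated, and the restriction $X^{\chi-ss} \to (X/U)^{\chi-ss}$ of the geometric good $U$-quotient is again a geometric good $U$-quotient: the properties involved (affine, invariants, geometric fibres) are local on the target. As in Lemma \ref{lem:quotient-stack-gms-no-character}, the composite of a good $U$-quotient and a good $L$-quotient is a good $H$-quotient. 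The one point needing care is closedness of images of $H$-invariant closed sets: such a set is $U$-invariant, so its image in $X/U$ is closed (because the $U$-quotient is geometric and submersive), and that image is $L$-invariant.

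\textbf{Step 3: the stable locus, where I expect the main obstacle.} On $(X/U)^{\chi-s}$ the map is tame, so geometric points of $[(X/U)^{\chi-s}/L]$ biject with points of the image. Stable points have finite stabilizers (Theorem \ref{thm:main-theorem}(2)(ii)), and $(X/U)^{\chi-s}$ is saturated. Together these show that geometric fibres are single $L$-orbits, so the quotient is geometric. Composing with the geometric $U$-quotient, geometric fibres of $X^{\chi-s} \to X/\!\!/_\chi H$ are single $H$-orbits, since $H$-orbits are $U$-saturated preimages of $L$-orbits. The subtle part is checking that tameness, which is stated for isomorphism classes of $K$-points of the stack, really yields orbit-separation on $K$-points of the scheme over non-closed fields $k$. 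This reduces to the identification $[(X/U)/L](K) = (X/U)(K)/L(K)$ for algebraically closed $K$, which holds because $L$-torsors over $K$ are trivial.
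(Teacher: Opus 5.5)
Your proposal is correct and follows essentially the same route as the paper: apply Proposition~\ref{prop:gms-from-line-bundle} with $\mathcal{L} = \mathcal{O}_{[(X/U)/L]}(\chi)$, identify the target with $X /\!\!/_{\chi} H$ via Lemma~\ref{lem:alt-desc-of-relative-proj}, compose the restricted geometric $U$-quotient with the resulting good $L$-quotient, and deduce projectivity from Lemma~\ref{lem:when-moduli-space-is-projective}. Your Steps 2 and 3 only spell out verifications that the paper treats as immediate. In Step 3, tameness together with the identification $[(X/U)/L](K) = (X/U)(K)/L(K)$ already gives the orbit separation, so the finite-stabilizer input is not actually needed.
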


\begin{proof}
    By Lemma \ref{lem:alt-desc-of-relative-proj}, we have
    $$ X /\!\!/_{\chi} H := \underline{\mathrm{Proj}}_{Z /\!\!/L} \bigoplus_{m \geq 0} (\pi \circ \mathrm{gr})_{\ast} (\mathcal{O}_{[X/H]}(\chi^m)) = \underline{\mathrm{Proj}}_{Z /\!\!/L} \bigoplus_{m \geq 0} (\pi \circ \overline{\mathrm{gr}})_{\ast} \mathcal{O}_{[(X/U)/L]}(\chi^m). $$

    Consequently, the result that $\pi_{\chi}$ is a good moduli space whose restriction to $[(X/U)^{\chi-s}/L]$ is a tame moduli space onto its image follows immediately from the second statement of Theorem \ref{thm:main-theorem} (i.e. Proposition \ref{prop:gms-from-line-bundle}), after taking $\mathcal{L} := \mathcal{O}_{[(X/U)/L]}(\chi)$. Since $X \to X/U$ is a geometric good $U$-quotient, the same is true for $X^{\chi-ss} \to (X/U)^{\chi-ss}$ and $X^{\chi-s} \to (X/U)^{\chi-s}$; it follows that the induced morphism $X^{\chi-ss} \to X /\!\!/_{\chi} H$ is the composition of a good $U$-quotient with a good $L$-quotient, whence a good $H$-quotient, whose restriction to $X^{\chi-s}$ is a geometric good $H$-quotient onto its image. Finally, the projectivity of the morphism $X /\!\!/_{\chi} H \to Z /\!\!/ L$ in the case where $X$ is of finite type and where $Z /\!\!/ L$ is a scheme follows from Lemmas \ref{lem:when-moduli-space-is-projective} and \ref{lem:alt-desc-of-relative-proj}.
\end{proof}

\begin{rmk} \label{rmk:H-quotient-is-spectrum-of-semiinvariants}
    Denote by $\psi : X \to Z /\!\!/ L$ the good $H$-quotient arising from Lemma \ref{lem:quotient-stack-gms-no-character}. The invertible sheaf $\mathcal{O}_{[X/H]}(\chi)$ can be identified with the $H$-linearization $\mathcal{O}_X(\chi)$ obtained by twisting the trivial sheaf $\mathcal{O}_X$ by the character $\chi$ (here we view $\chi$ as a character of $H$ via the composition $H \to L \stackrel{\chi}{\to} \mathbb{G}_{m,k}$). Under this identification, we may write
    $$ X /\!\!/_{\chi} H = \underline{\mathrm{Proj}}_{Z /\!\!/ L} \bigoplus_{m \geq 0} (\psi_{\ast} \mathcal{O}_X(\chi^m))^H $$
    as the relative projective spectrum of the $H$-invariants of the graded $\mathcal{O}_{Z /\!\!/ L}$-algebra $\bigoplus_{m \geq 0} \psi_{\ast} \mathcal{O}_X(\chi^m)$. In other words, $X /\!\!/_{\chi} H$ is the relative projective spectrum of the algebra of $\chi$-twisted semi-invariants of $\psi_{\ast} \mathcal{O}_X$, in the sense of \cite[Definition 2.3]{HHJRelativeNRGIT}. If $X$ is of finite type over $k$ and if $Z /\!\!/ L$ is a scheme, it also follows from Lemmas \ref{lem:when-moduli-space-is-projective} and \ref{lem:alt-desc-of-relative-proj} that $\bigoplus_{m \geq 0} (\psi_{\ast} \mathcal{O}_X(\chi^m))^H$ is an $\mathcal{O}_{Z/\!\!/L}$-algebra of finite type.
\end{rmk}

\begin{rmk}
    In the case where $k$ is of characteristic $0$ and where $X$ is a $k$-scheme of finite type, one may easily check (e.g. using the characterization arising from \cite[Equation 5.1]{HHJRelativeNRGIT}) that the loci $X^{\chi-(s)s}$, as defined above, coincide with the loci $X^{(s)s}(X \to Z, Z \to Z/\!\!/L, \mathcal{O}_X(\chi))$, as defined in \cite[Definition 2.26]{HHJRelativeNRGIT}.
\end{rmk}

\subsubsection{The Hilbert--Mumford Criterion} \label{sec:nrgit-hm-criterion}

By Theorem \ref{thm:main-theorem}, the $K$-points (where $K \supset k$ is an algebraically closed field) of the open substacks $[(X/U)/L]^{(s)s}(\mathcal{O}_{[(X/U)/L]}(\chi)) = [(X/U)^{\chi-(s)s}/L]$ of $[(X/U)/L]$ are characterized in terms of stability with respect to $\mathcal{O}_{[(X/U)/L]}(\chi)$. Via Example \ref{ex:theta-stability-for-quotient-stacks}, this corresponds to a Hilbert--Mumford characterization of the $K$-points of $(X/U)^{\chi-(s)s}$ in terms of cocharacters $\eta : \mathbb{G}_{m,K} \to L_K$ for which the corresponding limit exists in $(X/U)_K$. Namely, we have $\overline{x} \in (X/U)^{\chi-(s)s}(K)$ if and only if for all cocharacters $\eta : \mathbb{G}_{m,K} \to L$ for which the limit $\lim_{t \to 0} \eta(t) \cdot \overline{x}$ exists in $(X/U)_K$, $\langle \chi, \eta \rangle > (\geq) \ 0$. In fact, we can be more precise about this.

\begin{prop} \label{prop:nrgit-hm-part-1}
    Let $K \supset k$ be an algebraically closed field, and let $\overline{x} \in (X/U)(K)$. Then there is unique polystable orbit contained in the $L_K$-orbit closure of $\overline{x}$ in $(X/U)_K$. Moreover, if $L_{\overline{x}} \subset L_K$ is the stabilizer associated to this polystable orbit, then $\overline{x} \in (X/U)^{\chi-(s)s}(K)$ if and only if for all non-trivial cocharacters $\eta : \mathbb{G}_{m,K} \to L_{\overline{x}}$ for which the limit $\lim_{t \to 0} \eta(t) \cdot \overline{x}$ exists in $(X/U)_K$, $\langle \chi, \eta \rangle > (\geq) \ 0$.
\end{prop}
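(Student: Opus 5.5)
We work fiberwise over $Z/\!\!/L$, using the étale local structure of good moduli spaces and the relative Hilbert--Mumford criterion exactly as in the proof of Proposition \ref{prop:semistable-loci-are-theta-semistable-loci}.

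\textbf{Step 1: reduction to the fiber.} Let $z \in (Z/\!\!/L)(K)$ be the image of $\overline{x}$ under the good moduli space $\phi : \mathcal{X}^{\mathrm{rig}} \cong [(X/U)/L] \to Z/\!\!/L$ of Lemma \ref{lem:quotient-stack-gms-no-character}. By Lemma \ref{lem:fiber-local-structure-theorem}, the fiber $\mathcal{X}^{\mathrm{rig}}_z$ is Noetherian with good moduli space $\Spec K$. It therefore has a unique closed point $y$, and $\mathcal{X}^{\mathrm{rig}}_z \cong [\Spec A/G]$, where $G = \mathrm{Aut}(y)$ is linearly reductive. Concretely, $\mathcal{X}^{\mathrm{rig}}_z = [(X/U)_z/L_K]$ with $(X/U)_z := (X/U)_K \times_{(Z/\!\!/L)_K} \Spec K$, a closed $L_K$-invariant subscheme of $(X/U)_K$ containing the whole orbit closure of $\overline{x}$. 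The orbits in this closure lie in a single fiber, and the closed points of $\mathcal{X}^{\mathrm{rig}}_z$ correspond to closed orbits. Hence there is exactly one closed orbit in $\overline{L_K\cdot\overline{x}}$, namely the orbit corresponding to $y$. This is the unique polystable orbit, and its stabilizer is $L_{\overline{x}} \cong G$, well defined up to conjugacy. This proves the first assertion.

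\textbf{Step 2: identify the local model.} I will argue that one can take $\Spec A = L_K \times^{G}$-slice, i.e.\ that $[\Spec A/G] \cong [(X/U)_z/L_K]$ is induced by a $G$-invariant locally closed affine slice $W \subset (X/U)_z$ through a point $y_0$ of the closed orbit. This slice should satisfy $L_K \times^G W \to (X/U)_z$ étale and, in fact, an isomorphism, because both stacks are the same Noetherian stack with good moduli space a point. In that case $\overline{x}$ is $L_K$-conjugate to a point $p \in W$. Cocharacters of $G$ with a limit in $W$ map to cocharacters of $L_{\overline{x}}$ with a limit in $(X/U)_K$.

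\textbf{Step 3: Hilbert--Mumford in the slice.} By Lemma \ref{lem:stability_is_fiberwise} and the isomorphism $(\mathcal{X}^{\mathrm{rig}}_z)^{(s)s} \cong [(\Spec A)^{(s)s}(M)/G]$, we have $\overline{x} \in (X/U)^{\chi-(s)s}$ if and only if $p \in (\Spec A)^{(s)s}(M)$. Since the line bundle is pulled back from $B L$, $M$ is the trivial bundle twisted by $\chi|_G$, so $\mu^M(p,\eta) = \langle \chi,\eta\rangle$. Proposition \ref{prop:ghh_hilbert_mumford} then gives the criterion with cocharacters $\eta$ of $G$ for which $\lim \eta(t)\cdot p$ exists in $\Spec A$.

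\textbf{Step 4: transport to $\overline{x}$.} Conjugating $L_{\overline{x}}$ by the element $l$ with $l\cdot\overline{x} = p$ transports the criterion to $\overline{x}$. Existence of the limit in $\Spec A$ is equivalent to existence in $(X/U)_K$: one direction because limits in the fiber lie in the fiber, the other via the closed immersion $W \hookrightarrow (X/U)_z$ identification.

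\textbf{Main obstacle.} The delicate point is Step 2: producing the slice, i.e.\ a presentation $[\Spec A/G]$ in which $\Spec A$ sits $G$-equivariantly inside $(X/U)_z$ in a way compatible with limits. If a direct slice is unavailable, I would instead argue stack-theoretically. Any $\Theta_K$-filtration of $\overline{x}$ in $[\Spec A/G]$ is a filtration in $[(X/U)_K/L_K]$, so its cocharacter can be conjugated into $L_{\overline{x}}$; this holds because the filtration's associated graded lies over $y$, whose automorphism group is $L_{\overline{x}}$. Combining this with Proposition \ref{prop:semistable-loci-are-theta-semistable-loci} and Example \ref{ex:theta-stability-for-quotient-stacks} yields the restriction to cocharacters of $L_{\overline{x}}$, with the converse direction immediate.
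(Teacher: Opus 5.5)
This is correct and is essentially the paper's argument: you reduce to the fiber over $z$ via Lemma \ref{lem:stability_is_fiberwise}, use the unique closed point and the local model $[\Spec A/L_{\overline{x}}]$ from Lemma \ref{lem:fiber-local-structure-theorem}, and apply Proposition \ref{prop:ghh_hilbert_mumford} with Example \ref{ex:theta-stability-for-quotient-stacks}. The equivariant slice you flag in Step 2 is exactly what the paper asserts when it takes $\mathcal{X}_z^{\mathrm{rig}} \cong [\Spec A/L_{\overline{x}}]$ to be compatible with $L_{\overline{x}} \subset L_K$; it is supplied by the equivariant Luna-type form of the Alper--Hall--Rydh local structure theorem, and your conjugation in Step 4 usefully pins down which conjugate of $L_{\overline{x}}$ the statement means.

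Drop the stack-theoretic fallback, though. The associated graded of a filtration of $\overline{x}$ need not lie over the closed point $y$, and conjugating a cocharacter into $L_{\overline{x}}$ does not preserve existence of the limit at $\overline{x}$, so that argument does not avoid the slice.
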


\begin{proof}
    We make the following observations:
    \begin{itemize}
        \item Any filtration $\Theta_K \to \mathcal{X}^{\mathrm{rig}} = [(X/U)/L]$ must factor through a $K$-fiber of the good moduli space $\mathcal{X}^{\mathrm{rig}} \to Z /\!\!/L$ (cf. proof of Proposition \ref{prop:semistable-loci-are-theta-semistable-loci}).

        \item Let $\overline{z} \in (Z/\!\!/L)(K)$ be the image of $\overline{x}$. By Lemma \ref{lem:stability_is_fiberwise}, there are equalities
        $$ [(X/U)^{\chi-(s)s}/L]_{\overline{z}} = (\mathcal{X}^{\mathrm{rig}})^{(s)s}(\mathcal{O}_{[(X/U)/L]}(\chi))_{\overline{z}} = (\mathcal{X}_{\overline{z}}^{\mathrm{rig}})^{(s)s}(\mathcal{O}_{[(X/U)/L]}(\chi)_{\overline{z}}). $$

        \item By Lemma \ref{lem:fiber-local-structure-theorem}, the fiber $\mathcal{X}_{\overline{z}}^{\mathrm{rig}}$ has a unique closed point. In other words, there is a unique polystable orbit contained in the $L_K$-orbit closure of $\overline{x}$ in $(X/U)_K$. Moreover, if $L_{\overline{x}} \subset L_K$ is the associated (linearly reductive) stabilizer group, then there exists an affine $K$-scheme $\Spec A$ of finite type acted on by $L_{\overline{x}}$ such that $\mathcal{X}_{\overline{z}}^{\mathrm{rig}} \cong [\Spec A/L_{\overline{x}}]$, with this isomorphism being compatible with the inclusion $L_{\overline{x}} \subset L$. We fix the isomorphism $\mathcal{X}_{\overline{z}}^{\mathrm{rig}} \cong [\Spec A/L_{\overline{x}}]$.

        \item Let $M_{\chi}$ be the $L_{\overline{x}}$-linearization on $\Spec A$ obtained by pulling back $\mathcal{O}_{B_kL}(\chi)$ along the induced morphism $[\Spec A/L_{\overline{x}}] \cong \mathcal{X}_{\overline{z}}^{\mathrm{rig}} \to B_KL_K \to B_kL$. Then, as in Section \ref{sec:ss-loci-as-theta-ss-loci}, the isomorphism $\mathcal{X}_{\overline{z}}^{\mathrm{rig}} \cong [\Spec A/L_{\overline{x}}]$ induces isomorphisms
        $$ [(X/U)^{\chi-(s)s}/L]_{\overline{z}} = (\mathcal{X}_{\overline{z}}^{\mathrm{rig}})^{(s)s}(\mathcal{O}_{[(X/U)/L]}(\chi)_{\overline{z}}) \cong [(\Spec A)^{(s)s}(M_{\chi})/L_{\overline{x}}]. $$

        \item By Proposition \ref{prop:ghh_hilbert_mumford}, a $K$-point $p$ of $\Spec A$ is a $K$-point of $(\Spec A)^{(s)s}(M_{\chi})$ if and only if for all non-trivial cocharacters $\eta : \mathbb{G}_{m,K} \to L_{\overline{x}}$ for which the limit $\lim_{t \to 0} \eta(t) \cdot p$ exists in $\Spec A$, $\langle \chi, \eta \rangle > (\geq) \ 0$.
    \end{itemize}

    In conjunction with Example \ref{ex:theta-stability-for-quotient-stacks}, these observations imply that the $K$-point $\overline{x}$ of $X/U$ lying over $\overline{z}$ is Hilbert--Mumford (semi)stable with respect to cocharacters $\eta : \mathbb{G}_{m,K} \to L_K$ if and only if it is Hilbert--Mumford (semi)stable with respect to cocharacters $\eta : \mathbb{G}_{m,K} \to L_{\overline{x}}$. This proves the result.
\end{proof}

\begin{prop} \label{prop:nrgit-hm-part-2}
    Let $K \supset k$ be an algebraically closed field, and let $x \in X(K)$, with image $\overline{x} \in (X/U)(K)$. Let $L_{\overline{x}} \subset L_K$ be the stabilizer for the unique polystable orbit contained in the $L_K$-orbit closure of $\overline{x}$ in $(X/U)_K$. Then $x \in X(K)$ is contained in $X^{\chi-(s)s}(K)$ if and only if for all cocharacters $\eta : \mathbb{G}_{m,K} \to U_K \rtimes L_{\overline{x}}$ for which the limit $\lim_{t \to 0} \eta(t) \cdot x$ exists in $X_K$, $\langle \chi, \eta \rangle > (\geq) \ 0$.
\end{prop}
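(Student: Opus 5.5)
Since $x \in X^{\chi-(s)s}(K)$ exactly when $\overline{x} \in (X/U)^{\chi-(s)s}(K)$, Proposition \ref{prop:nrgit-hm-part-1} already characterizes membership through cocharacters $\overline{\eta} : \mathbb{G}_{m,K} \to L_{\overline{x}}$ whose limit exists in $(X/U)_K$. I will therefore show that these cocharacters and the relevant cocharacters $\eta : \mathbb{G}_{m,K} \to U_K \rtimes L_{\overline{x}}$ correspond to each other, with equal pairings against $\chi$. The pairings are immediate. Because $\chi$ factors through $H \to L$, we have $\langle \chi, \eta \rangle = \langle \chi, \overline{\eta}\rangle$, where $\overline{\eta}$ denotes the composite of $\eta$ with the projection $U_K \rtimes L_{\overline{x}} \to L_{\overline{x}}$. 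Equivalently, under the identification of Definition \ref{def:line-bundle-coming-from-character}, the weights of $\mathcal{O}_{[X/H]}(\chi) = \rho^{\ast}\mathcal{O}_{[(X/U)/L]}(\chi)$ along a filtration and along its image under $\rho$ agree, as in the proof of Proposition \ref{prop:semistable_locus_upstairs}.

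For the direction from $x$ to $\overline{x}$, let $\eta$ be a cocharacter of $U_K \rtimes L_{\overline{x}}$ such that $\lim_{t\to 0}\eta(t)\cdot x$ exists in $X_K$. The quotient $q : X \to X/U$ is equivariant along $H \to L$, so $q(\eta(t)\cdot x) = \overline{\eta}(t)\cdot \overline{x}$. Applying $q$ to the extension $\mathbb{A}^1_K \to X_K$ therefore shows that $\lim_{t\to 0}\overline{\eta}(t)\cdot\overline{x}$ exists in $(X/U)_K$. The semistability condition for $\overline{x}$ then yields the condition for $x$. When $\overline{\eta}$ is trivial the pairing is zero, and this case has to be handled separately. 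Here $\eta$ lands in $U_K$, and since $U$ is unipotent, $\eta$ is trivial. This needs care, and I would treat trivial cocharacters exactly as the statement does, namely by excluding them.

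For the converse, start from a non-trivial $\overline{\eta} : \mathbb{G}_{m,K}\to L_{\overline{x}}$ such that $\lim \overline{\eta}(t)\cdot\overline{x}$ exists. This gives a filtration $f : \Theta_K \to [(X/U)/L] = \mathcal{X}^{\mathrm{rig}}$ with $\mathrm{ev}_1(f)\cong \overline{x}$. By Proposition \ref{prop:lifting_filts_along_gerbe}, $f$ lifts to $f' : \Theta_K \to [X/H]$ with $\mathrm{ev}_1(f') \cong x$. Example \ref{ex:theta-stability-for-quotient-stacks} then turns $f'$ into a cocharacter $\eta : \mathbb{G}_{m,K}\to H_K$ such that $\lim\eta(t)\cdot x$ exists in $X_K$. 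The image of $\eta$ in $L_K$ is conjugate, by an element fixing $\overline{x}$, to $\overline{\eta}$. After replacing $\eta$ by a conjugate under $U_K$ and under the stabilizer, and using $\lim$-compatibility, I can arrange $\eta$ to take values in $U_K\rtimes L_{\overline{x}}$ with image exactly $\overline{\eta}$. The $U$-part of the conjugation does not change $x$ modulo $U$, so I would conjugate by an element $h$ with $h\cdot x = x$, or adjust the point within its $U$-orbit; the latter is harmless because membership in $X^{\chi-(s)s}$ and the whole cocharacter criterion are $U$-invariant.

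I expect this conjugation bookkeeping to be the main obstacle. Example \ref{ex:theta-stability-for-quotient-stacks} determines cocharacters only up to $P(\eta)$-conjugacy, and the lift must land in $U_K \rtimes L_{\overline{x}}$ rather than in all of $H_K$. My first attempt would be more direct: lift $\overline{\eta}$ to $\eta = \overline{\eta}$ itself, viewed via $L \subset H$, acting on a suitable point $u\cdot x$ of the $U$-orbit. I would then use that the $\mathbb{G}_m$-equivariant orbit map is affine over $Z$ to check that the limit exists in $X_K$. Failing that, I would fall back on the $\Theta$-surjectivity argument above.
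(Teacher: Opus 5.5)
Your argument is correct, but it is organized differently from the paper's. The paper never matches cocharacters of $U_K \rtimes L_{\overline{x}}$ with those of $L_{\overline{x}}$ directly. It applies Proposition \ref{prop:semistable_locus_upstairs} to $x$ and restricts to the fiber over $\overline{z}$. It then identifies $\rho^{-1}(\mathcal{X}^{\mathrm{rig}}_{\overline{z}})$ with the local model $[X_A/U_K \rtimes L_{\overline{x}}]$, where $X_A = X_K \times_{(X/U)_K} \Spec A$. Finally it re-runs Example \ref{ex:theta-stability-for-quotient-stacks}, the Gulbrandsen--Halle--Hulek criterion and Proposition \ref{prop:semistable_locus_upstairs} on that model, with the pulled-back linearization $N_\chi$.

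You instead use Proposition \ref{prop:nrgit-hm-part-1} as a black box and prove an explicit correspondence of cocharacters. One direction is just the projection $U_K \rtimes L_{\overline{x}} \to L_{\overline{x}}$ plus equivariance of $q$; your remark that a cocharacter of $U_K$ is trivial correctly disposes of the degenerate case. The other direction is a lifting statement. This isolates the only non-formal input: for each non-trivial $\overline{\eta}$ of $L_{\overline{x}}$ with $\lim_{t\to 0}\overline{\eta}(t)\cdot\overline{x}$ existing, there is $u \in U(K)$ with $\lim_{t\to 0}\overline{\eta}(t)\cdot ux$ existing in $X_K$, and then $\eta = u^{-1}\overline{\eta}u$ works. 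Your route also uses the slice $\Spec A$ only through Proposition \ref{prop:nrgit-hm-part-1}, whereas the paper's local model needs $x$ to be viewed as a point of $X_A$. The paper's route is more uniform, but it leaves this mechanism implicit.

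On the step you flagged: your ``first attempt'' is the right target, but affineness over $Z$ alone will not produce $u$. The existence of $u$ is precisely the content of $\Theta$-surjectivity, and the bookkeeping closes most cleanly componentwise.
\begin{itemize}
\item Write $\overline{\eta} = g_0^{-1}\mu g_0$ with $g_0 \in L(K)$ and $\mu$ a cocharacter of a fixed maximal torus of $L_K$. Then $g_0\overline{x} \in (X/U)^{\mu,+}(K)$.
\item After base change to $K$, $\mathrm{Filt}(\rho)$ on the $\mu$-component is $[X^{\mu,+}/P_H(\mu)] \to [(X/U)^{\mu,+}/P_L(\mu)]$.
\item Since $P_L(\mu) \subset P_H(\mu)$, Corollary \ref{cor:filt-gerbe-is-surjective} (with smoothness, to get $K$-points in fibers) yields $y \in X^{\mu,+}(K)$ with $q(y) = g_0\overline{x}$.
\item As geometric fibers of $q$ are $U$-orbits, $y = g_0 u x$ for some $u \in U(K)$, which is exactly what you need.
\end{itemize}
This also shows that your claim that the image of the lifted cocharacter is conjugate to $\overline{\eta}$ ``by an element fixing $\overline{x}$'' is imprecise, since there is also $P_L$-conjugation. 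Both ambiguities are harmless for the reasons you sketch: $P_L$-conjugation lifts to $P_H$-conjugation, and stabilizer elements only move $x$ within $Ux$.
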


\begin{proof}
    By Proposition \ref{prop:semistable_locus_upstairs}, the $K$-point $\overline{x}$ of $\mathcal{X} = [X/H]$ corresponding to $x \in X(K)$ is a $K$-point of $\mathcal{X}^{(s)s}(\mathcal{O}_{[X/H]}(\chi)) = [X^{\chi-(s)s}/H]$ if and only if $\overline{x}$ is $\Theta_K$-(semi)stable, i.e. if and only if $x$ is Hilbert--Mumford (semi)stable with respect to cocharacters $\eta : \mathbb{G}_{m,K} \to H$.
    
    On the other hand, since the formation of $(\mathcal{X}^{\mathrm{rig}})^{(s)s}(\mathcal{O}_{[(X/U)/L]}(\chi))$ commutes with passing to fibers over $K$-points of $Z/\!\!/L$, for any $K$-point $\overline{z} \in (Z /\!\!/ L)(K)$ there are equalities
    $$ [X^{\chi-(s)s}/H]_{\overline{z}} = \mathcal{X}^{(s)s}(\mathcal{O}_{[X/H]}(\chi))_{\overline{z}} =\rho^{-1}((\mathcal{X}_{\overline{z}}^{\mathrm{rig}})^{(s)s}(\mathcal{O}_{[(X/U)/L]}(\chi)_{\overline{z}})). $$

    Take $\overline{z}$ to be the image of $\overline{x}$ in $(Z /\!\!/ L)(K)$ and fix the isomorphism $\mathcal{X}_{\overline{z}}^{\mathrm{rig}} \cong [\Spec A/L_{\overline{x}}]$ considered in the proof of Proposition \ref{prop:nrgit-hm-part-1}. Write $(\mathcal{X}_{\overline{z}}^{\mathrm{rig}})^{(s)s}(\mathcal{O}_{[(X/U)/L]}(\chi)_{\overline{z}}) = [(\Spec A)^{(s)s}(M_{\chi})/L_{\overline{x}}]$, again as in the proof of Proposition \ref{prop:nrgit-hm-part-1}. From the diagram
    \[\begin{tikzcd}[ampersand replacement=\&]
	\&\& X_K \&\& \\
	\\
	\&\& {[X_K/U_K]} \&\& {\mathcal{X} = [X/H]} \\
	\\
	{\Spec A} \&\& {(X/U)_K} \&\& {\mathcal{X}^{\mathrm{rig}} = [(X/U)/L]} \\
	\\
	\&\& {\mathcal{X}_{\overline{z}}^{\mathrm{rig}} = [\Spec A/L_{\overline{x}}]}
	\arrow[from=1-3, to=3-3]
	\arrow[from=3-3, to=3-5]
	\arrow[from=3-3, to=5-3]
	\arrow["\lrcorner"{anchor=center, pos=0.125}, draw=none, from=3-3, to=5-5]
	\arrow["\rho", from=3-5, to=5-5]
	\arrow[from=5-1, to=5-3]
	\arrow[curve={height=18pt}, from=5-1, to=7-3]
	\arrow[from=5-3, to=5-5]
	\arrow[curve={height=18pt}, from=7-3, to=5-5]
    \end{tikzcd}\]
    we may identify the preimage $\rho^{-1}(\mathcal{X}_{\overline{z}}^{\mathrm{rig}})$ with the quotient stack $[X_A/U_K \rtimes L_{\overline{x}}]$, where $X_A := X_K \times_{(X/U)_K} \Spec A$. Denoting by $N_{\chi}$ the pullback of the $L_{\overline{x}}$-linearized invertible sheaf $M_{\chi}$ along the projection $X_A \to \Spec A$, by viewing $\chi$ as a character of $H$ in the usual way the invertible sheaf $N_{\chi}$ inherits the structure of a $U_K \rtimes L_{\overline{x}}$-linearization. Since the $K$-points of $(\Spec A)^{(s)s}(M_{\chi})$ are characterized in terms of the Hilbert--Mumford criterion (with respect to $M_{\chi}$ and cocharacters of $L_{\overline{x}}$), by Example \ref{ex:theta-stability-for-quotient-stacks} and Proposition \ref{prop:semistable_locus_upstairs} the $K$-points of $\rho^{-1}([(\Spec A)^{(s)s}(M_{\chi})/L_{\overline{x}}])$ are also characterized by the Hilbert--Mumford criterion, albeit with respect to $N_{\chi}$ and cocharacters of $U_K \rtimes L_{\overline{x}}$. As such, the isomorphism $[X/H]_{\overline{z}} = \rho^{-1}(\mathcal{X}_{\overline{z}}^{\mathrm{rig}}) \cong [X_A/U_K \rtimes L_{\overline{x}}]$ induces an isomorphism
    $$ [X^{\chi-(s)s}/H]_{\overline{z}} = \rho^{-1}([(\Spec A)^{(s)s}(M_{\chi})/L_{\overline{x}}]) \cong [X_A^{(s)s}(N_{\chi})/U_K \rtimes L_{\overline{x}}] $$
    of $[X^{\chi-(s)s}/H]_{\overline{z}}$ with the quotient of the (open) locus of $X_A$ consisting of points which are Hilbert--Mumford (semi)stable. As in the proof of Proposition \ref{prop:nrgit-hm-part-1}, it follows that the $K$-point $x \in X(K)$ is a $K$-point of $X^{\chi-(s)s}$ if and only if $x$ is Hilbert--Mumford (semi)stable with respect to cocharacters $\eta : \mathbb{G}_{m,K} \to U_K \rtimes L_{\overline{x}}$. This yields the result.
\end{proof}

\begin{proof}[Proof of Theorem \ref{thm:nrgit-intro}]
    Theorem \ref{thm:nrgit-intro} follows from combining Lemma \ref{lem:quotient-stack-weak-theta-action-cocharacter}, Proposition \ref{prop:existence-geometric-U-quotient}, Lemma \ref{lem:quotient-stack-gms-no-character} and Propositions \ref{prop:quotient-stack-gms-character}, \ref{prop:nrgit-hm-part-1} and \ref{prop:nrgit-hm-part-2}.
\end{proof}

\subsection{Existence of Non-Reductive GIT Quotients for Externally Graded Groups}

We now turn to proving Theorem \ref{thm:nrgit-external-intro} using Theorem \ref{thm:main-theorem}. This proof proceeds along similar lines to the proof of Theorem \ref{thm:nrgit-intro}, albeit with a few modifications to take into account that the group $H$ for which we wish to quotient by is externally graded instead of internally graded.

As in the statement of Theorem \ref{thm:nrgit-external-intro}, let $X$ be a separated $k$-scheme locally of finite type, acted on by a smooth linear algebraic group $H = U \rtimes L$ over $k$ with a split maximal torus. Assume that there exists a cocharacter $\lambda : \mathbb{G}_{m,k} \to \mathrm{Aut}(H)$ such that $\lambda(\mathbb{G}_{m,k})$ commutes with $L$, and such that for the group $\widehat{H} := H \rtimes_{\lambda} \mathbb{G}_{m,k}$, we have $P_{\widehat{H}}(\widehat{\lambda}) = \widehat{H}$, $U_{\widehat{H}}(\widehat{\lambda}) = U$ and $L_{\widehat{H}}(\widehat{\lambda}) = L \rtimes_{\lambda} \mathbb{G}_{m,k} \cong L \times \mathbb{G}_{m,k}$, where $\widehat{\lambda} : \mathbb{G}_{m,k} \to \widehat{H}$ corresponds to the external $\mathbb{G}_{m,k}$. Assume further that the action of $H$ on $X$ extends to an action of $\widehat{H}$, that the action of the external $\mathbb{G}_{m,k} \subset \widehat{H}$ extends to an action of the monoid $\mathbb{A}_k^1$, and that the $U$-stabilizer group scheme $\mathrm{Stab}_{U,X} \to X$ is flat. Let $Z := X^{\widehat{\lambda},0}$ be the fixed locus for the action of the external $\mathbb{G}_{m,k} \subset \widehat{H}$.

\begin{proof}[Proof of Theorem \ref{thm:nrgit-external-intro}]
    Set $\widehat{L} := L \rtimes_{\lambda} \mathbb{G}_{m,k}$, $\mathcal{X} := [X/H]$, $\widehat{\mathcal{X}} := [X/\widehat{H}]$, $\mathcal{Z} := [Z/L]$ and $\widehat{\mathcal{Z}} := [Z/\widehat{L}]$. This is a quotient stack associated to the action of the internally graded group $\widehat{H}$. In particular, by Lemma \ref{lem:quotient-stack-weak-theta-action-cocharacter} the cocharacter $\widehat{\lambda}$ induces a weak $\Theta_k$-action on $\widehat{\mathcal{X}}$ with center $\widehat{\mathcal{Z}}$ and retraction $\mathrm{gr} : \widehat{\mathcal{X}} \to \widehat{\mathcal{Z}}$ induced by the $\widehat{\lambda}$-retraction $X \to Z$. Since $\mathrm{Stab}_{U,X} \to X$ is flat, the relative inertia $\mathcal{I}_{\widehat{\mathcal{X}}/\widehat{\mathcal{Z}}} \to \widehat{\mathcal{X}}$ is flat and of finite presentation by Lemma \ref{lem:flat-stab-flat-relative-inertia}; since the pullback of $\mathcal{I}_{\widehat{\mathcal{X}}/\widehat{\mathcal{Z}}} \to \widehat{\mathcal{X}}$ along the natural morphism $\mathcal{X} \to \widehat{\mathcal{X}}$ induced by the inclusion $H \subset \widehat{H}$ coincides with $\mathcal{I}_{\mathcal{X}/\mathcal{Z}}$, the relative inertia $\mathcal{I}_{\mathcal{X}/\mathcal{Z}} \to \mathcal{X}$ is also flat and of finite presentation. 
    
    Theorem \ref{thm:main-theorem} is then applicable to the weak $\Theta_k$-action on $\widehat{\mathcal{X}}$ arising from $\widehat{\lambda}$. In particular, the rigidifications $\mathcal{X}^{\mathrm{rig}} := \mathcal{X} \!\!\! \fatslash  \mathcal{I}_{\mathcal{X}/\mathcal{Z}}$ and $\widehat{\mathcal{X}}^{\mathrm{rig}} := \widehat{\mathcal{X}} \!\!\! \fatslash  \mathcal{I}_{\widehat{\mathcal{X}}/\widehat{\mathcal{Z}}}$ are well-defined. These fit into a Cartesian diagram
    \[\begin{tikzcd}[ampersand replacement=\&]
	{\mathcal{X} = [X/H]} \&\& {\widehat{\mathcal{X}} = [X/\widehat{H}]} \\
	\\
	{\mathcal{X}^{\mathrm{rig}} = \mathcal{X} \!\!\! \fatslash  \mathcal{I}_{\mathcal{X}/\mathcal{Z}}} \&\& {\widehat{\mathcal{X}}^{\mathrm{rig}} = \widehat{\mathcal{X}} \!\!\! \fatslash  \mathcal{I}_{\widehat{\mathcal{X}}/\widehat{\mathcal{Z}}}} \\
	\\
	{\mathcal{Z} = [Z/L]} \&\& {\widehat{\mathcal{Z}} = [Z/\widehat{L}]}
	\arrow[from=1-1, to=1-3]
	\arrow[from=1-1, to=3-1]
	\arrow["\lrcorner"{anchor=center, pos=0.125}, draw=none, from=1-1, to=3-3]
	\arrow[from=1-3, to=3-3]
	\arrow[from=3-1, to=3-3]
	\arrow[from=3-1, to=5-1]
	\arrow["\lrcorner"{anchor=center, pos=0.125}, draw=none, from=3-1, to=5-3]
	\arrow[from=3-3, to=5-3]
	\arrow[from=5-1, to=5-3]
    \end{tikzcd}\]
    whose horizontal morphisms are all principal $\mathbb{G}_{m,k}$-bundles (in particular, these morphisms are affine). Moreover, the morphisms $\mathcal{X}^{\mathrm{rig}} \to \mathcal{Z}$ and $\widehat{\mathcal{X}}^{\mathrm{rig}} \to \widehat{\mathcal{Z}}$ are both affine. 
    
    For the first statement of Theorem \ref{thm:nrgit-external-intro}, we may apply Proposition \ref{prop:existence-geometric-U-quotient} to $\widehat{\mathcal{X}}$; this yields the existence of a geometric good quotient $X \to X/U$ with $X/U$ affine over $Z$, with the property that $X \to X/U$ is a principal bundle if the $U$-action is free. Moreover, we have $\widehat{\mathcal{X}}^{\mathrm{rig}} \cong [(X/U)/\widehat{L}]$; from the above diagram, the isomorphism $\widehat{\mathcal{X}}^{\mathrm{rig}} \cong [(X/U)/\widehat{L}]$ induces an isomorphism $\mathcal{X}^{\mathrm{rig}} \cong [(X/U)/L]$.

    For the remaining statements of Theorem \ref{thm:nrgit-external-intro}, assume further that there exists a good moduli space $\pi : [Z/L] \to Z /\!\!/ L$. Since the action of the external $\mathbb{G}_{m,k}$ on $Z$ is trivial, the evident morphism $\widehat{\mathcal{Z}} \to \mathcal{Z}$ is a trivial $\mathbb{G}_{m,k}$-gerbe (in particular, a good moduli space morphism); the composition of $\pi$ with this gerbe yields a good moduli space $\widehat{\pi} : \widehat{\mathcal{Z}} \to Z$ (cf. proof of Lemma \ref{lem:gms-Zrig}). By Lemma \ref{lem:quotient-stack-gms-no-character}, the induced morphism $\widehat{\mathcal{X}}^{\mathrm{rig}} \cong [(X/U)/\widehat{L}] \to [Z/\widehat{L}] \stackrel{\widehat{\pi}}{\to} Z /\!\!/ L$ is then a good moduli space. Since $\mathcal{X}^{\mathrm{rig}} \to \widehat{\mathcal{X}}^{\mathrm{rig}}$ is affine, by \cite[Lemma 4.14]{Alper-GMS} there exists a good moduli space $\mathcal{X}^{\mathrm{rig}} \cong [(X/U)/L] \to X /\!\!/ H$, with $X /\!\!/ H$ affine over $Z /\!\!/ L$. In particular, there exists a good $L$-quotient $X/U \to X /\!\!/ H$ and a good $H$-quotient $X \to X /\!\!/ H$.

    The proofs of the remaining statements of Theorem \ref{thm:nrgit-external-intro} follow after making minor modifications to the proofs of the analogous statements of Theorems \ref{thm:main-theorem} and \ref{thm:nrgit-intro}, to take into account the need to work relative to $X /\!\!/ H$ instead of $Z /\!\!/ L$:

    \begin{itemize}
        \item After fixing an identification $\mathcal{X}^{\mathrm{rig}} = [(X/U)/L]$, we modify Definition \ref{def:quotient-stack-semistable-locus} by taking $(\mathcal{X}^{\mathrm{rig}})^{ss}(\mathcal{O}_{[(X/U)/L]}(\chi))$ to be the saturated projective spectrum of $\mathcal{A} := \bigoplus_{m \geq 0} \mathcal{O}_{[(X/U)/L]}(\chi^m)$ relative to the good moduli space $\psi : [(X/U)/L] \to X /\!\!/H$ and setting
        $$ X /\!\!/_{\chi} H := \underline{\mathrm{Proj}}_{X /\!\!/ H} \bigoplus_{m \geq 0} \psi_{\ast}(\mathcal{O}_{[(X/U)/L]}(\chi^m)) \left( = \underline{\mathrm{Proj}}_{X /\!\!/ H} \bigoplus_{m \geq 0} (\psi \circ \rho)_{\ast}(\mathcal{O}_{[X/H]}(\chi^m)) \right). $$
        The proof of Proposition \ref{prop:gms-from-line-bundle} carries over to yield that the natural morphism
        $$ \pi_{\chi} : (\mathcal{X}^{\mathrm{rig}})^{ss}(\mathcal{O}_{[(X/U)/L]}(\chi)) \to X /\!\!/_{\chi} H $$
        is a good moduli space, whose restriction to $(\mathcal{X}^{\mathrm{rig}})^{s}(\mathcal{O}_{[(X/U)/L]}(\chi))$ is a tame moduli space onto its image. If $X$ is of finite type as a $k$-scheme and if $Z /\!\!/ L$ is a scheme, the projectivity of $X /\!\!/_{\chi} H \to X /\!\!/ H$ follows from Lemma \ref{lem:when-moduli-space-is-projective}.

        \item As in Definition \ref{def:quotient-stack-semistable-locus}, $(X/U)^{\chi-s} \subset (X/U)^{\chi-ss}$ are defined such that there are equalities $[(X/U)^{\chi-(s)s}/L] = (\mathcal{X}^{\mathrm{rig}})^{(s)s}(\mathcal{O}_{[(X/U)/L]}(\chi))$, and $X^{\chi-(s)s}$ is defined to be the preimage of $(X/U)^{\chi-(s)s}$ under the geometric good $U$-quotient $X \to X/U$.

        \item By working relative to $X /\!\!/ H$ instead of $Z /\!\!/ L$, and noting that the same argument used to prove Lemma \ref{lem:stability_is_fiberwise} yields that the formation of $(\mathcal{X}^{\mathrm{rig}})^{(s)s}(\mathcal{O}_{[(X/U)/L]}(\chi))$ commutes with passing to fibers over $K$-points of $X/\!\!/H$ (where $K \supset k$ is an algebraically closed field), the proofs of Propositions \ref{prop:nrgit-hm-part-1} and \ref{prop:nrgit-hm-part-2} carry over \emph{mutatis mutandis} to prove the Hilbert--Mumford criterion (statements 2(i) and 2(ii) of Theorem \ref{thm:nrgit-external-intro}).
    \end{itemize}

    This completes the proof of Theorem \ref{thm:nrgit-external-intro}.
\end{proof}

\section{Application to Moduli Spaces of Principal Bundles over Curves} \label{sec:principal-bundles}

In this section, we use Theorem \ref{thm:main-theorem} to construct new examples of quasi-projective moduli spaces of principal bundles over a smooth projective curve with non-reductive structure groups (cf. Theorem \ref{thm:moduli-principal-bundles-intro}).

Throughout this section, we fix an algebraically closed field $k$ of characteristic $0$, and work relative to $B = \Spec k$. To simplify notation, set $\Theta := \Theta_k$, $B\mathbb{G}_m := B\mathbb{G}_{m,k}$ and so on. We fix once and for all a smooth connected projective curve $C$ of genus $p_a(C) \geq 2$.

\begin{rmk}
    For simplicity of exposition, we work with a single curve $C \to \Spec k$. However, the results of this section extend to the case where $C \to \Spec k$ is replaced with a flat projective morphism $C \to S$ of finite presentation over a $k$-scheme $S$ (taking all structure groups to be constant relative to $S$). We leave it to the interested reader to make the necessary modifications.
\end{rmk}

\subsubsection{Additional Conventions, Notation and Terminology}

In this section we adopt the following additional conventions, notation and terminology:

\begin{enumerate}
    \item For an algebraic group $G$, $\mathfrak{X}^{\ast}(G)$ (resp. $\mathfrak{X}_{\ast}(G)$) denotes the set of characters (resp. cocharacters) of $G$, and $\mathfrak{X}^{\ast}(G)_{\Q}$ (resp. $\mathfrak{X}_{\ast}(G)_{\Q}$) denotes the set of rational characters (resp. cocharacters) of $G$. We denote the natural pairing between (rational) characters and cocharacters by $\langle -,-\rangle$. We denote by $G^{\circ}$ the identity component of $G$.

    \item By an \emph{isogeny} of algebraic groups, we mean a surjective homomorphism with finite kernel.

    \item If $G$ is a reductive algebraic group, then by a \emph{Borel pair} of $G$, we mean a pair $(B,T)$, where $T$ is a maximal torus of $G$ and where $B$ is a Borel subgroup of $G$ containing $T$. We assume that the roots associated to this pair are all trivial on $Z(G)^{\circ} \subset T$.

    \item If $S$ is a scheme and if $G \to S$ is an $S$-group scheme, we denote by $\omega_{G/S}$ the pullback of the sheaf of differentials $\Omega^1_{G/S}$ under the identity section $e : S \to G$.

    \item If $X$ is a right $G$-space, if $Y$ is a left $G$-space and if $f : X \to Y$ is a map, we say $f$ is \emph{$G$-equivariant} if $f(x \cdot g) = g^{-1} \cdot f(x)$ for all $x \in X$ and $g \in G$.

    \item All principal bundles are assumed to admit right actions. If $E$ is a principal $G$-bundle and if $X$ is a left $G$-space, we denote by $E \times^G X$ the associated Borel quotient, i.e. the geometric quotient of $E \times X$ by the right action $(e,x) \cdot g = (eg, g^{-1}x)$.

    \item If $E \to S$ is a principal $G$-bundle and if $H \subset G$ is a subgroup, by a \emph{reduction of structure group} of $E$ from $G$ to $H$, we mean a section $\theta : S \to E/H$ of the quotient $E/H \to S$; the corresponding principal $H$-bundle is given by $E_H = E \times_{E/H} S$. If $\phi : G \to K$ is a homomorphism of groups, we denote the corresponding extension of structure group of $E$ by $\phi_{\ast} E$ or by $E \times^G K$.

    \item If $E$ is a principal $G$-bundle, we denote the bundle $E \times^G G$ associated to the conjugation action of $G$ on itself by $c_G(E)$, and denote the bundle $E \times^G \mathfrak{g}$ associated to the adjoint representation of $G$ on $\mathfrak{g} = \mathrm{Lie}(G)$ by $\mathrm{Ad}_G(E)$.

    \item For a morphism $X \to Y$, we denote the sheaf of sections of this morphism by $\Gamma_Y(X)$. Where likely to not cause confusion, we also use the shorthand notation $H^0(X) = H^0(\Gamma_Y(X))$ to denote the space of globally defined sections $Y \to X$ of $X \to Y$.

    \item Given a maximal torus $T \subset G$, the \emph{type} of a principal $G$-bundle $E$ over $C$ (with respect to $T$) is the unique rational cocharacter $d \in \mathfrak{X}_{\ast}(T)_{\Q}$ such that $\langle \chi, d \rangle = \deg(\chi_{\ast}E)$ for all $\chi \in \mathfrak{X}^{\ast}(G)$, where by $\deg(\chi_{\ast}E)$ we mean the degree of the sheaf of sections of the principal $\mathbb{G}_m$-bundle $\chi_{\ast}E$.
\end{enumerate}

\subsection{Preliminaries on Principle Bundles over Curves and their Moduli}

Let $G$ be a connected linear algebraic group over $k$, not assumed to be reductive. We denote by
$$ \mathcal{B}un_G := \underline{\mathrm{Map}}(C, BG) $$
the moduli stack of principal $G$-bundles over $C$; if $S$ is a scheme, a morphism $S \to \mathcal{B}un_G$ is the same data as a morphism $C \times S \to BG$, or equivalently an (fppf locally trivial) principal $G$-bundle $E_S \to C \times S$. The stack $\mathcal{B}un_G$ is algebraic, smooth and locally of finite type, and has an affine diagonal of finite presentation.\footnote{An exposition of this result is given in \cite{Wang-Bundles}.} In particular, by Theorem \ref{thm:algebraicity-filt-grad} we have that $\mathrm{Filt}(\mathcal{B}un_G)$ and $\mathrm{Grad}(\mathcal{B}un_G)$ are well-defined algebraic stacks. For each type $d \in \mathfrak{X}_{\ast}(T)_{\Q}$ (where $T$ is a maximal torus of $G$), there is an open and closed substack $\mathcal{B}un_G^d \subset \mathcal{B}un_G$ parameterizing families of principal $G$-bundles $E_S \to C \times S$ whose fiber over any $k$-point $s \in S(k)$ is of type $d$.

\subsubsection{Automorphisms of Principal Bundles}

Let $S$ be a scheme and let $p : E_S \to C \times S$ be a principal $G$-bundle. By definition, an automorphism of the principal $G$-bundle $E_S$ is a $G$-equivariant morphism $\gamma : E_S \to E_S$ with $p \circ \gamma = p$. Recall that the automorphisms of $E_S$ are in one-to-one correspondence with equivariant morphisms $\xi : E_S \to G$, where $G$ acts on itself on the left by conjugation, by associating to $\xi$ the automorphism
$$ x \mapsto x \cdot \xi(x) $$
of $E_S$. In turn, specifying a $G$-equivariant morphism $E_G \to G$ is equivalent to specifying a section of the associated bundle $c_G(E_S) = E_S \times^G G \to C \times S$. In particular, the relative automorphism group
$$ \underline{\mathrm{Aut}}(E_S) := \mathcal{I}_{\mathcal{B}un_G} \times_{\mathcal{B}un_G} S $$
can be identified with $(\mathrm{pr}_S)_{\ast} \Gamma_{C \times S}(c_G(E_S))$, the pushforward of the sheaf of sections of $c_G(E_S)$ along the projection $C \times S \to S$.

Taking $S = \Spec k$, if $E \to C$ is a principal $G$-bundle then any element $z \in Z(G)$ defines an automorphism of $E$, given by right multiplication by $z$ (or, in terms of $G$-equivariant morphisms $E \to G$, the constant morphism to $z$). This defines an inclusion $Z(G) \subset \mathrm{Aut}(E)$ for which $Z(G)$ is a central subgroup.

In the sequel, we make use the following result concerning the sheaf of differentials $\omega_{\underline{\mathrm{Aut}}(E_S)/S}$ associated to a family of principal bundles over a scheme $S$.

\begin{lem} \label{lem:lie-alg-aut-gp}
    Let $S$ be a scheme, and let $E_S \to C \times S$ be a principal $G$-bundle. Then the $\mathcal{O}_S$-module dual $\omega_{\underline{\mathrm{Aut}}(E_S)/S}^{\vee} = \mathcal{H}om_{\mathcal{O}_S}(\omega_{\underline{\mathrm{Aut}}(E_S)/S}, \mathcal{O}_S)$ of the sheaf of differentials $\omega_{\underline{\mathrm{Aut}}(E_S)/S} = e^{\ast} \Omega_{\underline{\mathrm{Aut}}(E_S)/S}^1$ is isomorphic to $(\mathrm{pr}_S)_{\ast} \Gamma_{C \times S}(\mathrm{Ad}_G(E_S))$.
\end{lem}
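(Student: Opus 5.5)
We will compute the sheaf of differentials of $\underline{\mathrm{Aut}}(E_S) \cong (\mathrm{pr}_S)_{\ast}\Gamma_{C\times S}(c_G(E_S))$ at the identity section by computing the tangent space functor. The plan is to identify $\omega^{\vee}_{\underline{\mathrm{Aut}}(E_S)/S}$ with the sheaf of $S$-derivations at the identity section, i.e. the functor $S' \mapsto \ker\bigl(\underline{\mathrm{Aut}}(E_S)(S'[\epsilon]) \to \underline{\mathrm{Aut}}(E_S)(S')\bigr)$ for $S' \to S$ and $S'[\epsilon] = S' \times \Spec k[\epsilon]/(\epsilon^2)$. Concretely, for a group scheme $\mathcal{G} \to S$ with identity section $e$, $\mathcal{H}om_{\mathcal{O}_S}(e^\ast\Omega^1_{\mathcal{G}/S}, \mathcal{O}_S)$ has sections over $S'$ equal to the $S'[\epsilon]$-points of $\mathcal{G}$ restricting to $e$ on $S'$. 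This is a general fact about the universal property of $\Omega^1$, and applies since $\underline{\mathrm{Aut}}(E_S)$ is representable (affine, of finite presentation, as $\mathcal{B}un_G$ has affine diagonal of finite presentation).

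Next, we compute these first-order deformations of the identity automorphism. An automorphism of $E_{S'[\epsilon]}$ is a $G$-equivariant map $\xi : E_{S'[\epsilon]} \to G$, with $G$ acting on itself by conjugation, and restricting to the identity means $\xi$ reduces to the constant map $e$ modulo $\epsilon$. Since $C \times S'[\epsilon] \to C \times S'$ is a trivial square-zero thickening, and $E_{S'[\epsilon]} = E_{S'}\times_{S'} S'[\epsilon]$, such maps correspond to $G$-equivariant morphisms $E_{S'} \to \mathrm{Lie}(G) = \mathfrak{g}$. Here we use that morphisms $X[\epsilon] \to G$ lifting the constant map $e$ are the same as morphisms $X \to T_eG = \mathfrak{g}$, functorially, and that conjugation on such lifts is the adjoint action. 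Equivariant maps $E_{S'} \to \mathfrak{g}$ are exactly sections of $\mathrm{Ad}_G(E_{S'})$ over $C\times S'$, i.e. sections of $(\mathrm{pr}_S)_\ast\Gamma_{C\times S}(\mathrm{Ad}_G(E_S))$ over $S'$ after flat base change. Compatibility with base change holds at least for flat $S' \to S$, which suffices for an isomorphism of Zariski/étale sheaves on $S$.

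Finally, we check that this bijection is $\mathcal{O}_S$-linear, i.e. compatible with the module structures given by scaling $\epsilon$ and by addition via $S'[\epsilon_1,\epsilon_2]$. This gives the desired isomorphism $\omega^\vee_{\underline{\mathrm{Aut}}(E_S)/S} \cong (\mathrm{pr}_S)_\ast\Gamma_{C\times S}(\mathrm{Ad}_G(E_S))$. The main obstacle is the first step: identifying the dual of $e^\ast\Omega^1$ with the deformation functor over arbitrary $S'$, since $\omega$ need not be locally free and dualizing does not commute with base change. We will handle this by noting that both sides are only compared as sheaves on $S$, using $\mathrm{Hom}_{\mathcal{O}_{S'}}(e^\ast\Omega^1, \mathcal{O}_{S'}) = \mathrm{Der}$. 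Each open $S'\subset S$ is then evaluated directly, avoiding any base change of the dual.
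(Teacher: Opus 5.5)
Your proposal is correct and follows the same route as the paper. Both identify sections of $\omega^{\vee}$ with $S'[\varepsilon]$-points of $\underline{\mathrm{Aut}}(E_{S'})$ deforming the identity (SGA3, Exp.~II), rewrite these as $G$-equivariant maps $E_{S'}[\varepsilon] \to G$ reducing to $e$, and identify those with $\mathrm{Ad}$-equivariant maps $E_{S'} \to \mathfrak{g}$; the paper evaluates on flat $S' \to S$ and uses flat base change, whereas you note that evaluating on opens $S' \subset S$ already suffices and also check $\mathcal{O}_S$-linearity explicitly, which are harmless refinements.
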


\begin{proof}
    Let $S' \to S$ be a flat $S$-scheme, and let $E_{S'} \to C \times S'$ denote the pullback of $E_S$ along $S' \to S$. By \cite[Exposé II, Proposition 3.3 and Scholie 4.11.3]{SGA3}, a section of $\omega^{\lor}_{\underline{\mathrm{Aut}}(E_S)/S}$ over $S'$ is the same as a morphism
    $$ f : S'[\varepsilon] := S' \times_k \Spec k[\varepsilon]/\varepsilon^2 \to \underline{\mathrm{Aut}}(E_{S'}) $$
    for which the restriction of $f$ along $S' \stackrel{\varepsilon = 0}{\hookrightarrow} S'[\varepsilon]$ is the identity section $S' \to \underline{\mathrm{Aut}}(E_{S'})$. In turn, specifying $f$ is equivalent to specifying a $G$-equivariant morphism
    $$ f' : E_{S'[\varepsilon]} = E_{S'}[\varepsilon] \to G $$
    for which the restriction of $f'$ along $E_{S'} \stackrel{\varepsilon = 0}{\hookrightarrow} E_{S'}[\varepsilon]$ is the constant map to the identity element of $G$. By \cite[Exposé II, Proposition 3.3 and Proposition 4.1.1]{SGA3}, specifying $f'$ is equivalent to specifying a $G$-equivariant morphism $E_{S'} \to \mathfrak{g} = \mathrm{Lie}(G)$, or in other words a global section of $(\mathrm{pr}_{S'})_{\ast}\Gamma_{C \times S'}(\mathrm{Ad}_G(E_{S'}))$. By flat base change, there is a natural isomorphism $(\mathrm{pr}_{S'})_{\ast}\Gamma_{C \times S'}(\mathrm{Ad}_G(E_{S'})) \cong (\mathrm{pr}_S)_{\ast} \Gamma_{C \times S}(\mathrm{Ad}_G(E_S)) \otimes_{\mathcal{O}_S} \mathcal{O}_{S'}$, and so $\omega_{\underline{\mathrm{Aut}}(E_S)/S}^{\vee}$ and $(\mathrm{pr}_S)_{\ast} \Gamma_{C \times S}(\mathrm{Ad}_G(E_S))$ are isomorphic as $\mathcal{O}_S$-modules.
\end{proof}

\subsubsection{Filtered and Graded Objects of $\mathcal{B}un_G$} \label{sec:filt-grad-bung}

Fix a maximal torus $T \subset G$, and let $W$ be the corresponding Weyl group. By Theorem \ref{thm:filt-grad-quotient-stack}, there are isomorphisms
$$ \mathrm{Filt}(BG) \cong \bigsqcup_{\lambda \in \mathfrak{X}_{\ast}(T)/W} BP_{G}(\lambda), \quad \mathrm{Grad}(BG) \cong \bigsqcup_{\lambda \in \mathfrak{X}_{\ast}(T)/W} BL_{G}(\lambda). $$

By functoriality of the mapping stack construction, we have
\begin{align*}
    \mathrm{Filt}(\mathcal{B}un_G) &= \underline{\mathrm{Map}}(\Theta, \underline{\mathrm{Map}}(C, BG)) \\ &= \underline{\mathrm{Map}}(C, \underline{\mathrm{Map}}(\Theta, BG)) \\ &= \underline{\mathrm{Map}}(C, \mathrm{Filt}(BG)) \\
    &\cong \bigsqcup_{\lambda \in \mathfrak{X}_{\ast}(T)/W} \mathcal{B}un_{P_G(\lambda)},
\end{align*}

and similarly

\begin{align*}
\mathrm{Grad}(\mathcal{B}un_G) &= \underline{\mathrm{Map}}(C, \mathrm{Grad}(BG)) \\
&\cong \bigsqcup_{\lambda \in \mathfrak{X}_{\ast}(T)/W} \mathcal{B}un_{L_G(\lambda)}
\end{align*}
In particular, if $E$ is a principal $G$-bundle, then specifying a filtration $f : \Theta \to \mathcal{B}un_G$ with $\mathrm{ev}_1(f) \cong E$ is equivalent to specifying a cocharacter $\lambda \in \mathfrak{X}_{\ast}(T)$ (up to equivalence), a principal $P_G(\lambda)$-bundle $E_{P_G(\lambda)}$ and an isomorphism $E \cong E_{P_G(\lambda)} \times^{P_G(\lambda)} G$ of principal $G$-bundles. In other words, filtrations of $E$ are the same as reductions of structure groups to subgroups of $G$ of the form $P_G(\lambda)$.

It similarly follows from Theorem \ref{thm:filt-grad-quotient-stack} and the functoriality of the mapping stack construction that, given a cocharacter $\lambda \in \mathfrak{X}_{\ast}(T)$, there is a Cartesian diagram
\[\begin{tikzcd}[ampersand replacement=\&,cramped]
	{\mathcal{B}un_{P_G(\lambda)}} \&\& {\mathrm{Filt}(\mathcal{B}un_G)} \\
	\\
	{\mathcal{B}un_{L_G(\lambda)}} \&\& {\mathrm{Grad}(\mathcal{B}un_G)}
	\arrow[from=1-1, to=1-3]
	\arrow["{\mathrm{gr}}"', from=1-1, to=3-1]
	\arrow["\lrcorner"{anchor=center, pos=0.125}, draw=none, from=1-1, to=3-3]
	\arrow["{\mathrm{gr}}", from=1-3, to=3-3]
	\arrow["\sigma", curve={height=-18pt}, from=3-1, to=1-1]
	\arrow[from=3-1, to=3-3]
	\arrow["\sigma"', curve={height=18pt}, from=3-3, to=1-3]
\end{tikzcd}\]
whose horizontal arrows are open and closed immersions. The morphism $\mathrm{gr}: \mathcal{B}un_{P_G(\lambda)} \to \mathcal{B}un_{L_G(\lambda)}$ sends a principal $P_G(\lambda)$-bundle $E$ to $E \times^{P_G(\lambda)} L_G(\lambda) = E/U_G(\lambda)$, the extension of structure group of $E$ with respect to the quotient $P_G(\lambda) \to L_G(\lambda)$, and the morphism $\sigma : \mathcal{B}un_{L_G(\lambda)} \to \mathcal{B}un_{P_G(\lambda)}$ sends a principal $L_G(\lambda)$-bundle $F$ to $F \times^{L_G(\lambda)} P_G(\lambda)$, the extension of structure group of $F$ with respect to the inclusion $L_G(\lambda) \hookrightarrow P_G(\lambda)$.

\subsubsection{Ramanathan Semistability and Existence of Good Moduli Spaces}

Suppose in addition that the group $G$ is reductive. We recall the notion of semistability for principal $G$-bundles of Ramanathan \cite{Ramanathan-Riem-Surface}, in terms of reductions of structure groups to parabolic subgroups of $G$.

Fix a Borel pair $(B, T)$ of $G$. Let $\mathfrak{g}$ (resp. $\mathfrak{t}$) denote the Lie algebra of $G$ (resp. $T$), and consider the associated weight space decomposition
$$ \mathfrak{g} = \mathfrak{t} \oplus \bigoplus_{\alpha \in \Phi} \mathfrak{g}_{\alpha}. $$
Let $\Phi^+ \subset \Phi$ be the set of positive roots and let $\Delta \subset \Phi^+$ be the set of simple roots determined by $(B,T)$. Recall that the parabolic subgroups of $G$ which contain $B$ are in one-to-one correspondence with the subsets of $\Delta$.

\begin{defi}
    Let $P \subset G$ be a proper parabolic subgroup. A character $\chi : P \to \mathbb{G}_m$ of $P$ is said to be \emph{dominant} if $\chi$ is trivial on $Z(G)^{\circ}$ and if there exists a Borel pair $(B,T)$ with $B \subset P$ such that, if $I_P \subset \Delta$ is the corresponding subset of simple roots, then:
    \begin{enumerate}
        \item $\langle \chi, \beta^{\vee} \rangle = 0$ for all $\beta \in I_P$, where $\beta^{\vee}$ is the coroot associated to $\beta$; and
        \item $\langle \chi, \alpha^{\vee} \rangle \geq 0$ for all $\alpha \in \Delta \setminus I_P$.
    \end{enumerate}
\end{defi}

\begin{defi}
    Let $E$ be a principal $G$-bundle over $C$. We say that $E$ is \emph{stable} (resp. \emph{semistable}) if, whenever $E$ admits a reduction of structure group $(P, \theta : C \to E/P)$ to a proper parabolic subgroup $P \subset G$, then for all non-trivial dominant characters $\chi : P \to \mathbb{G}_m$ one has $\deg(\chi_{\ast}E_P) < 0$ (resp. $\deg(\chi_{\ast}E_P) \leq 0$), where $E_P$ is the principal $P$-bundle corresponding to $\theta$.
\end{defi}

\begin{rmk}
By \cite[Proposition 3.2]{Ramanathan-Riem-Surface}, if $E$ is a stable principal $G$-bundle then the quotient $\mathrm{Aut}(E)/Z(G)$ is a finite group. By \cite[Corollary 3.18]{Ramanathan-Moduli-1}, a principal $G$-bundle $E$ is semistable if and only if the sheaf of sections $\Gamma_C(\mathrm{Ad}_G(E))$ of the adjoint bundle $\mathrm{Ad}_G(E) = E \times^G \mathfrak{g}$ is a slope semistable vector bundle.
\end{rmk}

\begin{teo} \label{thm:good-moduli-spaces=semistable-bundles}
Let $G$ be a reductive linear algebraic group over $k$. Fix a type $d \in \mathfrak{X}_{\ast}(T)_{\Q}$, where $T$ is a maximal torus of $G$. Then there exists a finite type open substack $\mathcal{B}un_G^{d, ss} \subset \mathcal{B}un^d_G$ with the property that a morphism from a scheme $S \to \mathcal{B}un_G$, corresponding to the principal $G$-bundle $E_S \to C \times S$, factors through $\mathcal{B}un_G^{d, ss} \subset \mathcal{B}un_G$ if and only if for all $s \in S(k)$, the restricted $G$-bundle $E_s \to C$ is of type $d$ and is semistable. In turn, there exists an open substack $\mathcal{B}un_G^{d, s} \subset \mathcal{B}un_G^{d, ss}$ with the property that a morphism from a scheme $S \to \mathcal{B}un_G$ factors through $\mathcal{B}un_G^{d, s} \subset \mathcal{B}un_G$ if and only if in addition each $E_s$ is stable.

Moreover, the open substack $\mathcal{B}un_G^{d,ss}$ admits a projective good moduli space $\pi : \mathcal{B}un_G^{d,ss} \to M_G^d$. If $M_G^{d,s} = \pi(\mathcal{B}un_G^{d,s})$ then $M_G^{d,s}$ is an open subscheme of $M_G^d$, and $\pi : \mathcal{B}un_G^{d,s} \to M_G^{d,s}$ is a quasi-projective tame moduli space.
\end{teo}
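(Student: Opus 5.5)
This theorem is Ramanathan's classical result recast in stack-theoretic language, so the plan is to assemble it from known results rather than reprove GIT for principal bundles. The first step is openness and boundedness of $\mathcal{B}un_G^{d,ss}$ and $\mathcal{B}un_G^{d,s}$. By \cite[Corollary 3.18]{Ramanathan-Moduli-1}, $E$ is semistable if and only if $\Gamma_C(\mathrm{Ad}_G(E))$ is slope semistable. Slope semistability of vector bundles is open in flat families, by semicontinuity of the Harder--Narasimhan polygon. Pulling this back along the morphism $\mathcal{B}un_G^d \to \mathcal{B}un_{\mathrm{GL}(\mathfrak{g})}$ induced by the adjoint representation therefore gives an open substack. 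For stability, we use that the parabolic reductions of bounded degree form a family proper over the base, so the destabilizing condition $\deg(\chi_\ast E_P) \geq 0$ cuts out a closed locus. Alternatively, one may identify the $G$-stable locus with a $\Theta$-stable locus and quote openness from \cite{Halpern-Leistner}. Boundedness, and hence finite type, follows from boundedness of semistable vector bundles of fixed rank and degree after choosing a faithful representation $G \hookrightarrow \mathrm{GL}_n$.

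For existence of the good moduli space, I would take the route of \cite{AHLH-Existence-GMS}. The stack $\mathcal{B}un_G^{d,ss}$ has affine diagonal and is of finite type over $k$, which has characteristic $0$, so it suffices to verify $\Theta$-completeness and S-completeness. Alternatively, it suffices to realize the stack as a quotient $[R^{ss}/\mathrm{GL}_N]$, where $R$ parameterizes bundles with a frame of $H^0$ of an associated vector bundle twisted by a large multiple of an ample line bundle, as in Ramanathan's construction. Then Example \ref{ex:alper-affine-projective-git} gives a good moduli space, since Ramanathan's GIT quotient is a good quotient by a reductive group in characteristic $0$. Projectivity of $M_G^d$ follows from Ramanathan's theorem that the GIT quotient is projective. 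Equivalently, in the intrinsic approach it follows from properness via Langton-type semistable reduction together with a determinantal ample line bundle, as in \cite{Halpern-Leistner}.

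For the stable part, a stable bundle has $\mathrm{Aut}(E)/Z(G)$ finite by \cite[Proposition 3.2]{Ramanathan-Riem-Surface}. A stable bundle also has a closed point in $\mathcal{B}un_G^{d,ss}$: any degeneration within the semistable locus comes from a filtration, i.e.\ a parabolic reduction, of degree $0$, and stability rules this out. Hence S-equivalence is trivial on stable points, and $\mathcal{B}un_G^{d,s}$ is $\pi$-saturated. Properties (7) and (4) of good moduli spaces then give that $M_G^{d,s}$ is open and that $\pi$ restricts to a tame moduli space; it is quasi-projective as an open subscheme of a projective scheme.

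The main obstacle is the properness and projectivity of $M_G^d$, since this requires either Ramanathan's full GIT analysis or a semistable reduction argument. I would not reprove this, and would cite \cite{Ramanathan-Moduli-1, Ramanathan-Moduli-2} for the construction of the moduli space. The remaining work is the translation into statements about good moduli spaces and saturation described above.
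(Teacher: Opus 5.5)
Your proposal is correct and follows the paper's approach. The paper's proof is simply a citation of Ramanathan's thesis \cite{Ramanathan-Moduli-1, Ramanathan-Moduli-2}, and your translation into good moduli space language faithfully unpacks that citation: a good GIT quotient by a reductive group in characteristic $0$ gives a good moduli space, and closedness of stable points in $\mathcal{B}un_G^{d,ss}$ makes the open $\mathcal{B}un_G^{d,s}$ saturated, which yields openness of $M_G^{d,s}$ and tameness. Two side remarks are stated too loosely, though Ramanathan's work covers both:
\begin{enumerate}
\item Spaces of reductions to $P$ of fixed degree are not literally proper, since limits can acquire vertical components. What one uses instead is that the limiting reduction is at least as destabilizing.
\item For a general faithful representation $\rho$, the bundle $\rho_{\ast}E$ need not be semistable. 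Boundedness should therefore be deduced by splitting into $Z(G)^{\circ}$-isotypic summands, on which semistability is preserved in characteristic $0$.
\end{enumerate}
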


\begin{proof}
    This is the main result of Ramanathan's doctoral thesis, published posthumously as \cite{Ramanathan-Moduli-1, Ramanathan-Moduli-2}.
\end{proof}

The closed $k$-points of the moduli stack $\mathcal{B}un_G^{d,ss}$ and the $k$-points of the good moduli space $M_G^d$ are both in bijection with the set of polystable principal $G$-bundles of type $d$; by definition, a semistable principal $G$-bundle $E \to C$ is \emph{polystable} if there exists a parabolic subgroup $P \subset G$ (with Levi subgroup $L \subset P$) such that $E \cong F \times^L G$ for some stable principal $L$-bundle $F$, and such that for every character $\chi$ of $P$ trivial on $Z(G)^{\circ}$, the line bundle $\chi_{\ast}(F \times^L P)$ has degree $0$. All stable principal $G$-bundles are polystable.

\subsubsection{Canonical Reductions}

Continue to assume that the group $G$ is reductive. We recall the notion of a canonical reduction of a principal $G$-bundle, as originally defined in \cite{Ramanathan-Proceedings}; the canonical reduction of a principal bundle is the principal bundle-theoretic analog of the Harder--Narasimhan filtration of a vector bundle over a curve.

\begin{defi}
    Let $E$ be a principal $G$-bundle over $C$. A \emph{canonical reduction} of $E$ is a reduction of structure group $(P, \theta : C \to E/P)$ of $E$ to a parabolic subgroup $P \subset G$ satisfying the following two properties:
    \begin{enumerate}
        \item If $L$ is the Levi subgroup of $P$, then the principal $L$-bundle $E_P \times^P L$ is semistable, where $E_P$ is the principal $P$-bundle corresponding to $\theta$.
        \item For all non-trivial dominant characters $\chi : P \to \mathbb{G}_m$, $\deg(\chi_{\ast}E_P) > 0$.
    \end{enumerate}
\end{defi}

As shown by Atiyah--Bott \cite{Atiyah-Bott-YM} and Behrend \cite{Behrend-Bundles}, after fixing a Borel pair $(B, T)$ any principal $G$-bundle $E \to G$ admits a unique canonical reduction to a unique parabolic subgroup $P \subset G$ containing $B$. If $E$ is semistable then the canonical reduction of $E$ is $E$ itself; if $E$ is unstable then the group $P$ is a proper parabolic subgroup of $G$. In the case $G = \mathrm{GL}_r$, the canonical reduction of a principal $\mathrm{GL}_r$-bundle is given by the Harder--Narasimhan filtration of the corresponding (locally free) sheaf of sections.

In the following discussion, we fix a Borel pair $(B,T)$, and consider canonical reductions with respect to $(B,T)$.

\begin{defi}[\cite{Gurjar-Nitsure-Stratification}]
    Let $E$ be a principal $G$-bundle over $C$. The \emph{Harder--Narasimhan type} of $E$ is the unique rational cocharacter $\tau_E \in \mathfrak{X}_{\ast}(T)_{\Q}$ such that
    $$ \langle \chi, \tau_E \rangle = \begin{cases} \deg(\chi_{\ast} E_P) & \text{if } \chi \text{ is a character of } P, \\ 0 & \text{if } \chi \in I_P. \end{cases} $$
\end{defi}

As shown by Gurjar--Nitsure \cite{Gurjar-Nitsure-Stratification}, the Harder--Narasimhan type $\tau_E$ of $E$ uniquely determines the parabolic structure group $P$ of the canonical reduction of $E$. Moreover, $\tau_E$ is an element of the closed positive Weyl chamber $\overline{\mathcal{C}}^+$ determined by $(B,T)$.

\begin{teo} \label{thm:gurjar-nitsure-hn-strata}
    Fix $\tau \in \overline{\mathcal{C}}^+$. Then there exists a locally closed substack $\mathcal{B}un_G^{\tau} \subset \mathcal{B}un_G$ such that a morphism from a scheme $S \to \mathcal{B}un_G$, corresponding to the principal $G$-bundle $E_S \to C \times S$, factors through $\mathcal{B}un_G^{\tau} \subset \mathcal{B}un_G$ if and only if there exists a parabolic subgroup $P \subset G$ containing $B$ and a (necessarily unique) reduction of structure group $(P, \theta: C \times S \to E_S/P)$ such that, for all $s \in S(k)$, the reduction of structure group $(P, \theta_s : C \to E_s/P)$ is the canonical reduction of $E_s$.
\end{teo}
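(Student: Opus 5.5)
This is the Gurjar--Nitsure Harder--Narasimhan stratification for principal bundles. I would deduce it from the general theory of $\Theta$-stratifications of \cite{Halpern-Leistner}, with the Behrend/Atiyah--Bott uniqueness of canonical reductions supplying the key input.

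\textbf{Step 1: the candidate stratum.} Let $P \supseteq B$ be the parabolic determined by $\tau$, and write $P = P_G(\lambda)$ for an integral cocharacter $\lambda$ that is a positive multiple of $\tau$. By Section \ref{sec:filt-grad-bung}, $\mathcal{B}un_P$ is an open and closed substack of $\mathrm{Filt}(\mathcal{B}un_G)$ and $\mathrm{gr}$ is extension of structure group to $L = L_G(\lambda)$. Set
$$\mathcal{S}^\tau := \mathrm{gr}^{-1}(\mathcal{B}un_L^{d,ss}) \cap \mathcal{B}un_P^{\tau},$$
where $d$ is the type determined by $\tau$. Condition (2) of a canonical reduction is a degree condition on $\chi_\ast E_P$, so it is locally constant in families. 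Hence $\mathcal{S}^\tau$ is open and closed in $\mathrm{gr}^{-1}(\mathcal{B}un_L^{ss})$, and therefore open in $\mathcal{B}un_P$ by Theorem \ref{thm:good-moduli-spaces=semistable-bundles} applied to $L$. Its $S$-points are exactly the families of $P$-reductions that are fibrewise canonical.

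\textbf{Step 2: monomorphism.} I would show that $\mathrm{ev}_1 : \mathcal{S}^\tau \to \mathcal{B}un_G$ is a monomorphism.
\begin{itemize}
\item On $k$-points, injectivity is the uniqueness of canonical reductions \cite{Behrend-Bundles}.
\item On automorphisms, I need that every automorphism of $E$ preserves the canonical reduction; this again follows from uniqueness.
\item Infinitesimally, I need the vanishing
$$H^0(C, E_P \times^P (\mathfrak{g}/\mathfrak{p})) = 0.$$
This holds because $E_P \times^P (\mathfrak{g}/\mathfrak{p})$ has a filtration whose graded pieces are associated to semistable $L$-bundles of negative slope. These pieces come from $\mathfrak{g}/\mathfrak{p}$, which carries negative $\lambda$-weights, and the inequality $\langle \chi, \tau \rangle > 0$ makes their degrees negative.
\end{itemize}
This vanishing shows that $\mathcal{B}un_P \to \mathcal{B}un_G$ is unramified along $\mathcal{S}^\tau$ and that the reduction deforms uniquely. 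A standard argument then shows that the family of reductions in the statement is unique and that $\mathrm{ev}_1|_{\mathcal{S}^\tau}$ is a monomorphism.

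\textbf{Step 3: locally closed immersion (main obstacle).} To upgrade the monomorphism to a locally closed immersion, I would verify the valuative criterion of properness for $\mathrm{ev}_1|_{\mathcal{S}^\tau}$ over the closure of its image. Given a DVR family whose generic fibre has canonical reduction of type $\tau$, I would extend the reduction across the special point using properness of $E/P$. The Harder--Narasimhan type can only increase under specialization, which is semicontinuity for the Behrend polygon, so the limit reduction is canonical exactly on the locus where the type stays $\tau$. Combined with the openness from Step 1, this makes the image locally closed and the monomorphism an immersion. Equivalently, one could cite Halpern-Leistner's result that the HN type for $\Theta$-stability on $\mathcal{B}un_G$ gives a $\Theta$-stratification whose strata are of the form $\mathcal{S}^\tau$. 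The genuinely delicate input is the upper semicontinuity of the HN type, which I would take from \cite{Gurjar-Nitsure-Stratification} or Behrend.
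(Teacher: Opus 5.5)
The paper does not prove this statement; it quotes it as \cite[Theorem 3.10]{Gurjar-Nitsure-Stratification}. Your outline is therefore a genuinely different, reconstructive route.

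Steps 1 and 2 are sound. The substack $\mathcal{S}^\tau \subset \mathcal{B}un_P$ has exactly the $S$-points in the statement. The fibres of $\mathcal{B}un_P \to \mathcal{B}un_G$ are schemes of sections of $E_T/P \to C \times T$. The vanishing of $H^0(C, E_P \times^P (\mathfrak{g}/\mathfrak{p}))$ holds because negative $\lambda$-weights give negative slopes, $\lambda$ being proportional to the type of $E_L$. Together with uniqueness of canonical reductions over every algebraically closed field, this makes $\mathrm{ev}_1|_{\mathcal{S}^\tau}$ unramified and radicial, hence a monomorphism (its diagonal is a surjective open immersion). The appeal of your route is that the scheme structure on the stratum and the universal property come for free: once $\mathrm{ev}_1|_{\mathcal{S}^\tau}$ is a locally closed immersion, $\mathcal{B}un_G^\tau$ is just its image.

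Step 3, however, fails as formulated. The map $\mathcal{S}^\tau \to \overline{\mathrm{ev}_1(\mathcal{S}^\tau)}$ is not proper in general. For a semistable type ($P = G$) one has $\mathcal{S}^\tau = \mathcal{B}un_G^{d,ss}$, a dense open substack of $\mathcal{B}un_G^d$. For unstable $\tau$, the closure of the stratum can meet strata of strictly more unstable type, where the existence half of the valuative criterion fails. The openness from Step 1 does not help either: it is openness inside $\mathcal{B}un_P$, not inside $\mathcal{B}un_G$.

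What you need instead is the open substack $\mathcal{U}_\tau \subset \mathcal{B}un_G$ of bundles whose Harder--Narasimhan type is not strictly more unstable than $\tau$. Its openness is exactly upper semicontinuity of the HN type plus finiteness of HN types on quasi-compact opens. You then show $\mathcal{S}^\tau \to \mathcal{U}_\tau$ is proper; being a monomorphism, it is a closed immersion.

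Even then, properness of $E_R/P \to C_R$ only extends the generic section away from finitely many closed points of the special fibre of the surface $C_R$. To extend over those points and identify the limit as the canonical reduction, you need a saturation argument. For a dominant character, the saturation of the limiting line subbundle of the associated Pl\"ucker bundle has degree at least the generic one, with equality if and only if there is no indeterminacy. Equality is forced precisely when the special fibre lies in $\mathcal{U}_\tau$, by maximality of the HN type, and this also shows the limit reduction is canonical.

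This semicontinuity-and-limit argument is the real content of \cite[Theorem 3.10]{Gurjar-Nitsure-Stratification}. So your proof reorganizes the citation rather than replacing it.

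The $\Theta$-stratification route you mention (cf.\ the remark after the statement and \cite[Lemma 8.2]{AHLH-Existence-GMS}) gives the closed immersion into an open substack directly from the definition of a $\Theta$-stratum. To use it you must still identify $\Theta$-HN filtrations with canonical reductions. You must also note that a stratum indexed by a value of the numerical invariant can be a disjoint union of several $\mathcal{S}^\tau$.
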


\begin{proof}
    This is \cite[Theorem 3.10]{Gurjar-Nitsure-Stratification}.
\end{proof}

\begin{rmk}
    Setting $\mathcal{B}un_G^{d, \tau} := \mathcal{B}un_G^d \cap \mathcal{B}un_G^{\tau}$, the stratification $\mathcal{B}un^d_G = \bigsqcup_{\tau} \mathcal{B}un_G^{d,\tau}$ by Harder--Narasimhan type defines a $\Theta$-stratification (in the sense of \cite{Halpern-Leistner}) of the stack $\mathcal{B}un^d_G$, whose open $\Theta$-stratum is $\mathcal{B}un_G^{d,ss}$ (see also \cite[Lemma 8.2]{AHLH-Existence-GMS}).
\end{rmk}

\subsection{Conditions on the Structure Group}

In order to apply Theorem \ref{thm:main-theorem} to construct moduli spaces of principal bundles with non-reductive structure groups, we first consider moduli stacks of principal bundles $\mathcal{B}un_H$ which admit natural weak $\Theta$-actions and for which the center is isomorphic to a moduli stack of principal bundles with a reductive structure group, whence contains finite type open substacks admitting good moduli spaces (given by the semistable bundles of fixed type). In order to obtain such stacks $\mathcal{B}un_H$, we impose conditions on the structure group $H$ which ensure that the classifying stack $BH$ admits a weak $\Theta$-action with center $BL$ for a reductive group $L$.

\begin{ass} \label{ass:structure-group-bundles}
    This refers to the following set of assumptions on the structure group $H$.

    \begin{enumerate}
        \item $H$ is a connected non-reductive linear algebraic group, with Levi decomposition $H = U \rtimes L$, where $U \neq \{e\}$ is the unipotent radical of $H$ and $L$ is a reductive Levi factor, with a fixed maximal torus $T \subset L$.

        \item There is a distinguished non-trivial cocharacter $\lambda : \mathbb{G}_m \to Z(L)^{\circ}$ such that $H = P_H(\lambda)$, $L = L_H(\lambda)$ and $U = U_H(\lambda)$. In particular, for all $h \in H$ the limit $\lim_{t \to 0} \lambda(t)h\lambda(t)^{-1}$ exists in $H$, and this limit is the identity $e$ if and only if $h \in U$.
    \end{enumerate}
\end{ass}

In Section \ref{sec:stability-adapted-characters}, we seek stability conditions for which the associated notion of stability can be described fully explicitly\footnote{In our case, this will be in terms of the non-existence of reductions of structure groups with respect to the resulting decomposition \eqref{eq:iterated-semidirect-product}. In particular, the order of the $\lambda_i$ in Assumption \ref{ass:structure-group-bundles-strong} matters, since the decomposition \eqref{eq:iterated-semidirect-product} depends on the order of the $\lambda_i$.} and for which all $\Theta$-semistable bundles are $\Theta$-stable, up to filtrations trivialized by composition with the gerbe $\mathcal{B}un_H \to \mathcal{B}un_H \! \! \fatslash Z(H)'$ (where $Z(H)' = Z(L)^{\circ} \cap C_L(U)$), so that the resulting moduli spaces are coarse moduli spaces. Consequently, in Section \ref{sec:stability-adapted-characters} we will impose the following stronger set of assumptions on the structure group $H$.

\begin{ass} \label{ass:structure-group-bundles-strong}
    This refers to the following set of assumptions on the structure group $H$.

    \begin{enumerate}
        \item $H$ is a connected non-reductive linear algebraic group, with Levi decomposition $H = U \rtimes L$, where $U \neq \{e\}$ is the unipotent radical of $H$ and $L$ is a reductive Levi factor, with a fixed maximal torus $T \subset L$.

        \item Set $Z(H)' := Z(L)^{\circ} \cap C_L(U)$, where $C_L(U)$ is the $L$-centralizer of $U$, and $n := \dim Z(L)^{\circ} - \dim Z(H)'$; we assume that $n > 0$.
        
        \item There is a distinguished non-trivial homomorphism $\lambda : \mathbb{G}_m^n \to Z(L)^{\circ}$ satisfying the following properties:
        \begin{enumerate}
            \item[(i)] For each $i$, set $\lambda_i : \mathbb{G}_m \to Z(L)^{\circ}$ to be the restriction of $\lambda$ to the $i$th factor $\mathbb{G}_m \subset \mathbb{G}_m^n$. Then for all $i = 1, \dots, n$, we assume that the limit $\lim_{t \to 0} \lambda_i(t)h\lambda_i(t)^{-1}$ exists in $H$, so that we have an iterated semidirect product decomposition
            $$ H = U_1 \rtimes \left( U_2 \rtimes \left( \cdots \rtimes \left( U_n \rtimes H_n \right) \cdots \right) \right) $$
            defined inductively by $H_0 := H$ and $H_{i-1} := U_i \rtimes H_i$ for all $i = 1, \dots, n$, where $U_i = U_{H_{i-1}}(\lambda_i)$ and $H_i = L_{H_{i-1}}(\lambda_i)$. We further assume that $H_n = L$, so that $H$ is expressible as the iterated semidirect product
            \begin{equation} \label{eq:iterated-semidirect-product}
                H = U_1 \rtimes \left( U_2 \rtimes \left( \cdots \rtimes \left( U_n \rtimes L \right) \cdots \right) \right). \tag{$\dagger$}
        \end{equation}

        \item[(ii)] We assume that the homomorphism $\mathbb{G}_m^n \times Z(H)' \to Z(L)^{\circ}$ given by $(t, l) \mapsto \lambda(t)l$ is an isogeny.
        \end{enumerate}
    \end{enumerate}
\end{ass}

\begin{rmk}
    Assume Assumption \ref{ass:structure-group-bundles-strong} holds for $H$. Consider the cocharacter $\widehat{\lambda} : \mathbb{G}_m \to H$ defined by $\widehat{\lambda}(t) := \prod_{i=1}^n \lambda_i(t)$. We then have $H = P_H(\widehat{\lambda})$, $L = L_H(\widehat{\lambda})$ and $U = U_H(\widehat{\lambda})$, so Assumption \ref{ass:structure-group-bundles} holds for $H$. Moreover, since $L$ is connected and reductive, then $Z(L)^{\circ}$ is a torus, and there is an induced central isogeny $\mathbb{G}_m^n \times Z(H)' \times [L,L] \to L$, where $[L,L]$ is the derived subgroup of $L$ (cf. \cite[Exposé XXII, Proposition 6.2.4]{SGA3}).
\end{rmk}

\subsubsection{Induced Weak $\Theta$-Action on $\mathcal{B}un_H$}

Between here and Section \ref{sec:stability-adapted-characters}, we assume Assumption \ref{ass:structure-group-bundles} applies to $H$.

\begin{lem} \label{lem:weak-theta-action-bunh}
    Assume Assumption \ref{ass:structure-group-bundles} holds for $H$. Then there exists a weak action of $\Theta$ on $\mathcal{B}un_H$ with center $\mathcal{B}un_L$, for which the associated morphisms $\mathrm{gr} : \mathcal{B}un_H \to \mathcal{B}un_L$ and $\sigma : \mathcal{B}un_L \to \mathcal{B}un_H$ are given by extension of structure groups with respect to $H \to L$ and $L \subset H$ respectively.
\end{lem}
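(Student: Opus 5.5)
The plan is to mirror the proof of Lemma \ref{lem:quotient-stack-weak-theta-action-cocharacter}, working first at the level of the classifying stack $BH$ and then transporting everything along the mapping stack functor $\underline{\mathrm{Map}}(C,-)$.

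\emph{Step 1 (the classifying stack).} Apply Theorem \ref{thm:filt-grad-quotient-stack} with $X = \Spec k$, $G = H$ and the distinguished cocharacter $\lambda$ of Assumption \ref{ass:structure-group-bundles}. Since $P_H(\lambda) = H$ and $L_H(\lambda) = L$, we get open and closed immersions $BH = BP_H(\lambda) \to \mathrm{Filt}(BH)$ and $BL = BL_H(\lambda) \to \mathrm{Grad}(BH)$. Under these immersions, $\mathrm{gr}$ and $\sigma$ restrict to the maps $BH \to BL$ and $BL \to BH$ induced by the quotient $H \to L$ and the inclusion $L \subset H$. Moreover, $\mathrm{ev}_1$ restricts to the identity of $BH$. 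The square with vertices $BH$, $\mathrm{Filt}(BH)$, $BL$, $\mathrm{Grad}(BH)$ is Cartesian, because $\mathrm{gr}^{-1}$ of the component $BL_H(\lambda)$ is exactly the component $BP_H(\lambda)$ in the decomposition of Theorem \ref{thm:filt-grad-quotient-stack}.

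\emph{Step 2 (passing to $\mathcal{B}un$).} Apply $\underline{\mathrm{Map}}(C,-)$ and use the identifications $\mathrm{Filt}(\mathcal{B}un_H) = \underline{\mathrm{Map}}(C,\mathrm{Filt}(BH))$ and $\mathrm{Grad}(\mathcal{B}un_H) = \underline{\mathrm{Map}}(C,\mathrm{Grad}(BH))$ from Section \ref{sec:filt-grad-bung}. Mapping stacks preserve open and closed immersions (a connected $S$-family $C \times S$ maps into a single component) and Cartesian squares. So we obtain a Cartesian square with open and closed immersions $\mathcal{B}un_H \to \mathrm{Filt}(\mathcal{B}un_H)$ and $\mathcal{B}un_L \to \mathrm{Grad}(\mathcal{B}un_H)$, whose vertical maps are extension of structure group along $H \to L$ and $L \subset H$. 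This is precisely the $\lambda$-component $\mathcal{B}un_{P_H(\lambda)} = \mathcal{B}un_H$ displayed in Section \ref{sec:filt-grad-bung}.

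\emph{Step 3 (restricting the action and identifying the center).} By Proposition \ref{prop:filt-weak-actions}, the weak $\Theta$-action on $\mathrm{Filt}(\mathcal{B}un_H)$ restricts to the open and closed substack $\mathcal{B}un_H$; this needs $\mathcal{B}un_H$ to be locally of finite type with affine diagonal, which is recorded above. Because $\mathrm{ev}_1$ is the identity on this component, the classifying map $\phi$ of the restricted action is the inclusion itself. Hence the center $\sigma^{-1}(\mathcal{B}un_H) \subset \mathrm{Grad}(\mathcal{B}un_H)$ is computed from the Cartesian square, and it equals $\mathcal{B}un_L$. The retraction and zero section are then the maps of Step 2.

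\emph{Main obstacle.} The only delicate point is checking that $\sigma^{-1}(\mathcal{B}un_H)$ is exactly $\mathcal{B}un_L$ and not a larger union of components coming from other Weyl orbits of cocharacters. The approach is to argue at the level of $BH$: $\sigma$ sends the component $BL_H(\mu)$ into $BP_H(\mu)$, and these components are disjoint for distinct $W$-orbits. So the preimage is the single $\lambda$-component, and this statement then transports to $\mathcal{B}un$ by Step 2.
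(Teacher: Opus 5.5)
Your proposal is correct and follows essentially the same route as the paper. The paper likewise takes the Cartesian square with open and closed immersions $\mathcal{B}un_H \to \mathrm{Filt}(\mathcal{B}un_H)$ and $\mathcal{B}un_L \to \mathrm{Grad}(\mathcal{B}un_H)$, obtained in Section \ref{sec:filt-grad-bung} from Theorem \ref{thm:filt-grad-quotient-stack} and functoriality of $\underline{\mathrm{Map}}(C,-)$, and restricts the weak $\Theta$-action via Proposition \ref{prop:filt-weak-actions}; your explicit identification of the center, which the paper leaves implicit, is fine and can be shortened to $\sigma^{-1}(\mathrm{gr}^{-1}(\mathcal{B}un_L)) = (\mathrm{gr}\circ\sigma)^{-1}(\mathcal{B}un_L) = \mathcal{B}un_L$ using $\mathrm{gr}\circ\sigma \cong \mathrm{id}$.
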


\begin{proof}
    Recall that under Assumption \ref{ass:structure-group-bundles} we have $H = P_H(\lambda)$, $L = L_H(\lambda)$ and $U = U_H(\lambda)$. Consequently, as explained in Section \ref{sec:filt-grad-bung}, there is a Cartesian diagram
    \[\begin{tikzcd}[ampersand replacement=\&,cramped]
	{\mathcal{B}un_{H}} \&\& {\mathrm{Filt}(\mathcal{B}un_H)} \\
	\\
	{\mathcal{B}un_{L}} \&\& {\mathrm{Grad}(\mathcal{B}un_H)}
	\arrow[from=1-1, to=1-3]
	\arrow["{\mathrm{gr}}"', from=1-1, to=3-1]
	\arrow["\lrcorner"{anchor=center, pos=0.125}, draw=none, from=1-1, to=3-3]
	\arrow["{\mathrm{gr}}", from=1-3, to=3-3]
	\arrow["\sigma", curve={height=-18pt}, from=3-1, to=1-1]
	\arrow[from=3-1, to=3-3]
	\arrow["\sigma"', curve={height=18pt}, from=3-3, to=1-3]
    \end{tikzcd}\]
    whose horizontal arrows are open and    closed immersions, such that the morphisms $\mathrm{gr} : \mathcal{B}un_H \to \mathcal{B}un_L$ and $\sigma : \mathcal{B}un_L \to \mathcal{B}un_H$ are given by extension of structure groups. The weak $\Theta$-action on $\mathrm{Filt}(\mathcal{B}un_H)$ arising from Proposition \ref{prop:filt-weak-actions} then restricts to a weak $\Theta$-action on $\mathcal{B}un_H$ with center $\mathcal{B}un_L$.
\end{proof}

\begin{rmk}
    The weak $\Theta$-action described in the proof of Lemma \ref{lem:weak-theta-action-bunh} admits the following alternative description: the cocharacter $\lambda$ induces via Theorem \ref{thm:filt-grad-quotient-stack} an open and closed immersion of $BH$ into $\mathrm{Filt}(BH)$, and hence a weak $\Theta$-action on $BH$, whose center is $BL$. By the functoriality of the mapping stack construction (and via the canonical open and closed immersion $\Theta \to \mathrm{Filt}(\Theta)$), this yields a weak $\Theta$-action on $\mathcal{B}un_H$ with center $\mathcal{B}un_L$; this weak action is equivalent to the weak action described in the proof of Lemma \ref{lem:weak-theta-action-bunh}.
\end{rmk}

\begin{rmk}
    One can show using \cite[Theorem 1.4.8]{Halpern-Leistner} that if Assumption \ref{ass:structure-group-bundles-strong} holds for $H$ then the weak $\Theta$-action described above (using the product cocharacter $\widehat{\lambda}$) is the restriction of a weak $\Theta^n$-action on $\mathcal{B}un_H$ induced by the homomorphism $\lambda : \mathbb{G}_m^n \to Z(L)^{\circ}$, however for our purposes we do not need to make use of this.
\end{rmk}

It will be convenient to have an explicit description of the fibers of $\mathcal{I}_{\mathrm{gr}} \to \mathcal{B}un_H$ over $k$-points, along with their Lie algebras.

\begin{lem} \label{lem:explicit-descr-fibers-Igr}
    Let $E \to C$ be a principal $H$-bundle, and let $\mathcal{I}_{\mathrm{gr},E} := \mathcal{I}_{\mathrm{gr}} \times_{\mathcal{B}un_H, E} \Spec k$. Denote $\mathfrak{u} = \mathrm{Lie}(U)$. Then there are isomorphisms
    $$ \mathcal{I}_{\mathrm{gr},E} \cong H^0(E \times^H U), \quad \mathrm{Lie}(\mathcal{I}_{\mathrm{gr},E}) \cong H^0(E \times^H \mathfrak{u}), $$
    where $H$ acts on $U$ by conjugation and on $\mathfrak{u}$ by the adjoint representation.
\end{lem}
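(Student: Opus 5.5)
The plan is to identify $\mathcal{I}_{\mathrm{gr},E}$ as the kernel of a homomorphism of automorphism group schemes, and then compute that kernel using sections of associated bundles. By definition, $\mathcal{I}_{\mathrm{gr}}$ is the kernel of $\mathrm{gr}_{\ast} : \mathcal{I}_{\mathcal{B}un_H} \to \mathcal{I}_{\mathcal{B}un_L} \times_{\mathcal{B}un_L} \mathcal{B}un_H$. So its fiber over $E$ is the kernel of the homomorphism $\underline{\mathrm{Aut}}(E) \to \underline{\mathrm{Aut}}(E_L)$, where $E_L = E \times^H L$, induced by extension of structure group along $q : H \to L$.

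By the discussion of automorphisms of principal bundles above, $\underline{\mathrm{Aut}}(E)$ is the group scheme of sections of $c_H(E) = E \times^H H$, and $\underline{\mathrm{Aut}}(E_L)$ is that of $c_L(E_L) = E_L \times^L L$. Precisely, for a test scheme $S$, an $S$-point is an $H$-equivariant map $E_S \to H$ for the conjugation action. First, I will check that the homomorphism $\underline{\mathrm{Aut}}(E) \to \underline{\mathrm{Aut}}(E_L)$ is given on such equivariant maps $\xi : E_S \to H$ by $q \circ \xi$. This is an $H$-equivariant map to $L$ with $H$ acting through $q$, so it descends uniquely to an $L$-equivariant map $E_{L,S} \to L$. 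Unwinding the formula $x \mapsto x\cdot \xi(x)$ on $E \times^H L$ confirms the identification.

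The kernel then consists of equivariant maps $\xi : E_S \to H$ with $q\circ\xi \equiv e$, that is, equivariant maps $E_S \to U$. Since $U \subset H$ is normal, conjugation preserves $U$, and these maps are exactly the sections of $E \times^H U$ over $C \times S$. This holds functorially in $S$, which gives $\mathcal{I}_{\mathrm{gr},E} \cong H^0(E \times^H U)$ as group schemes; here $H^0$ is understood as the group functor $S \mapsto \Gamma(C\times S, E\times^H U)$. Representability follows from the fact that it is a closed subgroup of $\underline{\mathrm{Aut}}(E)$.

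For the Lie algebra, I will argue exactly as in Lemma \ref{lem:lie-alg-aut-gp}. A tangent vector at the identity is a $k[\varepsilon]$-point restricting to $e$, which here is an $H$-equivariant map $E[\varepsilon] \to U$ reducing to the constant map $e$. Using \cite[Exposé II]{SGA3}, these correspond to $H$-equivariant maps $E \to \mathfrak{u}$ with the adjoint action, that is, to global sections of $E\times^H \mathfrak{u}$. An alternative is to take the kernel of $\mathrm{Lie}$ applied to $\underline{\mathrm{Aut}}(E) \to \underline{\mathrm{Aut}}(E_L)$. By Lemma \ref{lem:lie-alg-aut-gp} over $S = \Spec k$, this is the map $H^0(\mathrm{Ad}_H(E)) \to H^0(\mathrm{Ad}_L(E_L))$, and its kernel is $H^0$ of the kernel bundle $E \times^H \mathfrak{u}$, by left exactness of $H^0$ applied to $0 \to \mathfrak{u} \to \mathfrak{h} \to \mathfrak{l} \to 0$.

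The main point requiring care is the compatibility of the inertia map $\mathrm{gr}_{\ast}$ with the description of automorphisms as equivariant maps. This is a routine but notationally fiddly unwinding, and I expect no real obstacle beyond it.
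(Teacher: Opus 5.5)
Your proposal is correct and follows essentially the same route as the paper: identify $\mathcal{I}_{\mathrm{gr},E}$ with the kernel of $\mathrm{Aut}(E) = H^0(c_H(E)) \to \mathrm{Aut}(E_L) = H^0(c_L(E_L))$, given by composing equivariant maps $E \to H$ with $H \to L$, so the kernel is $H^0(E \times^H U)$, and then get the Lie algebra statement by rerunning the argument of Lemma \ref{lem:lie-alg-aut-gp}. Your alternative Lie-algebra argument, via left exactness of $\mathrm{Lie}$ and of $H^0$ applied to $0 \to \mathfrak{u} \to \mathfrak{h} \to \mathfrak{l} \to 0$, is also valid but not needed.
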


\begin{proof}
    Let $E_L := E \times^H L$ be the extension of structure group along the quotient $H \to L$. Identifying $\mathrm{Aut}(E) = H^0(c_H(E))$ and $\mathrm{Aut}(E_L) = H^0(c_L(E_L))$, the homomorphism $\mathrm{Aut}(E) \to \mathrm{Aut}(E_L)$ induced by $\mathrm{gr} : \mathcal{B}un_H \to \mathcal{B}un_L$ corresponds to the homomorphism $H^0(c_H(E)) \to H^0(c_L(E_L))$ induced by composing an $H$-equivariant morphism $E \to H$ with the quotient $H \to L$. But this has kernel $H^0(E \times^H U)$, and so $\mathcal{I}_{\mathrm{gr},E} \cong H^0(E \times^H U)$. Since $\mathrm{Lie}(\mathcal{I}_{\mathrm{gr},E})$ is the vector space dual of $\omega_{\mathcal{I}_{\mathrm{gr},E}/\Spec k}$ (cf. \cite[Exposé II, Scholie 4.11.3]{SGA3}), the isomorphism $\mathrm{Lie}(\mathcal{I}_{\mathrm{gr},E}) \cong H^0(E \times^H \mathfrak{u})$ then follows by the same argument used to prove Lemma \ref{lem:lie-alg-aut-gp}.
\end{proof}

\subsubsection{Relation to Parabolic Subgroups of Reductive Groups and Unstable Harder--Narasimhan Strata}

An example of when the stronger Assumption \ref{ass:structure-group-bundles-strong} is applicable is when $H$ is a parabolic subgroup of a reductive group $G$.

\begin{ex} \label{ex:setting-parabolic-subgroup}
    Let $G$ be a connected reductive linear algebraic group. Fix a Borel pair $(B,T)$ of $G$, and let
    $$ \mathfrak{g} = \mathfrak{t} \oplus \bigoplus_{\alpha \in \Phi} \mathfrak{g}_{\alpha} $$
    be the corresponding weight space decomposition of $\mathfrak{g} = \mathrm{Lie}(G)$. Let $\Delta \subset \Phi^+$ be the corresponding set of simple roots. Fix a proper subset $I \subset \Delta$, and let $H = P_I$ be the corresponding parabolic subgroup of $G$. Let $n := |\Delta \setminus I| > 0$. Denoting by $\Phi_I$ the span of $I$ in $\Phi$, we have $H = U \rtimes L$, with
    $$ \mathrm{Lie}(L) = \mathfrak{t} \oplus \bigoplus_{\alpha \in \Phi_I} \mathfrak{g}_{\alpha}, \quad \mathrm{Lie}(U) = \bigoplus_{\beta \in \Phi^+ \setminus \Phi_I} \mathfrak{g}_{\beta}. $$
    We also have $Z(H)' = Z(G)^{\circ}$ and
    $$ Z(L)^{\circ} = \left( \bigcap_{\alpha \in I} \ker(\alpha : T \to \mathbb{G}_m) \right)^{\circ} \cong \mathbb{G}_m^{\dim Z(G)^{\circ} + n}. $$

    Fix an ordering $\alpha_1, \dots, \alpha_n, \dots, \alpha_{n+n'}$ of the simple roots with $\Delta \setminus I = \{ \alpha_1, \dots, \alpha_n \}$. For each $i = 1, \dots, n+n'$, let $\omega_i \in \mathfrak{X}_{\ast}(T)_{\Q}$ be the corresponding fundamental coweight, defined such that $\langle \alpha_i, \omega_j \rangle = \delta_{i,j}$. Choose positive integers $c_1, \dots, c_n \in \Z^{>0}$ such that for all $i = 1, \dots, n$, $\lambda_i := c_i \omega_i$ is an integral cocharacter of $T$. Define $\lambda : \mathbb{G}_m^n \to T \subset H$ by $\lambda(t_1, \dots, t_n) = \prod_{i=1}^n \lambda_i(t_i)$. The following statements then hold:
    \begin{enumerate}
        \item We have $\lambda(\mathbb{G}_m^n) \subset Z(L)^{\circ}$. Moreover, for all $h \in H$, the limit $\lim_{t \to 0} \lambda_i(t)h\lambda_i(t)^{-1}$ exists in $H$, and the iterated limit of $h$ under $\lambda_1, \dots, \lambda_n$ (in this order) is equal to $h$ if and only if $h \in L$. All of these assertions follow from the observation that if $\beta \in \Phi$ is such that $\beta \equiv_{\Phi_I} \sum_{j=1}^n b_j \alpha_j$ then (via the homomorphism $\lambda$) the torus $\mathbb{G}_m^n$ acts on the root space $\mathfrak{g}_{\beta}$ with $\Z^n$-weight $(c_1b_1, \dots, c_nb_n)$.

        \item The map $\mathbb{G}_m^n \times Z(G)^{\circ} \to Z(L)^{\circ}$ given by $(t, g) \mapsto \lambda(t)g$ is an isogeny.\footnote{Indeed, this is a homomorphism between tori of the same dimension with a $0$-dimensional kernel, hence is necessarily surjective.}
    \end{enumerate}

    Consequently, Assumption \ref{ass:structure-group-bundles-strong} applies to $H$. For each $i = 1, \dots, n$, the corresponding subgroup $U_i \subset U$ is generated by root subgroups $U_{\alpha}$ of $G$:
    $$ U_i = \left\langle U_{\alpha} : \alpha \equiv_{\Phi_I} \sum_{j=i}^n b_j \alpha_j, \ b_i > 0, b_{i+1}, \dots, b_n \geq 0 \right\rangle. $$
    We have that $U_{\alpha_i} \subset U_j$ if and only if $i = j$; in particular, each $U_i$ is a non-trivial subgroup of $U$.
\end{ex}

\begin{rmk} \label{rmk:hj24-4.1}
    In the setting of Example \ref{ex:setting-parabolic-subgroup}, take $G = \mathrm{SL}_r$, and let $H$ be the parabolic subgroup of $G$ consisting of block-upper triangular matrices whose diagonal blocks are of sizes $r_1 \times r_1, \dots, r_{\ell} \times r_{\ell}$ (in this order). Write $\gamma_i = \varepsilon_i - \varepsilon_{i+1}$, so that $\Delta = \{\gamma_1, \dots, \gamma_{r-1}\}$ and $\Delta \setminus I = \{\alpha_1, \dots, \alpha_{\ell-1}\}$, where $\alpha_i = \gamma_{s_i}$, $s_i := \sum_{j=1}^i r_j$. Take $\alpha_1, \dots, \alpha_{\ell-1}$ as the ordering of the roots in $\Delta \setminus I$. 
    
    Since the root space of $\mathfrak{sl}_r$ corresponding to the weight $\varepsilon_i - \varepsilon_j$ is given by $I + k \cdot E_{i,j}$ for all $i < j$, and since $\varepsilon_i - \varepsilon_j \equiv_{\Phi_I} \alpha_p + \cdots + \alpha_{q-1}$ whenever $s_{p-1} + 1 \leq i \leq s_p$ and $s_{q-1} + 1 \leq j \leq s_q$, it follows that the corresponding subgroup $U_i$ of the unipotent radical $U$ of $H$ is the subgroup given by the matrices in $U$ whose off-diagonal blocks are all $0$, except for those blocks in row $i$.
    
    If $U^{(0)} = \{I\}, U^{(1)}, \dots, U^{(\ell-1)}$ are as defined in \cite[Definition 4.4]{Hoskins-Jackson-QIS}, then $U_i \cong U^{(i)}/U^{(i-1)}$ for all $i = 1, \dots, \ell-1$. Consequently, from the iterated semidirect product
    $$ P = U_1 \rtimes \left( U_2 \rtimes \left( \cdots \rtimes \left( U_{\ell-1} \rtimes L \right) \cdots \right) \right) $$
    we recover the ``row filtration'' of the unipotent radical $U$ of $H \subset \mathrm{SL}_r$ considered in \cite[Section 4.1]{Hoskins-Jackson-QIS}. We consider this further in Example \ref{ex:comparing-adapt-tau-stable}.
\end{rmk}

By combining Lemma \ref{lem:weak-theta-action-bunh} with Example \ref{ex:setting-parabolic-subgroup}, our assumptions allow us to handle unstable Harder--Narasimhan strata in $\mathcal{B}un_G$, where $G$ is a reductive group.

\begin{ex} \label{ex:unstable-hn-strata-in-our-framework}
    Let $G$ be a connected reductive linear algebraic group, and fix a Borel pair $(B,T)$ of $G$. Let $\overline{\mathcal{C}}^+$ be the corresponding closed positive Weyl chamber. Fix an element $\tau \in \overline{\mathcal{C}}^+$ which is the Harder--Narasimhan type of an unstable principal $G$-bundle, and denote the corresponding Harder--Narasimhan stratum of $\mathcal{B}un_G$ by $\mathcal{B}un_G^{\tau}$, as in Theorem \ref{thm:gurjar-nitsure-hn-strata}.

    Let $H \subset G$ be the parabolic subgroup of $G$ containing $B$ determined by $\tau$, and let $L \subset H$ be the Levi subgroup of $H$. In the obvious way, we may view $\tau$ as a type for both principal $H$-bundles and principal $L$-bundles; denote by $\mathcal{B}un_H^{\tau}$ (resp. $\mathcal{B}un_L^{\tau}$) the corresponding open and closed substack of $\mathcal{B}un_H$ (resp. $\mathcal{B}un_L$). In light of Example \ref{ex:setting-parabolic-subgroup}, by Lemma \ref{lem:weak-theta-action-bunh} the stack $\mathcal{B}un_H^{\tau}$ admits a weak $\Theta$-action with center $\mathcal{B}un_L^{\tau}$.

    There is a natural morphism $\mathcal{B}un_H^{\tau} \to \mathcal{B}un_G^{\tau}$ given by extension of structure group. This is easily seen to be an isomorphism of algebraic stacks, whose inverse on $S$-points is given by associating to the unique reduction of structure group $(H, \theta: C \times S \to E_S/H)$ the principal $H$-bundle $F_S \to C \times S$ determined by $\theta$.
\end{ex}

\subsection{Substacks of $\mathcal{B}un_H$ Satisfying the Hypotheses of the Main Theorem}

Recall that to apply Theorem \ref{thm:main-theorem} to an algebraic stack $\mathcal{X}$ locally of finite presentation over $\Spec k$ with an affine diagonal, $\mathcal{X}$ is required to admit a weak $\Theta$-action with the following additional requirements:
\begin{enumerate}
    \item If $\mathcal{Z}$ is the center of the weak $\Theta$-action, then $\mathcal{Z}$ is required to admit a good moduli space.
    \item If $\mathrm{gr} : \mathcal{X} \to \mathcal{Z}$ is the retraction of $\mathcal{X}$ onto the center, then $\mathcal{I}_{\mathrm{gr}} \to \mathcal{X}$ is required to be flat and of finite presentation.
\end{enumerate}

Continue to assume Assumption \ref{ass:structure-group-bundles} holds for $H = U \rtimes L$, so that by Lemma \ref{lem:weak-theta-action-bunh} $\mathcal{B}un_H$ admits a weak $\Theta$-action with center $\mathcal{B}un_L$. In the context of this weak action, the first requirement can easily be handled by fixing a type $d \in \mathfrak{X}_{\ast}(T)$, and restricting to the preimage
$$ \mathcal{B}un_H^{d,ss} := \mathrm{gr}^{-1}(\mathcal{B}un_L^{d,ss}) $$
of the open substack $\mathcal{B}un_L^{d,ss}$ of $\mathcal{B}un_L$ parameterizing semistable $L$-bundles of type $d$; by Theorem \ref{thm:good-moduli-spaces=semistable-bundles}, the stack $\mathcal{B}un_L^{d,ss}$ admits a projective good moduli space. 

The second requirement requires more care to satisfy, since in general the relative inertia of $\mathrm{gr} : \mathcal{B}un_H^{d,ss} \to \mathcal{B}un_L^{d,ss}$ fails to be flat. We overcome this by passing to appropriate $\Theta$-invariant locally closed substacks of $\mathcal{B}un_H^{d,ss}$, defined in terms of vanishing loci of Fitting ideals of the relative sheaves of differentials associated to $\mathcal{I}_{\mathrm{gr}} \to \mathcal{B}un_H^{d,ss}$ and $\mathcal{I}_{\mathrm{gr}} \times_{\mathcal{B}un_H^{d,ss}} \mathcal{B}un_L^{d,ss} \to \mathcal{B}un_L^{d,ss}$.

\subsubsection{Flatness of the Relative Inertia in Terms of Sheaves of Differentials}

We require the following result characterising the flatness of a group stack $\Gamma \to \mathcal{S}$ representable by schemes, in terms of the sheaf of differentials $\omega_{\Gamma/\mathcal{S}} = e^{\ast}\Omega_{\Gamma/\mathcal{S}}^1$.

\begin{lem} \label{lem:smoothness-gp-schemes-cotangent-bundle}
    Let $\mathcal{S}$ be an algebraic stack over $\Spec k$, and let $\Gamma \to \mathcal{S}$ be an $\mathcal{S}$-group stack which is representable by schemes. Assume further that $\Gamma \to \mathcal{S}$ locally of finite presentation and has geometrically connected fibers. Then $\Gamma \to \mathcal{S}$ is flat if and only if $\Gamma \to \mathcal{S}$ is smooth, if and only if $\omega_{\Gamma/\mathcal{S}} = e^{\ast}\Omega_{\Gamma/\mathcal{S}}^1$ is a locally free $\mathcal{O}_S$-module.
\end{lem}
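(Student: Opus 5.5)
The plan is to reduce to the case where $\mathcal{S}$ is an affine scheme, and then to the classical characteristic-zero theory of group schemes.

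\textbf{Reduction.} All three properties in question are local on $\mathcal{S}$ for the smooth (even fppf) topology: flatness of $\Gamma \to \mathcal{S}$, smoothness of $\Gamma \to \mathcal{S}$, and local freeness of $\omega_{\Gamma/\mathcal{S}}$. Moreover, the formation of $\omega_{\Gamma/\mathcal{S}} = e^{\ast}\Omega^1_{\Gamma/\mathcal{S}}$ commutes with arbitrary base change. So after choosing a smooth atlas I will assume $\mathcal{S} = S$ is a scheme, and then that $S$ is affine and locally of finite type over $k$. If necessary, I would justify the finite type reduction by a standard limit argument, since $\Gamma \to S$ is locally of finite presentation. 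Now $\Gamma \to S$ is a group scheme with geometrically connected fibres.

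\textbf{Implications.} Smooth implies flat is trivial, and smooth implies that $\Omega^1_{\Gamma/S}$ is locally free, so its pullback $\omega_{\Gamma/S}$ is locally free.

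For flat implies smooth, I would use that $k$ has characteristic $0$. By Cartier's theorem each fibre $\Gamma_s$, a group scheme locally of finite type over a field of characteristic $0$, is smooth. A flat morphism locally of finite presentation with smooth fibres is smooth.

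For $\omega$ locally free implies smooth, I would invoke \cite[Exposé VI-B, Proposition 4.1 / Corollaire 4.4]{SGA3}. The criterion is that a group scheme locally of finite presentation over a reduced base, whose fibres are smooth and connected with locally constant dimension, is smooth. However, $S$ need not be reduced, so I will instead use the more refined statement in \cite[Exposé VII-B / Tag 047N-type]{SGA3}: a group scheme $G \to S$ locally of finite presentation, with smooth fibres and with $\omega_{G/S}$ locally free, is smooth. The route I would follow is this. The fibres are smooth by Cartier, so $\dim \Gamma_s = \operatorname{rank}\,\omega_{\Gamma_s}$ is locally constant. Since $\omega$ is locally free, the conormal sheaf of the unit section is locally free of the right rank. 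By translation, $\Omega^1_{\Gamma/S} \cong \pi^{\ast}\omega_{\Gamma/S}$, which holds for any group scheme. Hence $\Omega^1_{\Gamma/S}$ is locally free of rank equal to the fibre dimension. To conclude smoothness I then need flatness, i.e. a fibrewise-smooth morphism with locally free differentials of rank equal to the relative dimension. For a general morphism this is not enough, since nilpotent thickenings can break it. For group schemes, however, the SGA3 result (\cite[Exposé VI-B, Corollaire 1.3]{SGA3}, infinitesimal lifting via the formal group) gives smoothness.

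\textbf{Main obstacle.} I expect this last step, deducing flatness from local freeness of $\omega$ over a possibly non-reduced base, to be the main obstacle. If I cannot locate the precise SGA3 citation, I will use the following fallback. First establish the result over $S_{\mathrm{red}}$ via the reduced-base criterion, using geometric connectedness to obtain constancy of fibre dimension. Then lift through nilpotent thickenings, using that over characteristic $0$ the formal completion along $e$ is determined by the Lie algebra $\omega^{\vee}$. Once the formal group is a formal Lie group, it is smooth. Smoothness along the unit section then propagates to all of $\Gamma$ by translation, and geometric connectedness of the fibres ensures that this covers every point.
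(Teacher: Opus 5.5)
Your treatment of flat $\Leftrightarrow$ smooth is exactly the paper's: Cartier on the fibres, then the fibrewise criterion for flat lfp morphisms. The gap is in ``$\omega_{\Gamma/S}$ locally free $\Rightarrow$ smooth''.

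Your main line rests on the claim that an lfp group scheme with smooth fibres and locally free $\omega$ is smooth. This is false. Over $S=\Spec k[\epsilon]/(\epsilon^2)$, take the closed subscheme $G=\Spec k[\epsilon][x]/(x^2-1,\epsilon(x-1))$ of $\mu_{2,S}\cong S\sqcup S$. It is a subgroup scheme, namely $S\sqcup\Spec k$ (the identity component together with the reduced point $x=-1$). Its fibre $\mu_2$ is smooth, and $\Omega^1_{G/S}=0$ since $2x\,dx=0$ with $x$ a unit. Yet $G$ is not flat.

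Correspondingly, your main line never uses that the fibres are geometrically connected, so it cannot be a proof. Local freeness of $\omega$ is information at the unit section. The most it can give is smoothness along $e(S)$, i.e.\ of the identity component; connectedness of the fibres is what upgrades this to all of $\Gamma$. That is how the paper argues: it notes that the hypotheses give $\Gamma=\Gamma^{\circ}$ and then appeals to \cite[Exposé VI-B, Proposition 1.6 and Théorème 3.10]{SGA3}. So ``for group schemes, however, the SGA3 result gives smoothness'' is not a black box you can cite in the form you state. It is the whole content of the lemma.

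Your fallback contains the right two ingredients and should become the argument.

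(a) In characteristic $0$, local freeness of $\omega$ already makes $\Gamma$ smooth at the points of $e(S)$. Lift a local basis of $\omega$ to $x_1,\dots,x_n$ in the ideal $\mathcal{I}$ of the unit section. Let $D_1,\dots,D_n$ be the invariant derivations attached to the dual basis of $\omega^{\vee}=\mathrm{Lie}$, so that $e^{\ast}D_i(x_j)=\delta_{ij}$. An induction on $m$ then shows that $\mathcal{O}_S[X_1,\dots,X_n]/(X)^{m+1}\to\mathcal{O}_\Gamma/\mathcal{I}^{m+1}$, $X_i\mapsto x_i$, is an isomorphism; the factorials that appear are invertible because the characteristic is $0$. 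Passing from this to flatness at $e(s)$ is a genuine step that your sketch skips (``once the formal group is a formal Lie group, it is smooth''). You can use faithful flatness of the $\mathcal{I}$-adic completion of the Noetherian local ring $\mathcal{O}_{\Gamma,e(s)}$ when $S$ is of finite type, or cite SGA3 for it.

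(b) The smooth locus $W$ is open, contains $e(S)$, and is stable under products: translate by the tautological point after the flat base change $W\to S$. On a geometric fibre, $W_{\bar s}$ is a dense open subset of the connected group $\Gamma_{\bar s}$, so $\Gamma_{\bar s}=W_{\bar s}\cdot W_{\bar s}\subseteq W_{\bar s}$.

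The detour through $S_{\mathrm{red}}$ plays no role. Step (a) uses local freeness of $\omega$ over $S$ itself, and local constancy of the fibre dimension comes from the rank of $\omega$, not from connectedness.
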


\begin{proof}
    By descent, it suffices to consider the case where $\mathcal{S}$ (and hence $\Gamma$) is a $k$-scheme. 
    
    We first observe that $\Gamma \to \mathcal{S}$ is smooth if and only if $\Gamma \to \mathcal{S}$ is flat, since group schemes locally of finite type over an algebraically closed field of characteristic $0$ are smooth \cite[Exposé VI-B, Corollaire 1.6.1]{SGA3}, and a flat morphism locally of finite presentation with smooth geometric fibers is smooth \cite[Tag 01V8]{stacks-project}.

    The hypotheses on $\Gamma$ imply that $\Gamma = \Gamma^{\circ}$ coincides with it's own identity component over $\mathcal{S}$; as such, the result that $\Gamma \to \mathcal{S}$ is smooth if and only if $\omega_{\Gamma/\mathcal{S}}$ is locally free follows from \cite[Exposé VI-B, Proposition 1.6 and Théorème 3.10]{SGA3}.
\end{proof}

\begin{cor} \label{cor:flatness-Igr-differentials}
    Let $\mathcal{S} \to \mathcal{B}un_H$ be a morphism from an algebraic stack. Then the pullback $\mathcal{I}_{\mathrm{gr},\mathcal{S}} = \mathcal{I}_{\mathrm{gr}} \times_{\mathcal{B}un_H} \mathcal{S} \to \mathcal{S}$ is flat and of finite presentation over $\mathcal{S}$ if and only if $\mathcal{I}_{\mathrm{gr},\mathcal{S}} \to \mathcal{S}$ is smooth, if and only if $\omega_{\mathcal{I}_{\mathrm{gr},\mathcal{S}}/\mathcal{S}}$ is locally free.
\end{cor}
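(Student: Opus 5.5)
The plan is to reduce the corollary directly to Lemma \ref{lem:smoothness-gp-schemes-cotangent-bundle}, applied to the group stack $\Gamma := \mathcal{I}_{\mathrm{gr},\mathcal{S}} \to \mathcal{S}$. This requires checking three hypotheses of that lemma:
\begin{enumerate}
    \item $\Gamma \to \mathcal{S}$ is representable by schemes;
    \item it is locally of finite presentation;
    \item it has geometrically connected fibres.
\end{enumerate}
It also requires noting that, in the presence of these hypotheses, "flat and of finite presentation" is equivalent to "flat". Since all three properties and the sheaf $\omega_{\Gamma/\mathcal{S}}$ are compatible with base change along $\mathcal{S} \to \mathcal{B}un_H$, it suffices to verify the hypotheses for $\mathcal{I}_{\mathrm{gr}} \to \mathcal{B}un_H$ itself.

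\textbf{Representability and finiteness.} I would argue via the Cartesian square in the proof of Lemma \ref{lem:existence-rigidification}. The inertia $\mathcal{I}_{\mathcal{B}un_H} \to \mathcal{B}un_H$ is the base change of the diagonal of $\mathcal{B}un_H$, which is affine and of finite presentation. Hence $\mathcal{I}_{\mathcal{B}un_H}$ is representable by affine schemes and of finite presentation. The closed immersion $\mathcal{I}_{\mathrm{gr}} \to \mathcal{I}_{\mathcal{B}un_H}$ is a base change of the identity section of $\mathcal{I}_{\mathcal{B}un_L} \to \mathcal{B}un_L$. That section is of finite presentation because $\mathcal{B}un_L$ has a finitely presented affine diagonal and $\mathcal{I}_{\mathcal{B}un_L}$ is locally of finite presentation over $\mathcal{B}un_L$. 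So $\mathcal{I}_{\mathrm{gr}} \to \mathcal{B}un_H$ is affine and of finite presentation. In particular, flat is equivalent to flat of finite presentation.

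\textbf{Connectedness of fibres.} This is the main point. Over a $K$-point $E$ (with $K$ algebraically closed), Lemma \ref{lem:explicit-descr-fibers-Igr} identifies the fibre with $H^0(E \times^H U)$, the group of sections of the bundle of groups associated to the conjugation action on $U$. I would show this group is unipotent and connected. Filter $U$ by an $H$-stable central series with vector group subquotients $V_i$ (e.g.\ the lower central series; in characteristic $0$ each subquotient is a vector group with linear $H$-action via $\exp$). By induction this exhibits $H^0(E \times^H U)$ as an iterated extension of closed subgroups of the vector groups $H^0(E \times^H V_i)$.

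A cleaner alternative is to use the $\mathbb{A}^1$-action of $\lambda$. Conjugation by $\lambda(t)$ contracts $U$, which gives a contracting $\mathbb{G}_m$-action on the scheme $E \times^H U$ over $C$, and hence on $H^0$, with fixed point the identity section. An affine scheme with a contracting $\mathbb{G}_m$-action onto a single point is connected. This is exactly the argument of Remark \ref{rmk:unipotent_relative_inertia_groups}, applied after noting that $0 \cdot E = \sigma(\mathrm{gr}(E))$. For a general $E$ one needs the weak $\mathbb{A}^1$-action on $\mathcal{I}_{\mathrm{gr}}$, from the proof of Proposition \ref{prop:induced-action-on-rigidification}, to act fibrewise. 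I expect this to be the main obstacle, since that action moves $E$ to $t \cdot E$. I would therefore fall back on the unipotent-filtration argument: in characteristic $0$ every unipotent group is connected, and a closed subgroup of a vector group that is stable under scalars is a linear subspace.

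\textbf{Conclusion.} With the three hypotheses verified, Lemma \ref{lem:smoothness-gp-schemes-cotangent-bundle} gives the equivalence of flatness, smoothness and local freeness of $\omega_{\mathcal{I}_{\mathrm{gr},\mathcal{S}}/\mathcal{S}}$.
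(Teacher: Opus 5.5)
Your proposal is correct and has the same skeleton as the paper's proof. Both reduce to Lemma \ref{lem:smoothness-gp-schemes-cotangent-bundle}, get affineness and finite presentation from the finitely presented affine diagonal, and check connectedness of the fibres $H^0(E \times^H U)$ via Lemma \ref{lem:explicit-descr-fibers-Igr}. You differ only in how you prove connectedness.

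The paper does it in one line. In characteristic $0$, $\exp : \mathfrak{u} \to U$ is an $H$-equivariant isomorphism of schemes (adjoint action on $\mathfrak{u}$, conjugation on $U$). Hence $E \times^H U \cong E \times^H \mathfrak{u}$ over $C$, and $H^0(E \times^H U)$ is, as a scheme, the affine space $H^0(E \times^H \mathfrak{u})$.

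Your route through an $H$-stable central series also works. It uses left exactness of $H^0$ and the fact that in characteristic $0$ a closed subgroup of a vector group is unipotent, hence connected; the "stable under scalars" qualifier is unnecessary. But it only gives connectedness and unipotence. The paper's argument gives the stronger statement that the fibre is an affine space, and this is reused later to see that $H^0(E \times^H U)$ is split unipotent (Proposition \ref{prop:finiteness-of-auts-of-Xadpt}).

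You were right to drop the contracting-$\mathbb{G}_m$ idea. Conjugation by $\lambda(t)$ does not in general commute with the conjugation action of $U \subset H$ on $U$, so it does not descend to $E \times^H U$ unless $E$ reduces to $L$.

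Your treatment of finite presentation, via the identity section of $\mathcal{I}_{\mathcal{B}un_L} \to \mathcal{B}un_L$, is more careful than the paper's. So is your treatment of fibres over arbitrary algebraically closed $K \supseteq k$, where the paper only considers $k$-points.
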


\begin{proof}
    We will apply Lemma \ref{lem:smoothness-gp-schemes-cotangent-bundle}. Since $\mathcal{B}un_H$ has an affine diagonal of finite presentation, $\mathcal{I}_{\mathrm{gr},\mathcal{S}} \to \mathcal{S}$ is affine and of finite presentation, so in order to apply Lemma \ref{lem:smoothness-gp-schemes-cotangent-bundle} it remains to show that $\mathcal{I}_{\mathrm{gr},\mathcal{S}} \to \mathcal{S}$ has geometrically connected fibers.
    
    Without loss of generality, we may assume $\mathcal{S} = \Spec k$ and corresponds to a principal $H$-bundle $E \to C$. In this case, by Lemma \ref{lem:explicit-descr-fibers-Igr} the group $\mathcal{I}_{\mathrm{gr},E} = \mathcal{I}_{\mathrm{gr}} \times_{\mathcal{B}un_H, E} \Spec k$ is isomorphic to $H^0(E \times^H U).$ Since $U$ is unipotent and $k$ is of characteristic $0$, the exponential map $\exp : \mathfrak{u} = \mathrm{Lie}(U) \to U$ is an $H$-equivariant isomorphism of schemes (cf. \cite[Chapitre IV, Proposition 2.4.1]{Demazure-Gabriel}), so $H^0(E \times^H U)$ is isomorphic to the affine space of global sections of a vector bundle over $C$, and in particular is geometrically connected as a scheme.
\end{proof}

\subsubsection{Passing to Appropriate Fitting Ideal Loci I} \label{sec:passing-fitting-ideals-1}

Recall (cf. \cite[Tag 0C3C]{stacks-project}) that if $S$ is a scheme and if $\mathcal{F}$ is a finite type quasi-coherent $\mathcal{O}_S$-module, then there is a sequence of Fitting ideals
$$ 0 = \mathrm{Fit}_{-1}(\mathcal{F}) \subset \mathrm{Fit}_0(\mathcal{F}) \subset \mathrm{Fit}_1(\mathcal{F}) \subset \cdots \subset \mathcal{O}_S $$
with the property that, if $U \subset S$ is an open subscheme and if
\[\begin{tikzcd}[ampersand replacement=\&]
	{\bigoplus_{i \in I} \mathcal{O}_U} \& {\mathcal{O}_U^{\oplus m}} \& {\mathcal{F}|_U} \& 0
	\arrow["\psi", from=1-1, to=1-2]
	\arrow[from=1-2, to=1-3]
	\arrow[from=1-3, to=1-4]
\end{tikzcd}\]
is a presentation of $\mathcal{F}|_U$, then $\mathrm{Fit}_r(\mathcal{F})|_U$ is the ideal of $\mathcal{O}_U$ generated by the $(m-r)\times(m-r)$-minors of $\psi$. If $f : S' \to S$ is a morphisms of schemes, then for all $r \geq -1$ there is an equality
$$ f^{-1} \mathrm{Fit}_r(\mathcal{F}) \cdot \mathcal{O}_{S'} = \mathrm{Fit}_r(f^{\ast} \mathcal{F}). $$
This allows us to extend the Fitting ideal construction to the case where $S$ is replaced with an algebraic stack $\mathcal{S}$, by passing to a smooth presentation $S \to \mathcal{S}$ and applying descent. Smooth descent also implies that the Fitting ideals $\mathrm{Fit}_r(\mathcal{F})$ satisfy the following properties, extending the analogous properties for schemes \cite[Tags 05P8, 05P9]{stacks-project}:

\begin{enumerate}
    \item Let $f : S \to \mathcal{S}$ be a morphism from a scheme. Then $f^{\ast}\mathcal{F}$ is locally free of rank $r$ if and only if $f$ factors through the locally closed substack $\mathbb{V}(\mathrm{Fit}_{r-1}(\mathcal{F})) \setminus \mathbb{V}(\mathrm{Fit}_r(\mathcal{F}))$ of $\mathcal{S}$.

    \item Let $f : S \to \mathcal{S}$ be a morphism from a scheme. Then $f^{\ast}\mathcal{F}$ is flat if and only if $f$ factors through the natural morphism
    $$ \bigsqcup_{r=0}^{\infty} \left( \mathbb{V} ( \mathrm{Fit}_{r-1} ( \mathcal{F} ) ) \setminus \mathbb{V} ( \mathrm{Fit}_r ( \mathcal{F} ) ) \right) \to \mathcal{S}. $$

    \item Let $s : \Spec k \to \mathbb{V}(\mathrm{Filt}_r(\mathcal{F}))$ be a $k$-point, and let $\mathcal{F}|_s$ be the fiber of $\mathcal{F}$ over $s$. Then $\dim_k \mathcal{F}|_s > r$.
\end{enumerate}

\begin{cor} \label{cor:flatness-Igr-fitting-ideals}
    Let $\mathcal{S}$ be a locally closed substack of $\mathcal{B}un_H$. Then $\mathcal{I}_{\mathrm{gr},\mathcal{S}} \to \mathcal{S}$ is flat and of finite presentation if and only if the natural morphism
    $$ \bigsqcup_{r=0}^{\infty} \left( \mathbb{V} ( \mathrm{Fit}_{r-1} ( \omega_{\mathcal{I}_{\mathrm{gr}, \mathcal{S}}/\mathcal{S}} ) ) \setminus \mathbb{V} ( \mathrm{Fit}_r ( \omega_{\mathcal{I}_{\mathrm{gr}, \mathcal{S}}/\mathcal{S}} ) ) \right) \to \mathcal{S} $$
    is an isomorphism. Moreover, the restriction of $\mathcal{I}_{\mathrm{gr},\mathcal{S}} \to \mathcal{S}$ to the locally closed substack $\mathbb{V} ( \mathrm{Fit}_{r-1} ( \omega_{\mathcal{I}_{\mathrm{gr}, \mathcal{S}}/\mathcal{S}} ) ) \setminus \mathbb{V} ( \mathrm{Fit}_r ( \omega_{\mathcal{I}_{\mathrm{gr}, \mathcal{S}}/\mathcal{S}} ) )$ of $\mathcal{S}$ is smooth of relative dimension $r$.
\end{cor}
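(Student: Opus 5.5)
The plan is to combine Corollary \ref{cor:flatness-Igr-differentials} with the listed properties of Fitting ideals on algebraic stacks. Set $\omega := \omega_{\mathcal{I}_{\mathrm{gr},\mathcal{S}}/\mathcal{S}}$ and $\mathcal{S}_r := \mathbb{V}(\mathrm{Fit}_{r-1}(\omega)) \setminus \mathbb{V}(\mathrm{Fit}_r(\omega))$. First we check that $\omega$ is a finite type quasi-coherent $\mathcal{O}_{\mathcal{S}}$-module, so that its Fitting ideals are defined. Since $\mathcal{B}un_H$ has affine diagonal of finite presentation, $\mathcal{I}_{\mathrm{gr},\mathcal{S}} \to \mathcal{S}$ is affine and of finite presentation (as in the proof of Corollary \ref{cor:flatness-Igr-differentials}). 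Hence $\Omega^1_{\mathcal{I}_{\mathrm{gr},\mathcal{S}}/\mathcal{S}}$ is of finite type, and so is its pullback $\omega$ along the identity section. All of this can be checked smooth-locally on $\mathcal{S}$.

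For the first assertion, we argue as follows. By Corollary \ref{cor:flatness-Igr-differentials}, $\mathcal{I}_{\mathrm{gr},\mathcal{S}} \to \mathcal{S}$ is flat and of finite presentation if and only if $\omega$ is locally free. Since $\omega$ is of finite type, this is equivalent to $\omega$ being flat: a finitely presented flat module is locally free, and finite presentation of $\omega$ follows from that of the group scheme. By property (2) of Fitting ideals, applied to a smooth atlas $S \to \mathcal{S}$ and descended, $\omega$ is flat if and only if the identity of $\mathcal{S}$ factors through $\bigsqcup_r \mathcal{S}_r \to \mathcal{S}$.

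The $\mathcal{S}_r$ are pairwise disjoint locally closed substacks. Property (1) shows they carry the universal loci where $\omega$ is locally free of rank $r$, so $\bigsqcup_r \mathcal{S}_r \to \mathcal{S}$ is a monomorphism. A monomorphism admitting a section is an isomorphism. Conversely, if the map is an isomorphism, then $\omega$ pulled back to each $\mathcal{S}_r$ is locally free of rank $r$ by property (1), so $\omega$ is locally free. The main subtle point is the flat-versus-locally-free distinction together with the scheme-theoretic (not merely topological) identification of $\bigsqcup_r \mathcal{S}_r$ with $\mathcal{S}$. Both are handled by property (1)'s universal description, which holds on test schemes and therefore on the atlas.

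For the second assertion, restrict to $\mathcal{S}_r$. Fitting ideals commute with pullback, and so does the formation of $\omega$, since $\mathcal{I}_{\mathrm{gr},\mathcal{S}_r} = \mathcal{I}_{\mathrm{gr},\mathcal{S}} \times_{\mathcal{S}} \mathcal{S}_r$. Therefore, by property (1), $\omega_{\mathcal{I}_{\mathrm{gr},\mathcal{S}_r}/\mathcal{S}_r}$ is locally free of rank $r$. Applying Corollary \ref{cor:flatness-Igr-differentials} to the locally closed substack $\mathcal{S}_r \to \mathcal{B}un_H$ shows that $\mathcal{I}_{\mathrm{gr},\mathcal{S}_r} \to \mathcal{S}_r$ is smooth. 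For a smooth group scheme, the relative dimension at each point equals the rank of $\omega$ along the identity section, and the fibers are connected (Lemma \ref{lem:explicit-descr-fibers-Igr} shows they are affine spaces). Hence the relative dimension is $r$ everywhere.
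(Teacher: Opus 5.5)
Your proposal is correct and follows the paper's argument: the paper proves this in one line by applying Corollary \ref{cor:flatness-Igr-differentials} together with properties (1) and (2) of Fitting ideals to $\mathcal{F} = \omega_{\mathcal{I}_{\mathrm{gr},\mathcal{S}}/\mathcal{S}}$. You use the same ingredients and fill in the details the paper leaves implicit, namely finite presentation of $\omega$, the monomorphism-plus-section argument, and identifying the relative dimension with the rank of $\omega$ along the identity section.
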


\begin{proof}
    Taking $\mathcal{F} = \omega_{\mathcal{I}_{\mathrm{gr}, \mathcal{S}}/\mathcal{S}}$, this follows from Corollary \ref{cor:flatness-Igr-differentials} and properties $(1)$, $(2)$ above.
\end{proof}

\subsubsection{Necessary Conditions on Unipotent Automorphisms} \label{sec:necessary-conditions-unipotent-automorphisms-bundles}

Fix a non-negative integer $\delta \in \Z^{\geq 0}$. Let
$$ \mathcal{S}_{\delta} := \mathbb{V} ( \mathrm{Fit}_{\delta-1} ( \omega_{\mathcal{I}_{\mathrm{gr}}/\mathcal{B}un_H^{d,ss}} ) ) \setminus \mathbb{V} ( \mathrm{Fit}_{\delta} ( \omega_{\mathcal{I}_{\mathrm{gr}}/\mathcal{B}un_H^{d,ss}} ) ) \subset \mathcal{B}un_H^{d,ss}. $$
By Corollary \ref{cor:flatness-Igr-fitting-ideals}, the restriction of $\mathcal{I}_{\mathrm{gr}} \to \mathcal{B}un_H^{d,ss}$ to the locally closed substack $\mathcal{S}_{\delta}$ is smooth of relative dimension $\delta$, and in particular is flat. However, the weak $\Theta$-action on $\mathcal{B}un_H^{d,ss}$ in general does not restrict to a weak action on $\mathcal{S}_{\delta}$, since the set of $k$-points of $\mathcal{S}_{\delta}$ is in general not closed under the operation of replacing a principal $H$-bundle $E$ over $C$ with the associated graded $H$-bundle $F = E_L \times^L H$, where $E_L = E \times^H L$. Indeed, by Lemma \ref{lem:explicit-descr-fibers-Igr} we have
$$ \mathrm{Lie}(\mathcal{I}_{\mathrm{gr},E}) \cong H^0(E \times^H \mathfrak{u}), \quad \mathrm{Lie}(\mathcal{I}_{\mathrm{gr},F}) \cong H^0(F \times^H \mathfrak{u}) = H^0(E_L \times^L \mathfrak{u}). $$
Consequently, $E$ and $F$ are both $k$-points of $\mathcal{S}_{\delta}$ only if $\dim H^0(E \times^H \mathfrak{u}) = \dim H^0(E_L \times^L \mathfrak{u})$. Unless $U$ is abelian, this equality may fail to hold, as indicated by the following remark.

\begin{rmk} \label{rmk:unipotent-auts-bundles-non-abelian}
     Let $\{e\} = U^{(q+1)} \lhd U^{(q)} \lhd \cdots \lhd U^{(1)} = U$ be any central series such that $\{e\} = U^{(q+1)} \lhd U^{(q)} \lhd \cdots \lhd U^{(1)} = U \lhd H$ is a normal series (for example, the lower central series $U^{(i+1)} = [U^{(i)}, U]$). For each $i = 1, \dots, q, q+1$, write $\mathfrak{u}^{(i)} = \mathrm{Lie}(U^{(i)})$.

     Let $E$ and $F = E_L \times^L H$ be as before. For each $i = 1, \dots, q$, we obtain from $U^{(i)}$ a subgroup $H^0(E \times^H U^{(i)}) \subset H^0(E \times^H U) \cong \mathcal{I}_{\mathrm{gr},E}$, with Lie algebra $H^0(E \times^H \mathfrak{u}^{(i)})$. Viewed as points of $|\mathcal{B}un_H|$, $F$ lies in the closure of $E$ (since $F \cong 0 \cdot E$ under the weak $\Theta$-action on $\mathcal{B}un_H$), so upper semi-continuity for the dimension of group schemes \cite[Exposé VI-B, Proposition 4.1]{SGA3} implies that $\dim H^0(E \times^H \mathfrak{u}^{(i)}) \leq \dim H^0(F \times^H \mathfrak{u}^{(i)}) = \dim H^0(E_L \times^L \mathfrak{u}^{(i)})$ for all $i = 1, \dots, q$.

     On the other hand, since the adjoint action of $U$ on $\mathfrak{u}^{(i)}/\mathfrak{u}^{(i+1)}$ is trivial, there is an equality of associated bundles $E \times^H \mathfrak{u}^{(i)}/\mathfrak{u}^{(i+1)} = E_L \times^L \mathfrak{u}^{(i)}/\mathfrak{u}^{(i+1)}$. Moreover, as $L$ is linearly reductive, there is an isomorphism $\mathfrak{u}^{(i)} \cong \mathfrak{u}^{(i+1)} \oplus \mathfrak{u}^{(i)}/\mathfrak{u}^{(i+1)}$ of $L$-modules. As such, the composition of the quotient map $E \to E/U \cong E_L$ with the natural inclusion $E_L \subset E_L \times^L H$ induces commutative diagrams
    \[\begin{tikzcd}[ampersand replacement=\&]
	0 \& {H^0(E \times^H \mathfrak{u}^{(i+1)})} \& {H^0(E \times^H \mathfrak{u}^{(i)})} \& {H^0(E \times^H \mathfrak{u}^{(i)}/\mathfrak{u}^{(i+1)})} \& \\
	\\
	0 \& {H^0(E_L \times^L \mathfrak{u}^{(i+1)})} \& {H^0(E_L \times^L \mathfrak{u}^{(i)})} \& {H^0(E_L \times^L \mathfrak{u}^{(i)}/\mathfrak{u}^{(i+1)})} \& 0
	\arrow[from=1-1, to=1-2]
	\arrow[from=1-2, to=1-3]
	\arrow[from=1-2, to=3-2]
	\arrow[from=1-3, to=1-4]
	\arrow[from=1-3, to=3-3]
	\arrow["{{\cong}}", from=1-4, to=3-4]
	\arrow[from=3-1, to=3-2]
	\arrow[from=3-2, to=3-3]
	\arrow[from=3-3, to=3-4]
	\arrow[from=3-4, to=3-5]
    \end{tikzcd}\]
    whose rows are exact. Moreover, when $i + 1 = q$, the map $H^0(E \times^H \mathfrak{u}^{(q)}) \to H^0(E_L \times^L \mathfrak{u}^{(q)})$ is an isomorphism. From this, one may deduce by induction on $i$ (and the snake lemma) that the following are equivalent:

    \begin{enumerate}
    \item There is an equality $\dim H^0(E \times^H \mathfrak{u}) = \dim H^0(E_L \times^L \mathfrak{u})$.
    \item For all $i = 1, \dots, q$, there is an equality $\dim H^0(E \times^H \mathfrak{u}^{(i)}) = \dim H^0(E_L \times^L \mathfrak{u}^{(i)})$, and the natural map $H^0(E \times^H \mathfrak{u}^{(i)}) \to H^0(E \times^H \mathfrak{u}^{(i)}/\mathfrak{u}^{(i+1)})$ is surjective.
\end{enumerate}

The latter condition $(2)$ is implied by the vanishing of the sheaf cohomology groups $H^1(\Gamma_C(E \times^{H} \mathfrak{u}^{(i)}))$ for all $i \geq 2$. By induction on $i$, this is equivalent to the vanishing of $H^1(\Gamma_C(E \times^{H} \mathfrak{u}^{(i)}/\mathfrak{u}^{(i+1)})) = H^1(\Gamma_C(E_L \times^{L} \mathfrak{u}^{(i)}/\mathfrak{u}^{(i+1)}))$ for all $i \geq 2$. The associated locally free coherent sheaf $\Gamma_C(E_L \times^{L} \mathfrak{u}^{(i)}/\mathfrak{u}^{(i+1)})$ decomposes as a direct sum
$$ \Gamma_C(E_L \times^{L} \mathfrak{u}^{(i)}/\mathfrak{u}^{(i+1)}) \cong \bigoplus_{j=1}^{n_i} \Gamma_C(E_L \times^L V_{i,j}), $$
where $V_{i,1}, \dots, V_{i,n_i}$ are the irreducible $L$-subrepresentations of $\mathfrak{u}^{(i)}/\mathfrak{u}^{(i+1)}$. Invoking Serre duality, we obtain that a sufficient condition for condition $(2)$ to hold is that for any irreducible $L$-representation $V$ appearing as a subrepresentation of $\mathfrak{u}^{(i)}/\mathfrak{u}^{(i+1)}$ for some $i \geq 2$, we have
\begin{equation} \label{eq:generalizing-qiao-6.6.2}
    \mu(\Gamma_C(E_L \times^L V)) > 2p_a(C) - 2,
\end{equation}
where $\mu(\mathcal{E}) = \deg(\mathcal{E})/\mathrm{rank}(\mathcal{E})$ denotes the Mumford slope of a vector bundle. Since the degree of the sheaf of sections of $E_L \times^L V$ is determined by the type of $E_L$, this is a condition which depends only on the type of the associated $L$-bundle $E_L$.
\end{rmk}

\begin{rmk}
    Condition \eqref{eq:generalizing-qiao-6.6.2} is further considered in the case where $H$ is a standard parabolic subgroup of $\mathrm{GL}_r$ in Example \ref{ex:qiao-cohomological-condition}.
\end{rmk}
\subsubsection{Passing to Appropriate Fitting Ideal Loci II}

We return to the problem of exhibiting locally closed substacks $\mathcal{X} \subset \mathcal{B}un_H^{d,ss}$ to which we can apply Theorem \ref{thm:main-theorem}. As indicated by both Corollary \ref{cor:flatness-Igr-fitting-ideals} and the discussion of Section \ref{sec:necessary-conditions-unipotent-automorphisms-bundles}, the candidate substacks $\mathcal{X}$ are substacks of one of the stacks
$$ \mathcal{S}_{\delta} = \mathbb{V} ( \mathrm{Fit}_{\delta-1} ( \omega_{\mathcal{I}_{\mathrm{gr}}/\mathcal{B}un_H^{d,ss}} ) ) \setminus \mathbb{V} ( \mathrm{Fit}_{\delta} ( \omega_{\mathcal{I}_{\mathrm{gr}}/\mathcal{B}un_H^{d,ss}} ) ) \subset \mathcal{B}un_H^{d,ss} $$
which are additionally closed under the operation $E \mapsto E_L \times^L H$ of replacing a principal $H$-bundle over $C$ with its associated graded $H$-bundle. 

We proceed by first defining the centers $\mathcal{Z} = \mathcal{Z}_{\delta} \subset \mathcal{B}un_L^{d,ss}$ in terms of the Fitting ideals of the pullback of $\omega_{\mathcal{I}_{\mathrm{gr}}}$ along $\sigma : \mathcal{B}un_L^{d,ss} \to \mathcal{B}un_H^{d,ss}$. Since we still need $\mathcal{Z}$ to admit a good moduli space, and since only \emph{saturated} substacks of $\mathcal{B}un_L^{d,ss}$ inherit from $\mathcal{B}un_L^{d,ss}$ a good moduli space, we impose the additional condition that the principal $L$-bundles $E_L$ appearing in $\mathcal{Z}$ are all stable, i.e. that $\mathcal{Z} \subset \mathcal{B}un_L^{d,s}$. Since $\mathcal{B}un_L^{d,s}$ admits a tame moduli space, any locally closed substack of $\mathcal{B}un_L^{d,s}$ is saturated and hence inherits from $\mathcal{B}un_L^{d,s}$ a tame moduli space.

After defining $\mathcal{Z}$, we then define the corresponding stack $\mathcal{X} = \mathcal{X}_{\delta}$ by passing to an appropriate substack of $\mathrm{gr}^{-1}(\mathcal{Z})$, defined in terms of the Fitting ideals of the restriction of $\mathcal{I}_{\mathrm{gr}} \to \mathcal{B}un_H^{d,s} := \mathrm{gr}^{-1}(\mathcal{B}un_L^{d,s})$ to $\mathcal{Z}$, in a way which ensures that both $\mathcal{X} \to \mathcal{Z}$ has flat relative inertia and that $\mathcal{X}$ is $\Theta$-invariant with center $\mathcal{Z}$.

Fix $\delta \in \mathbb{Z}^{\geq 0}$. We consider the group stack $\mathcal{J}_{\mathrm{gr}} \to \mathcal{B}un_L^{d,s}$ obtained by taking the pullback of $\mathcal{I}_{\mathrm{gr}} \to \mathcal{B}un_H^{d,s}$ along the morphism $\sigma : \mathcal{B}un_L^{d,s} \to \mathcal{B}un_H^{d,s}$:
$$ \mathcal{J}_{\mathrm{gr}} := \mathcal{I}_{\mathrm{gr}} \times_{\mathcal{B}un_H^{d,s}} \mathcal{B}un_L^{d,s} \to \mathcal{B}un_L^{d,s}. $$
We then define $\mathcal{Z}$ to be the locally closed substack
$$ \mathcal{Z} = \mathcal{Z}_{\delta} := \mathbb{V} ( \mathrm{Fit}_{\delta-1} ( \omega_{\mathcal{J}_{\mathrm{gr}}/\mathcal{B}un_L^{d,s}} ) ) \setminus \mathbb{V} ( \mathrm{Fit}_{\delta} ( \omega_{\mathcal{J}_{\mathrm{gr}}/\mathcal{B}un_L^{d,s}} ) ) \subset \mathcal{B}un_L^{d,s}. $$
By construction, the restriction $\mathcal{J}_{\mathrm{gr}, \mathcal{Z}} = \mathcal{J}_{\mathrm{gr}} \times_{\mathcal{B}un_L^{d,s}} \mathcal{Z}$ of $\mathcal{J}_{\mathrm{gr}}$ to $\mathcal{Z}$ is smooth of relative dimension $\delta$.

\begin{lem}
    The weak $B\mathbb{G}_m$-action on $\mathcal{B}un_L$ restricts to a weak $B\mathbb{G}_m$-action on $\mathcal{Z}$.
\end{lem}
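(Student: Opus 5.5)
The plan is to use the description of the weak $B\mathbb{G}_m$-action on $\mathcal{B}un_L \subset \mathrm{Grad}(\mathcal{B}un_H)$ via Proposition \ref{prop:alt-desc-grad}. The action map $B\mathbb{G}_m \times \mathcal{B}un_L \to \mathcal{B}un_L$ sends a pair (a $\mathbb{G}_m$-torsor $P$ over $S$, an $L$-bundle $F_S$) to the twist of $F_S$ by $P$ along the central cocharacter $\lambda : \mathbb{G}_m \to Z(L)^{\circ}$, i.e. $(P \times_S F_S) \times^{\mathbb{G}_m \times L} L$ with $\mathbb{G}_m \times L \to L$, $(t,l) \mapsto \lambda(t) l$. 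It therefore suffices to show that the locally closed substack $\mathcal{Z}_\delta \subset \mathcal{B}un_L^{d,s}$ is preserved by this twisting. Since twisting along a central cocharacter preserves the type (the degree pairing changes by $\deg P \cdot \langle \chi, \lambda\rangle$ only in a way determined on $S$, and is trivial after fppf-localizing $P$) and preserves stability (reductions to parabolics of $L$ correspond bijectively, and $\lambda$ is central, so dominant-character degrees are unchanged fppf-locally), the action first restricts to $\mathcal{B}un_L^{d,s}$. I will make this precise by noting that the condition "factors through $\mathcal{B}un_L^{d,s}$" is checked on $k$-points, and over $k$ every $\mathbb{G}_m$-torsor is trivial, so the twisted bundle is isomorphic to $F$ itself.

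For $\mathcal{Z}_\delta$ itself, which is a Fitting-ideal stratum of $\omega_{\mathcal{J}_{\mathrm{gr}}/\mathcal{B}un_L^{d,s}}$ and hence not determined by $k$-points alone, I will instead argue scheme-theoretically. Let $a : B\mathbb{G}_m \times \mathcal{B}un_L^{d,s} \to \mathcal{B}un_L^{d,s}$ be the action and $p$ the projection. The key claim is that there is an isomorphism $a^{\ast}\omega_{\mathcal{J}_{\mathrm{gr}}} \cong p^{\ast}\omega_{\mathcal{J}_{\mathrm{gr}}}$. This should follow from an isomorphism of group stacks $a^{\ast}\mathcal{J}_{\mathrm{gr}} \cong p^{\ast}\mathcal{J}_{\mathrm{gr}}$: by Lemma \ref{lem:explicit-descr-fibers-Igr} in families, $\mathcal{J}_{\mathrm{gr}}$ over $F_S$ is $(\mathrm{pr}_S)_\ast\Gamma(F_S \times^L U)$. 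The twisted bundle $F_S^P$ gives $F_S^P \times^L U = (P \times_S F_S)\times^{\mathbb{G}_m \times L} U$, where $\mathbb{G}_m$ acts on $U$ through conjugation by $\lambda$. This is not literally $F_S \times^L U$, but the action of $\mathrm{Aut}$ by $\lambda$ is through automorphisms of the group scheme, so the pushforward group stack is a $P$-twisted form of $p^{\ast}\mathcal{J}_{\mathrm{gr}}$. Since Fitting ideals are fppf-local and compatible with pullback, it suffices to compare after trivializing $P$ fppf-locally on $S$, where the twist is trivial. Then $a^{-1}\mathrm{Fit}_r = p^{-1}\mathrm{Fit}_r$ as ideals on $B\mathbb{G}_m \times \mathcal{B}un_L^{d,s}$, so $a^{-1}(\mathcal{Z}_\delta) = B\mathbb{G}_m \times \mathcal{Z}_\delta$, and $a$ restricts. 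The 2-isomorphisms for associativity and unit restrict automatically because $\mathcal{Z}_\delta \to \mathcal{B}un_L$ is a monomorphism.

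I expect the main obstacle to be keeping track of the twisting in the second step, namely verifying that pulling back $\omega_{\mathcal{J}_{\mathrm{gr}}}$ along $a$ versus $p$ agree fppf-locally compatibly with base change. The cleanest route is to work on the smooth atlas $S \to B\mathbb{G}_m \times \mathcal{B}un_L^{d,s}$ with $P$ trivialized and use that $\mathrm{Fit}_r$ commutes with arbitrary pullback.
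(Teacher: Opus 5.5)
Your argument is correct, but it is more hands-on than the paper's, and most of the machinery is unnecessary. The paper argues formally. Composing the action $B\mathbb{G}_m \times \mathcal{Z} \to \mathcal{B}un_L$ with the $\mathbb{G}_m$-torsor $\mathcal{Z} = \Spec k \times \mathcal{Z} \to B\mathbb{G}_m \times \mathcal{Z}$ gives, via the unit $2$-isomorphism of the weak action, the inclusion $\mathcal{Z} \subset \mathcal{B}un_L$. Since $\mathcal{Z} \to \mathcal{B}un_L$ is a monomorphism and the torsor has sections étale-locally, the factorization descends by \cite[Proposition 3.7]{champs-algebriques}.

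That argument never uses how $\mathcal{Z}$ is built. In fact it shows that every locally closed substack of a stack with a weak $B\mathbb{G}_m$-action is invariant. Your Fitting-ideal comparison is the same mechanism, written in terms of the defining ideals. After pulling back along $\Spec k \times \mathcal{B}un_L^{d,s} \to B\mathbb{G}_m \times \mathcal{B}un_L^{d,s}$, the unit isomorphism identifies $a$ with $p$. So $\mathrm{Fit}_r(a^{\ast}\omega_{\mathcal{J}_{\mathrm{gr}}})$ and $\mathrm{Fit}_r(p^{\ast}\omega_{\mathcal{J}_{\mathrm{gr}}})$ agree on a smooth cover, hence everywhere.

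Consequently you can drop two things: the separate $k$-point check that twisting preserves type and stability (the same local argument gives this too), and the analysis of $\mathcal{J}_{\mathrm{gr}}$ at twisted bundles via Lemma~\ref{lem:explicit-descr-fibers-Igr}. Your route replaces the descent statement for factorizations through monomorphisms with the elementary fact that quasi-coherent ideals agreeing on a cover are equal. The cost is that it depends on the specific Fitting-ideal construction of $\mathcal{Z}_{\delta}$.

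One correction: your ``key claim'' of a global isomorphism $a^{\ast}\omega_{\mathcal{J}_{\mathrm{gr}}} \cong p^{\ast}\omega_{\mathcal{J}_{\mathrm{gr}}}$ is false as soon as $\omega_{\mathcal{J}_{\mathrm{gr}}} \neq 0$. Quasi-coherent sheaves on $B\mathbb{G}_m \times \mathcal{B}un_L^{d,s}$ correspond to graded quasi-coherent sheaves on $\mathcal{B}un_L^{d,s}$. Under this correspondence $p^{\ast}\omega_{\mathcal{J}_{\mathrm{gr}}}$ sits in weight $0$, whereas $a^{\ast}\omega_{\mathcal{J}_{\mathrm{gr}}}$ carries the nonzero weights of $\mathrm{Ad} \circ \lambda$ on $\mathfrak{u}$. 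You rightly retreat to the fppf-local statement, and that is all you need: Fitting ideals are ideals of the structure sheaf, and equality of ideals is local.
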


\begin{proof}
    The composition of the morphism $\mathcal{Z} = \Spec k \times \mathcal{Z} \stackrel{q \times \mathrm{id}_{\mathcal{Z}}}{\to} B\mathbb{G}_m \times \mathcal{Z}$ with the weak action $B\mathbb{G}_m \times \mathcal{Z} \to \mathcal{B}un_L$ factors through the locally closed substack $\mathcal{Z} \subset \mathcal{B}un_L$. Since $\mathcal{Z} \to \mathcal{Z} \times B\mathbb{G}_m$ is a principal bundle, and in particular admits sections locally in the étale topology, \cite[Proposition 3.7]{champs-algebriques} implies that $B\mathbb{G}_m \times \mathcal{Z} \to \mathcal{B}un_L$ must factor through $\mathcal{Z}$.
\end{proof}

Let $\mathcal{X}' := \mathrm{gr}^{-1}(\mathcal{Z})$ be the preimage of $\mathcal{Z}$ in $\mathcal{B}un_H^{d,s}$. We then define
$$ \mathcal{X} = \mathcal{X}_{\delta} := \mathbb{V} ( \mathrm{Fit}_{\delta-1} ( \omega_{\mathcal{I}_{\mathrm{gr}, \mathcal{X}'}/\mathcal{X}'} ) ) \subset \mathcal{X}' \subset \mathcal{B}un_H^{d,s}. $$

\begin{prop}
    The weak $\Theta$-action on $\mathcal{B}un_H$ restricts to a weak $\Theta$-action on $\mathcal{X}$, for which $\mathcal{X}$ has center $\mathcal{Z}$ and for which the retraction $\mathcal{X} \to \mathcal{Z}$ is given by the restriction of $\mathrm{gr} : \mathcal{B}un_H \to \mathcal{B}un_L$. Moreover, the relative inertia $\mathcal{I}_{\mathcal{X}/\mathcal{Z}} \to \mathcal{X}$ is given by $\mathcal{I}_{\mathrm{gr},\mathcal{X}} = \mathcal{I}_{\mathrm{gr}} \times_{\mathcal{B}un_H^{d,s}} \mathcal{X}$, and this is smooth of relative dimension $\delta$.
\end{prop}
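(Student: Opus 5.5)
The plan is to show, in order: (a) $\mathcal{X}'$ is $\Theta$-invariant; (b) $\omega_{\mathcal{I}_{\mathrm{gr},\mathcal{X}'}/\mathcal{X}'}$ has fibre dimension at most $\delta$ everywhere on $\mathcal{X}'$; (c) consequently $\mathcal{X}$ is the locus where this sheaf is locally free of rank $\delta$, and the inertia statement follows; (d) $\mathcal{X}$ is $\Theta$-invariant with center $\mathcal{Z}$.

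For (a), recall that $\mathrm{gr}$ is a $\Theta$-deformation retract, so $\mathrm{gr}\circ a \cong \mathrm{gr}\circ \mathrm{pr}_{\mathcal{B}un_H}$. Hence the action morphism $a : \Theta \times \mathcal{X}' \to \mathcal{B}un_H$ lands in $\mathrm{gr}^{-1}(\mathcal{Z}) = \mathcal{X}'$. Here we use the preceding lemma that $\mathcal{Z}$ is $B\mathbb{G}_m$-invariant, and the fact that $\mathcal{B}un_H^{d,s}$ is $\Theta$-invariant for the same reason.

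For (b), take a $k$-point $E$ of $\mathcal{X}'$. Since $\omega$ commutes with base change, the fibre of $\omega_{\mathcal{I}_{\mathrm{gr},\mathcal{X}'}/\mathcal{X}'}$ at $E$ is dual to $\mathrm{Lie}(\mathcal{I}_{\mathrm{gr},E}) \cong H^0(E \times^H \mathfrak{u})$ by Lemma \ref{lem:explicit-descr-fibers-Igr}. The orbit map $\mathbb{A}^1 \to \Theta \to \mathcal{X}'$, $t \mapsto t\cdot E$, exhibits $\sigma(\mathrm{gr}(E)) = 0\cdot E$ as a specialization of $E$. Upper semicontinuity, either of the dimension of group schemes or of $h^0$ in the $\mathbb{A}^1$-family of bundles $E \times^H \mathfrak{u}$, then gives
$$\dim H^0(E\times^H\mathfrak{u}) \le \dim H^0(E_L \times^L \mathfrak{u}),$$
as in Remark \ref{rmk:unipotent-auts-bundles-non-abelian}. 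The right-hand side is the fibre dimension of $\omega_{\mathcal{J}_{\mathrm{gr}}}$ at $\mathrm{gr}(E) \in \mathcal{Z}$, which equals $\delta$ by construction of $\mathcal{Z}$.

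For (c), property (3) of Fitting ideals together with (b) shows that $\mathbb{V}(\mathrm{Fit}_\delta(\omega))$ has no points in $\mathcal{X}'$. Thus $\mathrm{Fit}_\delta(\omega) = \mathcal{O}_{\mathcal{X}'}$, and
$$\mathcal{X} = \mathbb{V}(\mathrm{Fit}_{\delta-1}(\omega)) = \mathbb{V}(\mathrm{Fit}_{\delta-1}(\omega)) \setminus \mathbb{V}(\mathrm{Fit}_\delta(\omega)).$$
Since Fitting ideals commute with pullback, Corollary \ref{cor:flatness-Igr-fitting-ideals} shows that $\mathcal{I}_{\mathrm{gr}}\times_{\mathcal{B}un_H^{d,s}}\mathcal{X} \to \mathcal{X}$ is smooth of relative dimension $\delta$. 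Once we know $\mathrm{gr}(\mathcal{X}) \subset \mathcal{Z}$, this base change is also $\mathcal{I}_{\mathcal{X}/\mathcal{Z}}$, because $\mathcal{X} \to \mathcal{B}un_H$ and $\mathcal{Z} \to \mathcal{B}un_L$ are monomorphisms and relative inertia is compatible with such restrictions.

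For (d), first check that $\sigma$ maps $\mathcal{Z}$ into $\mathcal{X}$. The pullback $\sigma^\ast\omega_{\mathcal{I}_{\mathrm{gr},\mathcal{X}'}}$ equals $\omega_{\mathcal{J}_{\mathrm{gr},\mathcal{Z}}}$, which is locally free of rank $\delta$, so $\sigma^{-1}\mathrm{Fit}_{\delta-1} = 0$ and $\sigma$ factors through $\mathcal{X}$. Granting that $\Theta \times \mathcal{X}$ maps into $\mathcal{X}$, the center is $\sigma^{-1}(\mathcal{X})$ inside $\mathrm{Grad}$. This contains $\mathcal{Z}$ and is contained in the center $\mathcal{Z}$ of $\mathcal{X}'$, since $\mathcal{X} \subset \mathcal{X}'$ is closed and centers are formed by the Cartesian square of Definition \ref{defn:center-weak-action}. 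Hence the center is $\mathcal{Z}$.

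The main obstacle is showing that $a : \Theta \times \mathcal{X} \to \mathcal{X}'$ factors through the closed substack $\mathcal{X}$, i.e. that $a^{-1}\mathrm{Fit}_{\delta-1}(\omega)\cdot\mathcal{O} = 0$. Equivalently, $a^\ast\omega$ must be locally free of rank $\delta$ on $\Theta \times \mathcal{X}$. Checking points does not suffice because $\mathcal{X}$ may be non-reduced. I would work on the atlas $\mathbb{A}^1 \times \mathcal{X}$ and use the Rees-type family $\mathcal{E}$ over $C \times \mathbb{A}^1 \times \mathcal{X}$ whose fibre at $t$ is $t\cdot E$. Then $a^\ast\omega^\vee$ is $(\mathrm{pr})_\ast \Gamma(\mathcal{E}\times^H\mathfrak{u})$ by Lemma \ref{lem:lie-alg-aut-gp}. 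Over $\mathbb{G}_m \times \mathcal{X}$ the action identifies this with the pullback of the rank-$\delta$ bundle, since $a' \cong a^{\mathrm{can}}$ by Lemma \ref{lem:internal-Gm-action}. Over $0$ it is the pullback along $\sigma\circ\mathrm{gr}$, which is locally free of rank $\delta$ because $\mathcal{J}_{\mathrm{gr},\mathcal{Z}}$ is. To glue these across $t=0$, I would try cohomology and base change for the flat family $\mathcal{E}\times^H\mathfrak{u}$: the fibrewise $h^0$ is constantly $\delta$ by (b), and I would aim to show that the $h^1$ is constant as well, via Riemann--Roch, since the degree is fixed by the type. Then $\mathrm{pr}_\ast$ is locally free of rank $\delta$ and commutes with base change. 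If that fails, the fallback is the invariance argument from the proof of Proposition \ref{prop:induced-action-on-rigidification}: $a_\ast$ maps $\Theta\times\mathcal{I}_{\mathrm{gr}}$ into $\mathcal{I}_{\mathrm{gr}}$, which gives a comparison map between $a^\ast\omega$ and $\mathrm{pr}^\ast\omega$, and I would then check that this map is surjective by Nakayama at points.
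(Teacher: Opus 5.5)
Your steps (a)--(c) follow the paper, as does your reduction of the center computation in (d) to the two facts $\sigma(\mathcal{Z})\subset\mathcal{X}$ and $a(\Theta\times\mathcal{X})\subset\mathcal{X}$. The gap is in the step you yourself single out as the main obstacle: showing that $a:\Theta\times\mathcal{X}\to\mathcal{X}'$ factors through $\mathcal{X}$. Neither of your two routes establishes it.

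\emph{Your primary route.} It feeds only pointwise information into cohomology and base change: $h^0=\delta$ on fibres, and hence constant $h^1$ by Riemann--Roch. For a flat family over a curve, $\mathrm{pr}_\ast$ is locally free and commutes with base change exactly when $R^1\mathrm{pr}_\ast$ is locally free. A coherent sheaf of constant fibre rank is locally free only over a reduced base. So this route breaks down for precisely the reason you gave for why checking points is insufficient: $\mathcal{X}$, and hence $\mathbb{A}^1\times X$, may be non-reduced.

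\emph{Your fallback.} This fails outright. Over $0\in\Theta$ one has $a(0,-)\cong\sigma\circ\mathrm{gr}$, up to the $B\mathbb{G}_m$-action on $\mathcal{Z}$. So the induced homomorphism $\mathcal{I}_{\mathrm{gr},x}\to\mathcal{I}_{\mathrm{gr},0\cdot x}$ factors through $\mathrm{gr}_\ast|_{\mathcal{I}_{\mathrm{gr}}}$. That map is trivial, since $\mathcal{I}_{\mathrm{gr}}$ is the kernel of $\mathrm{gr}_\ast$; compare Remark~\ref{rmk:unipotent_relative_inertia_groups}, where acting by $0$ sends everything to the identity. Your comparison map $a^\ast\omega\to\mathrm{pr}^\ast\omega$ is therefore zero on every fibre over $0$, so it is not surjective there as soon as $\delta>0$.

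The missing idea is schematic density, which makes any analysis at $t=0$ unnecessary.
\begin{itemize}
\item The open immersion $1\times\mathrm{id}:\mathcal{X}\cong[\mathbb{G}_m\times\mathcal{X}/\mathbb{G}_m]\hookrightarrow\Theta\times\mathcal{X}$ is schematically dense. On an atlas it is $\mathbb{G}_m\times X\subset\mathbb{A}^1\times X$, and $t$ is a non-zero-divisor on $\mathcal{O}_X[t]$.
\item Its composite with $a$ is $a(1,-)$, which is isomorphic to the inclusion $\mathcal{X}\hookrightarrow\mathcal{X}'$.
\item Hence the ideal $a^{-1}\mathrm{Fit}_{\delta-1}(\omega)\cdot\mathcal{O}_{\Theta\times\mathcal{X}}$ vanishes on a schematically dense open, so it vanishes identically.
\item Therefore $a$ factors through the closed substack $\mathcal{X}=\mathbb{V}(\mathrm{Fit}_{\delta-1}(\omega))$ of $\mathcal{X}'$.
\end{itemize}
This is the paper's argument. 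You already had the needed input: your observation about the behaviour of $a$ over the $\mathbb{G}_m$-part suffices by itself. Factoring through a \emph{closed} substack is a condition on an ideal, and such a condition propagates from a schematically dense open. This is also why it matters that you first show $a$ lands in $\mathcal{X}'$, in which $\mathcal{X}$ is closed rather than merely locally closed.

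Separately, your sandwich argument for the center takes for granted two facts: that $\mathcal{Z}$ is the center of $\mathcal{X}'$, and that $\mathrm{Filt}$ and $\mathrm{Grad}$ preserve (locally) closed immersions. The paper establishes the latter and uses it to compare directly with $\sigma:\mathcal{B}un_L\to\mathcal{B}un_H$, obtaining the center as $\sigma^{-1}(\mathcal{X})=\mathcal{Z}$.
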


\begin{proof}
    Since $\mathcal{Z}$ is $B\mathbb{G}_m$-invariant, the composition of the weak $\Theta$-action $\Theta \times \mathcal{X}' \to \mathcal{B}un_H$ with $\mathrm{gr} : \mathcal{B}un_H \to \mathcal{B}un_L$ must factor through $\mathcal{Z} \subset \mathcal{B}un_L$, and so $\Theta \times \mathcal{X}' \to \mathcal{B}un_H$ must factor through $\mathrm{gr}^{-1}(\mathcal{Z}) = \mathcal{X}'$. Therefore $\mathcal{X}'$ is $\Theta$-invariant. It similarly follows that the restriction of $\sigma : \mathcal{B}un_L \to \mathcal{B}un_H$ to $\mathcal{Z}$ must factor through $\mathcal{X}'$. Since $\mathcal{X}$ is a closed substack of $\mathcal{X}'$, and since the morphism $\mathcal{X} \stackrel{1 \times \mathrm{id}_{\mathcal{X}}}{\to} \Theta \times \mathcal{X}$ is an open immersion with schematically dense image, the action morphism $\Theta \times \mathcal{X} \to \mathcal{X}'$ must factor through the closed immersion $\mathcal{X} \subset \mathcal{X}'$, and so $\mathcal{X}$ is also $\Theta$-invariant.

    Next, we show that $\mathcal{I}_{\mathrm{gr}, \mathcal{X}} \to \mathcal{X}$ is smooth of relative dimension $\delta$. By the properties of Fitting ideal loci mentioned in Section \ref{sec:passing-fitting-ideals-1}, it suffices to show that the closed substack $\mathbb{V} ( \mathrm{Fit}_{\delta} ( \omega_{\mathcal{I}_{\mathrm{gr}, \mathcal{X}'}/\mathcal{X}'} ) )$ of $\mathcal{X}'$ is empty, or equivalently (since $\mathcal{X}'$ is of finite type over $k$) that it admits no $k$-points. Let $x \in \mathcal{X}'(k)$ be a point, corresponding to the principal $H$-bundle $E$; the point $z = \mathrm{gr}(x) \in \mathcal{Z}(k)$ corresponds to the principal $L$-bundle $E_L = E \times^H L$. As in Remark \ref{rmk:unipotent-auts-bundles-non-abelian}, we have
    $$ \dim \mathrm{Lie}(\mathcal{I}_{\mathrm{gr},E}) = \dim H^0(E \times^H \mathfrak{u}) \leq \dim H^0(E_L \times^L \mathfrak{u}) = \dim \mathrm{Lie}(\mathcal{J}_{\mathrm{gr}, E_L}). $$
    On the other hand, since $\mathcal{J}_{\mathrm{gr},\mathcal{Z}} \to \mathcal{Z}$ is smooth of relative dimension $\delta$, we have $\dim \mathrm{Lie}(\mathcal{J}_{\mathrm{gr}, E_L}) = \delta$, and so $\dim \mathrm{Lie}(\mathcal{I}_{\mathrm{gr},E}) \leq \delta$. It follows that $x \not \in \mathbb{V} ( \mathrm{Fit}_{\delta} ( \omega_{\mathcal{I}_{\mathrm{gr}, \mathcal{X}'}/\mathcal{X}'} ) )(k)$, and so $\mathbb{V} ( \mathrm{Fit}_{\delta} ( \omega_{\mathcal{I}_{\mathrm{gr}, \mathcal{X}'}/\mathcal{X}'} ) ) = \emptyset$. Consequently $\mathcal{I}_{\mathrm{gr}, \mathcal{X}} \to \mathcal{X}$ is smooth of relative dimension $\delta$ as claimed.

    We now show that $\mathcal{X}$ has center $\mathcal{Z}$, and that the retraction $\mathcal{X} \to \mathcal{Z}$ is given by the restriction of $\mathrm{gr} : \mathcal{X}' \to \mathcal{Z}$. By construction, the restriction of $\sigma$ to $\mathcal{Z}$ factors through $\mathcal{X}' \subset \mathcal{B}un_L^{d,s}$. However, since $\mathcal{J}_{\mathrm{gr}, \mathcal{Z}} \to \mathcal{Z}$ is smooth of relative dimension $\delta$, the restriction $\sigma|_{\mathcal{Z}}$ further factors through
    $$ \mathbb{V} ( \mathrm{Fit}_{\delta-1} ( \omega_{\mathcal{I}_{\mathrm{gr}, \mathcal{X}'}/\mathcal{X}'} ) ) \setminus \mathbb{V} ( \mathrm{Fit}_{\delta} ( \omega_{\mathcal{I}_{\mathrm{gr}, \mathcal{X}'}/\mathcal{X}'} ) ) = \mathcal{X}. $$
    As such, we have $\sigma^{-1}(\mathcal{X}) = \mathcal{Z}$. On the other hand, since both $\mathrm{Filt}$ and $\mathrm{Grad}$ preserve both open immersions and closed immersions (cf. \cite[Corollary 1.1.8, Proposition 1.3.1]{Halpern-Leistner}), the induced morphism $\mathrm{Filt}(\mathcal{X}) \to \mathrm{Filt}(\mathcal{B}un_H)$ is a locally closed immersion, for which the composition with the weak action classifying morphism $\mathcal{X} \to \mathrm{Filt}(\mathcal{X})$ factors through the open and closed immersion $\mathcal{B}un_H \to \mathrm{Filt}(\mathcal{B}un_H)$ considered in the proof of Lemma \ref{lem:weak-theta-action-bunh}. Similarly, the induced morphism $\mathrm{Grad}(\mathcal{X}) \to \mathrm{Grad}(\mathcal{B}un_H)$ is a locally closed immersion, factoring through the open and closed immersion $\mathcal{B}un_L \to \mathrm{Grad}(\mathcal{B}un_H)$. Consequently there is a Cartesian diagram
    \[\begin{tikzcd}
	{\mathrm{Grad}(\mathcal{X})} && {\mathcal{B}un_L} \\
	\\
	{\mathrm{Filt}(\mathcal{X})} && {\mathcal{B}un_H}
	\arrow[from=1-1, to=1-3]
	\arrow[from=1-1, to=3-1]
	\arrow["\lrcorner"{anchor=center, pos=0.125}, draw=none, from=1-1, to=3-3]
	\arrow["\sigma", from=1-3, to=3-3]
	\arrow[from=3-1, to=3-3]
    \end{tikzcd}\]
    Since $\mathcal{Z} = \sigma^{-1}(\mathcal{X})$, this extends to a Cartesian diagram
    \[\begin{tikzcd}
	{\mathcal{Z}} && {\mathrm{Grad}(\mathcal{X})} && {\mathcal{B}un_L} \\
	\\
	{\mathcal{X}} && {\mathrm{Filt}(\mathcal{X})} && {\mathcal{B}un_H}
	\arrow[from=1-1, to=1-3]
	\arrow[from=1-1, to=3-1]
	\arrow["\lrcorner"{anchor=center, pos=0.125}, draw=none, from=1-1, to=3-3]
	\arrow[from=1-3, to=1-5]
	\arrow[from=1-3, to=3-3]
	\arrow["\lrcorner"{anchor=center, pos=0.125}, draw=none, from=1-3, to=3-5]
	\arrow["\sigma", from=1-5, to=3-5]
	\arrow["\phi"', from=3-1, to=3-3]
	\arrow[from=3-3, to=3-5]
    \end{tikzcd}\]
    where $\phi$ is the morphism classifying the restricted weak $\Theta$-action on $\mathcal{X}$. This shows that $\mathcal{Z}$ is the center for the weak $\Theta$-action on $\mathcal{X}$. Since the retraction $\mathrm{Filt}(\mathcal{X}) \to \mathrm{Grad}(\mathcal{X})$ is given by the restriction of the extension of structure group morphism $\mathrm{gr} : \mathcal{B}un_H \to \mathcal{B}un_L$, the same is true for the retraction $\mathcal{X} \to \mathcal{Z}$. This also implies that $\mathcal{I}_{\mathcal{X}/\mathcal{Z}} \cong \mathcal{I}_{\mathrm{gr}, \mathcal{X}}$ as group stacks over $\mathcal{X}$.
\end{proof}

\begin{cor} \label{cor:moduli-space-h-bundles-no-character}
    Let $\pi : \mathcal{Z} \to M_L^{d,\delta}$ be the quasi-projective tame moduli space of $\mathcal{Z} \subset \mathcal{B}un_L^{d,s}$. Let $\mathcal{X}^{\mathrm{rig}}$ be the rigidification of $\mathcal{X}$ by $\mathcal{I}_{\mathrm{gr}, \mathcal{X}} = \mathcal{I}_{\mathcal{X}/\mathcal{Z}}$. Then the composition of $\pi$ with $\mathrm{gr} : \mathcal{X} \to \mathcal{Z}$ factors as a gerbe $\rho : \mathcal{X} \to \mathcal{X}^{\mathrm{rig}}$ followed by a good moduli space $\pi \circ \overline{\mathrm{gr}} : \mathcal{X}^{\mathrm{rig}} \to M_L^{d,\delta}$.
\end{cor}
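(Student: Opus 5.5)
The plan is to deduce the corollary from part (2) of Theorem \ref{thm:main-theorem}, applied to the stack $\mathcal{X} = \mathcal{X}_\delta$ with the weak $\Theta$-action from the preceding proposition. That result already supplies most of what is needed: the center is $\mathcal{Z}$, the retraction is $\mathrm{gr}$, and $\mathcal{I}_{\mathcal{X}/\mathcal{Z}} = \mathcal{I}_{\mathrm{gr},\mathcal{X}}$ is smooth of relative dimension $\delta$, so in particular flat and of finite presentation. It therefore remains to check the standing hypotheses of the theorem and that $\mathcal{Z}$ has a good moduli space.

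\textbf{Standing hypotheses on $\mathcal{X}$.} The stack $\mathcal{B}un_H$ is locally of finite type over $k$ and has an affine diagonal of finite presentation. Since $\mathcal{X}$ is a locally closed substack of $\mathcal{B}un_H$, it inherits these properties: it is locally of finite presentation, has affine diagonal, and is locally Noetherian.

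\textbf{Good moduli space of the center.} By Theorem \ref{thm:good-moduli-spaces=semistable-bundles}, $\mathcal{B}un_L^{d,s} \to M_L^{d,s}$ is a quasi-projective tame moduli space, and in particular a good moduli space. I would argue that the locally closed substack $\mathcal{Z} \subset \mathcal{B}un_L^{d,s}$ inherits a good moduli space, in two steps.

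\begin{enumerate}
\item Bijectivity on $k$-points shows that every point of $\mathcal{B}un_L^{d,s}$ is closed in its fiber. Hence every open substack is saturated, and by property (7) of good moduli spaces it has a good moduli space given by an open subspace of $M_L^{d,s}$.
\item The substack $\mathcal{Z}$ is closed in such an open substack, so property (6) gives a good moduli space $M_L^{d,\delta}$ which is affine, indeed closed, over that open subspace. This is the step where care is needed: I would check that the scheme-theoretic image behaves, using the saturation argument above together with property (6).
\end{enumerate}

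From this, $M_L^{d,\delta}$ is quasi-projective. It is also tame, because points are still in bijection with isomorphism classes of $k$-points.

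\textbf{Conclusion.} Theorem \ref{thm:main-theorem}(1) gives the gerbe $\rho$, via Lemma \ref{lem:existence-rigidification}, together with $\mathrm{gr} = \overline{\mathrm{gr}} \circ \rho$. Part (2), namely Proposition \ref{prop:gms-of-Xrig}, then shows that $\pi \circ \overline{\mathrm{gr}}$ is a good moduli space, and hence that $\pi \circ \mathrm{gr} = (\pi \circ \overline{\mathrm{gr}}) \circ \rho$ factors as claimed.

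I expect no serious obstacle. The only point needing attention is the bookkeeping in the second step, namely that a locally closed substack of a stack with tame moduli space again has a (tame) good moduli space.
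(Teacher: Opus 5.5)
Your proposal is correct and follows the paper's proof. The paper feeds the preceding proposition (center $\mathcal{Z}$, relative inertia smooth of relative dimension $\delta$) into parts (1) and (2) of Theorem \ref{thm:main-theorem}, exactly as you do. It gets the tame moduli space of $\mathcal{Z}$ from the one-line remark, made just before the definition of $\mathcal{Z}_\delta$, that locally closed substacks of $\mathcal{B}un_L^{d,s}$ are saturated; your two-step argument via properties (6) and (7) of good moduli spaces is a correct expansion of that remark.
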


\begin{proof}
    This follows by combining the previous lemma with the first two statements of Theorem \ref{thm:main-theorem}.
\end{proof}

\subsubsection{Functors of Points of the Stacks}

Our next task is to give an explicit description of the groupoid $\mathcal{X}(S)$ for a scheme $S$, in terms of families of principal bundles $E_S \to C \times S$. To do this, we will make use of the following lemma, which provides a converse to the standard result that the dual of a locally free $\mathcal{O}_S$-module of finite rank is always locally free of the same rank.

\begin{lem} \label{lem:dual-sheaf-locally-free}
    Let $\mathcal{F}$ be a finite type quasi-coherent sheaf on a scheme $S$. The following are equivalent:

    \begin{enumerate}
        \item $\mathcal{F}$ is locally free of rank $r$.
        \item For all closed subschemes $S' \subset S$, $\mathcal{F}|_{S'}^{\vee} = \mathcal{H}om_{\mathcal{O}_{S'}}(\mathcal{F}|_{S'}, \mathcal{O}_{S'})$ is locally free of rank $r$.
        \item For all locally closed subschemes $S' \subset S$, $\mathcal{F}|_{S'}^{\vee} = \mathcal{H}om_{\mathcal{O}_{S'}}(\mathcal{F}|_{S'}, \mathcal{O}_{S'})$ is locally free of rank $r$.
    \end{enumerate}
\end{lem}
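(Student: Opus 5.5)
The implications $(1)\Rightarrow(3)\Rightarrow(2)$ are immediate. If $\mathcal{F}$ is locally free of rank $r$, then so is its restriction $\mathcal{F}|_{S'}$ to any locally closed subscheme $S'$, and the dual of a locally free module of rank $r$ is again locally free of rank $r$. Every closed subscheme is locally closed, which gives $(3)\Rightarrow(2)$. The real content is therefore $(2)\Rightarrow(1)$. Since both the hypothesis and the conclusion are local on $S$, I will reduce to $S = \Spec R$ affine, with $M := \Gamma(S,\mathcal{F})$ a finitely generated $R$-module.

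I first pin down the fibre dimensions. Take $S' = \Spec \kappa(s)$ for a point $s \in S$. Strictly this is only locally closed in general, but for a closed point it is closed, and I expect to work mainly with closed points. The hypothesis says that $(M \otimes \kappa(s))^{\vee}$ has dimension $r$, hence $\dim_{\kappa(s)} M\otimes\kappa(s) = r$ at every closed point. Because fibre rank of a finite type module is upper semicontinuous and every point of an affine scheme specializes to a closed point, the rank at an arbitrary point is at most $r$. To see it is also at least $r$, I use the closed subscheme $\overline{\{s\}}$ with its reduced structure, which is integral. On this integral scheme $\mathcal{F}^\vee$ is locally free of rank $r$, and the generic rank of $\mathcal{F}^\vee$ equals that of $\mathcal{F}$, since dualizing commutes with localization for finitely presented modules. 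Over an integral base one may shrink $R$ to make $M$ finitely presented; this is the point where some care is needed. The conclusion is that the fibre rank is constantly $r$.

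Next I reduce to the reduced case. For $S_{\mathrm{red}}$, a function with constant fibre rank gives local freeness by \cite[Tag 0FWH]{stacks-project}, the reference already used in the paper. So $\mathcal{F}|_{S_{\mathrm{red}}}$ is locally free of rank $r$. Locally, Nakayama then gives a surjection $\mathcal{O}_S^{r} \to \mathcal{F}$ with kernel $K$, and it remains to show that $K = 0$.

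\textbf{Main obstacle.} Showing $K = 0$ is where I expect the most difficulty. The plan is to dualize: applying $\mathcal{H}om(-,\mathcal{O}_S)$ gives an injection $\mathcal{F}^\vee \hookrightarrow \mathcal{O}_S^{r}$. Since $\mathcal{F}^\vee$ is locally free of rank $r$ and agrees with $\mathcal{O}^r$ on $S_{\mathrm{red}}$, I would expect this injection to be an isomorphism. I would try to establish this using closed subschemes of the form $S' = \Spec R/I$, chosen so that $K$ maps to a nonzero element of $(R/I)^r$ that is annihilated by every functional. That forces $\mathcal{F}|_{S'}^\vee$ to be strictly smaller than $(R/I)^r$. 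On a local Artinian thickening this would contradict the rank-$r$ freeness, by a length count. So the concrete approach is to localize at a point where $K \neq 0$, pass to an Artinian quotient $R/\mathfrak{m}^n$ in which the image of $K$ stays nonzero (Krull intersection; this needs $R$ Noetherian, which I will assume as in the application), and compare lengths of $\mathrm{Hom}(M/\mathfrak{m}^n M, R/\mathfrak{m}^n)$ with $r\cdot\ell(R/\mathfrak{m}^n)$. Matlis duality over the Artinian local ring gives the needed inequality only when $R/\mathfrak{m}^n$ is Gorenstein. If that fails, I would instead choose a Gorenstein quotient, for instance a complete intersection obtained by quotienting by a system of parameters.
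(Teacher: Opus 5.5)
The gap is in the step you flag as the main obstacle, showing $K=0$.

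First, the difficulty you describe there is misdiagnosed. Over $A=R/\mathfrak{m}^n$ no Matlis duality is needed. The module $\mathrm{Hom}_A(M/\mathfrak{m}^nM,A)$ is the submodule of $\mathrm{Hom}_A(A^r,A)=A^r$ consisting of the functionals that vanish on the image $K_A$ of $K$. If $K_A\neq 0$, some coordinate projection does not vanish on it, so the inclusion is proper and the length is $<r\,\ell(A)$, contradicting freeness of rank $r$. Your proposed fallback is also unsound: quotienting by a system of parameters does not produce a Gorenstein (let alone complete intersection) ring in general.

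More seriously, the whole thickening route rests on Krull's intersection theorem and on $R/\mathfrak{m}^n$ being Artinian, that is, on $R$ being Noetherian. Noetherianity is not a hypothesis of the lemma, and the lemma is applied in Lemma~\ref{lem:univ-prop-X-bund-given-Z} to arbitrary schemes $S$. So as written you prove at most the locally Noetherian case.

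The missing idea is that no thickening is needed: apply the hypothesis with $S'=S$. Locally you have $0\to K\to\mathcal{O}_S^{\oplus r}\xrightarrow{\phi}\mathcal{F}\to 0$ with $\mathcal{F}^\vee$ free of rank $r$. By left exactness of $\mathcal{H}om(-,\mathcal{O}_S)$ you get $0\to\mathcal{F}^\vee\xrightarrow{\phi^\vee}\mathcal{O}_S^{\oplus r}\to K^\vee$.
\begin{itemize}
\item Thus $\phi^\vee$ is an injective map between free modules of rank $r$, so $\det\phi^\vee$ is a non-zero-divisor (McCoy's theorem).
\item Via the adjugate, $\det\phi^\vee$ kills $\mathrm{coker}(\phi^\vee)$.
\item But $\mathrm{coker}(\phi^\vee)$ embeds in $K^\vee=\mathcal{H}om(K,\mathcal{O}_S)$, which has no nonzero sections killed by a non-zero-divisor.
\end{itemize}
Hence $\phi^\vee$ is an isomorphism, every coordinate functional vanishes on $K$, and $K=0$. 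This is the paper's argument, phrased there with a presentation $\mathcal{O}^{\oplus J}\xrightarrow{\psi}\mathcal{O}^{\oplus r}\xrightarrow{\phi}\mathcal{F}\to 0$ and deducing $\psi^\vee=0$, hence $\psi=0$. It works over any scheme.

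Your earlier steps are sound and give a pleasant variant of the paper's first half. Applying (2) to $\overline{\{s\}}$ with its reduced structure shows that the fibre rank is exactly $r$ at every point. This replaces the paper's Fitting stratification, which uses the locally closed strata, i.e.\ condition (3), and your version needs only closed subschemes.

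The detour through $S_{\mathrm{red}}$ and Tag 0FWH is unnecessary, since fibre rank $\le r$ already gives local generation by $r$ sections. With the ending above you also never need to restrict hypothesis (2) to an open subscheme. That restriction is not automatic, because a closed subscheme of an open subscheme need not extend to one of $S$. So your opening reduction to the affine case can simply be dropped.
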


\begin{proof}
    This is essentially \cite[Proposition 2.2.9]{Qiao-Thesis}; we repeat the proof here for completeness.

    The only non-straightforward implication is $(3) \Rightarrow (1)$. For $m \geq -1$, let $Z_m := \mathbb{V}(\mathrm{Fit}_m(\mathcal{F}))$ be the closed subscheme of $S$ defined by the $m$th Fitting ideal of $\mathcal{F}$, and for $m \geq 0$ let $T_m := Z_{m-1} \setminus Z_m$. Then, for each $m$ such that $T_m \neq \emptyset$, the restriction of $\mathcal{F}|_{T_m}$ to $T_m$ is locally free of rank $m$. But $(3)$ implies that the only non-empty $T_m$ is $T_r$, so $Z_r = \emptyset$ and $\mathrm{Fit}_r(\mathcal{F}) = \mathcal{O}_S$. By \cite[Tag 0C3F]{stacks-project} and Nakayama's lemma, this implies that there exists a Zariski open cover of $S$ with the property that over every open $U$ in the cover, $\mathcal{F}|_U$ can be generated by $r$ sections.

    Without loss of generality, we may therefore assume that $\mathcal{F}$ can be generated by $r$ sections. Pick a presentation of the form
    \[\begin{tikzcd}[ampersand replacement=\&,cramped]
	   {\mathcal{O}_S^{\oplus J}} \&\& {\mathcal{O}_S^{\oplus r}} \&\& {\mathcal{F}} \&\& {0.}
	   \arrow["\psi", from=1-1, to=1-3]
	   \arrow["\phi", from=1-3, to=1-5]
	   \arrow[from=1-5, to=1-7]
    \end{tikzcd}\]

    Since we assume $(3)$ holds, the sheaf $\mathcal{F}^{\vee}$ is locally free of rank $r$. As such, $\phi^{\vee}$ is an injective homomorphism between locally free sheaves of rank $r$, so $\mathrm{coker}(\phi^{\vee})$ is a torsion $\mathcal{O}_S$-module. On the other hand, $\mathrm{coker}(\phi^{\vee})$ is a submodule of a free $\mathcal{O}_S$-module; this implies that $\mathrm{coker}(\phi^{\vee}) = 0$, hence $\phi^{\vee}$ is an isomorphism and $\psi^{\vee} = 0$. As $\psi$ is a homomorphism of free $\mathcal{O}_S$-modules whose dual $\psi^{\vee} = 0$, we obtain $\psi = 0$, and hence $\phi$ is an isomorphism, which yields $(1)$.
\end{proof}

\begin{lem} \label{lem:univ-prop-X-bund-given-Z}
    Let $S$ be a scheme and let $S \to \mathcal{B}un_H^{d,s}$ be a morphism, corresponding to the principal $H$-bundle $E_S \to C \times S$ Assume that $S \to \mathcal{B}un_H^{d,s}$ factors through $\mathcal{X}' = \mathrm{gr}^{-1}(\mathcal{Z})$. Set $\mathcal{I}_{\mathrm{gr},S} := \mathcal{I}_{\mathrm{gr}} \times_{\mathcal{B}un_H^{d,s}} S$. Then the following are equivalent:
    \begin{enumerate}
        \item $S \to \mathcal{X}'$ factors through $\mathcal{X} \subset \mathcal{B}un_H^{d,s}$.
        \item The $\mathcal{O}_S$-module $\omega_{\mathcal{I}_{\mathrm{gr},S}/S}$ is locally free of rank $\delta$.
        \item The group scheme $\mathcal{I}_{\mathrm{gr},S} \to S$ is smooth of relative dimension $\delta$.
        \item For all closed subschemes $S' \subset S$, the $\mathcal{O}_{S'}$-module $(\mathrm{pr}_{S'})_{\ast} \Gamma_{C \times S'}(E_{S'} \times^H \mathfrak{u})$ is locally free of rank $\delta$.
    \end{enumerate}
\end{lem}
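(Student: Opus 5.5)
I will prove the equivalences in the cycle $(1)\Leftrightarrow(2)\Leftrightarrow(3)$ and then $(2)\Leftrightarrow(4)$.

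\emph{(1)$\Leftrightarrow$(2).} By definition $\mathcal{X} = \mathbb{V}(\mathrm{Fit}_{\delta-1}(\omega_{\mathcal{I}_{\mathrm{gr},\mathcal{X}'}/\mathcal{X}'}))$, and the preceding proposition shows that $\mathbb{V}(\mathrm{Fit}_{\delta}(\omega_{\mathcal{I}_{\mathrm{gr},\mathcal{X}'}/\mathcal{X}'}))$ is empty. The argument there bounds the fibre dimension of $\omega$ at every $k$-point of $\mathcal{X}'$ by $\delta$ and so applies on all of $\mathcal{X}'$. Hence $\mathcal{X}$ coincides with $\mathbb{V}(\mathrm{Fit}_{\delta-1})\setminus\mathbb{V}(\mathrm{Fit}_\delta)$. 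Since the formation of $\omega_{\mathcal{I}_{\mathrm{gr}}/-}$ and of Fitting ideals commutes with base change, we have $\omega_{\mathcal{I}_{\mathrm{gr},S}/S}$ equal to the pullback of $\omega_{\mathcal{I}_{\mathrm{gr},\mathcal{X}'}/\mathcal{X}'}$. Property (1) of Fitting ideals from Section~\ref{sec:passing-fitting-ideals-1} then gives that $S\to\mathcal{X}'$ factors through $\mathcal{X}$ if and only if this pullback is locally free of rank $\delta$.

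\emph{(2)$\Leftrightarrow$(3).} Apply Lemma~\ref{lem:smoothness-gp-schemes-cotangent-bundle} via Corollary~\ref{cor:flatness-Igr-differentials}. This shows that $\mathcal{I}_{\mathrm{gr},S}\to S$ is smooth exactly when $\omega$ is locally free. For a smooth group scheme the rank of $\omega$ equals the relative dimension, so the rank $\delta$ matches.

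\emph{(2)$\Leftrightarrow$(4).} This is the main step, and it uses Lemma~\ref{lem:dual-sheaf-locally-free} applied to $\mathcal{F}=\omega_{\mathcal{I}_{\mathrm{gr},S}/S}$, which is of finite type because $\mathcal{I}_{\mathrm{gr}}$ is of finite presentation. For any closed subscheme $S'\subset S$, base change gives $\omega_{\mathcal{I}_{\mathrm{gr},S}/S}|_{S'}\cong \omega_{\mathcal{I}_{\mathrm{gr},S'}/S'}$, because $e^\ast\Omega^1$ commutes with base change. It therefore suffices to identify the dual $\omega_{\mathcal{I}_{\mathrm{gr},S'}/S'}^\vee$ with $(\mathrm{pr}_{S'})_\ast\Gamma_{C\times S'}(E_{S'}\times^H\mathfrak{u})$.

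For this I would redo the argument of Lemma~\ref{lem:lie-alg-aut-gp} for the kernel subgroup. By Lemma~\ref{lem:explicit-descr-fibers-Igr} in families, $\mathcal{I}_{\mathrm{gr},S'}$ is the kernel of $\underline{\mathrm{Aut}}(E_{S'})\to\underline{\mathrm{Aut}}(E_{S'}\times^H L)$, namely $(\mathrm{pr}_{S'})_\ast\Gamma(E_{S'}\times^H U)$. Its points over $S''[\varepsilon]$ restricting to the identity correspond to $H$-equivariant maps $E_{S''}\to\mathfrak{u}$. This yields the identification over flat $S''\to S'$, and the identification of a sheaf on $S'$ then follows.

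The point needing care is that this description is valid for arbitrary, possibly non-flat, $S'$ when read as the dual of $\omega$. By SGA3 II, $\mathcal{H}om(\omega,\mathcal{O})$ computes tangent vectors along the identity section over any base, so I expect this to go through. Once it does, condition (4) is exactly condition (2) of Lemma~\ref{lem:dual-sheaf-locally-free}, which that lemma shows is equivalent to local freeness of $\mathcal{F}$ of rank $\delta$.
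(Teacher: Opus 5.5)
Your proposal is correct and follows the paper's proof: you get (1)$\Leftrightarrow$(2)$\Leftrightarrow$(3) from the Fitting-ideal definition of $\mathcal{X}$ (with $\mathbb{V}(\mathrm{Fit}_\delta)=\emptyset$ on $\mathcal{X}'$) and Corollary~\ref{cor:flatness-Igr-differentials}, and (2)$\Leftrightarrow$(4) by identifying $\omega^\vee_{\mathcal{I}_{\mathrm{gr},S'}/S'}$ with $(\mathrm{pr}_{S'})_\ast\Gamma_{C\times S'}(E_{S'}\times^H\mathfrak{u})$ and applying Lemma~\ref{lem:dual-sheaf-locally-free}. The only differences are cosmetic: the paper gets the identification by restricting the isomorphism of Lemma~\ref{lem:lie-alg-aut-gp} to the subgroup $\mathcal{I}_{\mathrm{gr},S'}\subset\underline{\mathrm{Aut}}(E_{S'})$ instead of redoing that argument, and your worry about non-flat $S'$ does not arise, since the identification is computed on $S'$ itself and the only base change needed is $\omega_{\mathcal{I}_{\mathrm{gr},S}/S}|_{S'}\cong\omega_{\mathcal{I}_{\mathrm{gr},S'}/S'}$, which holds for arbitrary base change.
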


\begin{proof}
    In light of Lemma \ref{lem:univ-prop-X-bund-given-Z} and Corollary \ref{cor:flatness-Igr-differentials}, it remains to show $(2) \iff (4)$. However, for any $S$-scheme $S'$, we have 
    $$ \mathcal{H}om_{\mathcal{O}_{S'}}(\omega_{\underline{\mathrm{Aut}}(E_{S'})/S'}, \mathcal{O}_{S'}) \cong (\mathrm{pr}_{S'})_{\ast} \Gamma_{C \times S'}(\mathrm{Ad}_H(E_{S'})) $$ 
    by Lemma \ref{lem:lie-alg-aut-gp}. Passing to the subgroup $\mathcal{I}_{\mathrm{gr},S} \subset \underline{\mathrm{Aut}}(E_{S'})$, this restricts to give an isomorphism
    $$ \mathcal{H}om_{\mathcal{O}_{S'}}(\omega_{\mathcal{I}_{\mathrm{gr},S'}/S'}, \mathcal{O}_{S'}) \cong (\mathrm{pr}_{S'})_{\ast} \Gamma_{C \times S'}(E_{S'} \times^H \mathfrak{u}). $$
    As such, the equivalence $(2) \iff (4)$ follows from Lemma \ref{lem:dual-sheaf-locally-free}.
\end{proof}

In order to characterize when $S \to \mathcal{B}un_H^{d,s}$ factors through $\mathcal{X}' = \mathrm{gr}^{-1}(\mathcal{Z})$, we use the following lemma.

\begin{lem} \label{lem:univ-prop-Z-bund}
    Let $S$ be a scheme and let $S \to \mathcal{B}un_L^{d,s}$ be a morphism, corresponding to the principal $L$-bundle $F_S \to C \times S$. Set $\mathcal{J}_{\mathrm{gr},S} := \mathcal{J}_{\mathrm{gr}} \times_{\mathcal{B}un_L^{d,s}} S$. Then the following are equivalent:
    \begin{enumerate}
        \item $S \to \mathcal{B}un_L^{d,s}$ factors through $\mathcal{Z} \subset \mathcal{B}un_L^{d,s}$.
        \item The $\mathcal{O}_S$-module $\omega_{\mathcal{J}_{\mathrm{gr},S}/S}$ is locally free of rank $\delta$.
        \item The group scheme $\mathcal{J}_{\mathrm{gr},S} \to S$ is smooth of relative dimension $\delta$.
        \item For all closed subschemes $S' \subset S$, the $\mathcal{O}_{S'}$-module $(\mathrm{pr}_{S'})_{\ast} \Gamma_{C \times S'}(F_{S'} \times^L \mathfrak{u})$ is locally free of rank $\delta$.
    \end{enumerate}
\end{lem}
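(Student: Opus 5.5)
The proof mirrors that of Lemma \ref{lem:univ-prop-X-bund-given-Z}, with $\mathcal{J}_{\mathrm{gr}} \to \mathcal{B}un_L^{d,s}$ in place of $\mathcal{I}_{\mathrm{gr}} \to \mathcal{B}un_H^{d,s}$. The first step is to identify $\mathcal{J}_{\mathrm{gr},S}$ concretely. Since $\mathcal{J}_{\mathrm{gr}}$ is the pullback of $\mathcal{I}_{\mathrm{gr}}$ along $\sigma$, the group scheme $\mathcal{J}_{\mathrm{gr},S}$ equals $\mathcal{I}_{\mathrm{gr},S}$ computed for the principal $H$-bundle $E_S := F_S \times^L H$. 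Arguing as in Lemma \ref{lem:explicit-descr-fibers-Igr}, but in families, $\mathcal{J}_{\mathrm{gr},S}$ is the kernel of $\underline{\mathrm{Aut}}(E_S) \to \underline{\mathrm{Aut}}(F_S)$, namely $(\mathrm{pr}_S)_{\ast}\Gamma_{C \times S}(E_S \times^H U)$. In particular it is affine, of finite presentation, and has geometrically connected fibres, by the exponential-map argument of Corollary \ref{cor:flatness-Igr-differentials}.

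With this identification, the equivalence $(2)\iff(3)$ is Lemma \ref{lem:smoothness-gp-schemes-cotangent-bundle}, equivalently Corollary \ref{cor:flatness-Igr-differentials} applied to $S \to \mathcal{B}un_L^{d,s} \xrightarrow{\sigma} \mathcal{B}un_H$; one also checks that the relative dimension is the rank of $\omega$. The equivalence $(1)\iff(2)$ uses the definition of $\mathcal{Z}$ as $\mathbb{V}(\mathrm{Fit}_{\delta-1}) \setminus \mathbb{V}(\mathrm{Fit}_\delta)$ of $\omega_{\mathcal{J}_{\mathrm{gr}}/\mathcal{B}un_L^{d,s}}$. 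Two inputs are needed:
\begin{enumerate}
\item compatibility of $\omega$ with base change, i.e.\ $\omega_{\mathcal{J}_{\mathrm{gr},S}/S} \cong f^{\ast}\omega_{\mathcal{J}_{\mathrm{gr}}/\mathcal{B}un_L^{d,s}}$, since pullback of differentials along the identity section commutes with base change;
\item property (1) of Fitting ideal loci from Section \ref{sec:passing-fitting-ideals-1}.
\end{enumerate}

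For $(2)\iff(4)$, apply Lemma \ref{lem:lie-alg-aut-gp} to $E_{S'}$ for every closed (or locally closed) $S' \subset S$, restricted to the subgroup $\mathcal{J}_{\mathrm{gr},S'}$. This gives
$$\mathcal{H}om_{\mathcal{O}_{S'}}(\omega_{\mathcal{J}_{\mathrm{gr},S'}/S'},\mathcal{O}_{S'}) \cong (\mathrm{pr}_{S'})_{\ast}\Gamma_{C\times S'}(E_{S'}\times^H\mathfrak{u}).$$
Since $E_{S'} = F_{S'}\times^L H$, we have $E_{S'}\times^H\mathfrak{u} \cong F_{S'}\times^L\mathfrak{u}$. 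Together with base change of $\omega$, Lemma \ref{lem:dual-sheaf-locally-free} then yields $(2)\iff(4)$.

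The main obstacle is restricting the isomorphism of Lemma \ref{lem:lie-alg-aut-gp} from $\underline{\mathrm{Aut}}(E_{S'})$ to the subgroup $\mathcal{J}_{\mathrm{gr},S'}$. To handle it, I would rerun the proof of Lemma \ref{lem:lie-alg-aut-gp} directly with $U$ in place of $H$. Tangent vectors at the identity of $\mathcal{J}_{\mathrm{gr},S'}$ are $H$-equivariant maps $E_{S'}[\varepsilon] \to U$ restricting to $e$. By \cite[Exposé II]{SGA3}, these correspond to $H$-equivariant maps $E_{S'}\to\mathfrak{u}$. This is the same step as in Lemma \ref{lem:univ-prop-X-bund-given-Z}, with $\mathcal{J}$ in place of $\mathcal{I}$.
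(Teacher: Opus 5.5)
Your proposal is correct and takes essentially the same route as the paper, which proves this lemma by the argument of Lemma \ref{lem:univ-prop-X-bund-given-Z}. There, $(1)\iff(2)$ comes from the Fitting-ideal definition of $\mathcal{Z}$ together with base change of $\omega$, $(2)\iff(3)$ comes from Corollary \ref{cor:flatness-Igr-differentials}, and $(2)\iff(4)$ comes from restricting the isomorphism of Lemma \ref{lem:lie-alg-aut-gp} to the subgroup and applying Lemma \ref{lem:dual-sheaf-locally-free}. Your two observations, that $\mathcal{J}_{\mathrm{gr},S}$ is the relative inertia at $E_S = F_S \times^L H$ and that $E_{S'} \times^H \mathfrak{u} \cong F_{S'} \times^L \mathfrak{u}$, are exactly the ``necessary modifications'' the paper leaves to the reader.
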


\begin{proof}
    The proof is analogous to the proof of Lemma \ref{lem:univ-prop-X-bund-given-Z}; we leave it to the interested reader to make the necessary modifications.
\end{proof}

\begin{cor} \label{cor:univ-prop-X-bund}
    Let $S$ be a scheme. Then a morphism $S \to \mathcal{X} = \mathcal{X}_{\delta} \subset \mathcal{B}un_H^{d,s}$ is the same data as a principal $H$-bundle $E_S \to C \times S$ for which the following properties hold:
    \begin{enumerate}
        \item For all $s \in S(k)$, the restricted $H$-bundle $E_s \to C$ is of type $d$, and the extension of structure group $E_s \times^H L$ is a stable principal $L$-bundle.
        \item For all closed subschemes $S' \subset S$, the $\mathcal{O}_{S'}$-module $(\mathrm{pr}_{S'})_{\ast} \Gamma_{C \times S'}(E_{S'} \times^H \mathfrak{u})$ is locally free of rank $\delta$.
        \item For all closed subschemes $S' \subset S$, the $\mathcal{O}_{S'}$-module $(\mathrm{pr}_{S'})_{\ast} \Gamma_{C \times S'}((E_{S'} \times^H L) \times^L \mathfrak{u})$ is locally free of rank $\delta$.
    \end{enumerate}
\end{cor}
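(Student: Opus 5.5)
The plan is to decompose the condition $S \to \mathcal{X}$ into two conditions: first that $S$ lands in $\mathcal{X}' = \mathrm{gr}^{-1}(\mathcal{Z})$, and then that the resulting map factors through the closed substack $\mathcal{X} \subset \mathcal{X}'$. By definition $\mathcal{X} \subset \mathcal{X}' \subset \mathcal{B}un_H^{d,s}$, and $\mathcal{B}un_H^{d,s} = \mathrm{gr}^{-1}(\mathcal{B}un_L^{d,s})$. So a morphism $S \to \mathcal{B}un_H$, i.e. a principal $H$-bundle $E_S \to C \times S$, factors through $\mathcal{B}un_H^{d,s}$ exactly when $S \to \mathcal{B}un_H \to \mathcal{B}un_L$ (the bundle $E_S \times^H L$) factors through $\mathcal{B}un_L^{d,s}$. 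By Theorem \ref{thm:good-moduli-spaces=semistable-bundles}, this is a fibrewise condition on $k$-points: each $E_s \times^H L$ has type $d$ and is stable. The type of $E_s$ as an $H$-bundle agrees with that of $E_s\times^H L$, because characters of $H$ factor through $H \to L$ (the unipotent radical $U$ has no nontrivial characters). This gives condition (1).

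Next I will handle the factorization through $\mathcal{X}' = \mathrm{gr}^{-1}(\mathcal{Z})$. This happens precisely when the $L$-bundle $F_S := E_S \times^H L$ gives a morphism $S \to \mathcal{B}un_L^{d,s}$ factoring through $\mathcal{Z}$. Applying Lemma \ref{lem:univ-prop-Z-bund} to $F_S$, this is equivalent to the $\mathcal{O}_{S'}$-module $(\mathrm{pr}_{S'})_{\ast}\Gamma_{C\times S'}(F_{S'} \times^L \mathfrak{u})$ being locally free of rank $\delta$ for all closed $S' \subset S$. To match this with condition (3), I use that restriction to $S'$ commutes with extension of structure group, so $F_{S'} = E_{S'} \times^H L$.

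Finally, given that $S$ factors through $\mathcal{X}'$, Lemma \ref{lem:univ-prop-X-bund-given-Z}, specifically the equivalence (1)$\iff$(4) there, says $S \to \mathcal{X}'$ factors through $\mathcal{X}$ if and only if condition (2) holds. The three conditions together are therefore equivalent to $S$ factoring through $\mathcal{X}$. Since $\mathcal{X} \to \mathcal{B}un_H$ is a (locally closed) monomorphism, such a factorization is a property rather than extra data, so the groupoid $\mathcal{X}(S)$ is the full subgroupoid of $\mathcal{B}un_H(S)$ on such bundles.

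The only point needing care is the identification in Lemma \ref{lem:univ-prop-Z-bund} of $\mathcal{J}_{\mathrm{gr},S}$ as the group associated with $\sigma(F_S)$. The bundle $\sigma(F_S) = F_S \times^L H$ satisfies $\sigma(F_S)\times^H \mathfrak{u} = F_S \times^L \mathfrak{u}$, which is what the dual-of-differentials computation (Lemma \ref{lem:lie-alg-aut-gp} restricted to $\mathcal{I}_{\mathrm{gr}}$, as in Lemma \ref{lem:explicit-descr-fibers-Igr}) requires. I expect no real obstacle beyond this bookkeeping.
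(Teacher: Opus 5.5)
Your proposal is correct and is essentially the paper's argument. The paper's proof just combines Lemma \ref{lem:univ-prop-Z-bund} (factorization through $\mathcal{X}' = \mathrm{gr}^{-1}(\mathcal{Z})$, giving condition (3)) with Lemma \ref{lem:univ-prop-X-bund-given-Z} (factorization through the closed substack $\mathcal{X} \subset \mathcal{X}'$, giving condition (2)); your handling of condition (1) via $\mathcal{B}un_H^{d,s} = \mathrm{gr}^{-1}(\mathcal{B}un_L^{d,s})$, Theorem \ref{thm:good-moduli-spaces=semistable-bundles} and the fact that characters of $H$ factor through $H \to L$ spells out what the paper leaves implicit.
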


\begin{proof}
    Combine Lemmas \ref{lem:univ-prop-X-bund-given-Z} and \ref{lem:univ-prop-Z-bund}.
\end{proof}

\begin{rmk} \label{rmk:functor-of-points-of-reduction}
    If $S$ is reduced, then by Grauert's theorem (as stated in \cite[Theorem 23.140]{Gortz-Wedhorn-2}) we have that condition $(2)$ is equivalent to $s \mapsto \dim_{\kappa(s)} H^0(E_s \times^H \mathfrak{u})$ being a constant function on $|S|$ with value $\delta$, and similarly for condition $(3)$.
\end{rmk}

\begin{rmk} \label{rmk:dropping-conditions-L-stabs}
    As in Remark \ref{rmk:unipotent-auts-bundles-non-abelian}, let $\{e\} = U^{(q+1)} \lhd U^{(q)} \lhd \cdots \lhd U^{(1)} = U$ be any central series such that $\{e\} = U^{(q+1)} \lhd U^{(q)} \lhd \cdots \lhd U^{(1)} = U \lhd H$ is a normal series, and write $\mathfrak{u}^{(i)} = \mathrm{Lie}(U^{(i)})$. If $S$ is a scheme and if $E_S \to C \times S$ is a principal $H$-bundle, we obtain commutative diagrams with exact rows as in Remark \ref{rmk:unipotent-auts-bundles-non-abelian}, with $(\mathrm{pr}_S)_{\ast} \Gamma_{C \times S}(E_S \times^H \mathfrak{u}^{(i)})$ in place of $H^0(E \times^H \mathfrak{u}^{(i)})$ etc.

    If each of the maps $(\mathrm{pr}_{S})_{\ast} \Gamma_{C \times S}(E_{S} \times^H \mathfrak{u}^{(i)}) \to (\mathrm{pr}_{S})_{\ast} \Gamma_{C \times S}(E_{S} \times^H \mathfrak{u}^{(i)} / \mathfrak{u}^{(i+1)})$ are surjective, by downward induction on $i$ and the snake lemma each of the vertical maps $(\mathrm{pr}_{S})_{\ast} \Gamma_{C \times S}(E_{S} \times^H \mathfrak{u}^{(i)}) \to (\mathrm{pr}_{S})_{\ast} \Gamma_{C \times S}((E_S \times^H L) \times^L \mathfrak{u}^{(i)})$ is an isomorphism; taking $i = 1$ then gives that $(\mathrm{pr}_{S})_{\ast} \Gamma_{C \times S}(E_{S} \times^H \mathfrak{u}) \to (\mathrm{pr}_{S})_{\ast} \Gamma_{C \times S}((E_S \times^H L) \times^L \mathfrak{u})$ is an isomorphism. In particular, if $(\mathrm{pr}_{S})_{\ast} \Gamma_{C \times S}(E_{S} \times^H \mathfrak{u})$ is locally free of rank $\delta$ then so is $(\mathrm{pr}_{S})_{\ast} \Gamma_{C \times S}((E_S \times^H L) \times^L \mathfrak{u})$.

    It follows that if the type $d$ of the fibers of $E_S \to C \times S$ is chosen such that the inequality \eqref{eq:generalizing-qiao-6.6.2} in Remark \ref{rmk:unipotent-auts-bundles-non-abelian} holds, so that for any $s \in S(k)$ and for any $i \geq 2$ we have $H^1(\Gamma_C(E_s \times^H \mathfrak{u}^{(i)})) = 0$, then by a corollary of Grothendieck's cohomology and base change theorem (cf. \cite[Corollary 23.144]{Gortz-Wedhorn-2}) we have $R^1(\mathrm{pr}_{S'})_{\ast} \Gamma_{C \times S'}(E_{S'} \times^H \mathfrak{u}^{(i)}) = 0$ for all $i \geq 2$ and all closed subschemes $S' \subset S$. Consequently, for any closed subscheme $S' \subset S$, each of the maps $(\mathrm{pr}_{S'})_{\ast} \Gamma_{C \times S'}(E_{S'} \times^H \mathfrak{u}^{(i)}) \to (\mathrm{pr}_{S'})_{\ast} \Gamma_{C \times S'}(E_{S'} \times^H \mathfrak{u}^{(i)} / \mathfrak{u}^{(i+1)})$ are surjective. As such, in this case condition $(3)$ of Corollary \ref{cor:univ-prop-X-bund} is automatically implied by condition $(2)$.
\end{rmk}

\subsection{Relatively Projective Moduli Spaces from Characters of the Levi}

Let $\mathcal{X} = \mathcal{X}_{\delta}$ and $\mathcal{Z} = \mathcal{Z}_{\delta}$ be as in the previous subsection. We also continue to assume Assumption \ref{ass:structure-group-bundles} holds for $H$. From now on, we take $\mathrm{gr} : \mathcal{X} \to \mathcal{Z}$ to be the retraction of $\mathcal{X}$ onto the center $\mathcal{Z}$ (given by extension of structure group with respect to $H \to L$) and write $\mathcal{I}_{\mathrm{gr}}$ in place of $\mathcal{I}_{\mathrm{gr}, \mathcal{X}}$, so that now $\mathcal{I}_{\mathrm{gr}} = \mathcal{I}_{\mathcal{X}/\mathcal{Z}}$ is the relative inertia of $\mathrm{gr} : \mathcal{X} \to \mathcal{Z}$. As in the statement of Corollary \ref{cor:moduli-space-h-bundles-no-character}, write $\mathcal{X}^{\mathrm{rig}} := \mathcal{X} \!\!\! \fatslash \mathcal{I}_{\mathrm{gr}}$, and write $\rho : \mathcal{X} \to \mathcal{X}^{\mathrm{rig}}$ for the rigidification morphism. Also write $\overline{\mathrm{gr}} : \mathcal{X}^{\mathrm{rig}} \to \mathcal{Z}$ for the induced morphism on the rigidification. By Corollary \ref{cor:moduli-space-h-bundles-no-character}, the morphism $\mathcal{X} \to M_L^{d, \delta}$ factors as $\rho$ followed by a good moduli space. However, this is far from being a coarse moduli space, since any principal $H$-bundle $E$ corresponding to a point of $\mathcal{X}$ is identified with the principal $L$-bundle $E_L = E \times^H L$. To rectify this we introduce stability conditions on $\mathcal{X}$, in our case arising from characters of $L$.

\subsubsection{Stability with Respect to an Arbitrary Character}

Fix a base point $p_0 \in C(k)$ (the choice of $p_0$ does not matter for our purposes). This defines an an evaluation morphism $\mathrm{ev}_{p_0} : \mathcal{B}un_L \to BL$, given by the composition
\[\begin{tikzcd}[ampersand replacement=\&]
	{\mathcal{B}un_L = \underline{\mathrm{Map}}(C, BL)} \&\& {C \times \underline{\mathrm{Map}}(C, BL)} \&\& BL,
	\arrow["{p_0 \times \mathrm{id}_{\underline{\mathrm{Map}}(C, BL)}}", from=1-1, to=1-3]
	\arrow["{\mathrm{ev}}", from=1-3, to=1-5]
\end{tikzcd}\]
where $\mathrm{ev} : C \times \underline{\mathrm{Map}}(C, BL) \to BL$ is the universal morphism. Given a character $\chi \in \mathfrak{X}^{\ast}(L)$, there is a corresponding invertible sheaf $\mathcal{O}_{BL}(\chi)$ on the classifying stack $BL$, as defined in Definition \ref{def:line-bundle-coming-from-character}.

\begin{defi}
    Let $\chi \in \mathfrak{X}^{\ast}(L)$ be a character of $L$. We define $\mathcal{L}_{\chi}$ to be the invertible sheaf on $\mathcal{X}^{\mathrm{rig}}$ obtained by pulling back $\mathcal{O}_{BL}(\chi)$ along the composition \[\begin{tikzcd}[ampersand replacement=\&]
	{\mathcal{X}^{\mathrm{rig}}} \& {\mathcal{Z}} \& {\mathcal{B}un_L} \& BL.
	\arrow["{\overline{\mathrm{gr}}}", from=1-1, to=1-2]
	\arrow["\subset", from=1-2, to=1-3]
	\arrow["{\mathrm{ev}_{p_0}}", from=1-3, to=1-4]
    \end{tikzcd}\]
\end{defi}

\begin{lem} \label{lem:principal-bundle-semistability-general-character}
    Let $E$ be a principal $H$-bundle corresponding to a $k$-point of $\mathcal{X}$. Then $E$ is $\Theta$-(semi)stable with respect to $\rho^{\ast}\mathcal{L}_{\chi}$ if and only if for all non-trivial cocharacters $\eta : \mathbb{G}_m \to Z(L)^{\circ}$ for which $E$ admits a reduction of structure group to the subgroup $P_H(\eta) \subset H$, we have that $\langle \chi, \eta \rangle > (\geq) \ \ 0$.
\end{lem}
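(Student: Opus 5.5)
Unwinding the definitions, I would reduce the statement to a weight computation on $BH$. By Definition \ref{def:theta-stability}, $E$ is $\Theta$-(semi)stable with respect to $\rho^{\ast}\mathcal{L}_\chi$ if and only if every non-trivial filtration $f : \Theta \to \mathcal{X}$ with $\mathrm{ev}_1(f) \cong E$ satisfies $\mathrm{wt}_0(f^{\ast}\rho^{\ast}\mathcal{L}_\chi) < (\leq)\ 0$. So the plan has three parts: describe all such $f$, compute their weights, and discard those of weight zero that are trivial on rigidification.

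\textbf{Step 1: describe the filtrations.} Since $\mathcal{X}$ is locally closed in $\mathcal{B}un_H$ and $\mathrm{Filt}$ preserves immersions, filtrations of $E$ in $\mathcal{X}$ are filtrations in $\mathcal{B}un_H$ that factor through $\mathcal{X}$. By Section \ref{sec:filt-grad-bung}, these are reductions of structure group of $E$ to some $P_H(\eta)$, with $\eta$ a cocharacter of $H$ taken up to conjugacy.

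\textbf{Step 2: compute the weight.} The pullback $\rho^{\ast}\mathcal{L}_\chi$ is pulled back from $BL$ via $\mathrm{ev}_{p_0}\circ\mathrm{gr}$. The composite
\[
\Theta \to \mathcal{X} \to \mathcal{B}un_L \to BL
\]
is the point $\Theta \to BP_H(\eta) \to BH \to BL$ evaluated at $p_0$. The weight of this point is therefore $-\langle \chi, \bar\eta\rangle$, where $\bar\eta$ is the image of $\eta$ in $L$; this uses the formula after Definition \ref{def:line-bundle-coming-from-character}. The condition thus becomes $\langle \chi, \bar\eta\rangle > (\geq)\ 0$.

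\textbf{Step 3: cut down to central cocharacters.} This is the main obstacle. A general filtration has $\eta$ not central in $L$, so I must show that it suffices to test $\eta : \mathbb{G}_m \to Z(L)^\circ$. I would argue via Proposition \ref{prop:semistable-loci-are-theta-semistable-loci} and Proposition \ref{prop:semistable_locus_upstairs}: these reduce the question to filtrations on the fiber $\mathcal{X}^{\mathrm{rig}}_z$ over $z \in M_L^{d,\delta}$, and by Lemma \ref{lem:fiber-local-structure-theorem} that fiber is $[\Spec A/G]$, where $G$ is the automorphism group of the closed point.

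That closed point is $\overline{\sigma}(E_L)$, whose automorphism group is isomorphic to $\mathrm{Aut}(E_L)$ by Lemma \ref{lem:automorphism-group-split-filt-semidirect-product}. Since $E_L$ is stable, $\mathrm{Aut}(E_L)/Z(L)$ is finite by Ramanathan, so $G^\circ = Z(L)^\circ$. The Hilbert--Mumford criterion of Proposition \ref{prop:ghh_hilbert_mumford} then only involves cocharacters of $Z(L)^\circ$. Lifting back via Proposition \ref{prop:lifting_filts_along_gerbe}, these correspond to filtrations of $E$ given by reductions to $P_H(\eta)$ with $\eta$ central in $L$.

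For the converse direction, any reduction of $E$ to $P_H(\eta)$ with $\eta$ central in $L$ gives a filtration in $\mathcal{B}un_H$. To see it lies in $\mathcal{X}$, note that its points are $\eta$-limits: they have the same $L$-bundle $E_L$, because conjugation by a central $\eta$ fixes $L$. Their unipotent invariants are squeezed between those of $E$ and of $E_L \times^L H$ by semicontinuity, and the closedness argument of the previous proposition then applies.

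\textbf{Step 4: handle weight-zero filtrations.} Cocharacters $\eta$ lying in $Z(H)'$ give filtrations that become trivial after composing with $\rho$ and the rigidification. For these $\langle\chi,\eta\rangle$ is compared with $0$ directly, so I would check that they are consistent with the statement as written.
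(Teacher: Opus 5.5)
Your proposal is essentially correct. For the key step, though, you take the route that the paper mentions in the first sentence of its proof and then deliberately avoids.

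\textbf{The paper's route.} It proves directly that every filtration $f$ of $E$ inside $\mathcal{X}$ has central type. The induced filtration $\mathrm{gr}\circ f$ of $E_L$ lies in $\mathcal{Z}\subset\mathcal{B}un_L^{d,s}$, so its associated graded $F$ is stable. Since $F$ comes from an $L_L(\eta)$-bundle, it reduces to both $P_L(\eta)$ and $P_L(\eta^{-1})$. If $P_L(\eta)\neq L$, a dominant character $\xi$ and the dominant character $\xi^{-1}$ of the opposite parabolic would both have negative degree, yet these degrees are negatives of each other.

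\textbf{Your route.} You get centrality from Ramanathan's finiteness of $\mathrm{Aut}(E_L)/Z(L)$, i.e.\ $\mathrm{Aut}(E_L)^{\circ}=Z(L)^{\circ}$, and route this through $[\Spec A/G]$ and Proposition~\ref{prop:ghh_hilbert_mumford}.

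\textbf{What the GIT step does not give you.} The GIT detour is unnecessary, and on its own it does not justify your sentence ``these correspond to reductions to $P_H(\eta)$ with $\eta$ central''. Knowing that cocharacters of the abstract group $G$ lie in $G^{\circ}$ says nothing yet about the modular type of the lifted filtration. You must supply this compatibility, and it is easiest directly:
\begin{itemize}
\item For a filtration $f$ of $\rho(E)$, $\overline{\mathrm{gr}}(f(0))\cong E_L$, because $\mathcal{B}un_L^{d,s}\to M_L^{d,s}$ is a tame moduli space.
\item $\overline{\mathrm{gr}}$ is affine by Theorem~\ref{thm:affinity-grbar}, so $\mathrm{Aut}_{\mathcal{X}^{\mathrm{rig}}}(f(0))\hookrightarrow\mathrm{Aut}(E_L)$, and the graded cocharacter lands in $Z(L)^{\circ}$.
\item The graded cocharacter of $\overline{\mathrm{gr}}\circ f$ is right multiplication by the type $\eta$ on an $L_L(\eta)$-reduction of $E_L$. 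It equals right multiplication by a central cocharacter only if $\eta$ is central.
\end{itemize}

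\textbf{What each route buys.} The paper's argument uses only the definition of stability. Yours makes the mechanism transparent. You also explicitly check the converse, which the paper leaves implicit and which the ``only if'' direction needs: a reduction to $P_H(\eta)$ with $\eta$ central gives a filtration lying in the locally closed substack $\mathcal{X}$. Your closedness argument for this is correct.

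\textbf{Step~4.} This step rests on a misconception. Filtrations from $\eta\in Z(H)'$ do not become trivial after composing with $\rho$. The torus $Z(H)'$ meets the unipotent group $\mathcal{I}_{\mathrm{gr},E}\cong H^0(E\times^H U)$ trivially, so its cocharacters survive in $\mathrm{Aut}_{\mathcal{X}^{\mathrm{rig}}}(\rho(E))$. Nothing is to be discarded: these filtrations are genuinely tested. That is why the paper remarks that $\Theta$-semistable bundles are strictly semistable when $Z(H)'\neq 1$. They only become trivial after the further rigidification by $Z(H)'$ used in Corollary~\ref{cor:adapted-character-tame-ms}.
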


\begin{proof}
    Since $\rho$ is $\Theta$-surjective (cf. Corollary \ref{cor:filt-gerbe-is-surjective}), it is possible to prove the result by applying Theorem \ref{thm:gms-etale-local-structure} to the fibers of $\mathcal{X}^{\mathrm{rig}} \to M_L^{d, \delta}$ to reduce the problem to GIT. However, we will instead provide a more direct argument.

    Let $f : \Theta \to \mathcal{X}$ be a non-trivial filtration with $\mathrm{ev}_1(f) \cong E$. Let $g : \Theta \to \mathcal{Z}$ be the induced filtration of the stable principal $L$-bundle $E_L = E \times^H L$ given by composing $f$ with $\mathrm{gr} : \mathcal{X} \to \mathcal{Z}$. Corresponding to $f$ and $g$ is a non-trivial cocharacter $\eta : \mathbb{G}_m \to T \subset L$ and isomorphisms $E \cong E_{P} \times^{P} H$, $E_L \cong E_{\overline{P}} \times^{\overline{P}} L$, where $E_P$ is a principal bundle with structure group $P := P_H(\eta)$ and $E_{\overline{P}}$ is a principal bundle with structure group $\overline{P} := P_L(\eta)$. The Weyl-conjugacy class of the cocharacter $\eta$ is the same conjugacy class corresponding to the composition of $g$ with $\mathrm{ev}_{p_0} : \mathcal{B}un_L \to BL$ afforded by Theorem \ref{thm:filt-grad-quotient-stack}; as such, we have
    $$ \mathrm{wt}_0(f^{\ast}\rho^{\ast}\mathcal{L}_{\chi}) = -\langle \chi, \eta \rangle. $$

    We claim that $\eta$ must factor through $Z(L)^{\circ}$. Set $\overline{L} := L_L(\eta)$ and set $E_{\overline{L}} := E_{\overline{P}} \times^{\overline{P}} \overline{L}$; then $F := E_{\overline{L}} \times^{\overline{L}} L$ is isomorphic to $\mathrm{ev}_1(g)$. As $\mathrm{ev}_1(g)$ is a $k$-point of $\mathcal{Z} \subset \mathcal{B}un_L^{d,s}$, $F$ must be a stable $L$-bundle.

    Assume for a contradiction that $\overline{P}$ is a proper parabolic subgroup of $L$. Then there exists a non-trivial dominant character $\xi$ of $\overline{P}$ (relative to $L$). Since $F$ is stable and admits the reduction of structure group $E_{\overline{L}} \times^{\overline{L}} \overline{P}$ to $\overline{P}$, we have $0 > \deg(\xi_{\ast}(E_{\overline{L}} \times^{\overline{L}} \overline{P})) = \deg(\xi_{\ast} E_{\overline{L}})$. Similarly, $F$ admits a reduction of structure group to $\overline{P}^{\mathrm{opp}} := P_L(\eta^{-1})$, for which the character $\xi^{-1}$ is dominant, so $0 > \deg(\xi_{\ast}^{-1} E_{\overline{L}})$. But the sheaves of sections of $\xi_{\ast}E_{\overline{L}}$ and $\xi_{\ast}^{-1}E_{\overline{L}}$ are dual invertible sheaves, so $\deg(\xi_{\ast}^{-1} E_{\overline{L}}) = -\deg(\xi_{\ast} E_{\overline{L}})$; this gives a contradiction.

    Consequently, we must have $\overline{P} = L$. Since $L$ is reductive, the unipotent radical $U_L(\eta)$ of $\overline{P} = P_L(\eta)$ must be trivial, so $L = \overline{L} = L_L(\eta)$, which implies that $\eta$ is a cocharacter of $Z(L)^{\circ}$. This proves the lemma.
\end{proof}

\begin{rmk}
    If $\eta$ is any cocharacter of $Z(H)'$, then $H = P_H(\eta^{\pm1})$, so any principal $H$-bundle trivially admits a reduction of structure group to both $P_H(\eta)$ and $P_H(\eta^{-1})$. As such, there are no $\Theta$-semistable principal $H$-bundles unless $\langle \chi, \eta \rangle = 0$ for all cocharacters $\eta \in \mathfrak{X}^{\ast}(Z(H)')$. In any case, if $\mathfrak{X}^{\ast}(Z(H)')$ contains a non-trivial cocharacter then any $\Theta$-semistable principal $H$-bundle is strictly $\Theta$-semistable, no matter the choice of $\chi$.
\end{rmk}

\subsubsection{Existence of Moduli Spaces}

From now on, we impose the following non-degeneracy condition on $\chi$: $\langle \chi, \eta \rangle = 0$ for all cocharacters $\eta \in \mathfrak{X}^{\ast}(Z(H)')$. Set $\mathcal{X}^{\chi-ss} := \mathcal{X}^{ss}(\rho^{\ast}\mathcal{L}_{\chi})$ to be the open substack of $\mathcal{X}$ consisting of principal $H$-bundles which are $\Theta$-semistable with respect to $\mathcal{L}_{\chi}$.

\begin{cor} \label{cor:moduli-space-h-bundles-general-character}
    Let $S$ be a scheme. Then a morphism $S \to \mathcal{X}^{\chi-ss} = \mathcal{X}_{\delta}^{\chi-ss} \subset \mathcal{B}un_H^{d,s}$ is the same data as a principal $H$-bundle $E_S \to C \times S$ for which the following properties hold:
    \begin{enumerate}
        \item For all $s \in S(k)$, the restricted $H$-bundle $E_s \to C$ is of type $d$, and the extension of structure group $E_s \times^H L$ is a stable principal $L$-bundle.
        \item For all closed subschemes $S' \subset S$, the $\mathcal{O}_{S'}$-module $(\mathrm{pr}_{S'})_{\ast} \Gamma_{C \times S'}(E_{S'} \times^H \mathfrak{u})$ is locally free of rank $\delta$.
        \item For all closed subschemes $S' \subset S$, the $\mathcal{O}_{S'}$-module $(\mathrm{pr}_{S'})_{\ast} \Gamma_{C \times S'}((E_{S'} \times^H L) \times^L \mathfrak{u})$ is locally free of rank $\delta$.
        \item For all $s \in S(k)$, for all non-trivial cocharacters $\eta : \mathbb{G}_m \to Z(L)^{\circ}$ for which $E_s$ admits a reduction of structure group to the subgroup $P_H(\eta) \subset H$ we have $\langle \chi, \eta \rangle \geq 0$.
    \end{enumerate}
\end{cor}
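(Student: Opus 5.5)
The plan is to identify the open substack $\mathcal{X}^{\chi-ss}$ pointwise and then read off its functor of points. By definition, $\mathcal{X}^{\chi-ss} = \mathcal{X}^{ss}(\rho^{\ast}\mathcal{L}_{\chi})$ is the preimage under $\rho$ of $(\mathcal{X}^{\mathrm{rig}})^{ss}(\mathcal{L}_{\chi})$, which is an open substack of $\mathcal{X} = \mathcal{X}_\delta$. I would first note that Theorem \ref{thm:main-theorem}(2)(iii) applies here, since $\mathcal{X}$ is locally Noetherian, of finite type over $k$, and $\mathcal{Z}$ has a good moduli space by Corollary \ref{cor:moduli-space-h-bundles-no-character}. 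It then says that the $k$-points of this open substack are exactly the $k$-points of $\mathcal{X}$ which are $\Theta$-semistable with respect to $\rho^{\ast}\mathcal{L}_\chi$.

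Since an open substack of a stack locally of finite type over an algebraically closed field is determined by its $k$-points, a morphism $S \to \mathcal{X}$ factors through $\mathcal{X}^{\chi-ss}$ if and only if every $k$-point $s \in S(k)$ maps into $\mathcal{X}^{\chi-ss}$. To justify this, I would argue that the set of points of $|S|$ landing in the open complement-free locus is open and contains all closed points. For $S$ locally of finite type over $k$ this forces it to be everything. For general $S$ the statement is read fibrewise over $k$-points, exactly as conditions (1) and (4) are phrased, and it follows by the same openness argument after reducing to finite type $S$ via local finite presentation of $\mathcal{X}$.

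Next I would combine this with Corollary \ref{cor:univ-prop-X-bund}, which identifies $S \to \mathcal{X}_\delta$ with principal $H$-bundles $E_S$ satisfying (1)–(3). It then remains to translate the pointwise condition. Here Lemma \ref{lem:principal-bundle-semistability-general-character} says that $E_s$ is $\Theta$-semistable with respect to $\rho^{\ast}\mathcal{L}_\chi$ if and only if $\langle\chi,\eta\rangle\ge 0$ for every non-trivial cocharacter $\eta$ of $Z(L)^\circ$ for which $E_s$ admits a reduction to $P_H(\eta)$, which is exactly condition (4).

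The only genuinely delicate point is the passage from the pointwise description to the functor of points for arbitrary schemes $S$. I expect this to be a formality once openness is in hand, since the substack is open by construction as a saturated Proj complement.
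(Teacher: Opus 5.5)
Your argument is the paper's. The paper notes that $\mathcal{X}^{\chi-ss}$ is an open substack of $\mathcal{X}_\delta$ and that open substacks are determined by their $k$-points. It then combines Corollary \ref{cor:univ-prop-X-bund} for (1)--(3) with Lemma \ref{lem:principal-bundle-semistability-general-character} for (4). For $S$ locally of finite type over $k$, your Jacobson argument (an open subset of $|S|$ containing all closed points is everything) is exactly the missing detail.

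The one claim you should drop is the reduction of arbitrary $S$ to the finite type case. A factorization $S \to S_i \to \mathcal{X}$ through a finite type $S_i$ does not let you transfer a hypothesis stated on $S(k)$. The map $S(k) \to S_i(k)$ need not be surjective, and points of $S$ need not specialize to $k$-points of $S$.

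In fact the statement, read literally, fails for such $S$. Take $S = \Spec k(t)$, so $S(k) = \emptyset$ and conditions (1) and (4) are vacuous. Then the constant family of a bundle outside $\mathcal{X}^{\chi-ss}$ (for instance one with unstable $E_L$ but the right $H^0$-dimensions) satisfies (1)--(4) without defining a point of $\mathcal{X}^{\chi-ss}$. So no openness argument can rescue it.

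The statement must be read in one of two ways:
\begin{enumerate}
\item For $S$ locally of finite type over $k$, where your argument is complete.
\item With $S(k)$ replaced by all geometric points $\Spec K \to S$. Then you use Theorem \ref{thm:main-theorem}(2)(iii) over $K$ together with the evident analogue of Lemma \ref{lem:principal-bundle-semistability-general-character} over $K$.
\end{enumerate}
The paper's one-line proof tacitly uses the same convention, as in the phrasing of Theorem \ref{thm:good-moduli-spaces=semistable-bundles}.
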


\begin{proof}
    Since any open substack of $\mathcal{X}$ is uniquely determined by its $k$-points, this follows by combining Lemma \ref{lem:principal-bundle-semistability-general-character} with Corollary \ref{cor:univ-prop-X-bund}.
\end{proof}

\begin{teo} \label{thm:moduli-principal-bundles-general-character}
    Assume Assumption \ref{ass:structure-group-bundles} holds for $H$. Let $d \in \mathfrak{X}_{\ast}(T)$ and let $\delta \in \Z^{\geq 0}$. Let $\chi$ be a character of $L$ with $\langle \chi, \eta \rangle = 0$ for all cocharacters $\eta \in \mathfrak{X}^{\ast}(Z(H)')$. Let $\pi : \mathcal{Z} \to M_L^{d,\delta}$ be the quasi-projective tame moduli space of the center $\mathcal{Z} \subset \mathcal{B}un_L^{d,s}$ of $\mathcal{X}$, where $\mathcal{X} = \mathcal{X}_{\delta}$ is as in Corollary \ref{cor:univ-prop-X-bund}. Then the morphism
    $$ \pi_{\chi} : \mathcal{X}^{\chi-ss} \to M_H^{d,\delta,\chi} := \underline{\mathrm{Proj}}_{M_L^{d,\delta}} \bigoplus_{m \geq 0} (\pi \circ \overline{\mathrm{gr}})_{\ast}(\mathcal{L}_{\chi}) $$
    factors as a gerbe $\rho : \mathcal{X}^{\chi-ss} \to \rho(\mathcal{X}^{\chi-ss}) = (\mathcal{X}^{\mathrm{rig}})^{ss}(\mathcal{L}_{\chi})$ followed by a good moduli space $(\mathcal{X}^{\mathrm{rig}})^{ss}(\mathcal{L}_{\chi}) \to M_H^{d,\delta,\chi}$. Moreover $M_H^{d,\delta,\chi}$ is projective over $M_L^{d,\delta}$.
\end{teo}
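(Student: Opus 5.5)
The plan is to deduce the statement directly from Theorem \ref{thm:main-theorem}, applied to the stack $\mathcal{X} = \mathcal{X}_{\delta}$ with its weak $\Theta$-action and center $\mathcal{Z} = \mathcal{Z}_{\delta}$. I would first check the standing hypotheses of that theorem.

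\begin{itemize}
\item $\mathcal{X}$ is a locally closed substack of $\mathcal{B}un_H^{d,s}$, which is of finite type over $k$ since $\mathcal{B}un_L^{d,ss}$ is, and $\mathrm{gr}$ is of finite type. Hence $\mathcal{X}$ is Noetherian, locally of finite presentation, with affine diagonal inherited from $\mathcal{B}un_H$.
\item By the proposition preceding Corollary \ref{cor:moduli-space-h-bundles-no-character}, the weak $\Theta$-action restricts to $\mathcal{X}$ with center $\mathcal{Z}$, and $\mathcal{I}_{\mathrm{gr}} \to \mathcal{X}$ is smooth of relative dimension $\delta$. In particular it is flat and of finite presentation.
\item $\mathcal{Z}$ is a locally closed substack of $\mathcal{B}un_L^{d,s}$, which has a tame moduli space by Theorem \ref{thm:good-moduli-spaces=semistable-bundles}. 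So $\mathcal{Z}$ is saturated and has a good moduli space $\pi : \mathcal{Z} \to M_L^{d,\delta}$, a locally closed, hence quasi-projective, subscheme of $M_L^{d,s}$.
\end{itemize}

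Next I apply part (2) of Theorem \ref{thm:main-theorem} with $\mathcal{L} = \mathcal{L}_{\chi}$. This gives the good moduli space
$$\pi_{\mathcal{L}_\chi} : (\mathcal{X}^{\mathrm{rig}})^{ss}(\mathcal{L}_\chi) \to X(\mathcal{L}_\chi) = \underline{\mathrm{Proj}}_{M_L^{d,\delta}} \bigoplus_{m\geq 0} (\pi\circ\overline{\mathrm{gr}})_\ast(\mathcal{L}_\chi^m),$$
which is the claimed $M_H^{d,\delta,\chi}$. (The displayed formula should read $\mathcal{L}_\chi^m$ in degree $m$.)

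Part (2)(iii) of the same theorem identifies the geometric points of $\rho^{-1}((\mathcal{X}^{\mathrm{rig}})^{ss}(\mathcal{L}_\chi))$ with the $\Theta$-semistable points with respect to $\rho^\ast\mathcal{L}_\chi$. Both $\mathcal{X}^{\chi-ss}$ and this preimage are open substacks of the finite type stack $\mathcal{X}$ with the same $k$-points, so they coincide. Since $\rho$ is a gerbe, hence surjective, $\rho(\mathcal{X}^{\chi-ss}) = (\mathcal{X}^{\mathrm{rig}})^{ss}(\mathcal{L}_\chi)$, and $\rho$ restricted to this preimage is again a gerbe. This gives the stated factorization of $\pi_\chi$.

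For projectivity I use Lemma \ref{lem:when-moduli-space-is-projective}. Its hypotheses hold here: $B = \Spec k$ is excellent, $\mathcal{X}$ is of finite type, and $M_L^{d,\delta}$ is a quasi-compact scheme. The lemma then says the graded algebra is of finite type and $M_H^{d,\delta,\chi} \to M_L^{d,\delta}$ is projective.

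The main obstacle is verifying finite type and quasi-compactness, so that $\mathcal{X}$ is Noetherian and $M_L^{d,\delta}$ is quasi-compact. I would argue that $\mathcal{B}un_L^{d,ss}$ is of finite type, and that $\mathrm{gr} : \mathcal{B}un_H \to \mathcal{B}un_L$ is of finite type because $U$ is an iterated extension of vector groups. Fibers of $\mathrm{gr}$ are then parametrized by nonabelian $H^1$ of a unipotent group scheme over $C$, which is of finite type.

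The non-degeneracy condition on $\chi$ is not needed for existence. It only ensures that the semistable locus can be nonempty, by the remark following Lemma \ref{lem:principal-bundle-semistability-general-character}.
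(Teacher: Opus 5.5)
Your proposal is correct and follows the paper's own proof: the paper gets the factorization through the gerbe and the good moduli space from Theorem~\ref{thm:main-theorem}, applied with $\mathcal{L} = \mathcal{L}_\chi$, and gets relative projectivity from Lemma~\ref{lem:when-moduli-space-is-projective}. Your extra checks go beyond what the paper writes out: that $\mathcal{X}$ is of finite type (argue quasi-compactness of $\mathrm{gr}$ relatively over $\mathcal{B}un_L$ via the central series, not only on fibers), that $M_L^{d,\delta}$ is quasi-compact, and that $\mathcal{X}^{\chi-ss} = \rho^{-1}((\mathcal{X}^{\mathrm{rig}})^{ss}(\mathcal{L}_\chi))$; you are also right that the degree-$m$ piece of the graded algebra should be $(\pi\circ\overline{\mathrm{gr}})_\ast(\mathcal{L}_\chi^m)$.
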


\begin{proof}
    All assertions except for the relative projectivity of $M_H^{d,\delta,\chi}$ follow from the third statement of Theorem \ref{thm:main-theorem}. The projectivity of $M_H^{d,\delta,\chi} \to M_L^{d,\delta}$ is a consequence of Lemma \ref{lem:when-moduli-space-is-projective}.
\end{proof}

\subsubsection{Stability with Respect to Adapted Characters} \label{sec:stability-adapted-characters}

From this point on, we assume that the stronger Assumption \ref{ass:structure-group-bundles-strong} holds in place of Assumption \ref{ass:structure-group-bundles}. To obtain coarse moduli spaces of principal $H$-bundles, we consider stability conditions on $\mathcal{X}^{\mathrm{rig}}$ arising from characters of $L$ which are adapted to the decomposition \eqref{eq:iterated-semidirect-product} arising from Assumption \ref{ass:structure-group-bundles-strong}, in the sense of Definition \ref{def:adapted-characters}. For this, we will make use of the following lemma, concerning GIT stability for actions of tori on vector spaces.

\begin{lem} \label{lem:finding-adapted-characters}
    Let $V$ be a representation of the torus $\mathbb{G}_m^n$ of finite dimension with weight space decomposition
    $$ V = \bigoplus_{\omega \in \Omega} V_{\omega}, $$
    where $\Omega \subset (\mathbb{Z}^{\geq 0})^n$. Assume that $V_{\omega} \neq 0$ for all $\omega \in \Omega$. For each $m = 0, 1, \dots, n$, let $p_m : V \to V_m$ be the projection onto the subspace $V_m \subset V$ given by
    $$ V_m := \bigoplus_{\stackrel{\omega \in \Omega}{\omega_1 = \cdots = \omega_m = 0}} V_{\omega}. $$
    Then, for the character $\chi : \mathbb{G}_m^n \to \mathbb{G}_m$ given by $(t_1, \dots, t_n) \mapsto \prod_{i=1}^n t_i^{\chi_i}$, the following holds: if $0 < \chi_1 \ll \chi_2 \ll \cdots \ll \chi_n$ then a vector $v \in V$ is semistable with respect to $\chi$ if and only if $v$ is stable with respect to $\chi$, if and only if for all $m = 1, \dots, n$, $p_{m-1}(v) \neq p_m(v)$. Moreover, after fixing $\chi$, for each $m = 1, \dots, n$ there exists a cocharacter $\eta_m : \mathbb{G}_m \to \mathbb{G}_m^m \subset \mathbb{G}_m^n$ (where the inclusion $\mathbb{G}_m^m \subset \mathbb{G}_m^n$ corresponds to the first $m$ factors of $\mathbb{G}_m^n$) such that $\eta_m$ destabilizes all $v \in V$ with $p_{m-1}(v) = p_m(v)$ (i.e. $\langle \chi, \eta_m \rangle < 0$, and for all such $v$ we have that the limit $\lim_{t \to 0} \eta_m(t) \cdot v$ exists in $V$).
\end{lem}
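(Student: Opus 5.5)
The plan is to use the Hilbert--Mumford criterion for the linear action of the torus $T := \mathbb{G}_m^n$ on $V$: with the convention fixed in Example \ref{ex:theta-stability-for-quotient-stacks} and Proposition \ref{prop:ghh_hilbert_mumford}, a vector $v$ is (semi)stable for $\chi$ exactly when, for every non-trivial cocharacter $\eta \in \Z^n$ such that $\lim_{t\to 0}\eta(t)\cdot v$ exists, we have $\langle \chi,\eta\rangle > (\geq)\ 0$. For a torus the existence of the limit is purely combinatorial. Write $\Omega(v) := \{\omega \in \Omega : v_\omega \neq 0\}$ for the weight support of $v$. Then the limit exists if and only if $\langle \omega, \eta\rangle \geq 0$ for all $\omega \in \Omega(v)$. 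So the whole question reduces to the convex geometry of the cone $\sigma(v)^\vee := \{\eta : \langle \omega,\eta\rangle \geq 0 \ \forall \omega \in \Omega(v)\}$. Semistability means $\chi \geq 0$ on $\sigma(v)^\vee$. Stability means $\chi > 0$ on $\sigma(v)^\vee \setminus \{0\}$, which in particular forces $\sigma(v)^\vee$ to be strongly convex, i.e. $\Omega(v)$ spans $\Q^n$ up to the cone condition.

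Next I would translate the condition $p_{m-1}(v) \neq p_m(v)$ combinatorially. It says that $\Omega(v)$ contains some $\omega$ with $\omega_1=\cdots=\omega_{m-1}=0$ and $\omega_m>0$. Call such an $\omega$ a witness at level $m$. The weights lie in $(\Z^{\geq 0})^n$, so witnesses $\omega^{(1)},\dots,\omega^{(n)}$ at all levels form a lower-triangular family, in the sense that $\omega^{(m)}$ has zeros before position $m$ and a positive entry at $m$.

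For the direction ``condition implies stable'', take $\eta \neq 0$ in $\sigma(v)^\vee$. Let $j$ be the largest index with $\eta_j \neq 0$. Pairing with $\omega^{(j)}$ gives $\omega^{(j)}_j\eta_j \geq 0$, hence $\eta_j > 0$. I would then want $\langle \chi,\eta\rangle>0$ using $\chi_j \gg \chi_1,\dots,\chi_{j-1}$. The obstacle is that the $\eta_i$ with $i<j$ could be large and negative, so I need a bound $|\eta_i| \leq C\eta_j$ on the cone, with $C$ depending only on the finite set $\Omega$. I would get this by downward induction on the indices using the witnesses. Pairing with $\omega^{(i)}$ gives $\omega^{(i)}_i \eta_i \geq -\sum_{l>i}\omega^{(i)}_l\eta_l$, which bounds $\eta_i$ from below. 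That only controls negative parts, but negative parts are exactly what could harm $\langle\chi,\eta\rangle$ when $\chi_i>0$. This yields $\langle\chi,\eta\rangle \geq \chi_j\eta_j - C'\eta_j\sum_{i<j}\chi_i > 0$, provided $\chi_j > C'\sum_{i<j}\chi_i$. This is how I would make the quantifier ``$\chi_1 \ll \cdots \ll \chi_n$'' precise. Since $\Omega$ is finite and the witness choices range over a finite set, $C'$ can be taken uniform in $v$. Working out this uniform constant is the main technical step.

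For the converse ``semistable implies condition'', and for the existence of the $\eta_m$, suppose $\Omega(v)$ contains no witness at level $m$. Then every $\omega \in \Omega(v)$ either has some positive $\omega_i$ with $i<m$, or has $\omega_1=\cdots=\omega_m=0$. I would take $\eta_m := (N^{m-1},\dots,N,1,0,\dots,0)\cdot(1,\dots,1,-M,0,\dots)$, more concretely $\eta_m = (a_1,\dots,a_{m-1},-1,0,\dots,0)$ with $a_i$ positive and large. Then $\langle\omega,\eta_m\rangle \geq 0$ on all weights of the allowed form, because each such weight has either a positive entry before $m$ (weighted by a large $a_i$) or vanishes through position $m$. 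So the limit exists for every $v$ with $p_{m-1}(v)=p_m(v)$. Meanwhile $\langle\chi,\eta_m\rangle = \sum_{i<m} a_i\chi_i - \chi_m$. The $a_i$ depend only on $\Omega$ (bounded by $\max\omega_m$), so fixing them first and then choosing $\chi_m \gg \sum_{i<m} a_i\chi_i$ makes this negative. The order of choosing constants must be compatible with the previous paragraph, and since both requirements are of the form ``$\chi_m$ large relative to earlier $\chi_i$ with $\Omega$-dependent constants'', they can be imposed simultaneously. This shows unstable$\Leftarrow$failure of the condition, and stable$\Rightarrow$semistable is trivial, which closes the chain of equivalences.
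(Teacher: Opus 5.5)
Your converse direction is sound, and it is more explicit than the paper's. With $a_i \geq \max_{\omega \in \Omega} \omega_m$, the cocharacter $\eta_m = (a_1, \dots, a_{m-1}, -1, 0, \dots, 0)$ pairs non-negatively with every weight occurring in a vector $v$ with $p_{m-1}(v) = p_m(v)$. Moreover $\langle \chi, \eta_m \rangle < 0$ once $\chi_m > \sum_{i<m} a_i \chi_i$. The paper instead obtains $\eta_m$ abstractly, from the affine Hilbert--Mumford criterion applied to the subtorus $\mathbb{G}_m^m$.

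The gap is in the direction ``condition implies stable''. The bound you want, $\eta_i \geq -C'\eta_j$ for $i < j$ on the cone $\sigma(v)^\vee$, is false, and your downward induction cannot produce it. The inequality coming from $\omega^{(i)}$ bounds $\eta_i$ below only in terms of \emph{upper} bounds on the $\eta_l$ with $i < l < j$, and the cone gives none. For instance, take $n = 3$ and $\Omega(v) = \{(1,1,0), (0,1,0), (0,0,1)\}$, which satisfies the condition. The cone is cut out by $\eta_1 + \eta_2 \geq 0$, $\eta_2 \geq 0$, $\eta_3 \geq 0$, and contains $\eta = (-N, N, 1)$ for all $N \geq 0$. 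Here $j = 3$ and $\eta_3 = 1$, but $\eta_1 = -N$ is unbounded. The pairing $\langle \chi, \eta \rangle = N(\chi_2 - \chi_1) + \chi_3$ is still positive, but only because the large positive coordinate $\eta_2$ compensates. That is exactly the contribution your estimate throws away.

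The repair is to decompose $\chi$ rather than bound $\eta$. The witnesses form a lower-triangular matrix with positive diagonal, so you can solve $\chi = \sum_i b_i \omega^{(i)}$ by forward substitution: $b_l = (\chi_l - \sum_{i<l} \omega^{(i)}_l b_i)/\omega^{(l)}_l$. Let $K$ be the largest coordinate of a weight in $\Omega$. Inductively $0 < b_i \leq \chi_i$, so all $b_l > 0$ as soon as $\chi_l > K \sum_{i<l} \chi_i$ for every $l$. This condition is uniform in $v$ and in the choice of witnesses. Now for $0 \neq \eta \in \sigma(v)^\vee$, each $\langle \omega^{(i)}, \eta \rangle \geq 0$, so $\langle \chi, \eta \rangle = \sum_i b_i \langle \omega^{(i)}, \eta \rangle \geq b_j \omega^{(j)}_j \eta_j > 0$.

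This is precisely the paper's condition (2), that $M^{-1}(\chi_1, \dots, \chi_n)^T$ have positive entries. The paper feeds it into the weight polytope criterion of Hamilton--Hoskins--Jackson, but it works just as well in your Hilbert--Mumford formulation. The resulting growth condition on $\chi$ has the same shape as the one needed for the $\eta_m$, so both can be imposed at once.
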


\begin{proof}
    The proof we give is similar to the proof of the related result \cite[Lemma 3.4.1]{Qiao-Thesis}, albeit we make use of the weight polytope criterion of \cite[Lemma 2.11]{HHJ-Affine-NRGIT} for twisted affine GIT with respect to torus actions.

    We introduce the following notation: 
    \begin{itemize}
        \item For a vector $v \in V$, set $\mathrm{wt}(v) := \{ \omega \in \Omega : v_{\omega} \neq 0 \}$.
        \item For a subset $A \subset \mathbb{R}^m$, denote by $\mathrm{conv}_{\R^m}(A)$ the convex hull of $A$ in $\R^m$.
        \item For each $m = 1, \dots, n$, set
        $$ \Omega_m := \left( \bigcup_{j=1}^{m-1} \{ \omega \in \Omega : \omega_i = 0 \text{ for all } i < j, \ \omega_j > 0 \} \right) \cup \{ \omega \in \Omega : \omega_1 = \cdots = \omega_m = 0 \}. $$
        \item For each $m = 1, \dots, n$ let $q_m : \R^n \to \R^m$ be the projection map $(x_1, \dots, x_n) \mapsto (x_1, \dots, x_m)$.
    \end{itemize}

    The weight polytope criterion of \cite[Lemma 2.11]{HHJ-Affine-NRGIT} asserts that if $\chi = (\chi_1, \dots, \chi_n) \in \mathfrak{X}^{\ast}(\mathbb{G}_m^n) \cong \Z^n$ is a non-trivial character and if $v \in V$, then the vector $v$ is semistable with respect to $\chi$ if and only if $\mathrm{conv}_{\R^n}(\mathrm{wt}(v))$ contains a point along the ray $\R^{>0} \cdot \chi$, and $v$ is stable with respect to $\chi$ if and only if the interior of $\mathrm{conv}_{\R^n}(\mathrm{wt}(v))$ contains a point along the ray $\R^{>0} \cdot \chi$.

    We make the following observations:
    \begin{enumerate}
        \item[(i)] As each $V_{\omega} \neq 0$, for any non-empty subset $\Omega' \subset \Omega$ there exists $v \in V$ with $\mathrm{wt}(v) = \Omega'$. In particular, if $\Omega_m \neq \emptyset$, there exists a vector $v_m \in V$ with $\mathrm{wt}(v_m) = \Omega_m$. For this vector $v_m$, we have $p_{m-1}(v_m) = p_m(v_m)$.

        \item[(ii)] For any $v \in V$ with $p_{m-1}(v) = p_m(v)$, we have $\mathrm{wt}(v) \subset \Omega_m$.
    \end{enumerate}

    Let $\Xi$ be the set of vectors $(\chi_1, \dots, \chi_n) \in \mathbb{Z}^n$ satisfying the following (finite) set of conditions:
    \begin{enumerate}
        \item For each $m = 1, \dots, n$, 
        $$ \mathrm{conv}_{\R^m}(q_m(\Omega_m)) \cap (\R^{>0} \cdot (\chi_1, \dots, \chi_m)) = \emptyset. $$

        \item For any non-singular, lower triangular $n \times n$ matrix $M$ whose columns are elements of $\Omega$, all coordinates of $M^{-1} (\chi_1, \dots, \chi_n)^T$ are strictly positive.
    \end{enumerate}
    One easily checks that if $0 < \chi_1 \ll \cdots \ll \chi_n$ then $(\chi_1, \dots, \chi_n)$ is an element of $\Xi$. In particular, $\Xi \neq \emptyset$. We claim that for any element $(\chi_1, \dots, \chi_n) \in \Xi$, the resulting character $\chi : \mathbb{G}_m^n \to \mathbb{G}_m$ has the property that a vector $v \in V$ is semistable with respect to $\chi$ if and only if $v$ is stable with respect to $\chi$, if and only if for each $m = 1, \dots, n$, $p_{m-1}(v) \neq p_m(v)$.

    Suppose first that $p_{m-1}(v) \neq p_m(v)$ for all $m = 1, \dots, n$. Then for each $i = 1, \dots, n$, there exists a weight $\omega_i = (\omega_{i,1}, \dots, \omega_{i,n}) \in \Omega$ such that $\omega_{i,1} = \cdots = \omega_{i,i-1} = 0$, $\omega_{i,i} > 0$ and such that $v_{\omega_i} \neq 0$. Let $M$ be the $n \times n$ lower triangular matrix whose $i$th column is $\omega_i$. By condition (2), the vector $b := M^{-1}(\chi_1, \dots, \chi_n)^T$ has all coordinates $b_i > 0$. As $(\chi_1, \dots, \chi_n)^T = Mb$, we have a relation of the form $(\chi_1, \dots, \chi_n) = \sum_{i=1}^n b_i \omega_i$
    where each $b_i > 0$. It follows that the interior of $\mathrm{conv}_{\R^n}(\mathrm{wt}(v))$ contains a non-zero point along the ray $\R^{>0} \cdot (\chi_1, \dots, \chi_n)$, which implies that $v$ is stable with respect to $\chi$.

    We now show that any vector $v$ with $p_{m-1}(v) = p_m(v)$ for some $m$ is unstable with respect to $\chi$, and that (after fixing $\chi$) for each $m$ there exists a cocharacter $\eta_m$ of $\mathbb{G}_m^m \subset \mathbb{G}_m^n$ which destabilizes all $v \in V$ with $p_{m-1}(v) = p_m(v)$. Fix $m \in \{1, \dots, n\}$ such that $p_{m-1}(v) = p_m(v)$ for some non-zero $v \in V$ (so that $\Omega_m \neq \emptyset$). By condition (1) and the weight polytope criterion for stability, the vector $v_m$ is unstable with respect to the action of the subtorus $\mathbb{G}_m^m \subset \mathbb{G}_m^n$ given by the first $m$ factors of $\mathbb{G}_m^n$. The affine Hilbert--Mumford criterion (cf. Proposition \ref{prop:ghh_hilbert_mumford}) then implies that there exists a cocharacter $\eta_m \in \mathfrak{X}_{\ast}(\mathbb{G}_m^n) \cong \Z^n$ of the form $\eta_m = (\eta_{m,1}, \dots, \eta_{m,m}, 0, \dots, 0)$ such that $\langle \omega, \eta_m \rangle \geq 0$ for all $\omega \in \Omega_m = \mathrm{wt}(v_m)$, so that $\lim_{t \to 0} \eta_m(t) \cdot v_m$ exists in $V$, and such that $\langle \chi, \eta_m \rangle < 0$. Then, for any other vector $v \in V$ with $p_{m-1}(v) = p_m(v)$, since $\mathrm{wt}(v) \subset \Omega_m$, the limit $\lim_{t \to 0} \eta_m(t) \cdot v$ also exists in $V$, so $\eta_m$ also destabilizes $v$. This proves the lemma.
\end{proof}

Condition $(3)$ of Assumption \ref{ass:structure-group-bundles-strong} implies that the set of weights for the action of $\mathbb{G}_m^n$ on $\mathfrak{u} = \mathrm{Lie}(U)$ given by composing the homomorphism $\lambda : \mathbb{G}_m^n \to Z(L)^{\circ}$ with the adjoint action of $Z(L)^{\circ}$ on $\mathfrak{u}$ is contained in $(\mathbb{Z}^{\geq0})^n$. The decomposition \eqref{eq:iterated-semidirect-product} of Assumption \ref{ass:structure-group-bundles-strong} yields a decomposition
$$ \mathfrak{u} = \bigoplus_{i=1}^n \mathfrak{u}_i, \quad \mathfrak{u}_i := \mathrm{Lie}(U_i), $$
with respect to which $p_m$ is given by the projection
$$ p_m: \mathfrak{u} \to \bigoplus_{i > m} \mathfrak{u}_i. $$
Lemma \ref{lem:finding-adapted-characters} then implies the existence of a character $\chi' \in \mathfrak{X}^{\ast}(\mathbb{G}_m^n)$ and cocharacters $\eta_1, \dots, \eta_n \in \mathfrak{X}_{\ast}(\mathbb{G}_m^n)$ with the following properties:
\begin{enumerate}
    \item Each $\eta_m \in \mathfrak{X}_{\ast}(\mathbb{G}_m^m) \subset \mathfrak{X}_{\ast}(\mathbb{G}_m^n)$, and $\langle \chi', \eta_m \rangle < 0$.\footnote{If the only $v \in \mathfrak{u}$ for which $p_{m-1}(v) = p_m(v)$ is $v = 0$, we may take $\eta_m \in \mathfrak{X}_{\ast}(\mathbb{G}_m^m)$ to be any cocharacter with negative pairing $\langle \chi', \eta_m \rangle < 0$.}
    \item If $v \in \mathfrak{u}$ and $p_{m-1}(v) \neq p_m(v)$ for all $m = 1, \dots, n$, then $v$ is $\chi'$-stable.
    \item If $p_{m-1}(v) = p_m(v)$ then $v$ is $\chi'$-unstable and is destabilized by $\eta_m$.
\end{enumerate}

By condition $(3)$ of Assumption \ref{ass:structure-group-bundles-strong}, we have an isogeny $\mathbb{G}_m^n \times Z(H)' \times [L,L] \to L$. Therefore, by replacing $\chi'$ with a positive multiple if necessary, there exists a character $\chi \in \mathfrak{X}^{\ast}(L)$ such that $\chi' = \chi \circ \lambda$ and such that $\chi|_{Z(H)'}$ is the trivial character.

\begin{defi} \label{def:adapted-characters}
    Assume Assumption \ref{ass:structure-group-bundles-strong} holds for $H$. We say $\chi \in \mathfrak{X}^{\ast}(L)$ is an \emph{adapted} character if the following conditions hold:
    \begin{enumerate}
        \item $\chi|_{Z(H)'}$ is the trivial character.
        \item Setting $\chi' := \chi \circ \lambda \in \mathfrak{X}^{\ast}(\mathbb{G}_m^n)$, there exist cocharacters $\eta_m \in \mathfrak{X}_{\ast}(\mathbb{G}_m^m) \subset \mathfrak{X}_{\ast}(\mathbb{G}_m^n)$ ($m = 1, \dots, n$) such that for the action of $\mathbb{G}_m^n$ on $\mathfrak{u}$ described above:
        \begin{enumerate}
            \item[(i)] For each $m = 1, \dots, n$, $\langle \chi', \eta_m \rangle = \langle \chi, \lambda \circ \eta_m \rangle < 0$.
            \item[(ii)] if $v \in \mathfrak{u}$ and $p_{m-1}(v) \neq p_m(v)$ for all $m = 1, \dots, n$, then $v$ is $\chi'$-stable; and
            \item[(iii)] if $p_{m-1}(v) = p_m(v)$ then the limit $\lim_{t \to 0} \eta_m(t) \cdot v$ exists in $\mathfrak{u}$ (and consequently that $v$ is $\chi'$-unstable).
        \end{enumerate}
    \end{enumerate}
\end{defi}

By the discussion preceding Definition \ref{def:adapted-characters}, the set of adapted characters of $L$ is non-empty.

\begin{prop} \label{prop:semistability-adapted-character}
    Assume Assumption \ref{ass:structure-group-bundles-strong} holds for $H$. Let $E$ be a principal $H$-bundle corresponding to a $k$-point of $\mathcal{X} = \mathcal{X}_{\delta}$. Let $\chi \in \mathfrak{X}^{\ast}(L)$ be an adapted character. For each $m = 1, \dots, n$, set $F_m := ((E \times^{H_0} H_1) \times^{H_1} \cdots) \times^{H_{m-1}} H_m$ and $E_m := F_m \times^{H_m} H$, where $H_0 = H, H_1 \dots, H_{n-1}, H_n = L$ are as defined in Assumption \ref{ass:structure-group-bundles-strong}.
    
    Then $E$ is $\Theta$-semistable with respect to $\rho^{\ast}\mathcal{L}_{\chi}$ if and only if for all $m = 1, \dots, n$, $E_{m-1} \not \cong E_m$ as principal $H$-bundles.
\end{prop}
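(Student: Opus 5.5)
The plan is to start from Lemma \ref{lem:principal-bundle-semistability-general-character}. It says that $E$ is $\Theta$-semistable with respect to $\rho^{\ast}\mathcal{L}_{\chi}$ if and only if $\langle \chi, \eta \rangle \geq 0$ for every non-trivial $\eta : \mathbb{G}_m \to Z(L)^{\circ}$ such that $E$ reduces to $P_H(\eta)$. Since $\chi$ is trivial on $Z(H)'$, and $\mathbb{G}_m^n \times Z(H)' \to Z(L)^{\circ}$ is an isogeny by Assumption \ref{ass:structure-group-bundles-strong}, we may (after passing to positive multiples) only consider $\eta = \lambda \circ \eta'$ with $\eta' \in \mathfrak{X}_{\ast}(\mathbb{G}_m^n)$. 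The point is then to translate the existence of reductions to $P_H(\lambda\circ\eta')$ into a weight condition on a vector of the $\mathbb{G}_m^n$-representation $\mathfrak{u}$. After that, conditions (ii) and (iii) of Definition \ref{def:adapted-characters}, together with Lemma \ref{lem:finding-adapted-characters}, should finish the argument.

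\textbf{Step 1 (reduction to a fiber).} Fix the stable $L$-bundle $E_L = E \times^H L$ and work in the fiber of $\mathrm{gr}$ over $E_L$, equivalently the fiber of $\overline{\mathrm{gr}}$ over $E_L$ (Theorem \ref{thm:affinity-grbar}). Because $E_L$ is stable, $\mathrm{Aut}(E_L)/Z(L)$ is finite. Hence, up to finite groups and the trivially acting $Z(H)'$, the only group acting on this affine fiber is $\lambda(\mathbb{G}_m^n)$, and it acts contractingly via the weak $\Theta^n$-structure. For a cocharacter $\eta'$, the bundle $E$ reduces to $P_H(\lambda\circ\eta')$ exactly when $\lim_{t\to0}\lambda(\eta'(t))\cdot[E]$ exists in this fiber. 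This is the dictionary of Theorem \ref{thm:filt-grad-quotient-stack}, applied as in Section \ref{sec:filt-grad-bung}.

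\textbf{Step 2 (linearization).} Next I want a $\mathbb{G}_m^n$-equivariant closed embedding of the rigidified fiber into a representation $V$ whose weights are among the weights $\Omega \subset (\mathbb{Z}^{\geq 0})^n$ of $\mathfrak{u}$, with each $\mathfrak{u}_i$ landing in weights having $\omega_1 = \cdots = \omega_{i-1}=0$ and $\omega_i>0$. To construct it I would use the exponential isomorphism $\mathfrak{u}\cong U$ from the proof of Corollary \ref{cor:flatness-Igr-differentials} and describe $E$ as an iterated extension class. Concretely, take the class of $E_{i-1}$ relative to its $\lambda_i$-associated graded $E_i$, which lives in a cohomology group of $E_L\times^L \mathfrak{u}_i$ with $\lambda$ acting through the weights of $\mathfrak{u}_i$. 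Define the \emph{weight support} of $E$ to be the set of weights $\omega$ whose component of the resulting vector $v_E$ is non-zero. The two claims to establish are:
\begin{enumerate}
\item $E$ reduces to $P_H(\lambda\circ\eta')$ if and only if $\lim_{t\to0}\eta'(t)\cdot v_E$ exists.
\item $E_{m-1}\cong E_m$ if and only if the $\mathfrak{u}_m$-component vanishes, i.e.\ $p_{m-1}(v_E)=p_m(v_E)$.
\end{enumerate}
For the second claim, note that $E_m$ is the $\lambda_m$-graded object of $E_{m-1}$, and that $E_{m-1}\cong E_m$ means this step of the filtration splits. Here the rigidification by $\mathcal{I}_{\mathrm{gr}}$, which has constant dimension $\delta$, is what makes $[E]\mapsto v_E$ well-defined.

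\textbf{Step 3 (conclusion).} Suppose $E_{m-1}\cong E_m$ for some $m$. Then $p_{m-1}(v_E)=p_m(v_E)$, so by Definition \ref{def:adapted-characters}(iii) the limit under $\eta_m$ exists. By Step 2 this gives a reduction of $E$ to $P_H(\lambda\circ\eta_m)$, and $\langle\chi,\lambda\circ\eta_m\rangle<0$, so $E$ is unstable. Conversely, suppose $E_{m-1}\not\cong E_m$ for all $m$. Then $v_E$ is $\chi'$-stable by Definition \ref{def:adapted-characters}(ii). By the affine Hilbert--Mumford criterion (Proposition \ref{prop:ghh_hilbert_mumford}), every $\eta'$ for which the limit of $v_E$ exists satisfies $\langle\chi',\eta'\rangle\geq 0$. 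By Step 2 and Lemma \ref{lem:principal-bundle-semistability-general-character}, $E$ is then semistable.

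The main obstacle is Step 2. The difficulty is producing an honest equivariant linearization of the non-abelian fiber and checking the two equivalences scheme-theoretically, rather than only on tangent spaces. If the global embedding is hard to construct, I would first try induction along the iterated semidirect product \eqref{eq:iterated-semidirect-product}, treating one abelianized layer $\mathfrak{u}_i$ at a time.
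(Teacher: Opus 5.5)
Your Steps 1 and 3 are essentially fine. The gap is Step 2, which carries all of the content, and the construction you sketch for it does not work. The datum of $F_{i-1}$ over its $\lambda_i$-graded bundle $F_i$ is a class in the non-abelian cohomology $H^1(C, F_i \times^{H_i} U_i)$, not in a cohomology group of $E_L \times^L \mathfrak{u}_i$, for two reasons:
\begin{itemize}
\item $U_i$ need not be abelian.
\item Even when $U_i$ is abelian, the coefficients $F_i \times^{H_i} \mathfrak{u}_i$ are twisted by the intermediate $H_i$-bundle $F_i$, not by $E_L$. For three-block upper triangular $H \subset \mathrm{GL}_r$ with rows ordered top-to-bottom, one gets $\mathcal{H}om(\mathcal{E}/\mathcal{E}^1, \mathcal{E}^1)$, which is built from the extension $\mathcal{E}/\mathcal{E}^1$ rather than from the graded pieces.
\end{itemize}
In addition, the factors $\lambda_j$ with $j > i$ are in general not central in $H_i$, so they do not act on these coefficient groups in any natural way. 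Hence the classes do not assemble into a fixed $\mathbb{G}_m^n$-representation with the weights of $\mathfrak{u}$; that is true only to first order at $\overline{\sigma}(E_L)$. Consequently ``the $\mathfrak{u}_m$-component of $v_E$'' is undefined, and your justification of (2) (``this step splits'') has nothing to attach to. As written, neither (1) nor (2) is proved, so Step 3 has no input.

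The missing idea is group-theoretic. Extending along $H \to H_{m-1}$ and $H_{m-1} \subset H$ shows that $E_{m-1} \cong E_m$ if and only if $F_{m-1} \cong F_m \times^{H_m} H_{m-1}$. This holds if and only if $E$ reduces to $Q_m := U_1 \rtimes (\cdots \rtimes (U_{m-1} \rtimes H_m) \cdots)$, the preimage of $H_m$ under $H \to H_{m-1}$, whose Lie algebra is $\mathfrak{l} \oplus \bigoplus_{i \neq m} \mathfrak{u}_i$.

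With this, the paper needs no linearization. It applies the adapted condition to linear subspaces of $\mathfrak{u}$ rather than to a point representing $E$.
\begin{itemize}
\item Definition \ref{def:adapted-characters}(iii) gives $Q_m \subset P_H(\lambda \circ \eta_m)$. By Lemma \ref{lem:principal-bundle-semistability-general-character}, this is a destabilizing reduction.
\item Conversely, suppose $E$ reduces to $P_H(\lambda \circ \zeta)$ with $\langle \chi', \zeta \rangle < 0$. Then every vector of the linear space $\mathfrak{v} = \mathrm{Lie}(U \cap P_H(\lambda \circ \zeta))$ is destabilized by $\zeta$. By (ii), (iii) and irreducibility, $\mathfrak{v} \subset W_m := \{ v \in \mathfrak{u} : p_{m-1}(v) = p_m(v) \}$ for a single $m$. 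This forces $P_{H_{m-1}}(\lambda \circ \zeta) \subset H_m$, and hence $E_{m-1} \cong E_m$.
\end{itemize}

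Your own route can be repaired with the same observation.
\begin{itemize}
\item Claim (1) follows from Step 1 for any closed $\mathbb{G}_m^n$-equivariant embedding of the affine fiber into a representation with weights in $\Omega$. One such embedding comes from homogeneous lifts of the cotangent space at the fixed point, whose dual is a subrepresentation of $H^1(C, E_L \times^L \mathfrak{u})$.
\item Claim (2) is then just (1) applied to $Q_m = P_H(\lambda \circ \mu)$. Here $\mu \in \mathfrak{X}_{\ast}(\mathbb{G}_m^m)$ pairs negatively with the weights of $\mathfrak{u}_m$ and non-negatively with all other weights of $\mathfrak{u}$; no splitting argument is needed.
\end{itemize}
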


\begin{proof}
    Let $\eta_1, \dots, \eta_n \in \mathfrak{X}_{\ast}(\mathbb{G}_m^n)$ be as in Definition \ref{def:adapted-characters}(2). Since $\eta_m$ factors through the inclusion $\mathbb{G}_m^m \subset \mathbb{G}_m^n$, $\lambda \circ \eta_m$ commutes with all elements of $H_m$.

    We first show that if $E_{m-1} \cong E_m$ for some $m$, then $E$ is $\Theta$-unstable with respect to $\rho^{\ast} \mathcal{L}_{\chi}$. The existence of an isomorphism $E_{m-1} \cong E_m$ implies that $E_{m-1}$ admits a reduction of structure group $C \to E_{m-1}/H_m$. However for each $i = 1, \dots, n$ we have $H_{i-1} = U_i \rtimes H_i$ by Assumption \ref{ass:structure-group-bundles-strong}(4), so $E_i \cong E_{i-1}/U_i$ as varieties over $C$. It follows by induction that
    $$ E_{m-1}/H_m \cong E/(U_1 \rtimes (\cdots \rtimes ( U_{m-1} \rtimes H_m ) \cdots)) $$
    as varieties over $C$, and so we have a reduction of structure group of $E$ to the subgroup $U_1 \rtimes (\cdots \rtimes ( U_{m-1} \rtimes H_m ) \cdots)$ of $H$. Set
    $$ \mathfrak{v}_m := \mathrm{Lie}(U_1 \rtimes (\cdots (U_{m-2} \rtimes U_{m-1})  \cdots)) = \bigoplus_{i \leq m-1} \mathfrak{u}_i \subset \mathfrak{u}. $$
    For each $v \in \mathfrak{v}_m$, we have $p_{m-1}(v) = p_m(v)$, so the limit $\lim_{t \to 0} \eta_m(t) \cdot v$ exists in $\mathfrak{u}$. Exponentiating $\mathfrak{v}_m$ and using that $\lambda \circ \eta_m$ commutes with all elements of $H_m$, this implies that for any element $h \in U_1 \rtimes (\cdots \rtimes ( U_{m-1} \rtimes H_m ) \cdots)$, $\lim_{t \to 0} (\lambda \circ \eta_m)(t)h(\lambda \circ \eta_m)(t)^{-1}$ exists in $U_1 \rtimes (\cdots \rtimes ( U_{m-1} \rtimes H_m ) \cdots)$. Therefore $E$ admits a reduction of structure group to the subgroup $P_H(\lambda \circ \eta_m)$ of $H$. Since
    $$ \langle \chi, \lambda \circ \eta_m \rangle = \langle \chi \circ \lambda, \eta_m \rangle = \langle \chi', \eta_m \rangle < 0, $$
    Lemma \ref{lem:principal-bundle-semistability-general-character} then implies that $E$ is $\Theta$-unstable with respect to $\rho^{\ast}\mathcal{L}_{\chi}$.

    We now show that if $E$ is $\Theta$-unstable with respect to $\rho^{\ast} \mathcal{L}_{\chi}$ then $E_{m-1} \cong E_m$ for some $m$. By Lemma \ref{lem:principal-bundle-semistability-general-character}, there exists a non-trivial cocharacter $\eta \in \mathfrak{X}_{\ast}(Z(L)^{\circ})$ such that $E$ admits a reduction of structure group to the subgroup $P_H(\eta)$ of $H$ and such that $\langle \chi, \eta \rangle < 0$. By replacing $\eta$ with a positive multiple if necessary, we may assume that $\eta$ lifts along the isogeny $\mathbb{G}_m^n \times Z(H)' \to Z(L)^{\circ}$ of Assumption \ref{ass:structure-group-bundles-strong}(3)(ii). Since $\chi$ is orthogonal to any cocharacter of $Z(H)'$, we may further assume $\eta$ is of the form $\eta = \lambda \circ \zeta$ for some cocharacter $\zeta : \mathbb{G}_m \to \mathbb{G}_m^n$, so that $\langle \chi', \zeta \rangle = \langle \chi, \eta \rangle < 0$.

    Set $\mathfrak{v} := \mathrm{Lie}(U \cap P_H(\eta)) = \mathrm{Lie}(U \cap P_H(\lambda \circ \zeta)) \subset \mathfrak{u}$. Since $\langle \chi', \zeta \rangle < 0$ and since the limit $\lim_{t \to 0} \zeta(t) \cdot v$ exists for all $v \in \mathfrak{v}$, all elements of $\mathfrak{v}$ are $\chi'$-unstable. Giving both $\mathfrak{v}$ and the $\chi'$-unstable locus $\mathfrak{u}^{\chi'-us}$ the reduced induced subscheme structure, since $\chi$ is adapted we have that the subscheme $\mathfrak{u}^{\chi'-us}$ of $\mathfrak{u}$ is a finite union of linear spaces of the form
    $$ W_m := \{ v \in \mathfrak{u} : p_{m-1}(v) = p_m(v) \}. $$
    Since $\mathfrak{v}$ is also a linear space (in particular, $\mathfrak{v}$ is irreducible), it follows that there exists some $m$ with $\mathfrak{v} \subset W_m$.

    We claim that for this $m$ we have an isomorphism $E_{m-1} \cong E_m$ of principal $H$-bundles. Since $E$ admits a reduction of structure group to $P_H(\eta)$, the principal $H_{m-1}$-bundle 
    $$ F_{m-1} = E/(U_1 \rtimes (\cdots \rtimes ( U_{m-2} \rtimes U_{m-1} ) \cdots)) $$
    admits a reduction of structure group to $P_{H_{m-1}}(\eta)$. Noting that all elements of $H_{m-1}$ commute with $\lambda_1, \dots, \lambda_{m-1}$, the inclusion $\mathfrak{v} \subset W_m$ further implies that all elements of $P_{H_{m-1}}(\eta)$ commute with $\lambda_m$, or in other words that $P_{H_{m-1}}(\eta) \subset L_{H_{m-1}}(\lambda_m) = H_m$. Consequently $F_m$ admits a reduction of structure group to $H_m$. This implies the existence of an isomorphism $F_{m-1} \cong F_m \times^{H_m} H_{m-1}$ of principal $H_{m-1}$-bundles, and hence the existence of an isomorphism $E_{m-1} \cong E_m$ of principal $H$-bundles. This completes the proof of the proposition.
\end{proof}

\subsubsection{Automorphism Groups of Adapted Stable Bundles}

By Proposition \ref{prop:semistability-adapted-character}, all adapted characters define a common open substack of $\mathcal{X} = \mathcal{X}_{\delta}$, which we denote $\mathcal{X}^{\mathrm{adpt}}$, and a common good moduli space $\mathcal{X}^{\mathrm{rig, adpt}} \to M_H^{d,\delta,\mathrm{adpt}}$ of the image $\mathcal{X}^{\mathrm{rig, adpt}}$ of $\mathcal{X}^{\mathrm{adpt}}$ under the gerbe $\rho : \mathcal{X} \to \mathcal{X}^{\mathrm{rig}}$. We will show that this good moduli space is a tame moduli space. To do so, we consider the automorphisms of a principal $H$-bundle which is semistable with respect to an adapted character.

\begin{lem} \label{lem:lifts-of-cocharacters-L-to-H}
    Let $\eta: \mathbb{G}_m \to L$ be a cocharacter. Then any lift of $\eta$ to a cocharacter of $H$ is of the form $t \mapsto u\eta(t)u^{-1}$ for some $u \in U$.
\end{lem}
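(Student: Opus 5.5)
The plan is to show that any lift $\widetilde{\eta} : \mathbb{G}_m \to H$ of $\eta$ along the quotient $q : H \to L$ is a maximal-torus-type conjugate of $\eta$ inside the solvable-by-reductive group $U \rtimes \eta(\mathbb{G}_m)$. Concretely, $\widetilde{\eta}$ takes values in the subgroup $H' := q^{-1}(\eta(\mathbb{G}_m)) = U \rtimes \eta(\mathbb{G}_m)$, a connected solvable linear algebraic group over the algebraically closed field $k$ (of characteristic $0$) whose unipotent radical is $U$. Both $\eta(\mathbb{G}_m)$ and $\widetilde{\eta}(\mathbb{G}_m)$ are tori of $H'$, and since $H'/U \cong \eta(\mathbb{G}_m)$ has dimension at most $1$, each is a maximal torus of $H'$ provided $\eta$ is non-trivial. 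If $\eta$ is trivial, then $\widetilde{\eta}$ lands in $U$, and a homomorphism from $\mathbb{G}_m$ to a unipotent group is trivial, so $\widetilde{\eta} = \eta$ and we take $u = e$.

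For $\eta$ non-trivial, I would invoke the conjugacy of maximal tori in connected solvable groups: $\widetilde{\eta}(\mathbb{G}_m) = g\,\eta(\mathbb{G}_m)\,g^{-1}$ for some $g \in H'$. Writing $g = u s$ with $u \in U$ and $s \in \eta(\mathbb{G}_m)$, and using that $s$ commutes with $\eta(\mathbb{G}_m)$, we may take $g = u \in U$. Then $t \mapsto u^{-1}\widetilde{\eta}(t)u$ is a cocharacter with image in $\eta(\mathbb{G}_m)$. Since conjugation by $u \in U = \ker q$ does not change the image under $q$, we have $q(u^{-1}\widetilde{\eta}(t)u) = q(\widetilde{\eta}(t)) = \eta(t)$. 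Moreover, $q$ restricted to $\eta(\mathbb{G}_m) \subset L$ is the identity, because $L$ is a splitting of $q$. Hence $u^{-1}\widetilde{\eta}(t)u = \eta(t)$ for all $t$, that is, $\widetilde{\eta}(t) = u\eta(t)u^{-1}$.

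The main obstacle is making sure that equality of images upgrades to equality of cocharacters. This is exactly the point where the injectivity of $q$ on $L \supset \eta(\mathbb{G}_m)$ is used, so that step needs to be stated carefully. I also need the conjugating element to lie in $U$ itself. If one prefers to avoid citing conjugacy of tori in $H'$, an alternative is to argue with the semidirect product directly. Writing $\widetilde{\eta}(t) = c(t)\eta(t)$ with $c : \mathbb{G}_m \to U$ a $1$-cocycle for the $\eta$-conjugation action, one notes that $H^1(\mathbb{G}_m, U) = 0$ for a unipotent $U$ (by filtering $U$ by a central series with vector group quotients and using the linear reductivity of $\mathbb{G}_m$). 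This gives $c(t) = u\,\eta(t)u^{-1}\eta(t)^{-1}$, which yields the same conclusion. I expect either route to work; the torus-conjugacy route is shorter.
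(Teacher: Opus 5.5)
Your proof is correct, but your main argument is not the paper's; the paper uses the route you list as your alternative. There, the lift is written as $\eta'(t) = u(t)\eta(t)$, and $t \mapsto u(t)$ is a $1$-cocycle for the $\eta$-conjugation action on $U$. A composition series then reduces to the case $U = \mathbb{G}_a$, and $H^1(\mathbb{G}_m, \mathbb{G}_a) = 0$ makes the cocycle a coboundary.

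Your primary route instead works inside the connected solvable group $U \rtimes \eta(\mathbb{G}_m)$. Conjugacy of maximal tori gives $\widetilde{\eta}(\mathbb{G}_m) = u\,\eta(\mathbb{G}_m)\,u^{-1}$ with $u \in U$. The fact that $q$ restricts to the identity on $L$ then upgrades this equality of images to an equality of cocharacters.

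What your route buys is brevity: there is no d\'evissage on $U$ and no non-abelian cohomology. What it costs:
\begin{itemize}
\item The real content is delegated to the structure theory of connected solvable groups, whose standard proof is itself an induction on the unipotent radical of much the same cohomological flavour.
\item You need a separate case for trivial $\eta$, which the cocycle argument handles uniformly.
\item You use that $k$ is algebraically closed to get a $k$-point conjugating the tori. This is harmless here, since it is a standing assumption of the section.
\end{itemize}

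The cocycle route is more self-contained and less sensitive to the ground field. It does, however, implicitly require the composition series to consist of $\eta(\mathbb{G}_m)$-stable normal subgroups of $U$ with vector-group quotients. Your sketch of the alternative makes that requirement explicit.
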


\begin{proof}
    This result is standard, though we include a proof for lack of a satisfactory reference. Since all unipotent linear algebraic groups over $k$ are split unipotent, by passing to an appropriate composition series we may assume without loss of generality that $U = \mathbb{G}_a$.

    Suppose $\eta' : \mathbb{G}_m \to H$ is a lift of $\eta$. For each $t \in \mathbb{G}_m$, write $\eta'(t) = u(t)\eta(t) \in H = U \rtimes L$. The assignment $t \mapsto u(t)$ then defines a 1-cocycle with respect to $\eta$. But $H^1(\mathbb{G}_m, \mathbb{G}_a) = 0$ (cf. \cite[Chapitre II, Proposition 3.4.2]{Demazure-Gabriel}), so $t \mapsto u(t)$ is cohomologous to a trivial 1-cocycle, from which the lemma follows.
\end{proof}

\begin{prop} \label{prop:finiteness-of-auts-of-Xadpt}
    Assume Assumption \ref{ass:structure-group-bundles-strong} holds for $H$. Let $E$ be a principal $H$-bundle corresponding to a $k$-point of $\mathcal{X}^{\mathrm{adpt}}$, so that for each $m = 1, \dots, n$, $E_{m-1} \not \cong E_m$ as principal $H$-bundles. Let $\overline{Z(H)}$ denote the image of $Z(H)$ under the homomorphism $\mathrm{Aut}(E) = H^0(c_H(E)) \to \mathrm{Aut}_{\mathcal{X}^{\mathrm{rig}}}(\rho(E))$, after first identifying elements of $Z(H)$ with the corresponding right multiplication automorphism of $E$. Then the automorphism group $\mathrm{Aut}_{\mathcal{X}^{\mathrm{rig}}}(\rho(E))$ contains $\overline{Z(H)}$ as a finite index central subgroup.
\end{prop}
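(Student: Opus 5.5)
The plan is to show that the identity component $\mathrm{Aut}_{\mathcal{X}^{\mathrm{rig}}}(\rho(E))^{\circ}$ is contained in $\overline{Z(H)}$. Centrality of $\overline{Z(H)}$ is immediate, because $Z(H)$ acts on $E$ by right multiplication and these automorphisms commute with every automorphism of $E$. The group $\mathrm{Aut}_{\mathcal{X}^{\mathrm{rig}}}(\rho(E))$ is affine of finite type. Once its identity component lies in $\overline{Z(H)}$, the index of $\overline{Z(H)}$ is finite.

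\textbf{Step 1: the reductive part.} By the short exact sequence in the proof of Lemma \ref{lem:automorphism-group-split-filt-semidirect-product}, together with the injectivity of $\overline{\mathrm{gr}}_{\ast}$ on relative inertia, we get $\mathrm{Aut}_{\mathcal{X}^{\mathrm{rig}}}(\rho(E)) \cong \mathrm{Aut}(E)/\mathcal{I}_{\mathrm{gr},E}$. This group embeds into $\mathrm{Aut}(E_L)$ through $\overline{\mathrm{gr}}_{\ast}$. Since $E_L$ is a stable $L$-bundle, $\mathrm{Aut}(E_L)/Z(L)$ is finite by Ramanathan's result. Hence $G := \mathrm{Aut}_{\mathcal{X}^{\mathrm{rig}}}(\rho(E))^{\circ}$ maps injectively into $Z(L)^{\circ}$, so $G$ is a torus, and its image $\overline{\mathrm{gr}}_\ast(G)$ is a subtorus of $Z(L)^{\circ}$.

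\textbf{Step 2: the torus lies in $Z(H)'$.} Suppose $\overline{\mathrm{gr}}_\ast(G)$ is not contained in $Z(H)'$. Using the isogeny $\mathbb{G}_m^n \times Z(H)' \to Z(L)^{\circ}$, pick a cocharacter $\eta$ of $G$ whose image in $Z(L)^{\circ}$, after passing to a multiple, has the form $\lambda\circ\zeta$ up to a $Z(H)'$-factor, with $\zeta$ a non-trivial cocharacter of $\mathbb{G}_m^n$. Two sub-steps follow.

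First, lift $\eta$ to a cocharacter of $\mathrm{Aut}(E)$. This uses the surjectivity of $\mathrm{Grad}(\rho)$ (Lemma \ref{lem:grad-gerbe-is-surjective}) at the point $\rho(E)$, or equivalently lifting along the unipotent kernel $\mathcal{I}_{\mathrm{gr},E}$, which is possible by the argument of Lemma \ref{lem:lifts-of-cocharacters-L-to-H}.

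Second, by Proposition \ref{prop:alt-desc-grad}, the lifted cocharacter of $\mathrm{Aut}(E)$ gives a graded point of $\mathcal{B}un_H$ over $E$. By the description of $\mathrm{Grad}(\mathcal{B}un_H)$ in Section \ref{sec:filt-grad-bung}, this amounts to a reduction of $E$ to $L_H(\eta')$, where $\eta'$ is a cocharacter of $H$ lifting $\lambda\circ\zeta$. By Lemma \ref{lem:lifts-of-cocharacters-L-to-H}, $\eta'$ is $U$-conjugate to $\lambda\circ\zeta$, so after conjugating we get a reduction of $E$ to $L_H(\lambda\circ\zeta)$. Both $P_H(\lambda\circ\zeta)$ and $P_H((\lambda\circ\zeta)^{-1})$ contain $L_H(\lambda\circ\zeta)$, so $E$ reduces to both of these parabolics.

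Since $\chi$ is adapted and $E$ is semistable, Lemma \ref{lem:principal-bundle-semistability-general-character} gives $\langle \chi',\zeta\rangle \ge 0$ and $\langle \chi',\zeta^{-1}\rangle\ge 0$, so $\langle\chi',\zeta\rangle = 0$. Now run the second half of the proof of Proposition \ref{prop:semistability-adapted-character}. The space $\mathfrak{v}=\mathrm{Lie}(U\cap P_H(\lambda\circ\zeta))$ consists of vectors whose $\zeta$-limit exists. These vectors are therefore not $\chi'$-stable: if $\langle\chi',\zeta\rangle=0$ and $\zeta$ is non-trivial, a vector with a limit is not properly stable. Stable vectors are exactly those with $p_{m-1}(v)\ne p_m(v)$ for all $m$, so $\mathfrak{v}\subset W_m$ for some $m$. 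This forces the reduction to factor through $H_m$, which gives $E_{m-1}\cong E_m$ and contradicts the hypothesis.

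\textbf{Step 3: conclusion.} Step 2 gives $\overline{\mathrm{gr}}_\ast(G)\subset Z(H)'$. Since $\overline{\sigma}_\ast$ splits $\overline{\mathrm{gr}}_\ast$ and elements of $Z(H)'\subset Z(H)$ act by right multiplication, $G$ lies in $\overline{Z(H)}$, which gives the finite index.

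The main obstacle is Step 2. Passing from a cocharacter of the rigidified automorphism group to a genuine reduction of $E$ requires careful bookkeeping of the $U$-conjugation and the $Z(H)'$-component. It also requires checking that ``limit exists and $\langle\chi',\zeta\rangle=0$'' really excludes $\chi'$-stability for vectors with non-trivial weights. I would handle the latter with the weight polytope criterion from the proof of Lemma \ref{lem:finding-adapted-characters}: in this case $\chi'$ lies on a supporting hyperplane of $\mathrm{conv}(\mathrm{wt}(v))$, so it cannot lie in the interior.
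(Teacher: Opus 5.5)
Your proposal is correct and follows essentially the paper's route: Step 1 is the paper's reduction to a torus inside $Z(L)^{\circ}$. Step 2 is the paper's argument with only cosmetic changes: lift the cocharacter past the unipotent kernel, get a reduction of $E$ to $L_H(\lambda\circ\zeta)$ via Lemma \ref{lem:lifts-of-cocharacters-L-to-H}, deduce $\langle\chi',\zeta\rangle=0$ from Lemma \ref{lem:principal-bundle-semistability-general-character}, and contradict $E_{m-1}\not\cong E_m$ via adaptedness. You read the reduction off the decomposition of $\mathrm{Grad}(\mathcal{B}un_H)$ where the paper uses the decomposition of $\mathrm{Hom}(\mathbb{G}_m,H)$, and you use $\mathrm{Lie}(U\cap P_H(\lambda\circ\zeta))$ in place of $\mathrm{Lie}(U\cap L_H(\lambda\circ\zeta))$.

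The one slip is in Step 3. There $\overline{\sigma}_\ast$ is unavailable, since it acts on automorphisms of $\overline{\sigma}(\mathrm{gr}(E))$, which is a different point from $\rho(E)$ (as $E\not\cong E_L\times^L H$). Conclude instead from the injectivity of $\overline{\mathrm{gr}}_\ast$ from Step 1: $Z(H)'\subset Z(H)$ acts on $E$ by right multiplication, and these automorphisms map onto $Z(H)'\subset\mathrm{Aut}(E_L)$.
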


\begin{proof}
    It is possible to prove this lemma by using Theorem \ref{thm:gms-etale-local-structure} to reduce the problem to GIT, however we elect to give a more direct proof.

    Fix an adapted character $\chi$ of $L$. Since $\mathcal{I}_{\mathrm{gr}, E}$ can be identified with $H^0(E \times^H U) \subset H^0(c_H(E))$, we may identify
    $$ \mathrm{Aut}_{\mathcal{X}^{\mathrm{rig}}}(\rho(E)) = \frac{H^0(c_H(E))}{H^0(E \times^H U)}. $$
    Since $Z(H)$ is a central subgroup of $\mathrm{Aut}(E)$, $\overline{Z(H)}$ is central in $\mathrm{Aut}_{\mathcal{X}^{\mathrm{rig}}}(\rho(E))$. On the other hand, the map $E \mapsto E_L = E \times^H L$ induces an injection $\mathrm{Aut}_{\mathcal{X}^{\mathrm{rig}}}(\rho(E))^{\circ} \hookrightarrow \mathrm{Aut}(E_L)^{\circ}$. As $E_L$ is a stable principal $L$-bundle, the quotient $\mathrm{Aut}(E_L)/Z(L)$ is finite, and so $\mathrm{Aut}(E_L)^{\circ}$ is a torus. Consequently, in order to show that the quotient $\mathrm{Aut}_{\mathcal{X}^{\mathrm{rig}}}(\rho(E))/\overline{Z(H)}$ is finite, it is enough to show that there exists a positive integer $N \in \Z^{>0}$ such that for any cocharacter $\eta : \mathbb{G}_m \to \mathrm{Aut}_{\mathcal{X}^{\mathrm{rig}}}(\rho(E))$, $\eta^N$ must factor through $\overline{Z(H)}$. We will complete the proof of the proposition by showing that this holds when $N$ is chosen such that for any cocharacter $\theta \in \mathfrak{X}_{\ast}(Z(L)^{\circ})$, $\theta^N$ admits a lift along the isogeny $\mathbb{G}_m^n \times Z(H)' \to Z(L)^{\circ}$ of Assumption \ref{ass:structure-group-bundles-strong}(3)(ii).

    Suppose we are given a cocharacter $\eta : \mathbb{G}_m \to \mathrm{Aut}_{\mathcal{X}^{\mathrm{rig}}}(\rho(E))$. The composition of $\eta$ with $\mathrm{Aut}_{\mathcal{X}^{\mathrm{rig}}}(\rho(E))^{\circ} \hookrightarrow \mathrm{Aut}(E_L)^{\circ}$ necessarily factors through $Z(L)^{\circ} \subset \mathrm{Aut}(E_L)$, so we may view $\eta$ as a cocharacter of $Z(L)^{\circ} \subset \mathrm{Aut}(E_L)$. We claim that this induces a reduction of structure group of $E$ to the subgroup $L_H(\eta)$. Indeed, since the underlying scheme of $H^0(E \times^H U) \cong H^0(E \times^H \mathfrak{u})$ is isomorphic to an affine space over $k$, $H^0(E \times^H U)$ is a split unipotent group, so we may pick a lift $\eta' : \mathbb{G}_m \to \mathrm{Aut}(E)$ of $\eta$ (cf. proof of Lemma \ref{lem:lifting_graded_points_in_center}). The formula
    $$ \eta'(t)(x) = x \cdot \eta_x'(t), \quad x \in E(k) $$
    then defines homomorphisms $\eta_x' : \mathbb{G}_m \to H$, each of which is a lift of $\eta : \mathbb{G}_m \to Z(L)^{\circ}$. By Lemma \ref{lem:lifts-of-cocharacters-L-to-H}, each $\eta_x'$ is conjugate to $\eta$ as cocharacters of $H$. On the other hand, we have $\eta_{xh}' = h^{-1}\eta_x'h$, so the $\eta_x'$ yield an $H$-equivariant morphism $E \to \mathfrak{X}_{\ast}(H) = \mathrm{Hom}_{k-\mathrm{gp}}(\mathbb{G}_m, H)$. It follows from \cite[Exposé XI, Corollaire 5.3 and Exposé XII, Remarques 5.7]{SGA3} that, as a scheme, we have
    $$ \mathfrak{X}_{\ast}(H) = \bigsqcup_{\psi \in \mathfrak{X}_{\ast}(T)/W} H/L_H(\psi) $$
    as a disjoint union of open and closed subschemes, corresponding to the conjugacy classes of cocharacters of $H$. Since $E$ is connected and since each $\eta_x'$ is conjugate to $\eta$, it follows that the morphism $E \to \mathfrak{X}_{\ast}(H)$ factors through $H/L_H(\eta)$. The morphism $E \to H/L_H(\eta)$ then corresponds to a section $C \to E \times^H H/L_H(\eta) = E/L_H(\eta)$, or in other words a reduction of structure group of $E$ to $L_H(\eta)$. This establishes the claim.

    Since $E$ admits a reduction of structure group to $L_H(\eta)$, by Lemma \ref{lem:principal-bundle-semistability-general-character} we have both $\langle \chi, \eta \rangle \geq 0$ and $\langle \chi, \eta^{-1} \rangle \geq 0$, i.e. $\langle \chi, \eta \rangle = 0$. Furthermore, since $\eta^N$ lifts to $\mathbb{G}_m^n \times Z(H)'$, we may write $\eta^N = (\lambda \circ \zeta) + \xi$ for cocharacters $\zeta \in \mathfrak{X}_{\ast}(\mathbb{G}_m^n)$ and $\xi \in \mathfrak{X}_{\ast}(Z(H)')$; we then have $L_H(\eta) = L_H(\lambda \circ \zeta)$ and
    $$ 0 = \langle \chi, \eta \rangle = \langle \chi, \lambda \circ \zeta \rangle = \langle \chi', \zeta \rangle. $$
    We claim that $\zeta$ must be the trivial cocharacter of $\mathbb{G}_m^n$. To see this, consider the subalgebra $\mathfrak{v} := \mathrm{Lie}(U \cap L_H(\lambda \circ \zeta)) \subset \mathfrak{u}$. $\mathfrak{v}$ must contain a vector $v$ which is $\chi'$-stable; if this were not the case then all elements of $\mathfrak{v}$ would be $\chi'$-unstable (since there are no strictly $\chi'$-semistable elements of $\mathfrak{u}$), so as in the proof of Proposition \ref{prop:semistability-adapted-character} there would exist an inclusion $\mathfrak{v} \subset W_m$ for some $m$, and hence an isomorphism $E_{m-1} \cong E_m$, a contradiction. For this vector $v \in \mathfrak{v}$, we have that $\lim_{t \to 0} \zeta(t) \cdot v$ exists in $\mathfrak{u}$, but that $\langle \chi', \zeta \rangle = 0$. This implies that $\zeta$ must be the trivial cocharacter of $\mathbb{G}_m^n$, whence $\eta^N : \mathbb{G}_m \to \mathrm{Aut}_{\mathcal{X}^{\mathrm{rig}}}(\rho(E))$ factors through the inclusion $\overline{Z(H)} \subset \mathrm{Aut}_{\mathcal{X}^{\mathrm{rig}}}(\rho(E))$. This completes the proof of the proposition.
\end{proof}

\begin{cor} \label{cor:adapted-character-tame-ms}
    Assume Assumption \ref{ass:structure-group-bundles-strong} holds for $H$. Then the good moduli space $\mathcal{X}^{\mathrm{rig, adpt}} \to M_H^{d,\delta,\mathrm{adpt}}$ associated to any adapted character of $L$ is a tame moduli space. In particular, the morphism $\pi_{\mathrm{adpt}} : \mathcal{X}^{\mathrm{adpt}} = \mathcal{X}_{\delta}^{\mathrm{adpt}} \to M_H^{d,\delta,\mathrm{adpt}}$ obtained by composing the gerbe $\rho : \mathcal{X}^{\mathrm{adpt}} \to \mathcal{X}^{\mathrm{rig, adpt}}$ with $\mathcal{X}^{\mathrm{rig, adpt}} \to M_H^{d,\delta,\mathrm{adpt}}$ is a coarse moduli space, and the $k$-points of the quasi-projective scheme $M_H^{d,\delta,\mathrm{adpt}}$ are in bijection with isomorphism classes of principal $H$-bundles $E$ over $C$ satisfying the following properties:
    \begin{enumerate}
        \item $E$ has type $d$.
        \item $E_L = E \times^H L$ is a stable principal $L$-bundle.
        \item There are equalities $\dim_k H^0(C, E \times^H \mathfrak{u}) = \dim_k H^0(C, E_L \times^L \mathfrak{u}) = \delta$.
        \item For each $m = 1, \dots, n$, $E_{m-1} \not \cong E_m$ as principal $H$-bundles.
    \end{enumerate}
\end{cor}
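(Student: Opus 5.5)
The strategy is to combine Theorem \ref{thm:moduli-principal-bundles-general-character}, applied to a fixed adapted character $\chi$, with Proposition \ref{prop:finiteness-of-auts-of-Xadpt}. Everything reduces to showing that the rigidified semistable locus coincides with its properly stable locus, after modding out by the central subgroup coming from $Z(H)$.

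\textbf{Step 1: identify the semistable locus and its good moduli space.} Fix an adapted $\chi$. Definition \ref{def:adapted-characters}(1) gives the non-degeneracy condition on $\chi$. Hence Theorem \ref{thm:moduli-principal-bundles-general-character} yields a good moduli space $(\mathcal{X}^{\mathrm{rig}})^{ss}(\mathcal{L}_\chi)\to M_H^{d,\delta,\chi}$, projective over the quasi-projective scheme $M_L^{d,\delta}$, so $M_H^{d,\delta,\chi}$ is quasi-projective. By Proposition \ref{prop:semistability-adapted-character}, the semistable locus $\mathcal{X}^{\chi-ss}$ equals $\mathcal{X}^{\mathrm{adpt}}$, independently of $\chi$.

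\textbf{Step 2: every point of $\mathcal{X}^{\mathrm{rig,adpt}}$ is properly stable, and the moduli space is tame.} Proposition \ref{prop:finiteness-of-auts-of-Xadpt} says every $k$-point of $\mathcal{X}^{\mathrm{rig,adpt}}$ has automorphism group containing $\overline{Z(H)}$ with finite index.

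If $\overline{Z(H)}$ is finite, the stabilizers are finite. Then the set $\mathcal{C}$ of Definition \ref{def:L-stable-locus} is empty, so the stable locus $(\mathcal{X}^{\mathrm{rig}})^s(\mathcal{L}_\chi)$ is the whole semistable locus. Proposition \ref{prop:gms-from-line-bundle} then says $\pi_\chi$ is a tame moduli space.

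In general, $\overline{Z(H)}$ may be positive-dimensional. This is the main obstacle: the image of $Z(H)'$ in the rigidified automorphism groups can be a torus. I would handle it by further rigidifying $\mathcal{X}^{\mathrm{rig,adpt}}$ along the flat central subgroup stack given by the image of $Z(H)$. This subgroup is a quotient of the constant group $Z(H)$, and its flatness can be checked via constancy of the kernel over the connected stack. The further rigidification is a gerbe with the same good moduli space, by the argument of Lemma \ref{lem:gms-Zrig}, and its points now have finite stabilizers. Alternatively, one can argue directly that all $k$-points of a fiber of the good moduli space have the same stabilizer dimension, hence all orbits are closed. Then the $S$-equivalence classes of property (4) of good moduli spaces are isomorphism classes, which gives bijectivity on $k$-points.

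\textbf{Step 3: pass to coarse moduli and describe the points.} Composing with the gerbe $\rho$, which induces a bijection on isomorphism classes of $k$-points (see the proof of Lemma \ref{lem:lifting_graded_points_in_center}), shows that $\pi_{\mathrm{adpt}}$ is universal for maps to algebraic spaces and bijective on geometric points, i.e.\ a coarse moduli space.

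The description of the $k$-points then follows by combining the following:
\begin{itemize}
\item Corollary \ref{cor:univ-prop-X-bund} over $S=\Spec k$, using Remark \ref{rmk:functor-of-points-of-reduction} to turn the local freeness conditions into the dimension equalities of condition (3);
\item stability of $E_L$, which is condition (2);
\item the type condition, which is condition (1);
\item Proposition \ref{prop:semistability-adapted-character}, which gives the non-isomorphism condition (4).
\end{itemize}
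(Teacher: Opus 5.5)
Your proposal is correct and follows essentially the same route as the paper. The paper also further rigidifies $\mathcal{X}^{\mathrm{rig,adpt}}$ by a central multiplicative-type subgroup coming from the center of $H$, though it uses $Z(H)' = Z(L)^{\circ} \cap C_L(U)$ rather than the image of all of $Z(H)$. It justifies this by noting that $Z(H)'$ meets the unipotent relative inertia $\mathcal{I}_{\mathrm{gr}}$ trivially, which is exactly your constancy-of-kernel check, the kernel $Z(H) \cap U$ being trivial. It then invokes Proposition \ref{prop:finiteness-of-auts-of-Xadpt} to get $0$-dimensional stabilizers, deduces tameness from Alper's results as in the proof of Proposition \ref{prop:gms-from-line-bundle}, and passes tameness and the coarse moduli property back through the gerbes. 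Your alternative fibrewise argument (constant stabilizer dimension plus the local structure theorem forcing all orbits in a fibre to be closed) is also valid, though it is not needed.
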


\begin{proof}
    We observe that if $E$ is a principal $H$-bundle corresponding to a $k$-point of $\mathcal{X}^{\mathrm{adpt}}$, then the intersection of $Z(H)'$ with $H^0(E \times^H U)$ in $\mathrm{Aut}(E) = H^0(c_H(E))$ is trivial, since the intersection of the torus $Z(H)'$ with the unipotent group $U$ is necessarily trivial. It follows that $\mathcal{I}_{\mathrm{gr}}$ and $Z(H)' \times \mathcal{X}^{\mathrm{adpt}}$ intersect trivially as subgroup stacks of the inertia of $\mathcal{X}^{\mathrm{adpt}}$, so $Z(H)' \times \mathcal{X}^{\mathrm{adpt}}$ descends along the gerbe $\rho$ to give a flat closed subgroup stack $Z(H)' \times \mathcal{X}^{\mathrm{rig,adpt}} \subset \mathcal{I}_{\mathcal{X}^{\mathrm{rig,adpt}}}$, and hence a well-defined rigidification $\mathcal{X}^{\mathrm{rig,adpt}} \!\!\! \fatslash Z(H)'$, which (by a similar argument used to prove Lemma \ref{lem:gms-Zrig}) admits $M_H^{d,\delta,\mathrm{adpt}}$ as a good moduli space. 
    
    On the other hand, by Proposition \ref{prop:finiteness-of-auts-of-Xadpt} the automorphism groups of the points of the rigidification $\mathcal{X}^{\mathrm{rig,adpt}} \!\!\! \fatslash Z(H)'$ are all of dimension $0$. It then follows from \cite[Proposition 7.7 and Lemma 11.10]{Alper-GMS} (cf. proof of Proposition \ref{prop:gms-from-line-bundle}) that the good moduli space $\mathcal{X}^{\mathrm{rig,adpt}} \!\!\! \fatslash Z(H)' \to M_H^{d, \delta, \mathrm{adpt}}$ is in fact a tame moduli space, and in particular a coarse moduli space. Since gerbes are coarse moduli space morphisms, it follows that $\pi_{\mathrm{adpt}} : \mathcal{X}^{\mathrm{adpt}} \to M_H^{d,\delta,\mathrm{adpt}}$ is a coarse moduli space. Finally, since $Z(H)'$ is an algebraic torus then the gerbe $\mathcal{X}^{\mathrm{rig,adpt}} \to \mathcal{X}^{\mathrm{rig,adpt}} \!\!\! \fatslash Z(H)'$ is a good moduli space morphism (cf. proof of Lemma \ref{lem:gms-Zrig}), so the good moduli space $\mathcal{X}^{\mathrm{rig, adpt}} \to M_H^{d,\delta,\mathrm{adpt}}$ is also a tame moduli space.
\end{proof}

\subsubsection{Adapted Stable Bundles for Parabolic Subgroups of $\mathrm{GL}_r$} \label{sec:parabolic-subgroups-gl}

As illustrated in Example \ref{ex:setting-parabolic-subgroup}, Assumption \ref{ass:structure-group-bundles-strong} is applicable in the case where $H$ is a parabolic subgroup of a connected reductive linear algebraic group $G$. Taking $G = \mathrm{GL}_r$, the moduli spaces $M_H^{d, \delta, \mathrm{adpt}}$ are moduli spaces of flags of vector bundles. Here we consider the resulting moduli spaces in more detail, and compare our moduli spaces to the moduli spaces of unstable vector bundles over $C$ of fixed Harder--Narasimhan type constructed by Jackson \cite{Jackson-HNL2} (where $\tau$ is chosen to be of length $\ell = 2$), Brambila Paz--Rios Sierra \cite{BPRS-HNL2-Proceedings} ($\ell = 2$), Qiao \cite{Qiao-HNL2, Qiao-Thesis} ($\ell = 2$, $\ell > 2$ respectively) and Hoskins--Jackson \cite{Hoskins-Jackson-QIS} ($\ell > 2$). In particular, we indicate how our existence result for the quasi-projective moduli spaces $M_H^{d,\delta,\mathrm{adpt}}$ (Theorem \ref{thm:moduli-principal-bundles-intro}/Corollary \ref{cor:adapted-character-tame-ms}) generalizes the existence results for moduli spaces of slope unstable vector bundles of fixed Harder--Narasimhan type arising from \cite{Jackson-HNL2, Qiao-HNL2, Qiao-Thesis, BPRS-HNL2-Proceedings}.

\begin{ex} \label{ex:comparing-adapt-tau-stable}
    In the setting of Example \ref{ex:setting-parabolic-subgroup}, take $G = \mathrm{GL}_r$ (where $r \geq 2$), and let $H$ be the standard parabolic subgroup of $G$ consisting of block upper-triangular matrices with diagonal blocks of sizes $r_1 \times r_1, \dots, r_{\ell} \times r_{\ell}$ (in this order). The data of a principal $H$-bundle $E$ over $C$ is then equivalent to the data of a vector bundle $\mathcal{E}$ over $C$ of rank $r$ together with a filtration
    \begin{equation} \label{eq:vector-bundle-filtration-hj}
        \mathcal{E}^{\bullet} : 0 = \mathcal{E}^{0} \subset \mathcal{E}^{1} \subset \cdots \subset \mathcal{E}^{\ell} = \mathcal{E}
    \end{equation}
    where each $\mathcal{E}^i$ is a subbundle of $\mathcal{E}$, and where each subquotient $\mathcal{E}^i/\mathcal{E}^{i-1}$ is a vector bundle of rank $r_i$. Fixing the type $d$ of $E$ is then equivalent to fixing the length $\ell$ of the filtration and the degrees $d_i = \deg(\mathcal{E}^i/\mathcal{E}^{i-1})$ of the subquotients of the filtration. In this case, we may also identify the space $H^0(E \times^H \mathfrak{u})$ with the space of shifted endomorphisms
    $$ \mathrm{End}_{-1}(\mathcal{E}^{\bullet}) := \{ \phi \in \mathrm{End}(\mathcal{E}) : \phi(\mathcal{E}^i) \subset \mathcal{E}^{i-1} \text{ for all } i = 1, \dots, \ell \}. $$
    
    The principal $L$-bundle $E_L = E \times^H L$ corresponds to the tuple of subquotients $(\mathcal{E}^1, \mathcal{E}^2/\mathcal{E}^1, \dots, \mathcal{E}^{\ell}/\mathcal{E}^{\ell-1})$, and the associated graded principal $H$-bundle $E_L \times^L H$ is given by the direct sum
    $$ \overline{\mathcal{E}} := \bigoplus_{i=1}^{\ell} \mathcal{E}^i/\mathcal{E}^{i-1} $$
    together with the associated split filtration $\overline{\mathcal{E}}^{\bullet}$; $E_L$ is (semi)stable as a principal $L$-bundle if and only if each subquotient $\mathcal{E}^i/\mathcal{E}^{i-1}$ is a slope (semi)stable vector bundle. In particular, if $E_L$ is semistable and if
    $$ \mu(\mathcal{E}^1) > \mu(\mathcal{E}^2/\mathcal{E}^1) > \cdots > \mu(\mathcal{E}^{\ell}/\mathcal{E}^{\ell-1}), $$
    then $\mathcal{E}$ is a slope unstable vector bundle whose Harder--Narasimhan filtration is the filtration \eqref{eq:vector-bundle-filtration-hj}. We remark that in each of \cite{Jackson-HNL2, Qiao-HNL2, Qiao-Thesis, BPRS-HNL2-Proceedings, Hoskins-Jackson-QIS}, the type $d$ is chosen to be a Harder--Narasimhan type $\tau$, and all of the Harder--Narasimhan subquotients of the bundles being parameterized are assumed to be stable (as opposed to being only semistable). Under the above identification, we also have an equality
    $$ H^0(E_L \times^L \mathfrak{u}) = \mathrm{End}_{-1}(\overline{\mathcal{E}}^{\bullet}) = \bigoplus_{1 \leq i < j \leq \ell} \mathrm{Hom}(\mathcal{E}^j/\mathcal{E}^{j-1}, \mathcal{E}^i/\mathcal{E}^{i-1}). $$

    As in Remark \ref{rmk:hj24-4.1}, set $\gamma_i = \varepsilon_i - \varepsilon_{i+1}$ and $\alpha_i = \gamma_{s_i}$, so that $H$ corresponds to the complementary set of simple roots $\Delta \setminus I = \{ \alpha_1, \dots, \alpha_{\ell-1} \}$. If we order the $\alpha_i$ in ascending order, so that we filter $U$ by its rows going top-to-bottom, the principal $H$-bundle $E_m$ corresponds to the vector bundle $\mathcal{E}/\mathcal{E}^m$ together with the filtration
    \begin{multline*}
        \mathcal{E}^1 \subset \mathcal{E}^1 \oplus \mathcal{E}^2/\mathcal{E}^2 \subset \cdots \subset \mathcal{E}^1 \oplus \mathcal{E}^2/\mathcal{E}^1 \oplus \cdots \oplus \mathcal{E}^m/\mathcal{E}^{m-1} \\ \subset (\mathcal{E}^1 \oplus \mathcal{E}^2/\mathcal{E}^1 \oplus \cdots \oplus \mathcal{E}^m/\mathcal{E}^{m-1}) \oplus \mathcal{E}^{m+1}/\mathcal{E}^m \\ \subset (\mathcal{E}^1 \oplus \mathcal{E}^2/\mathcal{E}^1 \oplus \cdots \oplus \mathcal{E}^m/\mathcal{E}^{m-1}) \oplus \mathcal{E}^{m+2}/\mathcal{E}^m \\ \subset \cdots \subset (\mathcal{E}^1 \oplus \mathcal{E}^2/\mathcal{E}^2 \oplus \cdots \oplus \mathcal{E}^m/\mathcal{E}^{m-1}) \oplus \mathcal{E}^{\ell}/\mathcal{E}^m.
    \end{multline*}
    Therefore, the condition that $E_{m-1} \not \cong E_m$ for all $m = 1, \dots, \ell-1$ is equivalent to the condition that for each $m = 2, \dots, \ell$, the inclusion $\mathcal{E}^{m-1}/\mathcal{E}^{m-2} \subset \mathcal{E}^m/\mathcal{E}^{m-2}$ is non-split.

    On the order hand, if we order the $\alpha_i$ in descending order, so that $\lambda_i$ is a multiple of the fundamental coweight of $\alpha_{\ell - i}$ for all $i = 1, \dots, \ell-1$, then the resulting subgroup $U_i \subset U$ consists of the matrices in the unipotent radical $U$ of $H$ whose off-diagonal blocks are all $0$, except for those blocks in column $\ell - i + 1$ (i.e. we are filtering $U$ by its columns going right-to-left). In this case, the condition that $E_{m-1} \not \cong E_m$ for all $m = 1, \dots, \ell-1$ is equivalent to the condition that $\mathcal{E}^m \not \cong \mathcal{E}^{m-1} \oplus \mathcal{E}^m/\mathcal{E}^{m-1}$ for all $m = 2, \dots, \ell$. In light of \cite[Lemma 7.1.4]{Jackson-HNL2} together with the fact that a non-split extension of simple vector bundles (in the sense that all endomorphisms are given by scalar multiplication) is simple, this is further equivalent to the condition that for each $m = 2, \dots, \ell$, the inclusion $\mathcal{E}^{m-1} \subset \mathcal{E}^m$ is non-split. When $\ell > 2$, this is the non-splitting criterion imposed on the \emph{$\widetilde{\tau}$-stable} sheaves considered in \cite{Hoskins-Jackson-QIS} (see Appendix \ref{app:hoskins-jackson-moduli-space}) and the {$(\tau, \delta)$-stable} vector bundles considered in \cite{Qiao-Thesis}; in the case $\ell = 2$, we also recover the non-splitting criterion appearing in each of \cite{Jackson-HNL2, Qiao-HNL2, BPRS-HNL2-Proceedings}.
\end{ex}

\begin{ex} \label{ex:qiao-cohomological-condition}
    Let $H$ be as in the previous example. Fix the type $d$ of the principal $H$-bundles $E$ being parameterized, such that $\mu(\mathcal{E}^i/\mathcal{E}^{i-1}) = \mu_i$ for each $i = 1, \dots, \ell$. Recall from Remark \ref{rmk:unipotent-auts-bundles-non-abelian} that if $\{e\} = U^{(q+1)} \lhd U^{(q)} \lhd \cdots \lhd U^{(1)} = U$ is a central series such that the extended series $\{e\} = U^{(q+1)} \lhd U^{(q)} \lhd \cdots \lhd U^{(1)} = U \lhd H$ is a normal series, then a sufficient condition for $\dim H^0(E \times^H \mathfrak{u}) = \dim H^0(E_L \times^L \mathfrak{u})$ (the pointwise condition on unipotent stabilizers arising from Corollary \ref{cor:univ-prop-X-bund}) to hold for the principal $H$-bundle $E$ is that for any irreducible $L$-representation appearing as a subrepresentation of $\mathrm{Lie}(U^{(m)}/U^{(m+1)})$ for some $m \geq 2$, the corresponding sheaf of sections $\Gamma_C(E_L \times^L V)$ satisfies $$ \mu(\Gamma_C(E_L \times^L V)) > 2p_a(C) - 2. $$

    Taking the central series to be the lower central series of $U$, the corresponding filtration of $\mathfrak{u}$ is given by filtering by off-diagonals, whence
    $$ \mathrm{Lie}(U^{(m)}/U^{(m+1)}) = \bigoplus_{j-i = m} \mathfrak{u}_{i,j}, \quad \mathfrak{u}_{i,j} := \bigoplus_{\stackrel{p = s_{i-1}+1, \dots, s_i}{q = s_{j-1}+1, \dots, s_j}} k \cdot E_{p,q} \cong \mathrm{Hom}(k^{r_j}, k^{r_i}). $$
    Each of the $\mathfrak{u}_{i,j}$ are irreducible as representations of $L$. Moreover, we have
    $$ \deg(\Gamma_C(E_L \times^L \mathfrak{u}_{i,j})) = r_ir_j \left( \mu(\mathcal{E}^i/\mathcal{E}^{i-1}) - \mu(\mathcal{E}^j/\mathcal{E}^{j-1}) \right) = r_ir_j(\mu_i - \mu_j). $$
    As such, $\mu(\Gamma_C(E_L \times^L V)) > 2p_a(C) - 2$ for all irreducible subrepresentations $V \subset \mathrm{Lie}(U^{(m)}/U^{(m+1)})$ for some $m \geq 2$ if and only if
    $$ \mu_i - \mu_j > 2p_a(C) - 2 $$
    for all pairs $(i,j)$ with $j - i \geq 2$. If $d$ is chosen to be a Harder--Narasimhan type, so that $\mu_1 > \cdots > \mu_{\ell}$, this is further equivalent to
    $$ \mu_i - \mu_{i+2} > 2p_a(C) - 2 $$
    for all $i = 1, \dots, \ell-2$, which is \cite[Condition 6.6.2]{Qiao-Thesis}.
\end{ex}

When $\ell = 2$, the condition $\dim H^0(E \times^H \mathfrak{u}) = \dim H^0(E_L \times^L \mathfrak{u}) = \delta$ is equivalent to the condition on unipotent stabilizers imposed on the vector bundles parameterized in \cite{Jackson-HNL2} and \cite{BPRS-HNL2-Proceedings}, and is the pointwise condition on unipotent stabilizers imposed in \cite{Qiao-HNL2}. For general $\ell \geq 2$, the second and third conditions appearing in the statement of Corollary \ref{cor:univ-prop-X-bund} collectively simplifies and extends the final conditions of \cite[Definition 2.3]{Qiao-HNL2} and \cite[Definition 6.6.15]{Qiao-Thesis} imposed on families of $(\tau, \delta)$-stable vector bundles (for Qiao's existence results of moduli spaces of $(\tau, \delta)$-stable vector bundles, \cite[Condition 6.6.2]{Qiao-Thesis} either automatically holds (if $\ell = 2$) or is assumed to hold (if $\ell > 2$), so Remark \ref{rmk:dropping-conditions-L-stabs} is applicable). We conclude that the existence result for the moduli spaces $M_H^{d, \delta, \mathrm{adpt}}$ generalizes the existence results for moduli spaces of slope unstable vector bundles of fixed Harder--Narasimhan type arising from \cite{Jackson-HNL2, Qiao-HNL2, Qiao-Thesis, BPRS-HNL2-Proceedings}.
    
The conditions on pointwise unipotent stabilizers imposed on $\widetilde{\tau}$-stable vector bundles in \cite{Hoskins-Jackson-QIS} do not necessarily imply that the condition $\dim H^0(E \times^H \mathfrak{u}) = \dim H^0(E_L \times^L \mathfrak{u}) = \delta$ holds; Hoskins--Jackson implement a blow-up procedure as part of their non-reductive GIT construction (to handle non-constant unipotent stabilizer dimensions), which allows their resulting moduli space to parameterize unstable vector bundles for which $\dim H^0(E \times^H \mathfrak{u}) < \dim H^0(E_L \times^L \mathfrak{u})$ can occur. However, a \emph{generic} $\widetilde{\tau}$-stable vector bundle satisfies $\dim H^0(E \times^H \mathfrak{u}) = \dim H^0(E_L \times^L \mathfrak{u})$, as shown in Appendix \ref{app:hoskins-jackson-moduli-space}. As such, a generic $\widetilde{\tau}$-stable vector bundle will be classified by some moduli space $M_H^{d, \delta, \mathrm{adpt}}$.

\appendix

\section{Comparison with the Moduli Space of Hoskins--Jackson} \label{app:hoskins-jackson-moduli-space}

In this appendix, we make free use of the conventions, notation and terminology of \cite{Hoskins-Jackson-QIS}. 

Let $k$ be an algebraically closed field of characteristic $0$, and let $(B, \mathcal{O}_B(1))$ be a polarized projective $k$-scheme. Let $\tau = (\tau_1, \dots, \tau_{\ell})$ be a fixed Harder--Narasimhan type for Gieseker unstable coherent sheaves on $B$.

We first recall Hoskins--Jackson's notion of a $\widetilde{\tau}$-stable coherent sheaf on $B$, where $\widetilde{\tau} = (\tau, d)$ is a fixed refinement of the Harder--Narasimhan type $\tau$ (\cite[Definition 6.1]{Hoskins-Jackson-QIS}).

\begin{defi}
    Let $\tau$ be as above, and let $d = (d^{[1]}, \dots, d^{[\ell]}, d^{[1,2]}, \dots d^{[\ell-1, \ell]})$ be a fixed sequence of non-negative integers. Write $\widetilde{\tau} = (\tau, d)$. Let $\mathcal{F}$ be a coherent sheaf on $B$ of Gieseker Harder--Narasimhan type $\tau$, with Gieseker Harder--Narasimhan filtration
    $$ \mathcal{F}^{(\bullet)} : 0 = \mathcal{F}^{(0)} \subset \mathcal{F}^{(1)} \subset \cdots \subset \mathcal{F}^{(\ell)} = \mathcal{F}. $$
    
    $\mathcal{F}$ is said to be \emph{$\widetilde{\tau}$-stable} if the following conditions hold:

    \begin{enumerate}
        \item For each $i = 1, \dots, \ell$, the subquotient $\mathcal{F}_i := \mathcal{F}^{(i)}/\mathcal{F}^{(i-1)}$ is Gieseker stable.
        \item For each $i = 2, \dots, \ell$, the inclusion $\mathcal{F}^{(i-1)} \subset \mathcal{F}^{(i)}$ is non-split.
        \item For each $i = 1, \dots, \ell$, $\dim \mathrm{Hom}(\mathcal{F}/\mathcal{F}^{(i)}, \mathcal{F}^{(i)}) = d^{[i]}$.
        \item For each $i = 2, \dots, \ell$, $\dim \mathrm{Hom}(\mathcal{F}/\mathcal{F}^{(i)}, \mathcal{F}^{(i-1)}) = d^{[i-1,i]}$.
    \end{enumerate}
\end{defi}

Fix the refinement $\widetilde{\tau}$. Let $W = \overline{p^{-1}(Z_{\min}^d)}$ be as in the proof of \cite[Theorem 6.2]{Hoskins-Jackson-QIS} (the scheme $W$ is the subscheme of an appropriately defined Grothendieck Quot scheme). $W$ is acted on by a parabolic subgroup $P_{\tau} \subset \mathrm{SL}_N$, and has the property that the isomorphism classes of $\widetilde{\tau}$-stable coherent sheaves on $B$ are in bijection with the $P_{\tau}$-orbits of the ``quotienting-in-stages stable locus'' $W^{P_{\tau}-qs}$ (cf. \cite[Definition 4.11]{Hoskins-Jackson-QIS}). Hoskins--Jackson's moduli space of $\widetilde{\tau}$-stable sheaves is constructed as a non-reductive GIT quotient of $W^{P_{\tau}-qs}$ by $P_{\tau}$ (the resulting $P_{\tau}$-quotient is a geometric good quotient).

Our objective is to establish the following result.

\begin{prop} \label{app-prop:generic-unipotent-stabilizers}
    If $W^{P_{\tau}-qs}$ is non-empty, then for any coherent sheaf $\mathcal{F}$ corresponding to a generic point of $W^{P_{\tau}-qs}$ with Gieseker Harder--Narasimhan filtration 
    $$ \mathcal{F}^{(\bullet)} : 0 = \mathcal{F}^{(0)} \subset \mathcal{F}^{(1)} \subset \cdots \subset \mathcal{F}^{(\ell)} = \mathcal{F}, $$
    there are equalities 
    $$ \dim \mathrm{End}_{-1}(\mathcal{F}) = \dim \mathrm{End}_{-1}(\mathrm{gr}^{\mathrm{HN}}(\mathcal{F})) = \sum_{i=1}^{\ell-2}(d^{[i]} - d^{[i,i+1]}) + d^{[\ell-1]}, $$ 
    where $\mathrm{End}_{-1}(\mathcal{F}) = \{ \phi \in \mathrm{End}(\mathcal{F}) : \phi(\mathcal{F}^{(i)}) \subset \mathcal{F}^{(i-1)}\}$ and where $\mathrm{gr}^{\mathrm{HN}}(\mathcal{F}) := \bigoplus_{i=1}^{\ell} \mathcal{F}_i$ (endowed with the induced split filtration; here $\mathcal{F}_i := \mathcal{F}^{(i)}/\mathcal{F}^{(i-1)}$).
\end{prop}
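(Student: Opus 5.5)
We plan to prove the two equalities separately: first compute $\dim \mathrm{End}_{-1}(\mathrm{gr}^{\mathrm{HN}}(\mathcal{F}))$ directly in terms of the refinement data $d$, and then use upper semicontinuity together with genericity to show that $\mathcal{F}$ attains this value.

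For the split filtration, $\mathrm{End}_{-1}(\mathrm{gr}^{\mathrm{HN}}(\mathcal{F})) = \bigoplus_{i<j} \mathrm{Hom}(\mathcal{F}_j, \mathcal{F}_i)$, exactly as in the vector bundle computation of Example \ref{ex:comparing-adapt-tau-stable}. This dimension is not recorded directly by $d$, so the first step is to relate the numbers $d^{[i]} = \dim \mathrm{Hom}(\mathcal{F}/\mathcal{F}^{(i)}, \mathcal{F}^{(i)})$ and $d^{[i-1,i]} = \dim \mathrm{Hom}(\mathcal{F}/\mathcal{F}^{(i)}, \mathcal{F}^{(i-1)})$ to $\mathrm{End}_{-1}(\mathcal{F})$.

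\textbf{Filtration and telescoping.} We filter $\mathrm{End}_{-1}(\mathcal{F})$ by the subspaces $V_i := \{\phi \in \mathrm{End}_{-1}(\mathcal{F}) : \phi(\mathcal{F}^{(i)}) = 0\}$, so that $V_0 = \mathrm{End}_{-1}(\mathcal{F})$ and $V_{\ell-1} = \mathrm{Hom}(\mathcal{F}/\mathcal{F}^{(\ell-1)}, \mathcal{F}^{(\ell-1)})$. The latter space has dimension $d^{[\ell-1]}$, because $\mathrm{Hom}(\mathcal{F}_\ell, \mathcal{F}^{(\ell-1)})$ automatically lands in $\mathcal{F}^{(\ell-1)}$. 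For $i = 1, \dots, \ell-2$, restricting to $\mathcal{F}^{(i)}$ gives
\[ V_{i-1}/V_i \hookrightarrow \mathrm{Hom}(\mathcal{F}^{(i)}/\mathcal{F}^{(i-1)}, \mathcal{F}^{(i-1)}). \]
The target of this map is computed by the Hoskins--Jackson quotienting-in-stages description. We expect the HJ construction to identify the relevant stabilizer pieces as $\mathrm{Hom}(\mathcal{F}/\mathcal{F}^{(i)}, \mathcal{F}^{(i)})$ modulo $\mathrm{Hom}(\mathcal{F}/\mathcal{F}^{(i)}, \mathcal{F}^{(i-1)})$, of dimension $d^{[i]} - d^{[i,i+1]}$ after the appropriate index shift. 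Summing the successive quotients then yields the stated formula for $\dim \mathrm{End}_{-1}(\mathcal{F})$ at points where each stage is "maximal", namely where each restriction map is surjective. The first step, then, is to establish this telescoping upper bound
\[ \dim \mathrm{End}_{-1}(\mathcal{F}) \leq \sum_{i=1}^{\ell-2}(d^{[i]} - d^{[i,i+1]}) + d^{[\ell-1]} \]
for all $\widetilde{\tau}$-stable $\mathcal{F}$.

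\textbf{Comparison with the associated graded.} By Remark \ref{rmk:unipotent-auts-bundles-non-abelian}, applied via upper semicontinuity of stabilizer dimensions under the degeneration $\mathcal{F} \rightsquigarrow \mathrm{gr}^{\mathrm{HN}}(\mathcal{F})$ (which is the $\lambda$-limit in $W$), we have $\dim \mathrm{End}_{-1}(\mathcal{F}) \leq \dim \mathrm{End}_{-1}(\mathrm{gr}^{\mathrm{HN}}(\mathcal{F}))$. The right-hand side depends only on the stable subquotients $\mathcal{F}_i$. Using that the same telescoping, applied to the split sheaf, has the same successive pieces—since $d^{[i]}$ and $d^{[i-1,i]}$ involve only $\mathcal{F}/\mathcal{F}^{(i)}$ and $\mathcal{F}^{(i)}$, which are the same data the construction of $W$ fixes on $Z_{\min}^d$—we would show that the split point realizes equality in the bound. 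Concretely, we expect $Z_{\min}^d$ to be chosen as the locus where the fibre data attains the minimal values $d$ on the graded side.

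\textbf{Genericity.} The function $x \mapsto \dim \mathrm{End}_{-1}(\mathcal{F}_x)$ is upper semicontinuous on the irreducible components of $W^{P_\tau-qs}$. Its minimum is therefore attained on a dense open subset, and by the previous step $W = \overline{p^{-1}(Z^d_{\min})}$ contains points where the value equals the graded one. On a dense open subset we thus have $\dim \mathrm{End}_{-1}(\mathcal{F}) = \dim \mathrm{End}_{-1}(\mathrm{gr}^{\mathrm{HN}}(\mathcal{F}))$, provided the minimum equals the graded value. This last point is exactly where the upper bound of the first step must be matched from below.

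\textbf{Main obstacle.} The main obstacle is the lower bound, namely showing that at generic points the restriction maps $V_{i-1}/V_i \to \mathrm{Hom}(\mathcal{F}_i, \mathcal{F}^{(i-1)})$ are surjective with the predicted dimensions. Equivalently, the lifting of graded shifted endomorphisms to $\mathcal{F}$ must be unobstructed generically. We would first try to extract this from the explicit structure of $p : W \to Z$ in \cite{Hoskins-Jackson-QIS}, where $p$ is an affine bundle whose fibres parametrize extension classes. The plan is to show that the obstruction to lifting a graded homomorphism is a Yoneda product with the extension class, and that this product vanishes on a dense open subset of the fibre, for instance at the zero extension, which lies in the closure.
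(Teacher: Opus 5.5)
There is a genuine gap, and it begins with the direction of your telescoping inequality. Your filtration $V_i$ is fine. But $\mathrm{Hom}(\mathcal{F}/\mathcal{F}^{(i)},\mathcal{F}^{(i)})$ sits inside $V_i$ and meets $V_{i+1}$ exactly in $\mathrm{Hom}(\mathcal{F}/\mathcal{F}^{(i+1)},\mathcal{F}^{(i)})$. So what you get for free is $\dim V_i/V_{i+1}\geq d^{[i]}-d^{[i,i+1]}$, that is, the \emph{lower} bound $\dim\mathrm{End}_{-1}(\mathcal{F})\geq\sum_{i=1}^{\ell-2}(d^{[i]}-d^{[i,i+1]})+d^{[\ell-1]}$ for every $\widetilde{\tau}$-stable $\mathcal{F}$.

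The upper bound you announce for all $\widetilde{\tau}$-stable sheaves would need the target $\mathrm{Hom}(\mathcal{F}_{i+1},\mathcal{F}^{(i)})$ of your restriction map to have dimension $d^{[i]}-d^{[i,i+1]}$. Only the image of $\mathrm{Hom}(\mathcal{F}/\mathcal{F}^{(i)},\mathcal{F}^{(i)})$ has that dimension, and the target can be strictly larger. So that step has no justification and need not hold.

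The only upper bound actually available is the semicontinuity one, $\dim\mathrm{End}_{-1}(\mathcal{F})\leq\dim\mathrm{End}_{-1}(\mathrm{gr}^{\mathrm{HN}}(\mathcal{F}))=\sum_{j<i}\dim\mathrm{Hom}(\mathcal{F}_i,\mathcal{F}_j)$. This right-hand side is the same formula evaluated on the \emph{graded} analogues of $d^{[i]}$ and $d^{[i-1,i]}$, namely $\dim\mathrm{Hom}(\mathcal{F}_{i+1}\oplus\cdots\oplus\mathcal{F}_{\ell},\mathcal{F}_1\oplus\cdots\oplus\mathcal{F}_i)$ and $\dim\mathrm{Hom}(\mathcal{F}_{i+1}\oplus\cdots\oplus\mathcal{F}_{\ell},\mathcal{F}_1\oplus\cdots\oplus\mathcal{F}_{i-1})$.

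The whole proof therefore reduces to showing that, at a generic point, the filtered and graded numbers coincide. This is the idea your proposal never uses. A generic $w\in W^{P_{\tau}-qs}$ lies in $p^{-1}(Z^d_{\min})$, so the graded numbers of $p(w)=\mathrm{gr}^{\mathrm{HN}}(\mathcal{F})$ equal $d^{[i]}$ and $d^{[i-1,i]}$. Meanwhile $w\in W^{P_\tau-qs}$, together with semicontinuity and \cite[Lemma 4.34, Proposition 4.38]{Hoskins-Jackson-QIS}, gives the same values for the filtered numbers. Lemma \ref{app-lem:sheaves-constant-dimension} is then a pure squeeze. For instance, $d^{[i-1]}-d^{[i-1,i]}\leq\dim\mathrm{Hom}(\mathcal{F}_i,\mathcal{F}^{(i-1)})\leq\dim\mathrm{Hom}(\mathcal{F}_i,\mathcal{F}_1\oplus\cdots\oplus\mathcal{F}_{i-1})=d^{[i-1]}-d^{[i-1,i]}$. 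This forces exactly the surjectivity you were after, with no obstruction theory.

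Your proposed substitute cannot work. Vanishing of a Yoneda obstruction is a closed condition. Knowing it vanishes at the split extension says nothing about a dense open subset, and that point is not even in $W^{P_{\tau}-qs}$ (it violates non-splitting), only in its closure. Semicontinuity only tells you the split point \emph{maximizes} $\dim\mathrm{End}_{-1}$. For non-split extensions such obstructions are typically nonzero; this is why $\dim\mathrm{End}_{-1}(\mathcal{F})<\dim\mathrm{End}_{-1}(\mathrm{gr}^{\mathrm{HN}}(\mathcal{F}))$ can occur in the Hoskins--Jackson moduli space. The equality here is forced by the numerical constraints encoded in $Z^d_{\min}$ and $d$, not by genericity of extension classes.
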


Before proving Proposition \ref{app-prop:generic-unipotent-stabilizers}, we first establish the following lemma.

\begin{lem} \label{app-lem:sheaves-constant-dimension}
    Let $\mathcal{F}$ be a coherent sheaf on $B$ with Gieseker Harder--Narasimhan filtration
    $$ \mathcal{F}^{(\bullet)} : 0 = \mathcal{F}^{(0)} \subset \mathcal{F}^{(1)} \subset \cdots \subset \mathcal{F}^{(\ell)} = \mathcal{F}. $$
    Assume that the following conditions hold:
    \begin{enumerate}
        \item For all $i = 1, \dots, \ell-1$, 
        $$ \mathrm{Hom}(\mathcal{F}/\mathcal{F}^{(i)}, \mathcal{F}^{(i)}) = \dim \mathrm{Hom}(\mathcal{F}_{i+1} \oplus \cdots \oplus \mathcal{F}_{\ell}, \mathcal{F}_1 \oplus \cdots \oplus \mathcal{F}_{i}) =: d^{[i]}. $$
        \item For all $i = 2, \dots, \ell-1$, 
        $$ \dim \mathrm{Hom}(\mathcal{F}/\mathcal{F}^{(i)}, \mathcal{F}^{(i-1)}) = \dim \mathrm{Hom}(\mathcal{F}_{i+1} \oplus \cdots \oplus \mathcal{F}_{\ell}, \mathcal{F}_1 \oplus \cdots \oplus \mathcal{F}_{i-1}) =: d^{[i-1,i]}. $$
    \end{enumerate}

    Then there are equalities 
    $$ \dim \mathrm{End}_{-1}(\mathcal{F}) = \dim \mathrm{End}_{-1}(\mathrm{gr}^{\mathrm{HN}}(\mathcal{F})) = \sum_{i=1}^{\ell-2}(d^{[i]} - d^{[i,i+1]}) + d^{[\ell-1]}. $$ 
\end{lem}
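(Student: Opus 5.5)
The plan is to squeeze $\dim \mathrm{End}_{-1}(\mathcal{F})$ between a lower bound and an upper bound that both equal $\dim \mathrm{End}_{-1}(\mathrm{gr}^{\mathrm{HN}}(\mathcal{F}))$. The lower bound comes from explicit subspaces of $\mathrm{End}_{-1}(\mathcal{F})$ whose dimensions are controlled by hypotheses (1) and (2). The upper bound is a general filtration argument that needs no hypotheses.

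\emph{The graded side.} First compute the right-hand side combinatorially. We have
$$\mathrm{End}_{-1}(\mathrm{gr}^{\mathrm{HN}}(\mathcal{F})) = \bigoplus_{a<b} \mathrm{Hom}(\mathcal{F}_b,\mathcal{F}_a).$$
By the definitions in (1) and (2), $d^{[i]} = \sum_{a \le i < b} \dim\mathrm{Hom}(\mathcal{F}_b,\mathcal{F}_a)$, and (reindexing (2)) $d^{[i,i+1]} = \sum_{a\le i,\, b \ge i+2}\dim \mathrm{Hom}(\mathcal{F}_b,\mathcal{F}_a)$ for $1 \le i \le \ell-2$. Hence $d^{[i]}-d^{[i,i+1]} = \sum_{a \le i}\dim\mathrm{Hom}(\mathcal{F}_{i+1},\mathcal{F}_a)$. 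The term $d^{[\ell-1]}$ is exactly the $b=\ell$ contribution. Summing over $i$ therefore counts every pair $a<b$ exactly once, which gives the second equality in the lemma.

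\emph{Lower bound.} For $1\le i\le \ell-1$, let $A_i := \mathrm{Hom}(\mathcal{F}/\mathcal{F}^{(i)},\mathcal{F}^{(i)})$. We view it inside $\mathrm{End}(\mathcal{F})$ via $\mathcal{F}\to\mathcal{F}/\mathcal{F}^{(i)}\to\mathcal{F}^{(i)}\subset\mathcal{F}$. Such a map kills $\mathcal{F}^{(j)}$ for $j\le i$ and sends $\mathcal{F}^{(j)}$ into $\mathcal{F}^{(i)}\subset\mathcal{F}^{(j-1)}$ for $j>i$, so $A_i\subset \mathrm{End}_{-1}(\mathcal{F})$. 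Similarly let $B_i := \mathrm{Hom}(\mathcal{F}/\mathcal{F}^{(i+1)},\mathcal{F}^{(i)})$, which lies in $A_i\cap A_{i+1}$.

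The key claim is that $(A_1+\cdots+A_i)\cap A_{i+1} = B_i$. Elements of $A_1+\cdots+A_i$ have image in $\mathcal{F}^{(i)}$, and elements of $A_{i+1}$ kill $\mathcal{F}^{(i+1)}$. So the intersection consists of maps killing $\mathcal{F}^{(i+1)}$ with image in $\mathcal{F}^{(i)}$, which is $B_i$; the reverse inclusion is clear. Inductively,
$$\dim(A_1+\cdots+A_{\ell-1}) = d^{[1]}+\sum_{i=1}^{\ell-2}\bigl(d^{[i+1]}-d^{[i,i+1]}\bigr),$$
using hypotheses (1) and (2) for the dimensions of $A_i$ and $B_i$. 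This rearranges to $\sum_{i=1}^{\ell-2}(d^{[i]}-d^{[i,i+1]})+d^{[\ell-1]}$. Hence $\dim\mathrm{End}_{-1}(\mathcal{F})$ is at least the formula.

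\emph{Upper bound.} For $s\ge 1$, let $E_s := \{\phi : \phi(\mathcal{F}^{(j)})\subset\mathcal{F}^{(j-s)}\ \forall j\}$, so that $E_1=\mathrm{End}_{-1}(\mathcal{F})$ and $E_\ell = 0$. An element of $E_s$ induces maps $\mathcal{F}_b\to\mathcal{F}_{b-s}$ for every $b$. This gives a linear map $E_s/E_{s+1}\to\bigoplus_b\mathrm{Hom}(\mathcal{F}_b,\mathcal{F}_{b-s})$, which is injective because $\phi\in E_s$ inducing zero on all these subquotients means $\phi(\mathcal{F}^{(j)})\subset\mathcal{F}^{(j-s-1)}$. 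Summing over $s$ gives $\dim\mathrm{End}_{-1}(\mathcal{F})\le\dim\mathrm{End}_{-1}(\mathrm{gr}^{\mathrm{HN}}(\mathcal{F}))$. Combined with the lower bound and the graded computation, all three quantities are equal.

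The only real obstacle is the intersection identity in the lower bound, together with the bookkeeping that the resulting telescoping sum matches the graded count. Everything else is formal.
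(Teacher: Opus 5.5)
Your proof is correct, but it follows a different route from the paper's.

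The paper proves the spanning statement $\mathrm{End}_{-1}(\mathcal{F}) = \sum_j \mathrm{Hom}(\mathcal{F}/\mathcal{F}^{(j)}, \mathcal{F}^{(j)})$ directly. It combines the hypotheses with the left-exactness bound $\dim \mathrm{Hom}(\mathcal{F}_i, \mathcal{F}^{(i-1)}) \leq \dim \mathrm{Hom}(\mathcal{F}_i, \mathcal{F}_1 \oplus \cdots \oplus \mathcal{F}_{i-1})$. This shows that certain restriction maps are isomorphisms, for instance the map from $\mathrm{Hom}(\mathcal{F}/\mathcal{F}^{(i-1)}, \mathcal{F}^{(i-1)})/\mathrm{Hom}(\mathcal{F}/\mathcal{F}^{(i)}, \mathcal{F}^{(i-1)})$ to $\mathrm{Hom}(\mathcal{F}_i, \mathcal{F}^{(i-1)})$. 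It then inducts over the truncations $\mathcal{F}^{(i)}$ (its Steps 2 and 3). Finally it reads off the dimension from a composition series with factors $A_i/B_i$ ($1 \leq i \leq \ell-2$) and $A_{\ell-1}$.

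You instead obtain spanning only a posteriori, from a squeeze:
\begin{itemize}
\item the lower bound comes from inclusion--exclusion, using the hypothesis-free identity $(A_1+\cdots+A_i)\cap A_{i+1} = B_i$;
\item the upper bound $\dim \mathrm{End}_{-1}(\mathcal{F}) \leq \dim \mathrm{End}_{-1}(\mathrm{gr}^{\mathrm{HN}}(\mathcal{F}))$ comes from the shift filtration $E_s$ and holds for any filtered sheaf.
\end{itemize}

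Your version gains in several ways. It is shorter, and the hypotheses enter only through $\dim A_i$ and $\dim B_i$, so the nested restriction bookkeeping of the paper's Steps 2 and 3 is unnecessary. Your intersection identity also makes explicit a step the paper leaves implicit when it identifies the composition factors of $\mathrm{End}_{-1}(\mathcal{F})$. Equality in the squeeze gives $\mathrm{End}_{-1}(\mathcal{F}) = A_1 + \cdots + A_{\ell-1}$, which is the paper's spanning statement for $\mathcal{F}$ itself. It also shows that each injection $E_s/E_{s+1} \hookrightarrow \bigoplus_b \mathrm{Hom}(\mathcal{F}_b, \mathcal{F}_{b-s})$ is an isomorphism.

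The paper's route is more hands-on. Along the way it produces explicit statements that maps out of the subquotients $\mathcal{F}_i$ extend, namely that every map $\mathcal{F}_i \to \mathcal{F}^{(i-1)}$ extends over $\mathcal{F}/\mathcal{F}^{(i-1)}$.
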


\begin{proof}
    We break the proof down into several steps.

    \emph{Step 1.} We show that for all $i = 2, \dots, \ell-1$ that the natural injection\footnote{Here and throughout we identify $\mathrm{Hom}(\mathcal{F}/\mathcal{F}^{(i)}, \mathcal{F}^{(i-1)}) \subset \mathrm{Hom}(\mathcal{F}, \mathcal{F}^{(i-1)})$ as the subspace of homomorphisms which vanish on $\mathcal{F}^{(i)} \subset \mathcal{F}$.}
    $$ \frac{\mathrm{Hom}(\mathcal{F}/\mathcal{F}^{(i-1)}, \mathcal{F}^{(i-1)})}{\mathrm{Hom}(\mathcal{F}/\mathcal{F}^{(i)}, \mathcal{F}^{(i-1)})} \hookrightarrow \mathrm{Hom}(\mathcal{F}_i, \mathcal{F}^{(i-1)}) $$
    induced by restricting along $\mathcal{F}_i \subset \mathcal{F}/\mathcal{F}^{(i-1)}$ is an isomorphism. Indeed, the left hand side has dimension $d^{[i-1]} - d^{[i-1,i]}$. On the other hand, by using the left-exactness of $\mathrm{Hom}(\mathcal{F}_i,-)$ and by considering the short exact sequences
    \[\begin{tikzcd}[ampersand replacement=\&]
	0 \& {\mathcal{F}^{(j-1)}} \& {\mathcal{F}^{(j)}} \& {\mathcal{F}_j} \& 0
	\arrow[from=1-1, to=1-2]
	\arrow[from=1-2, to=1-3]
	\arrow[from=1-3, to=1-4]
	\arrow[from=1-4, to=1-5]
    \end{tikzcd}\]
    for $j = 2, \dots, i-1$, we have $\dim \mathrm{Hom}(\mathcal{F}_i, \mathcal{F}^{(i-1)}) \leq \dim \mathrm{Hom}(\mathcal{F}_i, \mathcal{F}_1 \oplus \cdots \oplus \mathcal{F}_{i-1})$, and this latter space has dimension $d^{[i-1]} - d^{[i-1,i]}$.

    \emph{Step 2.} We show that for all $2 \leq j < i \leq \ell$, the natural (injective) restriction map
    $$ \frac{\mathrm{Hom}(\mathcal{F}^{(i)}/\mathcal{F}^{(j-1)}, \mathcal{F}^{(j-1)})}{\mathrm{Hom}(\mathcal{F}^{(i)}/\mathcal{F}^{(j)}, \mathcal{F}^{(j-1)})} \hookrightarrow \mathrm{Hom}(\mathcal{F}_j, \mathcal{F}^{(j-1)}) $$
    is an isomorphism. Indeed, this follows from \emph{Step 1}, since restricting along $\mathcal{F}_j \subset \mathcal{F}/\mathcal{F}^{(j)}$ is the same as restricting along $\mathcal{F}^{(i)}/\mathcal{F}^{(j-1)} \subset \mathcal{F}/\mathcal{F}^{(j-1)}$ followed by restricting along $\mathcal{F}_j \subset \mathcal{F}^{(i)}/\mathcal{F}^{(j-1)}$.

    \emph{Step 3.} We show that for all $i = 2, \dots, \ell$ that the restriction map
    $$ V^{(i)} := \frac{\mathrm{End}_{-1}(\mathcal{F}^{(i)})}{\sum_{j=2}^{i-1} \mathrm{Hom}(\mathcal{F}^{(i)}/\mathcal{F}^{(j)}, \mathcal{F}^{(j)})} \to \mathrm{Hom}(\mathcal{F}^{(2)}/\mathcal{F}^{(1)}, \mathcal{F}^{(1)}) $$
    is an isomorphism. Observe that each $V^{(i)}$ contains $\mathrm{Hom}(\mathcal{F}^{(i)}/\mathcal{F}^{(1)}, \mathcal{F}^{(1)})/\mathrm{Hom}(\mathcal{F}^{(i)}/\mathcal{F}^{(2)}, \mathcal{F}^{(1)})$ as a subspace. By induction on $i \geq 3$, one sees that the maps $V^{(i)} \to V^{(i-1)}$ induced by restricting along the inclusion $\mathcal{F}^{(i-1)} \subset \mathcal{F}^{(i)}$ are all injective. Moreover, by \emph{Step 2}, the restriction of the map $V^{(i)} \hookrightarrow V^{(2)} = \mathrm{End}_{-1}(\mathcal{F}^{(2)}) = \mathrm{Hom}(\mathcal{F}^{(2)}/\mathcal{F}^{(1)}, \mathcal{F}^{(1)})$ to the subspace $\mathrm{Hom}(\mathcal{F}^{(i)}/\mathcal{F}^{(1)}, \mathcal{F}^{(1)})/\mathrm{Hom}(\mathcal{F}^{(i)}/\mathcal{F}^{(2)}, \mathcal{F}^{(1)})$ is an isomorphism; this establishes the claim.

    \emph{Step 4.} We show that $\dim \mathrm{End}_{-1}(\mathcal{F}) = \dim \mathrm{End}_{-1}(\mathrm{gr}^{\mathrm{HN}}(\mathcal{F}))$. First, it follows from \emph{Step 3} that for each $i = 2, \dots, \ell$ that the natural inclusion
    $$ \sum_{j=1}^{i-1} \mathrm{Hom}(\mathcal{F}^{(i)}/\mathcal{F}^{(j)}, \mathcal{F}^{(j)}) \subset \mathrm{End}_{-1}(\mathcal{F}^{(i)}) $$
    is an isomorphism. It follows that $\mathrm{End}_{-1}(\mathcal{F}) = \mathrm{End}_{-1}(\mathcal{F}^{(\ell)})$ admits a composition series with composition factors
    $$ \frac{\mathrm{Hom}(\mathcal{F}^{(\ell)}/\mathcal{F}^{(i)}, \mathcal{F}^{(i)})}{\mathrm{Hom}(\mathcal{F}^{(\ell)}/\mathcal{F}^{(i+1)}, \mathcal{F}^{(i)})} \ \ (i = 1, \dots, \ell-2), \quad \mathrm{Hom}(\mathcal{F}^{(\ell)}/\mathcal{F}^{(\ell-1)}, \mathcal{F}^{(\ell-1)}). $$
    Consequently, we have
    $$ \dim \mathrm{End}_{-1}(\mathcal{F}) = \sum_{1 \leq j < i \leq \ell} \dim \mathrm{Hom}(\mathcal{F}_i, \mathcal{F}_j) = \dim \mathrm{End}_{-1}(\mathrm{gr}^{\mathrm{HN}}(\mathcal{F})). $$

    To complete the proof of the lemma, we compute the common dimension $\dim \mathrm{End}_{-1}(\mathcal{F}) = \dim \mathrm{End}_{-1}(\mathrm{gr}^{\mathrm{HN}}(\mathcal{F}))$: since by assumption we have $\dim \mathrm{Hom}(\mathcal{F}^{(\ell)}/\mathcal{F}^{(\ell-1)}, \mathcal{F}^{(\ell-1)}) = d^{[\ell-1]}$ and by \emph{Step 1} we have
    $\dim \left( \mathrm{Hom}(\mathcal{F}^{(\ell)}/\mathcal{F}^{(i)}, \mathcal{F}^{(i)}) / \mathrm{Hom}(\mathcal{F}^{(\ell)}/\mathcal{F}^{(i+1)}, \mathcal{F}^{(i)}) \right) = d^{[i]} - d^{[i,i+1]}$,
    from the composition series considered in \emph{Step 4} we obtain that this common dimension is equal to $\sum_{i=1}^{\ell-2}(d^{[i]} - d^{[i,i+1]}) + d^{[\ell-1]}$.
\end{proof}

\begin{proof}[Proof of Proposition \ref{app-prop:generic-unipotent-stabilizers}]
    Assume $W^{P_{\tau}-qs} \neq \emptyset$. Then a generic $k$-point $w \in W^{P_{\tau}-qs}(k)$ is a $k$-point of the dense open subscheme $p^{-1}(Z_{\min}^d) \cap W^{P_{\tau}-qs} \subset W^{P_{\tau}-qs}$. Write 
    $$ \mathcal{F}^{(\bullet)} : 0 = \mathcal{F}^{(0)} \subset \mathcal{F}^{(1)} \subset \cdots \subset \mathcal{F}^{(\ell)} = \mathcal{F} $$
    for the Gieseker Harder--Narasimhan filtration of the coherent sheaf $\mathcal{F}$ corresponding to $w$. The point $p(w)$ corresponds to the associated graded sheaf $\mathrm{gr}^{\mathrm{HN}}(\mathcal{F})$, with the induced split filtration; since $p(w)$ is a $k$-point of $Z_{\min}^d$, this implies the following:

    \begin{enumerate}
        \item for all $i = 1, \dots, \ell-1$, $\dim \mathrm{Hom}(\mathcal{F}_{i+1} \oplus \cdots \oplus \mathcal{F}_{\ell}, \mathcal{F}_1 \oplus \cdots \oplus \mathcal{F}_{i}) = d^{[i]}$; and
        \item for all $i = 2, \dots, \ell-1$, $\dim \mathrm{Hom}(\mathcal{F}_{i+1} \oplus \cdots \oplus \mathcal{F}_{\ell}, \mathcal{F}_1 \oplus \cdots \oplus \mathcal{F}_{i-1}) = d^{[i-1,i]}$.
    \end{enumerate}
    On the other hand, since $w \in W^{P_{\tau}-qs}(k)$ then the following also hold:

    \begin{enumerate}
        \item[(3)] for all $i = 1, \dots, \ell-1$, $\dim \mathrm{Hom}(\mathcal{F}/\mathcal{F}^{(i)}, \mathcal{F}^{(i)}) = d_{\min}^{[i]}$, where $d_{\min}^{[i]} := d_{\min}(W_{\min}^{[i],ss}, U^{[i]})$; and

        \item[(4)] for all $i = 2, \dots, \ell-1$, $\dim \mathrm{Hom}(\mathcal{F}/\mathcal{F}^{(i)}, \mathcal{F}^{(i-1)}) = d_{\min}^{[i-1,i]}$, where $d_{\min}^{[i-1,i]} := d_{\min}(W_{\min}^{[i],ss}, U^{[i-1,i]})$.
    \end{enumerate}

    Since $U^{[i]}$ and $U^{[i-1,i]}$-stabilizer dimensions are upper semi-continuous with respect to taking limits under cocharacters, and since we have $w \in W_{\min}^{[i],ss}(k)$ for all $i = 1, \dots, \ell-1$ by \cite[Lemma 4.34 and Proposition 4.38]{Hoskins-Jackson-QIS}, it follows that $d_{\min}^{[i]} = d^{[i]}$ for all $i = 1, \dots, \ell-1$, and that $d_{\min}^{[i-1,i]} = d^{[i-1,i]}$ for all $i = 2, \dots, \ell-1$. Consequently, Lemma \ref{app-lem:sheaves-constant-dimension} is applicable to $\mathcal{F}$, from which we conclude that
    $$ \dim \mathrm{End}_{-1}(\mathcal{F}) = \dim \mathrm{End}_{-1}(\mathrm{gr}^{\mathrm{HN}}(\mathcal{F})) = \sum_{i=1}^{\ell-2}(d^{[i]} - d^{[i,i+1]}) + d^{[\ell-1]}. $$
    This completes the proof of the result.
\end{proof}

\printbibliography
\end{document}